\tikzset{tab/.style={matrix of math nodes,column sep=-.35, row sep=-.35,text height=7pt,text width=7pt,align=center,inner sep=2,font=\footnotesize}}
\newtheorem{thm}{Theorem}[section]
\newtheorem{lemma}[thm]{Lemma}
\newtheorem{prop}[thm]{Proposition}
\newtheorem{cor}[thm]{Corollary}
\newtheorem{conj}[thm]{Conjecture}
\theoremstyle{definition}
\newtheorem{dfn}[thm]{Definition}
\newtheorem{ex}[thm]{Example}
\newtheorem{remark}[thm]{Remark}
\numberwithin{equation}{section} 
\numberwithin{table}{section}
\newcommand{\inner}[2]{\left\langle #1, #2 \right\rangle}
\newcommand{\virtual}[1]{\widehat{#1}}
\newcommand{\iso}{\cong}
\newcommand{\absval}[1]{\left\lvert #1 \right\rvert}
\newcommand{\clfw}{\overline{\Lambda}} 
\newcommand{\column}[1]{\left[ #1 \right]} 
\newcommand{\case}[1]{\vspace{12pt}\noindent
\underline{#1}: \nopagebreak}
\newcommand{\hwRC}{\RC^*} 
\newcommand{\ZZ}{\mathbb{Z}}
\newcommand{\HH}{\mathcal{H}}
\newcommand{\g}{\mathfrak{g}}
\DeclareMathOperator{\RC}{RC} 
\DeclareMathOperator{\wt}{wt} 
\DeclareMathOperator{\ls}{ls} 
\DeclareMathOperator{\lt}{lt} 
\DeclareMathOperator{\cc}{cc} 
\DeclareMathOperator{\fillmap}{fill} 
\lstdefinelanguage{Sage}[]{Python}
{morekeywords={False,sage,True},sensitive=true}
\definecolor{dblackcolor}{rgb}{0.0,0.0,0.0}
\definecolor{dbluecolor}{rgb}{0.01,0.02,0.7}
\definecolor{dgreencolor}{rgb}{0.2,0.4,0.0}
\definecolor{dgraycolor}{rgb}{0.30,0.3,0.30}
\begin{document}
\title{\bf Crystal structure on rigged configurations and the filling map}

\author[A. Schilling]{Anne Schilling}
\address[Anne Schilling]{Department of Mathematics, UC Davis, One Shields Ave., Davis, CA 95616-8633, U.S.A.}
\email{anne@math.ucdavis.edu}

\author[T. Scrimshaw]{Travis Scrimshaw}
\address[Travis Scrimshaw]{Department of Mathematics, UC Davis, One Shields Ave., Davis, CA 95616-8633, U.S.A.}
\email{scrimsha@math.ucdavis.edu}

\thanks{A.S. was supported by NSF grants DMS--1001256, OCI--1147247 and a Simons Fellowship.}
\thanks{T.S. was supported by NSF grant OCI--1147247.}

\begin{abstract}
In this paper, we extend work of the first author on a crystal structure on rigged configurations of simply-laced type
to all non-exceptional affine types using the technology of virtual rigged configurations and crystals. Under the bijection 
between rigged configurations and tensor products of Kirillov--Reshetikhin crystals specialized to a single tensor factor, 
we obtain a new tableaux model for Kirillov--Reshetikhin crystals. This is related to the model in terms of 
Kashiwara--Nakashima tableaux via a filling map, generalizing the recently discovered filling map in type $D_n^{(1)}$.
\end{abstract}

\maketitle

\section{Introduction}

Rigged configurations index solutions of the Bethe Ansatz equations used to solve integrable
systems such as the XXX spin 1/2 Heisenberg chain. Despite their analytic origin, rigged configurations have fascinating combinatorial
properties. Kerov, Kirillov, and Reshetikhin~\cite{KKR86,KR86} introduced them in type $A_n^{(1)}$ and showed that they are in 
bijection with semi-standard tableaux. This bijection $\Phi$ is defined in a rather recursive manner which leaves many of its properties obscure.
For example, the bijection preserves certain statistics (cocharge and energy) and also maps the
very intricate combinatorial $R$-matrix to the identity map on rigged configurations.

Given the recursive nature of the bijection, it is desirable to explain it in more algebraic terms.
In~\cite{S06} a classical crystal structure was imposed on rigged configurations for simply-laced
types by verifying that the Stembridge~\cite{Stem.2003} local axioms hold. This led to the generalization~\cite{DS06}
of the Kerov, Kirillov, and Reshetikhin rigged configurations, which correspond to highest weight 
elements in a tensor product of single row representations in this setting. One of the main achievements in the current 
paper is the generalization of the classical crystal structure on rigged configurations to non-simply-laced types. 
Since the Stembridge local rules only characterize simply-laced highest weight crystals, we employ the method of virtual crystals initially
introduced in~\cite{OSS03III,OSS03II} to achieve this goal. The virtual crystal method realizes a crystal of non-simply-laced
type in terms of an embedding into a simply-laced crystal. In terms of the Kashiwara--Nakashima (KN) tableaux model~\cite{KN94}, it is 
not always easy to characterize the image of these embeddings in order to check that the virtual crystal is ``aligned" and hence in bijection 
with the expected non-simply-laced crystal. One of the major advantages of using rigged configurations is that the image is easy to 
compute and the alignedness property readily follows (see Section~\ref{sec:virtual_rc}).

The bijection $\Phi$ was generalized beyond type $A_n^{(1)}$ to arbitrary non-exceptional types in~\cite{OSS03} and to type $E_6^{(1)}$
in~\cite{OS12} for tensor products of Kirillov--Reshetikhin (KR) crystals indexed by the vector representation.
In the spirit of~\cite{KSS02} for type $A_n^{(1)}$, it is conjectured that $\Phi$ can be extended to arbitrary tensor products of 
KR crystals. This involves certain splitting maps (of the rectangles that index the KR crystals) and,
beyond type $A_n^{(1)}$, also a ``filling map'' as first pointed out in~\cite{S05} and fully established in type $D_n^{(1)}$
in~\cite{OSS13}. More precisely, the recursive algorithm for $\Phi$ yields rectangular tableaux (which are not necessarily semi-standard), 
which we coin Kirillov--Reshetikhin (KR)
tableaux following~\cite{OSS13}, that are similar to the KN tableaux appearing in the theory of crystal bases.
The filling map is a crystal isomorphism between these two versions of tableaux.
The second main result of the current paper is an explicit description of the filling map on classically highest weight elements for 
all non-exceptional types.

In order to utilize the algebraic structure of crystal bases to its fullest degree, it is necessary not only to define
a classical crystal structure, but also affine crystal operators. Since tensor products of affine KR crystals
are connected~\cite{Kashiwara.2002}, this would provide a description of the bijection between rigged configurations
and tensor products of KR crystals as an affine crystal isomorphism. For a single tensor factor in type $D_n^{(1)}$,
this was achieved in~\cite{OSS13}. In this paper, we extend the result of~\cite{OSS13} to any type that embeds into
type $D_n^{(1)}$. Note that semi-infinite tensor products of perfect KR crystals play an important role in the path realization~\cite{HK02}
of highest weight representations for $\g$. Also, the characters of KR crystals correspond to solutions of certain $Q$-systems, and their 
corresponding modules are solutions of certain $T$-systems~\cite{KNS11}. It is conjectured that KR crystals are universal objects in the 
category of finite-dimensional $U_q^{\prime}(\g)$-crystals. 
In addition, tensor products of KR crystals are related to Macdonald polynomials and $q$-deformed Whittaker functions~\cite{LNSSS14,ST12}.

This paper is organized as follows. We begin with background on crystals, rigged configurations, the Kleber algorithm, and 
their corresponding virtual counterparts in Section~\ref{sec:background}. 
We provide an explicit description of crystal operators on non-simply-laced rigged configurations in Section~\ref{sec:virtual_rc}.
In Section~\ref{sec:filling_map}, the filling map for all non-exceptional types is described and proved on highest weight elements for 
a single tensor factor, along with the existence of a statistics preserving bijection.
The affine crystal structure for the types that embed virtually into type $D_n^{(1)}$ is
given in Section~\ref{sec:affine_structure}. We conclude in Section~\ref{sec:single_factors} by showing that
the aforementioned bijection commutes with the virtualization map on highest weight elements for a single tensor factor.
The appendix provides a proof of a generalization of a result of Baker~\cite{baker2000}.

\subsection*{Acknowledgements}
Both authors would like to thank ICERM for holding the program ``Automorphic Forms, Combinatorial Representation Theory and 
Multiple Dirichlet Series" during the spring of 2013 where part of this work was done. We would also like to thank
Masato Okado for helpful discussions. 

This work benefitted from computations in Sage~\cite{sage, combinat}. Rigged configurations and the bijection to KR tableaux
were implemented by the second author.

\section{Background}
\label{sec:background}

In this section we provide background on crystals, rigged configurations, Kirillov--Reshetikhin crystals, virtual crystals, and
the Kleber and virtual Kleber algorithm.

Let $\g$ be an affine Kac--Moody Lie algebra with index set $I$, generalized Cartan matrix $A = (A_{ij})_{i,j \in I}$, 
fundamental weights $\{\Lambda_i \mid i \in I\}$, weight lattice $P$, root lattice $Q$, simple roots $\{\alpha_i \mid i \in I\}$, 
and simple coroots $\{\alpha_i^{\vee} \mid i \in I\}$. Denote by $P^{\vee}$ and $Q^{\vee}$ the coweight and coroot lattice, respectively.
Our conventions for the Dynkin diagrams and Cartan matrices follow Kac~\cite{kac90} (see also Figure~\ref{fig:Dynkin}).
Let $\inner{\cdot}{\cdot} \colon P^{\vee} \times P \to \ZZ$ be the canonical pairing defined by the evaluation pairing.
In particular, $\inner{\alpha_i^{\vee}}{\alpha_j} = A_{ij}$.
In addition, let $\g_0$ be the classical subalgebra of $\g$ with index set $I_0 = I \setminus \{0\}$, fundamental weights
$\{ \clfw_i \mid i \in I_0 \}$, and weight lattice $\overline{P}=\bigoplus_{i\in I_0} \mathbb{Z} \clfw_i$.

{\unitlength=.8pt
\begin{figure}
\begin{tabular}[t]{rl}
$A_1^{(1)}$:&
\begin{picture}(26,20)(-5,-5)
\multiput( 0,0)(20,0){2}{\circle{6}}
\multiput(2.85,-1)(0,2){2}{\line(1,0){14.3}}
\put(0,-5){\makebox(0,0)[t]{$0$}}
\put(20,-5){\makebox(0,0)[t]{$1$}}
\put( 6, 0){\makebox(0,0){$<$}}
\put(14, 0){\makebox(0,0){$>$}}
\end{picture}
\\
&
\\
\begin{minipage}[b]{4em}
\begin{flushright}
$A_n^{(1)}$:\\$(n \ge 2)$
\end{flushright}
\end{minipage}&
\begin{picture}(106,40)(-5,-5)
\multiput( 0,0)(20,0){2}{\circle{6}}
\multiput(80,0)(20,0){2}{\circle{6}}
\put(50,20){\circle{6}}
\multiput( 3,0)(20,0){2}{\line(1,0){14}}
\multiput(63,0)(20,0){2}{\line(1,0){14}}
\multiput(39,0)(4,0){6}{\line(1,0){2}}
\put(2.78543,1.1142){\line(5,2){44.429}}
\put(52.78543,18.8858){\line(5,-2){44.429}}
\put(0,-5){\makebox(0,0)[t]{$1$}}
\put(20,-5){\makebox(0,0)[t]{$2$}}
\put(80,-5){\makebox(0,0)[t]{$n\!\! -\!\! 1$}}
\put(100,-5){\makebox(0,0)[t]{$n$}}
\put(55,20){\makebox(0,0)[lb]{$0$}}
\end{picture}
\\
&
\\
\begin{minipage}[b]{4em}
\begin{flushright}
$B_n^{(1)}$:\\$(n \ge 3)$
\end{flushright}
\end{minipage}&
\begin{picture}(126,40)(-5,-5)
\multiput(0,0)(20,0){3}{\circle{6}}
\multiput(100,0)(20,0){2}{\circle{6}}
\put(20,20){\circle{6}}
\multiput( 3,0)(20,0){3}{\line(1,0){14}}
\multiput(83,0)(20,0){1}{\line(1,0){14}}
\put(20,3){\line(0,1){14}}
\multiput(102.85,-1)(0,2){2}{\line(1,0){14.3}} 
\multiput(59,0)(4,0){6}{\line(1,0){2}} 
\put(110,0){\makebox(0,0){$>$}}
\put(0,-5){\makebox(0,0)[t]{$1$}}
\put(20,-5){\makebox(0,0)[t]{$2$}}
\put(40,-5){\makebox(0,0)[t]{$3$}}
\put(100,-5){\makebox(0,0)[t]{$n\!\! -\!\! 1$}}
\put(120,-5){\makebox(0,0)[t]{$n$}}
\put(25,20){\makebox(0,0)[l]{$0$}}
\put(120,13){\makebox(0,0)[t]{$2$}}
\end{picture}
\\
&
\\
\begin{minipage}[b]{4em}
\begin{flushright}
$C_n^{(1)}$:\\$(n \ge 2)$
\end{flushright}
\end{minipage}&
\begin{picture}(126,20)(-5,-5)
\multiput( 0,0)(20,0){3}{\circle{6}}
\multiput(100,0)(20,0){2}{\circle{6}}
\multiput(23,0)(20,0){2}{\line(1,0){14}}
\put(83,0){\line(1,0){14}}
\multiput( 2.85,-1)(0,2){2}{\line(1,0){14.3}} 
\multiput(102.85,-1)(0,2){2}{\line(1,0){14.3}} 
\multiput(59,0)(4,0){6}{\line(1,0){2}} 
\put(10,0){\makebox(0,0){$>$}}
\put(110,0){\makebox(0,0){$<$}}
\put(0,-5){\makebox(0,0)[t]{$0$}}
\put(20,-5){\makebox(0,0)[t]{$1$}}
\put(40,-5){\makebox(0,0)[t]{$2$}}
\put(100,-5){\makebox(0,0)[t]{$n\!\! -\!\! 1$}}
\put(120,-5){\makebox(0,0)[t]{$n$}}

\put(20,13){\makebox(0,0)[t]{$2$}}
\put(40,13){\makebox(0,0)[t]{$2$}}
\put(100,13){\makebox(0,0)[t]{$2$}}
\end{picture}
\\
&
\\
\begin{minipage}[b]{4em}
\begin{flushright}
$D_n^{(1)}$:\\$(n \ge 4)$
\end{flushright}
\end{minipage}&
\begin{picture}(106,40)(-5,-5)
\multiput( 0,0)(20,0){2}{\circle{6}}
\multiput(80,0)(20,0){2}{\circle{6}}
\multiput(20,20)(60,0){2}{\circle{6}}
\multiput( 3,0)(20,0){2}{\line(1,0){14}}
\multiput(63,0)(20,0){2}{\line(1,0){14}}
\multiput(39,0)(4,0){6}{\line(1,0){2}}
\multiput(20,3)(60,0){2}{\line(0,1){14}}
\put(0,-5){\makebox(0,0)[t]{$1$}}
\put(20,-5){\makebox(0,0)[t]{$2$}}
\put(80,-5){\makebox(0,0)[t]{$n\!\! - \!\! 2$}}
\put(103,-5){\makebox(0,0)[t]{$n\!\! -\!\! 1$}}
\put(25,20){\makebox(0,0)[l]{$0$}}
\put(85,20){\makebox(0,0)[l]{$n$}}
\end{picture}
\\
&
\\
$E_6^{(1)}$:&
\begin{picture}(86,60)(-5,-5)
\multiput(0,0)(20,0){5}{\circle{6}}
\multiput(40,20)(0,20){2}{\circle{6}}
\multiput(3,0)(20,0){4}{\line(1,0){14}}
\multiput(40, 3)(0,20){2}{\line(0,1){14}}
\put( 0,-5){\makebox(0,0)[t]{$1$}}
\put(20,-5){\makebox(0,0)[t]{$2$}}
\put(40,-5){\makebox(0,0)[t]{$3$}}
\put(60,-5){\makebox(0,0)[t]{$4$}}
\put(80,-5){\makebox(0,0)[t]{$5$}}
\put(45,20){\makebox(0,0)[l]{$6$}}
\put(45,40){\makebox(0,0)[l]{$0$}}
\end{picture}
\\
&
\\
$E_7^{(1)}$:&
\begin{picture}(126,40)(-5,-5)
\multiput(0,0)(20,0){7}{\circle{6}}
\put(60,20){\circle{6}}
\multiput(3,0)(20,0){6}{\line(1,0){14}}
\put(60, 3){\line(0,1){14}}
\put( 0,-5){\makebox(0,0)[t]{$0$}}
\put(20,-5){\makebox(0,0)[t]{$1$}}
\put(40,-5){\makebox(0,0)[t]{$2$}}
\put(60,-5){\makebox(0,0)[t]{$3$}}
\put(80,-5){\makebox(0,0)[t]{$4$}}
\put(100,-5){\makebox(0,0)[t]{$5$}}
\put(120,-5){\makebox(0,0)[t]{$6$}}
\put(65,20){\makebox(0,0)[l]{$7$}}
\end{picture}
\\
&
\\
$E_8^{(1)}$:&
\begin{picture}(146,40)(-5,-5)
\multiput(0,0)(20,0){8}{\circle{6}}
\put(100,20){\circle{6}}
\multiput(3,0)(20,0){7}{\line(1,0){14}}
\put(100, 3){\line(0,1){14}}
\put( 0,-5){\makebox(0,0)[t]{$0$}}
\put(20,-5){\makebox(0,0)[t]{$1$}}
\put(40,-5){\makebox(0,0)[t]{$2$}}
\put(60,-5){\makebox(0,0)[t]{$3$}}
\put(80,-5){\makebox(0,0)[t]{$4$}}
\put(100,-5){\makebox(0,0)[t]{$5$}}
\put(120,-5){\makebox(0,0)[t]{$6$}}
\put(140,-5){\makebox(0,0)[t]{$7$}}
\put(105,20){\makebox(0,0)[l]{$8$}}
\end{picture}
\\
&
\\
\end{tabular}
\begin{tabular}[t]{rl}
$F_4^{(1)}$:&
\begin{picture}(86,20)(-5,-5)
\multiput( 0,0)(20,0){5}{\circle{6}}
\multiput( 3,0)(20,0){2}{\line(1,0){14}}
\multiput(42.85,-1)(0,2){2}{\line(1,0){14.3}} 
\put(63,0){\line(1,0){14}}
\put(50,0){\makebox(0,0){$>$}}
\put( 0,-5){\makebox(0,0)[t]{$0$}}
\put(20,-5){\makebox(0,0)[t]{$1$}}
\put(40,-5){\makebox(0,0)[t]{$2$}}
\put(60,-5){\makebox(0,0)[t]{$3$}}
\put(80,-5){\makebox(0,0)[t]{$4$}}
\put(60,13){\makebox(0,0)[t]{$2$}}
\put(80,13){\makebox(0,0)[t]{$2$}}
\end{picture}
\\
&
\\
$G_2^{(1)}$:&
\begin{picture}(46,20)(-5,-5)
\multiput( 0,0)(20,0){3}{\circle{6}}
\multiput( 3,0)(20,0){2}{\line(1,0){14}}
\multiput(22.68,-1.5)(0,3){2}{\line(1,0){14.68}}
\put( 0,-5){\makebox(0,0)[t]{$0$}}
\put(20,-5){\makebox(0,0)[t]{$1$}}
\put(40,-5){\makebox(0,0)[t]{$2$}}
\put(30,0){\makebox(0,0){$>$}}
\put(40,13){\makebox(0,0)[t]{$3$}}
\end{picture}
\\
&
\\
$A^{(2)}_2$:&
\begin{picture}(26,20)(-5,-5)
\multiput( 0,0)(20,0){2}{\circle{6}}
\multiput(2.958,-0.5)(0,1){2}{\line(1,0){14.084}}
\multiput(2.598,-1.5)(0,3){2}{\line(1,0){14.804}}
\put(0,-5){\makebox(0,0)[t]{$0$}}
\put(20,-5){\makebox(0,0)[t]{$1$}}
\put(10,0){\makebox(0,0){$<$}}
\put(0,13){\makebox(0,0)[t]{$2$}}
\put(20,13){\makebox(0,0)[t]{$2$}}
\end{picture}
\\
&
\\
\begin{minipage}[b]{4em}
\begin{flushright}
$A_{2n}^{(2)}$:\\$(n \ge 2)$
\end{flushright}
\end{minipage}&
\begin{picture}(126,20)(-5,-5)
\multiput( 0,0)(20,0){3}{\circle{6}}
\multiput(100,0)(20,0){2}{\circle{6}}
\multiput(23,0)(20,0){2}{\line(1,0){14}}
\put(83,0){\line(1,0){14}}
\multiput( 2.85,-1)(0,2){2}{\line(1,0){14.3}} 
\multiput(102.85,-1)(0,2){2}{\line(1,0){14.3}} 
\multiput(59,0)(4,0){6}{\line(1,0){2}} 
\put(10,0){\makebox(0,0){$<$}}
\put(110,0){\makebox(0,0){$<$}}
\put(0,-5){\makebox(0,0)[t]{$0$}}
\put(20,-5){\makebox(0,0)[t]{$1$}}
\put(40,-5){\makebox(0,0)[t]{$2$}}
\put(100,-5){\makebox(0,0)[t]{$n\!\! -\!\! 1$}}
\put(120,-5){\makebox(0,0)[t]{$n$}}
\put(0,13){\makebox(0,0)[t]{$2$}}
\put(20,13){\makebox(0,0)[t]{$2$}}
\put(40,13){\makebox(0,0)[t]{$2$}}
\put(100,13){\makebox(0,0)[t]{$2$}}
\put(120,13){\makebox(0,0)[t]{$2$}}
\put(120,13){\makebox(0,0)[t]{$2$}}
\end{picture}
\\
&
\\
$A^{(2)\dagger}_2$:&
\begin{picture}(26,20)(-5,-5)
\multiput( 0,0)(20,0){2}{\circle{6}}
\multiput(2.958,-0.5)(0,1){2}{\line(1,0){14.084}}
\multiput(2.598,-1.5)(0,3){2}{\line(1,0){14.804}}
\put(0,-5){\makebox(0,0)[t]{$0$}}
\put(20,-5){\makebox(0,0)[t]{$1$}} \put(10,0){\makebox(0,0){$>$}}
\put(0,13){\makebox(0,0)[t]{$$}} \put(20,13){\makebox(0,0)[t]{$$}}
\end{picture}
\\
&
\\
\begin{minipage}[b]{4em}
\begin{flushright}
$A_{2n}^{(2)\dagger}$:\\$(n \ge 2)$
\end{flushright}
\end{minipage}&
\begin{picture}(126,20)(-5,-5)
\multiput( 0,0)(20,0){3}{\circle{6}}
\multiput(100,0)(20,0){2}{\circle{6}}
\multiput(23,0)(20,0){2}{\line(1,0){14}}
\put(83,0){\line(1,0){14}}
\multiput( 2.85,-1)(0,2){2}{\line(1,0){14.3}} 
\multiput(102.85,-1)(0,2){2}{\line(1,0){14.3}} 
\multiput(59,0)(4,0){6}{\line(1,0){2}} 
\put(10,0){\makebox(0,0){$>$}} \put(110,0){\makebox(0,0){$>$}}
\put(0,-5){\makebox(0,0)[t]{$0$}}
\put(20,-5){\makebox(0,0)[t]{$1$}}
\put(40,-5){\makebox(0,0)[t]{$2$}}
\put(100,-5){\makebox(0,0)[t]{$n\!\! -\!\! 1$}}
\put(120,-5){\makebox(0,0)[t]{$n$}}
\put(0,13){\makebox(0,0)[t]{$$}}
\put(20,13){\makebox(0,0)[t]{$$}}
\put(40,13){\makebox(0,0)[t]{$$}}
\put(100,13){\makebox(0,0)[t]{$$}}
\put(120,13){\makebox(0,0)[t]{$$}}
\put(120,13){\makebox(0,0)[t]{$$}}
\end{picture}
\\
&
\\
\begin{minipage}[b]{4em}
\begin{flushright}
$A_{2n-1}^{(2)}$:\\$(n \ge 3)$
\end{flushright}
\end{minipage}&
\begin{picture}(126,40)(-5,-5)
\multiput( 0,0)(20,0){3}{\circle{6}}
\multiput(100,0)(20,0){2}{\circle{6}}
\put(20,20){\circle{6}}
\multiput( 3,0)(20,0){3}{\line(1,0){14}}
\multiput(83,0)(20,0){1}{\line(1,0){14}}
\put(20,3){\line(0,1){14}}
\multiput(102.85,-1)(0,2){2}{\line(1,0){14.3}} 
\multiput(59,0)(4,0){6}{\line(1,0){2}} 
\put(110,0){\makebox(0,0){$<$}}
\put(0,-5){\makebox(0,0)[t]{$1$}}
\put(20,-5){\makebox(0,0)[t]{$2$}}
\put(40,-5){\makebox(0,0)[t]{$3$}}
\put(100,-5){\makebox(0,0)[t]{$n\!\! -\!\! 1$}}
\put(120,-5){\makebox(0,0)[t]{$n$}}
\put(25,20){\makebox(0,0)[l]{$0$}}

\put(120,13){\makebox(0,0)[t]{$2$}}
\end{picture}
\\
&
\\
\begin{minipage}[b]{4em}
\begin{flushright}
$D_{n+1}^{(2)}$:\\$(n \ge 2)$
\end{flushright}
\end{minipage}&
\begin{picture}(126,20)(-5,-5)
\multiput( 0,0)(20,0){3}{\circle{6}}
\multiput(100,0)(20,0){2}{\circle{6}}
\multiput(23,0)(20,0){2}{\line(1,0){14}}
\put(83,0){\line(1,0){14}}
\multiput( 2.85,-1)(0,2){2}{\line(1,0){14.3}} 
\multiput(102.85,-1)(0,2){2}{\line(1,0){14.3}} 
\multiput(59,0)(4,0){6}{\line(1,0){2}} 
\put(10,0){\makebox(0,0){$<$}}
\put(110,0){\makebox(0,0){$>$}}
\put(0,-5){\makebox(0,0)[t]{$0$}}
\put(20,-5){\makebox(0,0)[t]{$1$}}
\put(40,-5){\makebox(0,0)[t]{$2$}}
\put(100,-5){\makebox(0,0)[t]{$n\!\! -\!\! 1$}}
\put(120,-5){\makebox(0,0)[t]{$n$}}

\put(20,13){\makebox(0,0)[t]{$2$}}
\put(40,13){\makebox(0,0)[t]{$2$}}
\put(100,13){\makebox(0,0)[t]{$2$}}
\end{picture}
\\
&
\\
$E_6^{(2)}$:&
\begin{picture}(86,20)(-5,-5)
\multiput( 0,0)(20,0){5}{\circle{6}}
\multiput( 3,0)(20,0){2}{\line(1,0){14}}
\multiput(42.85,-1)(0,2){2}{\line(1,0){14.3}} 
\put(63,0){\line(1,0){14}}
\put(50,0){\makebox(0,0){$<$}}
\put( 0,-5){\makebox(0,0)[t]{$0$}}
\put(20,-5){\makebox(0,0)[t]{$1$}}
\put(40,-5){\makebox(0,0)[t]{$2$}}
\put(60,-5){\makebox(0,0)[t]{$3$}}
\put(80,-5){\makebox(0,0)[t]{$4$}}

\put(60,13){\makebox(0,0)[t]{$2$}}
\put(80,13){\makebox(0,0)[t]{$2$}}
\end{picture}
\\
&
\\
$D_4^{(3)}$:&
\begin{picture}(46,20)(-5,-5)
\multiput( 0,0)(20,0){3}{\circle{6}}
\multiput( 3,0)(20,0){2}{\line(1,0){14}}
\multiput(22.68,-1.5)(0,3){2}{\line(1,0){14.68}}
\put( 0,-5){\makebox(0,0)[t]{$0$}}
\put(20,-5){\makebox(0,0)[t]{$1$}}
\put(40,-5){\makebox(0,0)[t]{$2$}}
\put(30,0){\makebox(0,0){$<$}}

\put(40,13){\makebox(0,0)[t]{$3$}}
\end{picture}
\\
&
\\
\end{tabular}
\caption{Dynkin diagrams for $X^{(r)}_N$.
The enumeration of the nodes with
$I = \{0,1,\ldots, n\}$ is specified under or the right side of the nodes.
In addition, the numbers $t_i$ (resp. $t^\vee_i$) defined in
\eqref{equation.t} are attached \emph{above} the nodes for $r=1$ (resp. $r>1$)
if and only if $t_i \neq 1$ (resp. $t^\vee_i \neq 1$).\label{fig:Dynkin}}
\end{figure}
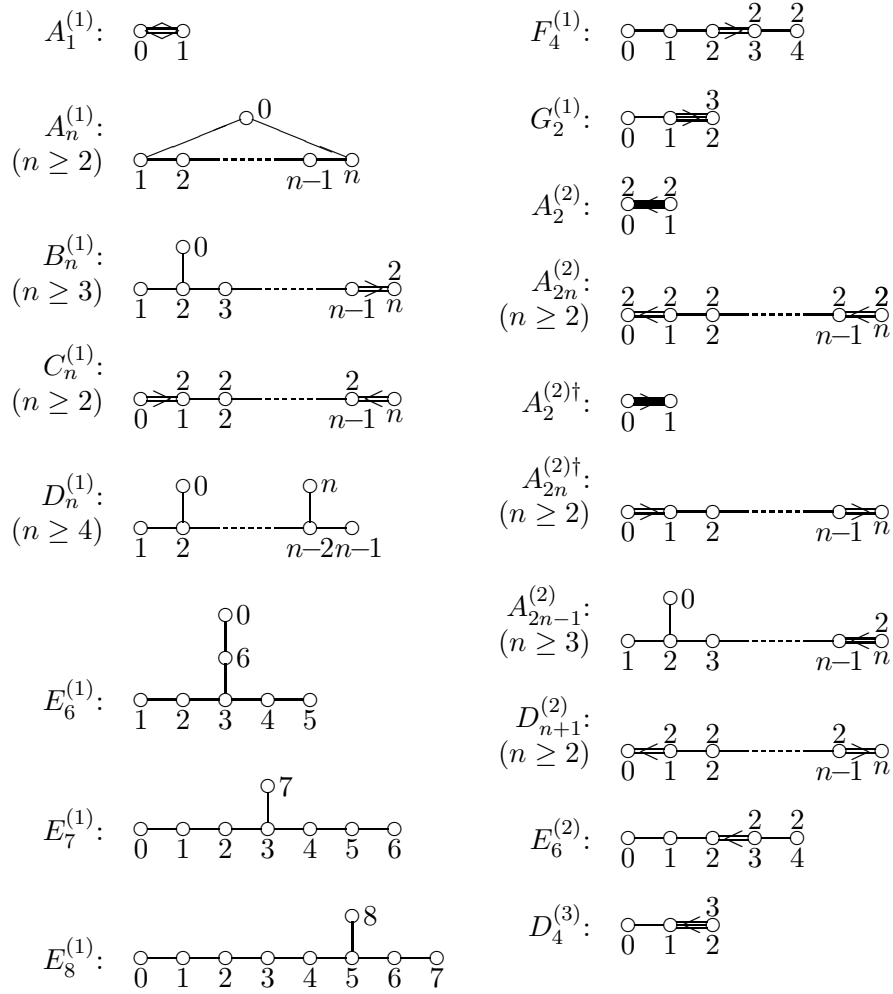}

\subsection{Crystals}
We begin by giving the axiomatic definition of a crystal.

\begin{dfn}
\label{definition.abstract crystal}
An \emph{abstract $U_q(\g)$-crystal} is a non-empty set $B$ together with maps
\[
	\wt \colon B \to P, \qquad
	\varepsilon_i, \varphi_i \colon B \to \ZZ \sqcup \{-\infty\}, \qquad
	e_i, f_i \colon B \to B \sqcup \{0\},
\]
subject to the following conditions:
\begin{enumerate}[(1)]
\item $\varphi_i(b) = \varepsilon_i(b) + \langle \alpha^\vee_i, \wt(b)\rangle$ for all $i \in I$,
\item if $b\in B$ satisfies $e_ib \neq 0$, then
  \begin{enumerate}[(a)]
  \item $\varepsilon_i(e_ib) = \varepsilon_i(b) - 1$,
  \item $\varphi_i(e_ib) = \varphi_i(b) + 1$,
  \item $\wt(e_ib) = \wt(b) + \alpha_i$,
  \end{enumerate}
\item if $b\in B$ satisfies $f_ib \neq 0$, then
  \begin{enumerate}[(a)]
  \item $\varepsilon_i(f_ib) = \varepsilon_i(b) + 1$,
  \item $\varphi_i(f_ib) = \varphi_i(b) - 1$,
  \item $\wt(f_ib) = \wt(b) - \alpha_i$,
  \end{enumerate}
\item $f_ib = b^{\prime}$ if and only if $b = e_i b^{\prime}$ for $b,b^{\prime} \in B$ and $i \in I$,
\item if $\varphi_i(b) = -\infty$ for $b\in B$, then $e_i b = f_i b = 0$.
\end{enumerate}
The maps $e_i$ and $f_i$ ($i \in I$) are called the \emph{crystal} or \emph{Kashiwara operators}.
\end{dfn}

We call a crystal \emph{regular} if
\begin{equation}
\label{equation.eps phi}
	\varepsilon_i(b) = \max\{ k \in \ZZ \mid e_i^k(b) \neq 0 \} \quad \text{and} \quad
	\varphi_i(b) = \max\{ k \in \ZZ \mid f_i^k(b) \neq 0 \}
\end{equation}
for all $i \in I$ and $b \in B$. In this case, we depict the entire $i$-string diagrammatically as
\[ \xymatrix{
e_i^{\varepsilon_i(b)} b \ar[r]^i & \cdots \ar[r]^i & e_i b \ar[r]^i & b \ar[r]^i & f_i b \ar[r]^i & \cdots \ar[r]^i & f_i^{\varphi_i(b)} b.
}\]

Let $B_1$ and $B_2$ be abstract $U_q(\g)$-crystals.  The tensor product $B_2 \otimes B_1$ is defined to be the Cartesian product 
$B_2\times B_1$ equipped with crystal operators given by
\begin{align*}
e_i(b_2 \otimes b_1) &= \begin{cases}
e_i b_2 \otimes b_1 & \text{if } \varepsilon_i(b_2) > \varphi_i(b_1), \\
b_2 \otimes e_i b_1 & \text{if } \varepsilon_i(b_2) \le \varphi_i(b_1),
\end{cases} \\
f_i(b_2 \otimes b_1) &= \begin{cases}
f_i b_2 \otimes b_1 & \text{if } \varepsilon_i(b_2) \ge \varphi_i(b_1), \\
b_2 \otimes f_i b_1 & \text{if } \varepsilon_i(b_2) < \varphi_i(b_1),
\end{cases} \\ 
\varepsilon_i(b_2 \otimes b_1) &= \max\big( \varepsilon_i(b_1), \varepsilon_i(b_1) + \varepsilon_i(b_2) - \varphi_i(b_1) \bigr), \\
\varphi_i(b_2 \otimes b_1) &= \max\big( \varphi_i(b_2), \varphi_i(b_2) + \varphi_i(b_1) - \varepsilon_i(b_2) \bigr), \\
\wt(b_2 \otimes b_1) &= \wt(b_2) + \wt(b_1).
\end{align*}

\begin{remark}
Our convention for tensor products is opposite to the convention given by Kashiwara in~\cite{K91}. The above tensor product
rules can also be described by the signature rule, see for example~\cite{HK02}.
\end{remark}

Again let $B_1$ and $B_2$ be abstract $U_q(\g)$-crystals. A \emph{crystal morphism} $\psi\colon B_1 \longrightarrow B_2$ is a map 
$B_1 \sqcup \{0\} \longrightarrow B_2 \sqcup \{0\}$ such that
\begin{enumerate}[(i)]
\item \label{mor:1} $\psi(0) = 0$;
\item \label{mor:2} if $b \in B_1$ and $\psi(b) \in B_2$, then $\wt(\psi(b)) = \wt(b)$, $\varepsilon_i(\psi(b)) = \varepsilon_i(b)$, 
and $\varphi_i(\psi(b)) = \varphi_i(b)$;
\item \label{mor:3} for $b, b' \in B_1$, $\psi(b),\psi(b')\in B_2$ and $f_ib=b'$, we have $\psi(f_ib) = f_i\psi(b)$ and $\psi(e_ib') = e_i \psi(b')$ for all $i \in I$.
\end{enumerate}
A morphism $\psi$ is called \emph{strict} if $\psi$ commutes with $e_i$ and $f_i$ for all $i\in I$.  Moreover, a morphism 
$\psi\colon B_1 \longrightarrow B_2$ is called an \emph{embedding} if the induced map $B_1 \sqcup\{0\} \longrightarrow B_2 \sqcup \{0\}$ is injective.

For a dominant integral weight $\lambda$, let $B(\lambda)$ denote the highest weight crystal with highest weight $\lambda$. Let $u_{\lambda}$ 
denote the unique highest weight vector in $B(\lambda)$. Recall that in general we can consider the classical dominant weight 
$\lambda = \sum_{i \in I_0} k_i \clfw_i$ as a partition with $k_i$ columns of height $i$ and width 1 (resp. width 1/2 for spin nodes $i$). 
We draw our diagrams (and hence our tableaux) using French convention. In the Kashiwara--Nakashima (KN) model~\cite{KN94}, the 
elements of $B(\lambda)$ are given by certain tableaux of shape $\lambda$. The crystal structure is determined by the embedding 
$B(\lambda) \hookrightarrow B(\clfw_1)^{\otimes |\lambda|}$, where the inclusion is the reading word given by reading down the columns 
from left to right. For more on this model, see for instance~\cite[Chapter 8]{HK02}.

\subsection{Simple subalgebras}
For later use, specific realizations are given for the simple roots and fundamental weights of the simple
Lie algebras of types $B_n$, $C_n$, and $D_n$. In each case, the sublattice of
$\overline{P}$ given by the weights appearing in tensor products of the
vector representation is identified with $\ZZ^n$. Let
$\{\epsilon_i\mid 1\le i\le n\}$ be the standard basis of $\ZZ^n$.

\subsubsection*{The simple Lie algebra $B_n$}

\begin{equation}
\label{eq:Broots}
\begin{aligned}
\alpha_a & =\epsilon_a-\epsilon_{a+1} & \hspace{90pt} & \text{for $1\le a<n$}\\
\alpha_n & =\epsilon_n & & \\
\clfw_a & =\epsilon_1+\cdots+\epsilon_a & & \text{for $1\le a<n$}\\
\clfw_n & = \frac{1}{2}(\epsilon_1+\cdots+\epsilon_n). & &
\end{aligned}
\end{equation}
$\lambda\in\ZZ^n$ is $B_n$-dominant if and only if
\begin{equation}
\label{eq:Bdom}
\begin{aligned}
\lambda_a-\lambda_{a+1} & \ge 0 & \hspace{120pt} & \text{for $1\le a < n$} \\
\lambda_n&\ge0.&
\end{aligned}
\end{equation}

\subsubsection*{The simple Lie algebra $C_n$}

\begin{equation}
\label{eq:Croots}
\begin{aligned}
\alpha_a&=\epsilon_a-\epsilon_{a+1} & \hspace{110pt} & \text{for $1\le a<n$}\\
\alpha_n&=2\epsilon_n & &\\
\clfw_a&=\epsilon_1+\cdots+\epsilon_a && \text{for $1\le
a\le n$.}
\end{aligned}
\end{equation}
$\lambda\in\ZZ^n$ is $C_n$-dominant if and only if it is $B_n$-dominant
\eqref{eq:Bdom}.

\subsubsection*{The simple Lie algebra $D_n$}

\begin{equation}
\label{eq:Droots}
\begin{aligned}
\alpha_a&=\epsilon_a-\epsilon_{a+1} & \hspace{70pt} &\text{for $1\le a<n$}\\
\alpha_n&=\epsilon_{n-1}+\epsilon_n &&\\
\clfw_a&=\epsilon_1+\cdots+\epsilon_a && \text{for $1\le a\le n-2$}\\
\clfw_{n-1}&=\frac{1}{2}(\epsilon_1+\cdots+\epsilon_{n-1}-\epsilon_n)&&\\
\clfw_n&=\frac{1}{2}(\epsilon_1+\cdots+\epsilon_{n-1}+\epsilon_n)&&
\end{aligned}
\end{equation}
$\lambda\in\ZZ^n$ is $D_n$-dominant if and only if
\begin{equation}
\label{eq:dominant D}
\begin{aligned}
\lambda_a-\lambda_{a+1}&\ge 0 & \hspace{120pt} &\text{for $1\le a<n$}\\
\lambda_{n-1}+\lambda_n & \ge 0. & &
\end{aligned}
\end{equation}

\subsection{Rigged configurations}

Set $\HH_0 = I_0 \times \ZZ_{>0}$. Let $c_i$ and $c_i^{\vee}$ be the Kac and dual Kac labels~\cite[Table Aff1-3]{kac90}, respectively. 
Let $( \cdot \mid \cdot )$ be the invariant bilinear form on $P$, normalized such that
\[
	(\alpha_i \mid \alpha_j) = \frac{c_i^{\vee}}{c_i} A_{ij}\;.
\]
We also define
\begin{equation}
\label{equation.t}
	t_i = \max\left( \frac{c_i}{c_i^{\vee}}, c_0^{\vee} \right), \quad t_i^{\vee} = \max\left( \frac{c_i^{\vee}}{c_i}, c_0 \right).
\end{equation}
Moreover let $(\widetilde{\alpha}_a)_{a \in I_0}$ denote the simple roots of the classical type $\g_0$ except for type $A_{2n}^{(2)}$, where 
it will be of type $B_n$ (as opposed to type $C_n$ and is the subalgebra fixed by the automorphism $\sigma$ of~\cite[Sec.~8.3]{kac90}).

We now define rigged configurations by mostly following~\cite{OSS03}. Consider the multiplicity array
\[
	L = \big(L_i^{(a)} \in \ZZ_{\geq 0} \mid (a,i) \in \HH_0\big)
\]
with only finitely many nonzero entries and a dominant integral weight $\lambda$ of $\g_0$. We call a sequence of partitions 
$\nu = \{ \nu^{(a)} \mid a \in I_0 \}$ an $(L, \lambda)$-\emph{configuration} if
\begin{equation}
\label{eq:LL-config}
	\sum_{(a,i)\in\HH_0} im_i^{(a)} \widetilde{\alpha}_a = \eta \left(\sum_{(a,i)\in\HH_0} i L_i^{(a)} \clfw_a - \lambda \right)\;,
\end{equation}
where $m_i^{(a)}$ is the number of parts of length $i$ in the partition $\nu^{(a)}$. Here $\eta$ is the identity map except 
in type $A_{2n}^{(2)}$, in which case $\eta$ is the $\ZZ$-linear map from the weight lattice of type $C_n$ to the weight lattice of type $B_n$ 
such that
\[
	\eta(\clfw_a^C) = \begin{cases} \clfw_a^B & 1 \leq a < n, \\ 2\clfw_n^B & a = n. \end{cases}
\]
The set of all such $(L,\lambda)$-configurations is denoted by $C(L,\lambda)$. For $\nu \in C(L,\lambda)$, define the \emph{vacancy numbers} 
of $\nu$ as
\begin{equation}\label{eq:vacancy}
p_i^{(a)}(\nu) = p_i^{(a)} = \sum_{j \geq 1} \min(i,j) L_j^{(a)} - \frac{1}{t_a^{\vee}} \sum_{(b,j) \in \HH_0} (\widetilde{\alpha}_a | \widetilde{\alpha}_b) 
\min( t_b \upsilon_a i, t_a \upsilon_b j) m_j^{(b)},
\end{equation}
where
\[
\upsilon_a = \begin{cases}
2 & a = n \text{ and } \g = C_n^{(1)}, \\
\frac{1}{2} & a = n \text{ and } \g = B_n^{(1)}, \\
1 & \text{otherwise.}
\end{cases}
\]
Define $C^*(L,\lambda) = \{ \nu \in C(L,\lambda) \mid p_i^{(a)}(\nu) \geq 0 \text{ for all $(a,i)\in \HH_0$}\}$.

Recall that we can consider a partition as a multiset of positive integers (typically sorted in decreasing order).
A \emph{rigged partition} is a multiset of pairs of integers $(i, x)$ such that $i > 0$ (typically sorted under decreasing
lexicographic order). Each $(i,x)$ is called a \emph{string}, and we call $i$ the \emph{length} (or \emph{size}) of the string and $x$
is the \emph{label} (or \emph{quantum number}) of the string. Finally, a \emph{rigged configuration} is a pair $(\nu, J)$,
where $\nu \in C(L,\lambda)$ and $J = \big( J_i^{(a)} \big)_{(a, i) \in \HH_0}$ with each $J_i^{(a)}$ a multiset of labels of strings of length $i$ in 
$\nu^{(a)}$ and $\max J_i^{(a)} \le p_i^{(a)}$ for all $(a, i) \in \HH_0$ for which $m_i^{(a)}>0$. In particular, the multiset $J_i^{(a)}$ has
$m_i^{(a)}$ elements.
We define the \emph{colabel} (or \emph{coquantum number}) of a string $(i,x)$ to be $p_i^{(a)} - x$. We call a string $(i,x)$ \emph{singular}
if $x = p_i^{(a)}$ (or equivalently, its colabel is 0). For brevity, we will denote the $a$-th part (rigged partition) of $(\nu,J)$
by $(\nu,J)^{(a)}$ (as opposed to $(\nu^{(a)},J^{(a)})$). In type $A_{2n}^{(2)\dagger}$, we must have $x \in \ZZ + \frac{1}{2}$ for all $x \in J_{2j-1}^{(n)}$

\begin{remark}
\label{rem:rc_convention}
We use a slightly different definition of rigged configurations than the one given in~\cite{OSS03}. In particular, $\nu^{(n)}$ in 
type $B_n^{(1)}$ and $C_n^{(1)}$ in our definition is a usual partition as compared to $\frac{1}{2} \nu^{(n)}$ and $2 \nu^{(n)}$ 
of half-width or double-width as in~\cite{OSS03}, respectively. An example of this convention choice can be seen with 
$\upsilon_a$ used in Equation~\eqref{eq:vacancy}, which are the values $P_i^{(a)}$ in~\cite{OSS03}.
We use this convention since it makes the definition of the crystal structure given in Definition~\ref{def:crystal_ops_nsl}
more uniform.
\end{remark}

Define the set of $L$-\emph{highest weight rigged configurations} of dominant weight $\lambda$ as
\begin{equation*}
\begin{split}
	&\hwRC(L;\lambda) := \{ (\nu,J) \mid \nu \in C^*(L,\lambda) \text{ and } 0 \leq x \leq p_i^{(a)}
	\text{ for all $x\in J_i^{(a)}$ and all } (a,i) \in \HH_0\},\\
	&\hwRC(L) := \bigsqcup_\lambda \hwRC(L;\lambda).
\end{split}
\end{equation*}

\begin{ex}
\label{ex:highest_weight_RC}
Consider type $D_5^{(1)}$ with $L_1^{(2)} = L_2^{(1)} = 1$ and all other $L_s^{(r)} = 0$. An example of a rigged configuration 
$(\nu^*, J^*) \in \hwRC(L;2\clfw_1)$ is
\[
{
\begin{array}[t]{r|c|l}
\cline{2-2} 1 &\phantom{|}& 0 \\
 \cline{2-2} 
\end{array}
} 
\quad
\raisebox{13pt}{
\begin{array}[t]{r|c|l}
\cline{2-2} 0 &\phantom{|}& 0 \\
 \cline{2-2}  &\phantom{|}& 0 \\
 \cline{2-2} 
\end{array}
} 
\quad
\raisebox{13pt}{
\begin{array}[t]{r|c|l}
\cline{2-2} 0 &\phantom{|}& 0 \\
 \cline{2-2}  &\phantom{|}& 0 \\
 \cline{2-2} 
\end{array}
} 
\quad
 {
\begin{array}[t]{r|c|l}
\cline{2-2} 0 &\phantom{|}& 0 \\
 \cline{2-2} 
\end{array}
} 
\quad
 {
\begin{array}[t]{r|c|l}
\cline{2-2} 0 &\phantom{|}& 0 \\
 \cline{2-2} 
\end{array}
}
\raisebox{-15pt}{,}
\]
where the vacancy numbers are displayed to the left of a part and the riggings to the right.
\end{ex}

\begin{dfn}[\cite{S06}]
\label{def:rc_crystal_ops}
Let $\g_0$ be a Lie algebra of type $A_n$, $D_n$, or $E_{6,7,8}$ and $L$ a multiplicity array.  
We define the set $\RC(L)$ as the set generated from the highest weight
rigged configurations $\hwRC(L)$ by the application of operators $e_a, f_a$ for $a\in I_0$ as follows.
Fix $a\in I_0$, and let $x$ be the smallest label of $(\nu,J)^{(a)}$.  
\begin{enumerate}
\item Definition of $e_a$: If $x \geq 0$, then set $e_a(\nu,J) = 0$. Otherwise, let $\ell$ be the minimal length of all strings in $(\nu,J)^{(a)}$ which 
have label $x$.  
The rigged configuration $e_a(\nu,J)$ is obtained by replacing the string $(\ell, x)$ with the string $(\ell-1, x+1)$ and by changing all other labels 
so that all colabels remain fixed.
\item Definition of $f_a$: If $x > 0$, then add the string $(1,-1)$ to $(\nu,J)^{(a)}$. Otherwise, let $\ell$ be the maximal length of all strings in 
$(\nu,J)^{(a)}$ which have label $x$. Replace the string $(\ell, x)$ by the string $(\ell+1, x-1)$ and change all other labels so that all colabels 
remain fixed.  If the result is a rigged configuration, then it is $f_a(\nu,J)$. Otherwise $f_a(\nu,J) = 0$.
\end{enumerate}
\end{dfn}

We define the classical weight $\overline{\wt} \colon \RC(L) \to \overline{P}$ by solving Equation~\eqref{eq:LL-config} for $\lambda$. Thus we have
\begin{equation}
\label{eq:weight_function}
\begin{split}
	\overline{\wt}(\nu, J) & = \sum_{(a,i) \in \HH_0} i \bigl( L_i^{(a)} \clfw_a - m_i^{(a)} \eta^{-1}(\widetilde{\alpha}_a) \bigr),
	\\ & = \sum_{(a,i) \in \HH_0} i L_i^{(a)} \clfw_a - \sum_{a \in I_0} \bigl\lvert \nu^{(a)} \bigr\rvert \eta^{-1}(\widetilde{\alpha}_a).
\end{split}
\end{equation}
Note that
\begin{equation}
\label{eq:weight_p}
\langle \alpha^\vee_a, \overline{\wt}(\nu,J) \rangle = k_a p_\infty^{(a)},
\end{equation}
where $k_a=1$ except $k_n=2$ for type $A_{2n}^{(2)\dagger}$. We can extend this to an affine weight $\wt \colon \RC(L) \to P$ by
\[
\wt(\nu, J) = c_0 \Lambda_0 + \overline{\wt}(\nu, J),
\]
where we lift $\clfw_a \to \Lambda_a$ for all $a \in I_0$ and $c_0$ is such that $\inner{c}{\wt(\nu,J)} = 0$ with $c = \sum_{i \in I} c_i^{\vee} \alpha_i^{\vee}$ 
(the canonical central element). That is to say, we make the resulting affine weight level 0.

From Definition~\ref{def:rc_crystal_ops}, we can see that applying $e_a$ for $a \in I_0$ to a highest weight rigged configuration returns $0$. 
So this agrees with the usual notation of a highest weight element of a crystal. It is known that Definition~\ref{def:rc_crystal_ops} and 
Equation~\eqref{eq:weight_function} gives a classical crystal structure on $\RC(L)$ for simply-laced types.

\begin{thm}[{\cite[Thm.\ 3.7]{S06}}]\label{S06-thm}
\label{thm:crystal_structure}
Let $\g_0$ be a Lie algebra of type $A_n$, $D_n$, or $E_{6,7,8}$ and $\lambda$ a dominant weight in $\overline{P}$.  
For $(\nu, J) \in \hwRC(L;\lambda)$, let $X_{(\nu, J)}$ be the graph generated by $(\nu, J)$ and $e_a, f_a$ for $a \in I_0$.  Then $X_{(\nu, J)}$ 
is isomorphic to the crystal graph $B(\lambda)$ as $U_q(\g_0)$-crystals.
\end{thm}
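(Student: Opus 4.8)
The plan is to invoke Stembridge's local characterization of simply-laced highest weight crystals~\cite{Stem.2003}: a connected regular $U_q(\g_0)$-crystal possessing a unique highest weight vector, of weight $\lambda$, all of whose two-color full subgraphs satisfy the Stembridge axioms, is isomorphic to $B(\lambda)$. Accordingly, it suffices to verify for $X_{(\nu,J)}$ that (a) the operators $e_a, f_a$ of Definition~\ref{def:rc_crystal_ops} together with the weight~\eqref{eq:weight_function} define a bona fide regular crystal structure on $\RC(L)$; (b) $X_{(\nu,J)}$ is connected with unique highest weight vector $(\nu,J)$, of weight $\lambda$; and (c) the Stembridge axioms hold.

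For (a) and (b): since the types in question are simply-laced we have $t_a = t_a^{\vee} = 1$, $\upsilon_a = 1$, $\eta = \mathrm{id}$, and $(\widetilde{\alpha}_a | \widetilde{\alpha}_b) = A_{ab}$, so~\eqref{eq:vacancy} becomes $p_i^{(a)} = \sum_j \min(i,j) L_j^{(a)} - \sum_{(b,j)\in\HH_0} A_{ab} \min(i,j) m_j^{(b)}$ and $\overline{\wt}(\nu,J) = \sum_{(a,i)\in\HH_0} i L_i^{(a)}\clfw_a - \sum_{a\in I_0} \lvert\nu^{(a)}\rvert\,\alpha_a$. First I would dispatch the formal points of Definition~\ref{definition.abstract crystal}: $e_a$ (resp.\ $f_a$) removes (resp.\ adds) one box from $\nu^{(a)}$, so $\overline{\wt}$ changes by $\pm\alpha_a$, giving~(2)(c) and~(3)(c); $f_a$ inverts $e_a$ wherever both are nonzero, giving~(4); $e_a$ always returns an element of $\RC(L)$ while $f_a$ may return $0$ (a lemma whose proof uses the same bookkeeping of vacancy-number changes that is needed in~(c)). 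The substantive ingredient is an explicit formula for $\varepsilon_a(\nu,J)$ and $\varphi_a(\nu,J)$ in terms of the vacancy numbers and labels of $(\nu,J)^{(a)}$; granting it, regularity~\eqref{equation.eps phi} becomes the statement that these formulas count the length of the $a$-string through $(\nu,J)$, and axiom~(1) follows by combining the formula for $\varphi_a - \varepsilon_a$ with~\eqref{eq:weight_p}, namely $\langle\alpha_a^{\vee},\overline{\wt}(\nu,J)\rangle = p_\infty^{(a)}$. Part (b) is then immediate: $X_{(\nu,J)}$ is connected and generated by $(\nu,J)$ by construction; every label in $\hwRC(L;\lambda)$ is $\ge 0$, so every $e_a$ kills $(\nu,J)$; and $\overline{\wt}(\nu,J) = \lambda$ by~\eqref{eq:LL-config}.

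Part (c), the Stembridge axioms for a pair of distinct colors $a,b$, is the heart of the matter and the step I expect to be the main obstacle. One splits on the Cartan integer $A_{ab}$. If $a$ and $b$ are non-adjacent, then $e_a$ modifies only $(\nu,J)^{(a)}$ and the vacancy numbers of parts adjacent to $a$; in particular it leaves $(\nu,J)^{(b)}$ and every quantity read off of it by the definition of $e_b$ untouched, and symmetrically, so $e_a$ and $e_b$ simply commute and all Stembridge relations hold trivially. If $a$ and $b$ are adjacent ($A_{ab} = -1$), applying $e_b$ moves the minimal-label string of $(\nu,J)^{(b)}$ and, through the $A_{ab}=-1$ term in~\eqref{eq:vacancy}, shifts $p_i^{(a)}$ by an amount depending on $i$ and on the length of that string, which can change which string of $(\nu,J)^{(a)}$ is the shortest minimal-label string that $e_a$ selects. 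One then tracks the selected lengths and minimal labels through the two orders of application and checks the Stembridge conditions — the one-step bound $\varepsilon_a(e_b x) - \varepsilon_a(x) \in \{0,1\}$ and the ``square'' and ``double square'' identities relating $e_a e_b x$ with $e_b e_a x$ — in each resulting case, the delicate cases being those in which the shift induced by $e_b$ flips the singular/minimal status of a string near the critical length. This case analysis — a local study of how the minimal labels and vacancy numbers of two neighboring rigged partitions interact under the operators — is where the bulk of the work lies; once it is complete, Stembridge's theorem applied to the data from (a) and (b) yields $X_{(\nu,J)} \cong B(\lambda)$ as $U_q(\g_0)$-crystals.
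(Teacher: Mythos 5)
Your strategy is exactly that of the source: the paper does not reprove this theorem but cites~\cite{S06}, whose argument is precisely the verification of the Stembridge local axioms for the operators of Definition~\ref{def:rc_crystal_ops}, together with the string-length formulas for $\varepsilon_a,\varphi_a$ and the weight identity~\eqref{eq:weight_p} that you list as ingredients. Your outline is correct and complete in structure, though you only describe rather than execute the adjacent-color case analysis in part~(c), which is where essentially all of the work in~\cite{S06} actually resides.
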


\begin{remark}
In~\cite{S06}, elements of $X_{(\nu,J)}$ were called unrestricted rigged configurations.
\end{remark}

There is a natural statistic on rigged configurations called \emph{cocharge}. We first define cocharge on $C(L, \lambda)$-configurations by
\begin{equation}
\label{eq:cocharge_nu}
	\cc(\nu) := \frac{1}{2} \sum_{\substack{(a,i) \in \HH_0 \\ (b,j) \in \HH_0}} ( \widetilde{\alpha}_a | \widetilde{\alpha}_b ) 
	\min(t_b \upsilon_a i, t_a \upsilon_b j) m_i^{(a)} m_j^{(b)}
\end{equation}
and then on rigged configurations by
\begin{equation}
\label{eq:cocharge}
\cc(\nu, J) := \cc(\nu) + \sum_{(a,i) \in \HH_0} t_a^{\vee} \lvert J_i^{(a)} \rvert,
\end{equation}
where $\lvert J_i^{(a)} \rvert$ is the sum of the entries in $J_i^{(a)}$.

\begin{ex}
Consider $(\nu^*, J^*)$ from Example~\ref{ex:highest_weight_RC}. Let
\[
f_5 f_2 f_3 f_1 f_2 f_1(\nu^*, J^*) = (\nu, J) \in X_{(\nu^*, J^*)},
\]
so
\[
 \raisebox{-6pt}{$(\nu, J) =$} {
\begin{array}[t]{r|c|c|c|l}
\cline{2-4} 0 &\phantom{|}&\phantom{|}&\phantom{|}& 0 \\
 \cline{2-4} 
\end{array}
} 
\quad
\raisebox{13pt}{
\begin{array}[t]{r|c|c|c|l}
 \cline{2-2} 0 &\phantom{|}& \multicolumn{3 }{l}{ 0 } \\
 \cline{2-4} -1 &\phantom{|}&\phantom{|}&\phantom{|}& -1 \\
 \cline{2-4}
\end{array}
} 
\quad
\raisebox{13pt}{
\begin{array}[t]{r|c|c|l}
 \cline{2-2} 0 &\phantom{|}& \multicolumn{2 }{l}{ 0 } \\
 \cline{2-3} 0 &\phantom{|}&\phantom{|}& 0 \\
 \cline{2-3} 
\end{array}
} 
\quad
 {
\begin{array}[t]{r|c|l}
\cline{2-2} 0 &\phantom{|}& 0 \\
 \cline{2-2}
\end{array}
} 
\quad
 {
\begin{array}[t]{r|c|c|l}
\cline{2-3} -1 &\phantom{|}&\phantom{|}& -1 \\
 \cline{2-3} 
\end{array}
}\; \raisebox{-12pt}{.}
\]
Then we have
\begingroup
\addtolength{\jot}{1em}
\begin{align*}
\raisebox{-6pt}{$e_2(\nu, J) = $} \hspace{10pt} &
{
\begin{array}[t]{r|c|c|c|l}
\cline{2-4} -1 &\phantom{|}&\phantom{|}&\phantom{|}& -1 \\
 \cline{2-4} 
\end{array}
} 
\quad
\raisebox{13pt}{
\begin{array}[t]{r|c|c|l}
 \cline{2-2} 0 &\phantom{|}& \multicolumn{2}{l}{ 0 } \\
 \cline{2-3} 0 &\phantom{|}&\phantom{|}& 0 \\
 \cline{2-3}
\end{array}
} 
\quad
\raisebox{13pt}{
\begin{array}[t]{r|c|c|l}
 \cline{2-2} 0 &\phantom{|}& \multicolumn{2 }{l}{ 0 } \\
 \cline{2-3} 0 &\phantom{|}&\phantom{|}& 0 \\
 \cline{2-3} 
\end{array}
} 
\quad
 {
\begin{array}[t]{r|c|l}
\cline{2-2} 0 &\phantom{|}& 0 \\
 \cline{2-2} 
\end{array}
} 
\quad
 {
\begin{array}[t]{r|c|c|l}
\cline{2-3} -1 &\phantom{|}&\phantom{|}& -1 \\
 \cline{2-3} 
\end{array}
}
\raisebox{-12pt}{,}
\\ \raisebox{-6pt}{$f_3(\nu, J) = $} \hspace{10pt} &
{
\begin{array}[t]{r|c|c|c|l}
\cline{2-4} 0 &\phantom{|}&\phantom{|}&\phantom{|}& 0 \\
 \cline{2-4} 
\end{array}
} 
\quad
\raisebox{13pt}{
\begin{array}[t]{r|c|c|c|l}
 \cline{2-2} 0 &\phantom{|}& \multicolumn{3 }{l}{ 0 } \\
 \cline{2-4} 0 &\phantom{|}&\phantom{|}&\phantom{|}& 0 \\
 \cline{2-4}
\end{array}
} 
\quad
\raisebox{13pt}{
\begin{array}[t]{r|c|c|c|l}
 \cline{2-2} 0 &\phantom{|}& \multicolumn{3 }{l}{ 0 } \\
 \cline{2-4} -1 &\phantom{|}&\phantom{|}&\phantom{|}& -1 \\
 \cline{2-4} 
\end{array}
} 
\quad
 {
\begin{array}[t]{r|c|l}
\cline{2-2} 0 &\phantom{|}& 0 \\
 \cline{2-2} 
\end{array}
} 
\quad
 {
\begin{array}[t]{r|c|c|l}
\cline{2-3} -1 &\phantom{|}&\phantom{|}& -1 \\
 \cline{2-3} 
\end{array}
}\;\raisebox{-12pt}{.}
\end{align*}
\endgroup
Next we look at $f_2(\nu, J)$, and after adding a box, we obtain
\[
{
\begin{array}[t]{r|c|c|c|l}
\cline{2-4} 0 &\phantom{|}&\phantom{|}&\phantom{|}& 0 \\
 \cline{2-4} 
\end{array}
} 
\quad
\raisebox{13pt}{
\begin{array}[t]{r|c|c|c|c|l}
 \cline{2-2} 0 &\phantom{|}& \multicolumn{4 }{l}{ 0 } \\
 \cline{2-5} -3 &\phantom{|}&\phantom{|}&\phantom{|}&\phantom{|}& -2 \\
 \cline{2-5}
\end{array}
} 
\quad
\raisebox{13pt}{
\begin{array}[t]{r|c|c|l}
 \cline{2-2} 0 &\phantom{|}& \multicolumn{2 }{l}{ 0 } \\
 \cline{2-3} 0 &\phantom{|}&\phantom{|}& 0 \\
 \cline{2-3} 
\end{array}
} 
\quad
 {
\begin{array}[t]{r|c|l}
\cline{2-2} 0 &\phantom{|}& 0 \\
 \cline{2-2} 
\end{array}
} 
\quad
 {
\begin{array}[t]{r|c|c|l}
\cline{2-3} -1 &\phantom{|}&\phantom{|}& -1 \\
 \cline{2-3} 
\end{array}
}\; \raisebox{-12pt}{,}
\]
and since $p_4^{(2)} = -3 < -2 = \max J_4^{(2)}$, we have $f_2(\nu, J) = 0$. Additionally we have
\begin{gather*}
\arraycolsep=1.4pt\def\arraystretch{1.15}
\begin{array}{rlrl}
\overline{\wt}(\nu, J) & = -\clfw_2 + \clfw_3 + \clfw_4 - \clfw_5, & \wt(\nu, J) & = -\Lambda_2 + \Lambda_3 + \Lambda_4 - \Lambda_5
\\ \overline{\wt}\bigl( e_2(\nu, J) \bigr) & = -\clfw_1 + \clfw_2 + \clfw_4 - \clfw_5, & \hspace{20pt} \wt\bigl( e_2(\nu, J) \bigr) & = - \Lambda_0 - \Lambda_1 + \Lambda_2 + \Lambda_4 - \Lambda_5
\\ \overline{\wt}\bigl( f_3(\nu, J) \bigr) & = - \clfw_3 + 2\clfw_4, & \wt\bigl( f_3(\nu, J) \bigr) & = -\Lambda_3 + 2 \Lambda_4
\end{array}
\\ \cc(\nu, J) = \cc(\nu^*, J^*) = 1.
\end{gather*}
\end{ex}

\subsection{Kirillov-Reshetikhin crystals}

Let $\g$ be an affine Kac--Moody algebra, $\g^{\prime} = [\g, \g]$ the derived subalgebra of $\g$ and $U_q^{\prime}(\g) := U_q(\g^{\prime})$ 
the associated quantum group. We consider a particular class of finite-dimensional irreducible representations called \emph{Kirillov-Reshetikhin 
(KR) modules} which are indexed by $(r, s) \in \HH_0$ and denoted by $W^{(r)}_s$. It was shown in~\cite{OS08} that in all non-exceptional types 
KR modules have crystal bases, which were described combinatorially in~\cite{FOS09}. We call these crystals \emph{Kirillov-Reshetikhin (KR) 
crystals} and denote them by $B^{r,s}$. As classical crystals, they decompose as
\[
B^{r,s} \iso B(s \clfw_r) \oplus \bigoplus_{\lambda} B(\lambda).
\]
Explicitly we have the following classical decompositions:
\begin{itemize}
\item In type $A_n^{(1)}$, we have $B^{r,s} \iso B(s \clfw_r)$ for all $r\in I_0$.
\item In type $B_n^{(1)}$, we obtain $\lambda$ by removing vertical dominoes from an $r \times s$ rectangle for $r<n$ or an $n \times (s/2)$ 
rectangle for $r=n$.
\item In type $C_n^{(1)}$, we obtain $\lambda$ by removing horizontal dominoes from an $r \times s$ rectangle for $r < n$ and we have 
$B^{r,s} \iso B(s \clfw_r)$ for $r = n$.
\item In type $D_n^{(1)}$, we obtain $\lambda$ by removing vertical dominoes from an $r \times s$ rectangle for $r < n-1$ and we have 
$B^{r,s} \iso B(s \clfw_r)$ for $r = n-1,n$.
\item In type $A_{2n-1}^{(2)}$, we obtain $\lambda$ by removing vertical dominoes from an $r \times s$ rectangle for all $r \le n$.
\item In type $A_{2n}^{(2)}$, we obtain $\lambda$ by removing boxes from an $r \times s$ rectangle for all $r\le n$.
\item In type $D_{n+1}^{(2)}$, we obtain $\lambda$ by removing boxes from an $r \times s$ rectangle for $r < n$ and we have 
$B^{r,s} \iso B(s \clfw_r)$ for $r = n$.
\item In type $A_{2n}^{(2)\dagger}$, we obtain $\lambda$ by removing horizontal dominoes from an $r \times s$ rectangle for all $r \leq n$.
\end{itemize}
We note that the decomposition for general $r$ depends only on how the affine node attaches to the classical diagram. We let $\diamond$ 
denote the type of boxes removed in each decomposition (that is a single box, a vertical domino, or a horizontal domino).

\begin{dfn}
\label{def:energy}
Consider a KR crystal $B^{r,s}$. There exists a statistic called \emph{energy} $D \colon B^{r,s} \to \ZZ$, which on the classical 
component $B(\lambda)$ is equal to the number of $\diamond$ which have been removed from the $r \times s$ rectangle in order to obtain 
$\lambda$~\cite{HKOTY99}. Thus the energy is constant on all classical components. If $B^{r,s} \iso B(s \Lambda_r)$ (as classical crystals), 
then $D(v) = 0$ for all $v \in B^{r,s}$.
\end{dfn}
Definition~\ref{def:energy} can be extended to arbitrary tensor factors; see for example~\cite[Section 4]{ST12}.

Next we consider a tensor product of KR crystals $B = \bigotimes_{i=1}^N B^{r_i,s_i}$. We define a multiplicity array $L$ from $B$ by $L^{(r)}_s$ 
as the number of factors $B^{r,s}$ occurring in $B$. Alternative to our notation $\RC(L)$, we also use the notation $\RC(B)$, where $L$ is the 
multiplicity array associated to $B$, in order to signify the ordering of the factors. In~\cite{OSS03}, it was shown that for 
$B = \bigotimes_{i=1}^N B^{1,1}$ there exists a bijection $\Phi \colon \RC(B) \to B$ for all non-exceptional affine types. The bijection $\Phi$ is 
formed by repeatedly applying a map
\[
	\delta \colon \RC(B^{1,1} \otimes B^*) \to \RC(B^*) \times B^{1,1},
\]
where $B^*$ is some tensor product of KR crystals (in~\cite{OSS03}, $B^* = \bigotimes_{i=1}^{N-1} B^{1,1}$).
The map $\delta$ generally is given by traversing the crystal $B^{1,1}$ from classically highest weight to classically lowest weight, and for 
every crystal edge labelled by $a\in I_0$, the smallest singular string from $(\nu, J)^{(a)}$ of length bigger or equal to the previously
selected singular string is removed, if possible.
If it is not possible, the process stops and determines the element in $B^{1,1}$. We say $\delta$ returns the element of $B^{1,1}$.
For brevity, we refer to~\cite{OSS03} for an explicit description of $\delta$.

\begin{remark}
\label{rem:rc_convention_delta}
Since we are using a different convention for rigged configurations than~\cite{OSS03} (see Remark~\ref{rem:rc_convention}), we must make appropriate modifications here to the map $\delta$.
\end{remark}

In addition to the map $\delta$, we require the following maps for defining $\Phi$ on arbitrary tensor factors
\begin{align*}
\ls \colon & \RC(B^{r,s} \otimes B^*) \hookrightarrow \RC(B^{r,1} \otimes B^{r,s-1} \otimes B^*) \qquad \text{if $s>1$,}
\\ \lt \colon & \RC(B^{r,1} \otimes B^*) \hookrightarrow \RC(B^{1,1} \otimes B^{r-1,1} \otimes B^*) \qquad \text{if $r>1$.}
\end{align*}
The map $\ls$ on a rigged configuration is the identity (perhaps with larger vacancy numbers; so it is well-defined). The map $\lt$ adds a length 
1 singular string to $(\nu, J)^{(a)}$ for $1\le a < r$. A straightforward computation shows that this preserves the vacancy numbers and hence is well-defined.

When the left factor is a spinor column $B^{r,1}$, which happens in type $B_n^{(1)}$ when $r=n$ and $D_n^{(1)}$ when $r=n-1$ or $n$, 
the application of $\lt$ needs to be modified. We must perform a ``doubling map'' before applying $\lt$, and then a ``halving map'' once we
have completed the column. The doubling map is generally given by
\begin{equation}
\label{eq:doubling_map}
\begin{aligned}
\widetilde{m}_{2i}^{(a)} & = m_i^{(a)}, \\
\widetilde{J}_{2i}^{(a)} & = 2 J_i^{(a)},
\end{aligned}
\end{equation}
and the halving map is the inverse.

For $B^{n,s}$ in type $B_n^{(1)}$, following~\cite[Lemma~4.2]{FOS09} our doubling map also consists of embedding this into type $A_{2n-1}^{(2)}$ 
with $\widetilde{\nu}^{(r)} = 2 \nu^{(r)}$ and $\widetilde{L}_{2s}^{(r)} = L_s^{(r)}$ for $r < n$, with $\widetilde{\nu}^{(n)} = \nu^{(n)}$ and 
$\widetilde{L}_s^{(n)} = L_s^{(n)}$, and does not change the labels (our convention choice for rigged configurations can be seen here). 
We then perform the usual $A_{2n-1}^{(2)}$ bijection algorithm on the leftmost factor, followed by the halving map.

For $B^{r,s}$ with $r = n-1,n$ in type $D_n^{(1)}$, the doubling map on the rigged configuration is given by Equation~\eqref{eq:doubling_map} with 
$\widetilde{L}_{2s}^{(r)} = L_{s}^{(r)}$. We perform the doubling map after performing $\ls$. Next we must apply
\[
\delta^{(r)} \colon \RC(B^{r,2} \otimes B^*) \hookrightarrow \RC(B^{n,1} \otimes B^{n-1,1} \otimes B^*),
\]
which is given by the usual algorithm for $\delta$ but starting with $\nu^{(r)}$. We then apply
\[
\widetilde{\delta}^{(r)} \colon \RC(B^{n,1} \otimes B^{n-1,1} \otimes B^*) \hookrightarrow \RC(B^{n-2,1} \otimes B^*),
\]
which is given by the usual algorithm for $\delta$ but starting with $\nu^{(n-2)}$. We then proceed with $\lt$ and $\delta$ as we 
normally would until we finish the column. After this, we perform the halving map. For an alternative description of the map $\Phi$ 
for type $D_n^{(1)}$ spinors, see~\cite{S05}.

For simplicity, we consider $\delta^{\prime} := \delta \circ \lt$, and it is straightforward to show that this is equivalent to 
beginning at $\nu^{(r)}$ (instead of $\nu^{(1)}$) and following the usual procedure of $\delta$. From now on, if there is no 
cause for confusion, we will write $\delta$ for $\delta^{\prime}$ in the remainder of the paper. One of the main results in this paper 
will be the definition of the analogues of $\lt$ and $\ls$ on $B$ itself. Then, defining a 
map $\Phi \colon \RC(B) \to B$ that commutes with $\ls$ and $\delta'$, we have the following conjecture.
\begin{conj}
\label{conj:bijection}
Let $\g$ be an affine Kac--Moody algebra and $B = \bigotimes_{i=1}^N B^{r_i,s_i}$ a tensor product
of KR crystals of type $\g$. The map $\Phi \colon \RC(B) \to B$ is a bijection. In addition, $\Phi \circ \theta$ sends cocharge to energy,
where $\theta$ maps each rigging $x$ to its colabel.
\end{conj}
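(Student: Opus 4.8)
The plan is to prove the conjecture by a double reduction: from an arbitrary tensor product of KR crystals down to a single KR factor $B^{r,s}$, and from there down to a tensor product of vector factors $B^{1,1}$, for which the bijectivity of $\Phi$ and the cocharge--energy identity are established in~\cite{OSS03}. The engine of both reductions is the pair of splitting maps $\ls$ and $\lt$, together with $\delta' = \delta \circ \lt$, available both on rigged configurations and --- as one of the main constructions of this paper --- on $B$ itself. Granting that $\Phi$ intertwines these maps --- that is, that $\Phi$ commutes with $\ls$ and with $\delta'$ when each is read on $\RC(B)$ on one side and on $B$ on the other --- one peels off the leftmost factor of $B = B^{r_1, s_1} \otimes B^*$ by applying $\ls$ a total of $s_1 - 1$ times, then $\lt$ a total of $r_1 - 1$ times (with the doubling/halving modification described above when $B^{r_1, 1}$ is a spinor column), and finally $\delta$ to strip off the resulting $B^{1,1}$; iterating over all tensor factors identifies $\Phi \colon \RC(B) \to B$ with the known bijection $\RC(\bigotimes B^{1,1}) \to \bigotimes B^{1,1}$, from which bijectivity follows.

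For a single factor $B^{r,s}$ the same procedure applies, and it simultaneously pins down the target: $\Phi$ maps $\RC(B^{r,s})$ onto the set of KR tableaux, which the filling map of Section~\ref{sec:filling_map} identifies with the Kashiwara--Nakashima realization of $B^{r,s}$. To upgrade the highest-weight statement of Section~\ref{sec:filling_map} to all of $B^{r,s}$, I would use the fact that $\Phi$ commutes with the classical Kashiwara operators $e_a, f_a$ for $a \in I_0$: the crystal structure on $\RC(B^{r,s})$ is the one of Definition~\ref{def:rc_crystal_ops}, extended to non-simply-laced types via the virtualization of Section~\ref{sec:virtual_rc}, which by Theorem~\ref{thm:crystal_structure} realizes the correct classical crystal, while the target carries the crystal structure of~\cite{FOS09}. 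Since the two classical decompositions agree and $\Phi$ matches up the highest weight elements of corresponding components, Theorem~\ref{thm:crystal_structure} propagates the isomorphism through each component.

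For the cocharge--energy claim, note that the cocharge~\eqref{eq:cocharge} depends only on $(\nu, J)$ and not on the multiplicity array, so $\ls$ on $\RC$ (being the identity on the rigged configuration) preserves $\cc$, matching the behaviour of energy under the crystal-side $\ls$; the map $\lt$ adjoins length-one singular strings to $(\nu, J)^{(a)}$ for $a < r$, and one verifies that the resulting change in $\cc(\nu) + \sum_{(a,i) \in \HH_0} t_a^\vee |J_i^{(a)}|$ equals the change in energy under the crystal-side $\lt$. Together with the identity $\cc = D \circ \Phi \circ \theta$ on $\RC(\bigotimes B^{1,1})$ and the known effect of $\delta$ on cocharge, induction along the splitting steps yields the second assertion.

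The hard part --- and the reason the statement is only a conjecture --- is proving that $\Phi$ genuinely intertwines $\ls$ and $\delta'$. This breaks into: (1) showing that the recursively defined $\delta$ is compatible with inserting a singular string (the content of $\lt$) and with the doubling/halving maps for spinor columns in types $B_n^{(1)}$ and $D_n^{(1)}$; (2) a coherence statement, namely that the reduction to $\bigotimes B^{1,1}$ is independent of the order in which the factors are split and stripped, equivalently the compatibility of $\Phi$ with the combinatorial $R$-matrix (the identity on $\RC$); and (3) transporting $\Phi$, the splitting maps, and the statistic through the virtualization map of Section~\ref{sec:virtual_rc} in all non-simply-laced types. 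Section~\ref{sec:single_factors} carries out (3) only for a single tensor factor, and it is precisely the passage from single factors to arbitrary tensor products where the present methods are not yet sufficient.
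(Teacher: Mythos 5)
The statement you are asked to prove is Conjecture~\ref{conj:bijection}, and the paper does not prove it: it remains open, with the paper explicitly listing only the special cases in which it is known (type $A_n^{(1)}$, single rows, single columns in certain types, and $\bigotimes B^{1,1}$ in type $E_6^{(1)}$) together with computer verification for small ranks. Your proposal is therefore correctly calibrated --- it is a strategy outline, not a proof, and you say so. The architecture you describe (peel off factors with $\ls$ and $\delta' = \delta \circ \lt$, reduce to $\bigotimes B^{1,1}$ where \cite{OSS03} gives the bijection and the cocharge--energy identity, and handle spinor columns by doubling/halving) is exactly the intended one, inherited from \cite{KSS02}; and the three obstacles you isolate --- compatibility of $\delta$ with the splitting and doubling maps, independence of the order of reduction (equivalently compatibility with the combinatorial $R$-matrix), and transport through virtualization for multiple factors --- are precisely why the statement is still a conjecture. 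Since there is no proof in the paper to compare against, the only meaningful comparison is with what the paper \emph{does} establish toward the conjecture: Theorems~\ref{thm:isomorphism} and~\ref{thm:statistics} prove $\Phi = \fillmap \circ \iota^{-1}$ and $D = \cc \circ \theta \circ \iota$ on classically highest weight elements of a single factor $B^{r,s}$, by explicitly computing the highest weight rigged configurations via the (virtual) Kleber algorithm and then tracking $\delta$ column by column --- which is the single-factor, highest-weight instance of the induction you sketch. Your second paragraph, which proposes to upgrade from highest weight elements to all of $B^{r,s}$ by intertwining the classical crystal operators, is the content of Conjecture~\ref{conj:crystal_isomorphism} and is likewise not proved here except in the cases discussed in Sections~\ref{sec:affine_structure} and~\ref{sec:single_factors}.

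Two small factual corrections to your outline. First, in the cocharge argument, while $\cc(\nu,J)$ as defined in~\eqref{eq:cocharge} indeed depends only on $(\nu,J)$, the complementation map $\theta$ sends each rigging to its colabel $p_i^{(a)} - x$, and the vacancy numbers $p_i^{(a)}$ \emph{do} depend on the multiplicity array; so the claim that ``$\Phi \circ \theta$ sends cocharge to energy'' is not preserved trivially under $\ls$ and $\lt$ --- one must track how $\theta$ interacts with the change in $L$, which is part of obstacle (1). Second, the reduction does not apply $\ls$ a total of $s_1-1$ times up front and then $\lt$ a total of $r_1-1$ times; rather one interleaves: a single application of $\ls$ splits off one column, which is then consumed by $r_1$ applications of $\delta'$ before $\ls$ is applied again. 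This does not affect the viability of the strategy but matters if one were to carry out the induction carefully.
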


Note that restricting $\Phi$ to classically highest weight elements implies the $X = M$ 
conjecture of~\cite{HKOTY99, HKOTT02}.

Conjecture~\ref{conj:bijection} is known on highest weight elements in the following cases.
\begin{itemize}
\item For type $A_n^{(1)}$~\cite{KSS02}.
\item For $B = \bigotimes_{i=1}^N B^{1,s_i}$ in all non-exceptional types~\cite{OSS03,SS2006}.
\item For $B = \bigotimes_{i=1}^N B^{r_i,1}$ in types $D_{n+1}^{(2)}$, $A_{2n}^{(2)}$ and $C_n^{(1)}$~\cite{OSS03III}
and type $D_n^{(1)}$~\cite{S05}.
\item For $\bigotimes_{i=1}^N B^{1,1}$ in type $E_6^{(1)}$~\cite{OS12}.
\end{itemize}

Conjecture~\ref{conj:bijection} was also verified by computer for tensor products for non-exceptional types up to rank 4, up to 2 factors of 
the same level, and $s \leq 2$. 
See~\cite{ScrimshawThesis} for some sample code.

We also have the following refinement of Conjecture~\ref{conj:bijection}.

\begin{conj}
\label{conj:crystal_isomorphism}
Let $\g$ be an affine Kac--Moody algebra and $B = \bigotimes_{i=1}^N B^{r_i, s_i}$ a tensor product of KR crystals of type $\g$. The map 
$\Phi \colon \RC(B) \to B$ is an affine crystal isomorphism.
\end{conj}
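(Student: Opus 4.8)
The plan is to promote the (conjectural) bijection of Conjecture~\ref{conj:bijection} to an affine crystal isomorphism in two stages: first show that $\Phi$ is a classical $U_q(\g_0)$-crystal isomorphism, and then show that it commutes with the affine Kashiwara operators $e_0$ and $f_0$. The guiding principle throughout is to reduce an arbitrary tensor product $B = \bigotimes_{i=1}^N B^{r_i,s_i}$ to a single factor by means of the maps $\ls$ and $\lt$ (and, for spinor columns, their variants built from the doubling/halving maps of~\eqref{eq:doubling_map}), pushing the real work into the single-factor analysis carried out in Sections~\ref{sec:filling_map} and~\ref{sec:affine_structure}.

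For the classical statement, I would use that $\RC(B)$ carries a $U_q(\g_0)$-crystal structure, by Theorem~\ref{thm:crystal_structure} together with its non-simply-laced extension in Section~\ref{sec:virtual_rc}, so that $\RC(B) = \bigsqcup_{(\nu,J)\in\hwRC(B)} X_{(\nu,J)}$ with $X_{(\nu,J)} \iso B\bigl(\overline{\wt}(\nu,J)\bigr)$, while $B$ itself decomposes as a direct sum of irreducible classical crystals $\bigoplus_\lambda B(\lambda)$. Since a connected classical crystal isomorphic to some $B(\lambda)$ is determined up to unique isomorphism by its highest weight, it suffices to verify that $\Phi$ restricts to a weight-preserving bijection from $\hwRC(B)$ onto the classically highest weight elements of $B$ — the highest weight content of Conjecture~\ref{conj:bijection}, which Section~\ref{sec:filling_map} supplies for a single factor — and that $\Phi$ commutes with $e_a$ and $f_a$ for all $a \in I_0$. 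For the latter I would induct on the number and widths of the factors, using $\ls$ and $\lt$ to pass to the basic map $\delta \colon \RC(B^{1,1}\otimes B^*) \to \RC(B^*)\times B^{1,1}$, and then track how the string selected by $\delta$ in $(\nu,J)^{(a)}$ changes upon applying $e_a$ or $f_a$, using that these operators only modify the minimal- (resp.\ maximal-) length singular string of the $a$-th rigged partition.

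For the affine statement, the idea is to settle type $D_n^{(1)}$ for a single tensor factor first — where the affine operators on $\RC(B^{r,s})$ and the fact that $\Phi$ intertwines them are~\cite{OSS13} — and then transfer to the other non-exceptional types by virtualization. Concretely, one embeds $B \hookrightarrow \virtual{B}$ and correspondingly $\RC(B) \hookrightarrow \RC(\virtual{B})$ with $\virtual{B}$ of type $D_N^{(1)}$, defines $e_0$ and $f_0$ on $\RC(B)$ as the appropriate composites of powers of the ambient $\virtual{e}_0,\virtual{f}_0$ restricted to the image, and checks that the resulting virtual crystal is \emph{aligned}; the point, as emphasized in Section~\ref{sec:virtual_rc}, is that the image of the virtualization map on rigged configurations is transparent, so alignedness is immediate, unlike in the KN tableaux picture. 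Together with the compatibility of $\Phi$ with the virtualization map for a single factor (Section~\ref{sec:single_factors}) and the classical isomorphism already obtained, this gives that $\Phi$ commutes with $e_0$ and $f_0$. Finally, since tensor products of affine KR crystals are connected~\cite{Kashiwara.2002}, a bijective classical crystal isomorphism that commutes with $e_0$ and $f_0$ at a single element (e.g.\ a classically highest weight one) automatically commutes with them everywhere, yielding the asserted affine crystal isomorphism.

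The hardest part will be the general tensor product case, which is why the present paper proves the affine statement only for a single factor and only for types virtualizing into $D_n^{(1)}$. On the one hand, Conjecture~\ref{conj:bijection} itself — bijectivity of $\Phi$ — remains open beyond the families listed after it, so unconditionally the argument above is restricted to those. On the other hand, even granting bijectivity, the inductive step for $e_0$ and $f_0$ is genuinely subtle: in contrast to $e_a, f_a$ for $a\in I_0$, the operator $e_0$ on a tensor product is highly non-local — it is controlled by the combinatorial $R$-matrix and the energy function — so the reduction through $\delta$, $\ls$, and $\lt$ must be carried out in tandem with a careful analysis of how $\delta$ interacts with reorderings of the tensor factors. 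It is this non-locality, rather than any one computation, that constitutes the real obstacle.
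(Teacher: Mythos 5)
You are addressing Conjecture~\ref{conj:crystal_isomorphism}: the paper does not prove this statement, and your text is a strategy outline rather than a proof, as you yourself concede in the final paragraph. The outline is consistent with what the paper actually carries out in its partial results — reduce to single factors via $\ls$, $\lt$ and $\delta$, settle the single-factor case in type $D_n^{(1)}$, and transfer to other types by virtualization (Sections~\ref{sec:affine_structure} and~\ref{sec:single_factors}, Theorem~\ref{thm:virtual_bijection_A2odd}) — but three of its load-bearing steps are precisely the open problems the paper records. First, bijectivity of $\Phi$ for general tensor products is Conjecture~\ref{conj:bijection} and is known only in the cases listed after it; your classical-isomorphism argument presupposes it. Second, the claim that $\Phi$ intertwines $e_a,f_a$ for $a\in I_0$ on arbitrary tensor products is established only in types $A_n^{(1)}$~\cite{DS06} and $D_n^{(1)}$~\cite{Sakamoto13}; your proposed induction through $\delta$, ``tracking how the selected string changes under $e_a$ and $f_a$,'' is exactly the content of those papers and is a substantial argument, not a routine verification, in the remaining types. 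Third, the transfer by virtualization requires Conjecture~\ref{conj:virtual_bijection} (that $v \circ \Phi = \virtual{\Phi} \circ v$), which the paper proves only for type $A_{2n-1}^{(2)}$ and, on highest weight elements, for a single tensor factor.

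One further point. Your closing step — that a bijective classical isomorphism commuting with $e_0, f_0$ at a single element commutes with them everywhere by connectedness — is not justified as stated: connectedness under all $e_i, f_i$ does not by itself propagate commutation with $f_0$ from one vertex to its neighbors. The paper's actual reduction is the remark following Conjecture~\ref{conj:crystal_isomorphism}: since $\Phi$ preserves weights, it suffices to show that $\Phi$ is a bijection commuting with the classical crystal operators. If you want to argue along your lines, you would need to invoke the simplicity of tensor products of KR crystals (uniqueness of an affine crystal isomorphism extending a given weight-preserving classical one), stated explicitly, rather than attributing the rigidity to connectedness.
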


Given that $\Phi$ is a classical crystal isomorphism, we can in principle extend $\Phi$ to an affine crystal isomorphism. Since $\Phi$ preserves 
the weights, it suffices to show that $\Phi$ is a bijection that commutes with the classical crystal operators for Conjecture~\ref{conj:crystal_isomorphism}.
For type $A_n^{(1)}$, it was shown in~\cite{KSS02} that $\Phi$ is a bijection and in~\cite{DS06} that it intertwines with the crystal operators.
For type $D_n^{(1)}$ commutativity with the crystal operators was shown in~\cite{Sakamoto13}, however currently $\Phi$ is only known to
be a bijection for single columns and single rows~\cite{SS2006,S05}.
Conjecture~\ref{conj:crystal_isomorphism} has been verified by computer for non-exceptional affine types up to rank 4, up to 2 factors, and $s \leq 2$.

The combinatorial $R$-matrix is the affine crystal isomorphism $R \colon B^{r,s} \otimes B^{r',s'} \to B^{r',s'} \otimes B^{r,s}$
mapping $u_{r,s} \otimes u_{r',s'}$ to $u_{r',s'} \otimes u_{r,s}$, where $u_{r,s}$ is the unique element in $B^{r,s}$ of classical weight
$s\overline{\Lambda}_r$.
In general, it is hard to give an explicit combinatorial description of this map. Since $R$ is conjectured to be the identity on rigged
configurations (proven in certain cases), the bijection $\Phi$ would give an explicit way to obtain the combinatorial $R$-matrix.

\subsection{Virtual crystals}
\label{subsection.virtual crystals}

We now recall the notation of virtual crystals~\cite{OSS03III,OSS03II}. Let $\g$ be any non-simply-laced affine Kac-Moody algebra, and
consider the well-known natural embeddings of algebras~\cite{JM85}:
\begin{equation}
\label{eqn:affine_embeddings}
\begin{aligned}
C_n^{(1)}, A_{2n}^{(2)}, A_{2n}^{(2)\dagger}, D_{n+1}^{(2)} & \lhook\joinrel\longrightarrow A_{2n-1}^{(1)}
\\ B_n^{(1)}, A_{2n-1}^{(2)} & \lhook\joinrel\longrightarrow D_{n+1}^{(1)}
\\ E_6^{(2)}, F_4^{(1)} & \lhook\joinrel\longrightarrow E_6^{(1)}
\\ G_2^{(1)}, D_4^{(3)} & \lhook\joinrel\longrightarrow D_4^{(1)}.
\end{aligned}
\end{equation}
Let $\virtual{\g}$ denote the simply-laced type under the image with index set $\virtual{I}$. Let $\Gamma$ and $\virtual{\Gamma}$ be the 
Dynkin diagrams of $\g$ and $\widehat{\g}$, respectively. These embeddings arise from the diagram foldings 
$\phi \colon \virtual{\Gamma} \searrow \Gamma$ which fix the affine node. By abuse of notation, we will also denote the corresponding 
map on the index sets by $\phi$. In addition, we require \emph{scaling factors} $\gamma = (\gamma_a)_{a \in I}$ defined in the following way:
\begin{enumerate}
\item Suppose $\Gamma$ has a unique arrow.
\begin{enumerate}
  \item Suppose the arrow points towards the component of the special node $0$. Then $\gamma_a = 1$ for all $a \in I$.
  \item Otherwise, $\gamma_a$ is the order of $\phi$ for all $a$ in the component of $0$ after removing the arrow and $\gamma_a = 1$
  in all other components.
\end{enumerate}
\item Otherwise $\Gamma$ has 2 arrows and embeds in $A_{2n-1}^{(1)}$. Then $\gamma_a = 1$ for all $1 \leq a \leq n-1$, and for 
$a \in \{0, n\}$, we have $\gamma_a = 2$ if the arrow points away from $a$ and $\gamma_a = 1$ otherwise.
\end{enumerate}

\begin{remark}
\label{rem:phi_gamma}
Note that for the first set of embeddings in~\eqref{eqn:affine_embeddings}, we have $\lvert \phi^{-1}(a) \rvert=2$, $\gamma_a=1$
for $a\neq 0,n$ and $\lvert \phi^{-1}(a) \rvert=1$, $\gamma_a \in \{1,2\}$ for $a=0,n$. For the second set of embeddings in~\eqref{eqn:affine_embeddings}, 
we have $\lvert \phi^{-1}(a) \rvert=1$ for $a\neq n$, $\lvert \phi^{-1}(n) \rvert=2$, and $\gamma_a=1$ except for $\gamma_a=2$ for $0\le a<n$ and type $B_n^{(1)}$.

Overall (including the exceptional cases) note that all orbits under $\phi$ have either 1 or the order of $\phi$ elements. Also for any fixed $a \in I$, we cannot have
$\gamma_a \neq 1$ and $\lvert \phi^{-1}(a) \rvert \neq 1$ simultaneously. In addition, if $\gamma_a \neq 1$, then $\gamma_a$ 
equals the order of $\phi$.
\end{remark}

The above embeddings of algebras yield natural embeddings $\Psi \colon P \longrightarrow \virtual{P}$ of weight lattices as 
\begin{align*}
\Lambda_a & \mapsto \gamma_a \sum_{b \in \phi^{-1}(a)} \virtual{\Lambda}_b,
\\ \alpha_a & \mapsto \gamma_a \sum_{b \in \phi^{-1}(a)} \virtual{\alpha}_b.
\end{align*}
This implies that $\Psi(\delta) = c_0 \gamma_0 \virtual{\delta}$, where $c_0$ is from Table Aff~2 in~\cite{kac90} (denoted by $a_0$) and $\delta$ 
(resp.\ $\virtual{\delta}$) is the minimal positive imaginary root in $P$ (resp.\ $\virtual{P}$).

\begin{dfn}
\label{dfn:virtual_crystal}
Let $\virtual{B}$ be a $U_q^{\prime}(\virtual{\g})$-crystal and $V \subseteq \virtual{B}$. Let $\phi$ and 
$(\gamma_a)_{a \in I}$ be the folding and the scaling factors given above. The \emph{virtual crystal operators} (of type $\g$) are defined as
\begin{align*}
	e^v_a & := \prod_{b \in \phi^{-1}(a)} \virtual{e}_b^{\;\gamma_a},\\ 
	f^v_a & := \prod_{b \in \phi^{-1}(a)} \virtual{f}_b^{\;\gamma_a}.
\end{align*}
A \emph{virtual crystal} is the pair $(V, \virtual{B})$ such that $V$ has a $U_q^{\prime}(\g)$-crystal structure defined by
\begin{equation}
\label{equation.crystal virtual}
\def\arraystretch{1.3}
\begin{array}{c}
e_a := e^v_a \hspace{40pt} f_a := f^v_a \\
\varepsilon_a := \gamma_a^{-1} \virtual{\varepsilon}_b \hspace{40pt} \varphi_a := \gamma_a^{-1} \virtual{\varphi}_b \\
\wt := \Psi^{-1} \circ \virtual{\wt}
\end{array}
\end{equation}
for any $b \in \phi^{-1}(a)$.
\end{dfn}

\begin{remark}
The order in which the operators in $e^v_a$ and $f^v_a$ are applied does not matter since $b, b^{\prime}$ are not connected 
in $\virtual{\Gamma}$ for all $b \neq b^{\prime} \in \phi^{-1}(a)$. Also the fact that $\varepsilon_a$ and $\varphi_a$ as defined
in~\eqref{equation.crystal virtual} are indeed the string lengths of the $U'_q(\g)$-crystal is a property called ``aligned'' in~\cite{OSS03III,OSS03II}.
\end{remark}

We say $B$ \emph{virtualizes} in $\virtual{B}$ if there exists a $U_q^{\prime}(\g)$-crystal isomorphism $v \colon B \to V$ for some 
virtual crystal $(V, \virtual{B})$. The resulting isomorphism is called the \emph{virtualization map}.

In subsequent sections, we will denote an object $S$ associated with $\g$ by $\virtual{S}$ for the corresponding object in $\virtual{\g}$.

We modify Definition~\ref{dfn:virtual_crystal} for classical types by using $U_q(\g_0)$ in place of $U_q^{\prime}(\g)$ and 
restricting Equation~\eqref{eqn:affine_embeddings} to the corresponding classical types:
\begin{equation}
\label{eq:classical_embeddings}
\begin{split}
C_n, C_n, B_n, B_n & \lhook\joinrel\longrightarrow A_{2n-1}
\\ B_n, C_n & \lhook\joinrel\longrightarrow D_{n+1}
\\ F_4, F_4 & \lhook\joinrel\longrightarrow E_6
\\ G_2, G_2 & \lhook\joinrel\longrightarrow D_4.
\end{split}
\end{equation}
Note that for the exceptional types, there are different scaling factors $(\gamma_a)_{a \in I_0}$ in each of the embeddings above.
For the non-exceptional types, the classical embedding are the same.

The following result is due to Baker~\cite{baker2000} when $\virtual{\g}_0$ is of type $A_{2n-1}$ and we show the other cases 
in Appendix~\ref{appendix:extension}.
\begin{thm}
\label{thm:virtual_highest_weight}
Let $\g_0$ be of classical type. The highest weight crystal $B(\lambda)$ virtualizes in $B(\Psi(\lambda))$ with the virtualization 
map $v$ given by $v(u_{\lambda}) \mapsto u_{\Psi(\lambda)}$ and extended by $f_a \mapsto f^v_a$ (recall $u_{\lambda}$ 
is the unique highest weight element in $B(\lambda)$).
\end{thm}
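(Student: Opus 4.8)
The plan is to follow the three–step pattern underlying Baker's argument~\cite{baker2000} for $\virtual{\g}_0 = A_{2n-1}$: reduce the statement for a general dominant weight $\lambda$ to the fundamental weights by means of tensor products, settle the fundamental cases by an explicit construction of $v$, and recover the general $\lambda$ by restricting a tensor product virtualization to the highest weight component. The first ingredient is the classical analogue of the tensor product property for virtual crystals from~\cite{OSS03III,OSS03II}: if $v_j\colon B_j\to\virtual{B}_j$ ($j=1,2$) are virtualization maps for the same folding $\phi$ and scaling factors $(\gamma_a)_{a\in I_0}$ of~\eqref{eq:classical_embeddings}, then $v_1\otimes v_2\colon B_1\otimes B_2\to\virtual{B}_1\otimes\virtual{B}_2$ is again a virtualization map. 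The only subtlety is that $e^v_a,f^v_a$ are the composite operators $\prod_{b\in\phi^{-1}(a)}\virtual{e}_b^{\,\gamma_a}$, $\prod_{b\in\phi^{-1}(a)}\virtual{f}_b^{\,\gamma_a}$ rather than single operators; but the $\virtual{f}_b$ for distinct $b\in\phi^{-1}(a)$ commute, and on the virtual subsets $\varepsilon_a=\gamma_a^{-1}\virtual{\varepsilon}_b$ and $\varphi_a=\gamma_a^{-1}\virtual{\varphi}_b$ for any $b\in\phi^{-1}(a)$, so a direct comparison of the $\g_0$- and $\virtual{\g}_0$-tensor product rules shows $v_1\otimes v_2$ intertwines $f_a$ with $f^v_a$ and transports $\wt,\varepsilon_a,\varphi_a$ exactly as in Definition~\ref{dfn:virtual_crystal}.

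\textbf{Reduction to fundamental crystals.} For dominant $\lambda$, write $\lambda=\sum_j\clfw_{a_j}$ as the sum over its columns, so that $B(\lambda)$ embeds into $\bigotimes_j B(\clfw_{a_j})$ as the $\g_0$-connected component of $u_\lambda=\bigotimes_j u_{\clfw_{a_j}}$; likewise $\Psi(\lambda)=\sum_j\Psi(\clfw_{a_j})$ and $B(\Psi(\lambda))$ is the $\virtual{\g}_0$-connected component of $u_{\Psi(\lambda)}=\bigotimes_j u_{\Psi(\clfw_{a_j})}$ inside $\bigotimes_j B(\Psi(\clfw_{a_j}))$. Assuming each fundamental crystal virtualizes via a map $v_j$ with $v_j(u_{\clfw_{a_j}})=u_{\Psi(\clfw_{a_j})}$, the map $V=\bigotimes_j v_j$ is a virtualization with $V(u_\lambda)=u_{\Psi(\lambda)}$, and I would put $v:=V|_{B(\lambda)}$. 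Since $v$ intertwines $f_a$ with $f^v_a=\prod_{b\in\phi^{-1}(a)}\virtual{f}_b^{\,\gamma_a}$, the image of any $b=f_{c_1}\cdots f_{c_k}u_\lambda$ is obtained from $u_{\Psi(\lambda)}$ by a sequence of $\virtual{f}_b$'s, hence lies in the $\virtual{\g}_0$-connected component $B(\Psi(\lambda))$. Thus $v$ maps $B(\lambda)$ into $B(\Psi(\lambda))$, carries the crystal data as required, and in particular the prescription $u_\lambda\mapsto u_{\Psi(\lambda)}$, $f_a\mapsto f^v_a$ is well defined and equals this $v$.

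\textbf{The base case and the main obstacle.} It remains to show each fundamental crystal $B(\clfw_a)$ of $\g_0$ virtualizes in $B(\Psi(\clfw_a))$; this is the generalization of Baker, who did $\virtual{\g}_0=A_{2n-1}$, so the new cases are $\virtual{\g}_0\in\{D_{n+1},E_6,D_4\}$. It in fact suffices to treat the vector representation $B(\clfw_1)$ for $\g_0=C_n,B_n,F_4,G_2$ together with the spin representation $B(\clfw_n)$ for $\g_0=B_n$: the standard representation of $C_n$, $F_4$, $G_2$ is faithful, so every $B(\clfw_a)$ occurs as a connected component of a tensor power of $B(\clfw_1)$, to which the preceding arguments again apply (the relevant highest weight element goes to one of $\virtual{\g}_0$-weight $\Psi(\clfw_a)$), while for $B_n$ the remaining $B(\clfw_a)$ with $a<n$ occur already in $B(\clfw_1)^{\otimes a}$ and a general $B(\lambda)$ embeds into a tensor product of copies of $B(\clfw_1)$ and $B(\clfw_n)$. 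For the vector representations $v$ is the labelling-preserving inclusion into the vector crystal of $D_{n+1}$ (skipping the two middle letters for $C_n$, and using the $\gamma_a=2$ doubled strings for $B_n$) or the ``complementation'' map for $A_{2n-1}$ à la Baker, and the verification is short; for the $B_n$ spin representation one uses the $\pm$-diagram/spin-column description, where again $\gamma_a=2$ produces doubled strings; and for $F_4\hookrightarrow E_6$ and $G_2\hookrightarrow D_4$ one matches the small explicit crystal graphs directly. The main obstacle is precisely this last point: the exceptional fundamentals require a careful case analysis, and each exceptional classical embedding in~\eqref{eq:classical_embeddings} arises from two affine embeddings in~\eqref{eqn:affine_embeddings} (from $F_4^{(1)}$ and $E_6^{(2)}$, resp.\ from $G_2^{(1)}$ and $D_4^{(3)}$) with different foldings and scaling factors, so several variants must be checked; a minor additional point is to confirm that $\phi$ and $(\gamma_a)$ are such that $\Psi$ sends $\clfw_a$ to the claimed dominant weight of $\virtual{\g}_0$ (for instance $\Psi(\clfw_n^{B_n})=\virtual{\clfw}_n^{D_{n+1}}+\virtual{\clfw}_{n+1}^{D_{n+1}}$), so that $B(\Psi(\clfw_a))$ is the correct ambient crystal in each case.
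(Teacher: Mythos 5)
Your proposal is correct in outline and shares the paper's overall architecture: both arguments rest on the tensor-product property of (classical) virtual crystals (Proposition~\ref{prop:virtual_tensor}) to pass from explicitly verified building blocks to a general dominant $\lambda$ by restricting to the connected component of $u_{\Psi(\lambda)}$, exactly as in your reduction step. Where you genuinely diverge is in the choice of building blocks. The paper verifies \emph{every} fundamental crystal directly: for $C_n, B_n \hookrightarrow D_{n+1}$ it gives an explicit tableau-level map on each $B(\clfw_r)$ and, in the $B_n$ case with $r<n$, carries out a long case-by-case check that $v \circ f_a = f_a^v \circ v$ (the bulk of Appendix~\ref{appendix:extension}); for the exceptional foldings it checks each $B(\clfw_a)$ by computer in Sage (Proposition~\ref{prop:single_column_fg} and Lemma~\ref{lemma:folding_F}). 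You instead reduce further, to the vector crystal $B(\clfw_1)$ (plus the $B_n$ spin crystal), using faithfulness and self-duality of the standard representation so that every $B(\clfw_a)$ sits inside $B(\clfw_1)^{\otimes k}$; since $\varepsilon_a = \gamma_a^{-1}\virtual{\varepsilon}_b$ forces the image of $u_{\clfw_a}$ to be genuinely $\virtual{\g}_0$-highest weight, the component argument applies again. This buys a real saving in the $B_n \hookrightarrow D_{n+1}$ case, where only the $r=1$ verification (a handful of letters, with $0 \mapsto [n \mid \overline{n}]$) is needed rather than the paper's general-$r$ case analysis; your handling of the $B_n$ spin case and of $\Psi(\clfw_n^{B_n}) = \virtual{\clfw}_n + \virtual{\clfw}_{n+1}$ matches the paper's. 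Two caveats: for $F_4 \hookrightarrow E_6$ and $G_2 \hookrightarrow D_4$ your reduction does not lighten the load, since the vector crystal is itself the hard base case, and calling those crystal graphs ``small'' understates matters --- $B(\Psi(\clfw_1))$ for $F_4 \hookrightarrow E_6$ is far too large to match by hand, and the paper resolves exactly this point by computer computation; and you should be explicit that both affine foldings over each exceptional classical embedding induce the same classical scaling factors you are using (the paper makes the analogous remark after Equation~\eqref{eq:classical_embeddings}). Neither caveat is a gap in the logic, only in the amount of explicit verification your sketch defers.
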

We note that Baker's result is for the restriction of $C_n^{(1)}, D_{n+1}^{(2)} \lhook\joinrel\longrightarrow A_{2n-1}^{(1)}$, but the other cases of 
$A_{2n}^{(2)}, A_{2n}^{(2)\dagger} \lhook\joinrel\longrightarrow A_{2n-1}^{(1)}$ considered in~\cite{OSS03III} gives the same classical virtualization.

It is clear that if $(V_1, \virtual{B}_1)$ and $(V_2, \virtual{B}_2)$ are virtual crystals, then $(V_1 \oplus V_2, \virtual{B}_1 \oplus \virtual{B}_2)$ 
is a virtual crystal. Moreover, virtual crystals are closed under taking tensor products.
\begin{prop}[{\cite[Prop.~6.4]{OSS03III}}]
\label{prop:virtual_tensor}
Virtual crystals form a tensor category.
\end{prop}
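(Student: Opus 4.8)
The plan is to show that if $(V_1, \virtual{B}_1)$ and $(V_2, \virtual{B}_2)$ are virtual crystals of type $\g$, then $V_2 \otimes V_1 \subseteq \virtual{B}_2 \otimes \virtual{B}_1$, equipped with the structure maps from Definition~\ref{dfn:virtual_crystal}, is again a virtual crystal of type $\g$; that is, it carries a $U_q'(\g)$-crystal structure isomorphic (as an abstract crystal) to an honest $U_q'(\g)$-crystal built from $V_1$ and $V_2$. The key point to verify is compatibility of the virtual crystal operators with the tensor product rule: one must check that for $b_2 \otimes b_1 \in V_2 \otimes V_1$, the operators $e_a^v = \prod_{b \in \phi^{-1}(a)} \virtual{e}_b^{\,\gamma_a}$ and $f_a^v = \prod_{b \in \phi^{-1}(a)} \virtual{f}_b^{\,\gamma_a}$ act according to the formulas
\[
e_a(b_2 \otimes b_1) = \begin{cases} e_a b_2 \otimes b_1 & \text{if } \varepsilon_a(b_2) > \varphi_a(b_1),\\ b_2 \otimes e_a b_1 & \text{if } \varepsilon_a(b_2) \le \varphi_a(b_1),\end{cases}
\]
and similarly for $f_a$, where $\varepsilon_a, \varphi_a$ are the $\g$-values $\gamma_a^{-1}\virtual{\varepsilon}_b$, $\gamma_a^{-1}\virtual{\varphi}_b$ for $b \in \phi^{-1}(a)$.

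First I would reduce to a single orbit: fix $a \in I$ and set $\phi^{-1}(a) = \{b_1, \dots, b_k\}$ with $k = 1$ or $k = \mathrm{ord}(\phi)$. By Remark~\ref{rem:phi_gamma}, exactly one of $\gamma_a \ne 1$ or $k \ne 1$ can occur, so there are two cases. \emph{Case $k > 1$, $\gamma_a = 1$:} since the nodes $b_j$ are pairwise non-adjacent in $\virtual{\Gamma}$, the operators $\virtual{e}_{b_j}$ commute, and on $\virtual{B}_2 \otimes \virtual{B}_1$ one has $\virtual{\varepsilon}_{b_j}(b_2) = \virtual{\varepsilon}_{b_j}(b_2)$, etc.; because $V_1, V_2$ are virtual crystals the values $\virtual{\varepsilon}_{b_j}(\beta)$ are independent of $j$ for $\beta \in V_i$, equal to $\varepsilon_a(\beta)$, and the same for $\virtual{\varphi}$. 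Then applying the $\virtual{\g}$ tensor rule simultaneously in each of the $k$ (non-interacting) directions, the ``switch'' condition $\virtual{\varepsilon}_{b_j}(b_2) > \virtual{\varphi}_{b_j}(b_1)$ is the same inequality for every $j$, namely $\varepsilon_a(b_2) > \varphi_a(b_1)$, so all $\virtual{e}_{b_j}$ act on the same tensor factor, giving the desired formula for $e_a$; likewise for $f_a$, and the $\varepsilon_a, \varphi_a$ formulas follow from the $\virtual{\g}$ ones by dividing through. \emph{Case $k = 1$, $\gamma_a \in \{1, 2\}$:} here $e_a^v = \virtual{e}_b^{\,\gamma_a}$ is an iterate in a single direction $b$, and one checks that applying $\virtual{e}_b$ exactly $\gamma_a$ times stays within $V_2 \otimes V_1$ (never stranding us between the valid virtual configurations) and acts on a single tensor factor; this uses that the relevant string lengths $\virtual{\varepsilon}_b, \virtual{\varphi}_b$ on elements of $V_i$ are divisible by $\gamma_a$ — which is precisely the ``aligned'' property noted after Definition~\ref{dfn:virtual_crystal} and built into the hypothesis that $(V_i, \virtual{B}_i)$ are virtual crystals — so the $\gamma_a$-fold move in $\virtual{\g}$ corresponds to a single $\g$-move, compatible with the tensor rule after rescaling. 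In both cases the weight condition $\wt = \Psi^{-1} \circ \virtual{\wt}$ is immediate since $\virtual{\wt}(b_2 \otimes b_1) = \virtual{\wt}(b_2) + \virtual{\wt}(b_1)$ and $\Psi^{-1}$ is linear.

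Finally, with these formulas in hand, one invokes the virtualization isomorphisms $v_i \colon B_i \to V_i$: the map $v_2 \otimes v_1 \colon B_2 \otimes B_1 \to V_2 \otimes V_1$ is then a bijection intertwining the $\g$-crystal operators (the $\g$-tensor rule on $B_2 \otimes B_1$ matches the one just established on $V_2 \otimes V_1$ because the structure maps $\varepsilon_a, \varphi_a, \wt$ agree under $v_i$ on each factor), so $(V_2 \otimes V_1, \virtual{B}_2 \otimes \virtual{B}_1)$ is a virtual crystal and $B_2 \otimes B_1$ virtualizes in $\virtual{B}_2 \otimes \virtual{B}_1$. I expect the main obstacle to be the bookkeeping in the $\gamma_a = 2$ case: one must argue carefully that the intermediate element $\virtual{e}_b(b_2 \otimes b_1)$ — which need not lie in $V_2 \otimes V_1$ — is nonetheless produced and consumed within a single $\gamma_a$-fold application without the tensor-product ``switch'' occurring partway through, i.e. that $\virtual{\varepsilon}_b$ and $\virtual{\varphi}_b$ restricted to $V_i$ take values in $2\ZZ$ so that the signature rule behaves uniformly over the pair of steps. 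This divisibility is exactly the content of alignedness, which is why the hypothesis that the $V_i$ are genuine virtual crystals (and not arbitrary subsets) is essential; everything else is a direct unwinding of the tensor product rule from Definition~\ref{definition.abstract crystal}.
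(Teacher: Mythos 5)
Your proposal is correct, and it is essentially the argument the paper relies on: the paper does not reprove this statement but cites \cite[Prop.~6.4]{OSS03III}, remarking only that the proof ``is a statement about the tensor product rule,'' and your reduction to the two cases of Remark~\ref{rem:phi_gamma} together with the divisibility of $\virtual{\varepsilon}_b, \virtual{\varphi}_b$ by $\gamma_a$ (alignedness) is exactly the content of that cited proof. The only blemish is the tautological line ``$\virtual{\varepsilon}_{b_j}(b_2) = \virtual{\varepsilon}_{b_j}(b_2)$,'' which should read that these values agree for all $b_j, b_{j'} \in \phi^{-1}(a)$.
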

We note that the proof also holds for classical types since it is a statement about the tensor product rule.

Next we restate a conjecture given in~\cite[Conj. 3.7]{OSS03II}.
\begin{conj}
\label{conj:KR_virtualization}
The KR crystal $B^{a,s}$ virtualizes into
\[
\virtual{B}^{a,s} = \begin{cases}
B^{n,s} \otimes B^{n,s} & \text{if }\g = A_{2n}^{(2)}, A_{2n}^{(2)\dagger} \text{ and } a = n, \\
\bigotimes_{b \in \phi^{-1}(a)} B^{b,\gamma_a s} & \text{otherwise.}
\end{cases}
\]
\end{conj}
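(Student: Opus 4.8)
The plan is to construct a $U_q'(\g)$-crystal isomorphism $v \colon B^{a,s} \to V$ onto a virtual crystal $(V, \virtual{B}^{a,s})$ in two stages: first build $v$ as a classical ($U_q(\g_0)$-) isomorphism onto a suitable subset $V$, and then show that $v$ intertwines the affine operators $e_0, f_0$. For the classical stage, decompose $B^{a,s} \iso \bigoplus_\lambda B(\lambda)$ using the explicit rule (removing copies of $\diamond$ from the $a \times s$ rectangle) recalled before Definition~\ref{def:energy}, apply Theorem~\ref{thm:virtual_highest_weight} to virtualize each summand $B(\lambda)$ inside $B(\Psi(\lambda))$, and set $V := \bigoplus_\lambda v\bigl(B(\lambda)\bigr)$; by the classical analogue of Proposition~\ref{prop:virtual_tensor} and closure of virtual crystals under direct sums, $(V, \bigoplus_\lambda B(\Psi(\lambda)))$ is a virtual crystal with $v(u_{a,s})$ the unique classically highest weight element of $\virtual{B}^{a,s}$. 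The first point to verify is then the purely combinatorial identity that, under $\Psi$ together with the scaling factors $\gamma_a$ and the folding $\phi$, the multiset $\{\Psi(\lambda)\}$ of classical weights appearing in $B^{a,s}$ coincides with that of $\virtual{B}^{a,s}$, i.e.\ $\bigoplus_\lambda B(\Psi(\lambda)) \iso \virtual{B}^{a,s}$ as $U_q(\virtual{\g}_0)$-crystals.

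I would check this identity case by case along~\eqref{eqn:affine_embeddings}. For $B_n^{(1)}, A_{2n-1}^{(2)} \hookrightarrow D_{n+1}^{(1)}$ one verifies that $\Psi$ carries ``remove a vertical domino from the $a \times s$ rectangle'' to the same operation for the $a \times s$ rectangle of type $D_{n+1}$ when $a < n$ (using $\gamma_a = 1$ or $\gamma_a = 2$ appropriately near the end of the diagram, cf.\ Remark~\ref{rem:phi_gamma}), and treats the spin node $a = n$, where $\lvert \phi^{-1}(n) \rvert = 2$, separately. For $C_n^{(1)}, A_{2n}^{(2)}, A_{2n}^{(2)\dagger}, D_{n+1}^{(2)} \hookrightarrow A_{2n-1}^{(1)}$ the ambient $\virtual{B}^{a,s}$ is irreducible of type $A_{2n-1}$ when $a < n$, so the check reduces to tracking how horizontal/vertical dominoes and single boxes map to single boxes of type $A$; the genuinely new point is $a = n$ in types $A_{2n}^{(2)}, A_{2n}^{(2)\dagger}$, where $\virtual{B}^{n,s} = B^{n,s} \otimes B^{n,s}$ of type $A_{2n-1}^{(1)}$ and one must match the Littlewood--Richardson decomposition of this tensor product with the $\diamond$-removal decomposition of $B^{n,s}$, using the map $\eta$ from the $C_n$- to the $B_n$-weight lattice. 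For $E_6^{(2)}, F_4^{(1)} \hookrightarrow E_6^{(1)}$ and $G_2^{(1)}, D_4^{(3)} \hookrightarrow D_4^{(1)}$ the decompositions are short finite lists checked directly. Once the classical identity holds, the values of $\varepsilon_a, \varphi_a$ for $a \in I_0$ are forced on $V$ by the weights together with regularity (applying Theorem~\ref{thm:crystal_structure}-type reasoning componentwise), so $v$ is a classical isomorphism onto $V$.

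The hard part is the affine operators, and I expect this to be where essentially all of the work lies. I would realize $B^{a,s}$ on the combinatorial model of~\cite{FOS09}, where $e_0, f_0$ are described not locally on a tableau but through the branching of $B^{a,s}$ to the sub-diagram used there to define them (via $\pm$-diagrams), and realize $v$ on the corresponding Kashiwara--Nakashima tableaux picture coming from Theorem~\ref{thm:virtual_highest_weight}. One then must show that $V$ is stable under $e_0^v = \prod_{b \in \phi^{-1}(0)} \virtual{e}_b^{\,\gamma_0}$ and $f_0^v$ and that $v \circ f_0 = f_0^v \circ v$ on all of $B^{a,s}$. The obstacle is precisely that $f_0$ is not a local operation, so nothing can be read off from the highest weight case alone: one must match the full branching of $B^{a,s}$ to the relevant sub-diagram of $\g_0$ with the branching of $\virtual{B}^{a,s}$ to the corresponding sub-diagram of $\virtual{\g}_0$, keeping careful track of the two mutually exclusive phenomena of Remark~\ref{rem:phi_gamma} (an orbit $\phi^{-1}(a)$ of size $>1$ versus a scaling factor $\gamma_a > 1$), which behave very differently under $f_0^v$. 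An alternative and perhaps cleaner route is inductive: the statement is already known for the building blocks $B^{1,1}$ (and for $B^{1,s}$, $B^{a,1}$ in several types) from~\cite{OSS03III, OSS03II}; using that virtual crystals form a tensor category (Proposition~\ref{prop:virtual_tensor}) together with a uniqueness statement for KR crystals with a prescribed classical decomposition, one would identify $B^{a,s}$ as the connected component of $u_{a,s}$ inside a tensor product of building blocks and transport the virtualization along. In either approach, the delicate step is controlling $f_0$; everything classical is bookkeeping on top of Theorems~\ref{thm:virtual_highest_weight} and~\ref{thm:crystal_structure}.
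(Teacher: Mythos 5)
You are attempting to prove a statement that the paper itself leaves as a conjecture: Conjecture~\ref{conj:KR_virtualization} is restated from~\cite{OSS03II}, and the paper only establishes special cases --- previously known cases for $B^{r,1}$ and $B^{1,s}$, and, new in this paper, Theorem~\ref{thm:virtual_KR} for $\g$ of type $B_n^{(1)}$ or $A_{2n-1}^{(2)}$ with $r<n$, proved by showing that the virtualization map commutes with the $\pm$-diagram involution $\mathfrak{S}$ and hence with $\sigma$, so that $e_0=\sigma\circ e_1\circ\sigma$ and $f_0=\sigma\circ f_1\circ\sigma$ are intertwined. Your first proposed route for the affine step is essentially this argument, and your classical stage matches what the paper does in Lemma~\ref{lemma:virtualization_KR_crystals} (via the virtual Kleber algorithm and Theorem~\ref{thm:virtual_highest_weight}). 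So the parts of your plan that can be carried out coincide with the paper's approach; the rest is a plan, not a proof, and cannot be checked against anything in the paper because no complete proof exists there.

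Two concrete problems. First, the classical ``identity'' you propose to verify, $\bigoplus_\lambda B(\Psi(\lambda)) \iso \virtual{B}^{a,s}$ as $U_q(\virtual{\g}_0)$-crystals, is false: the virtual crystal is a pair $(V,\virtual{B})$ with $V\subseteq\virtual{B}$, and the ambient crystal generally has classical components not of the form $B(\Psi(\lambda))$. For example, $B^{1,1}$ of type $C_n^{(1)}$ is classically irreducible, while $\virtual{B}^{1,1}=B^{1,1}\otimes B^{2n-1,1}$ of type $A_{2n-1}^{(1)}$ decomposes as $B(\clfw_1+\clfw_{2n-1})\oplus B(0)$. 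What is needed (and what the paper proves, using multiplicity-freeness and the ambient Kleber tree) is only that each $B(\Psi(\lambda))$ occurs in $\virtual{B}^{a,s}$, so that $v$ is well-defined onto a subset. Second, and more seriously, your plan for the affine operators in the types embedding into $A_{2n-1}^{(1)}$ ($C_n^{(1)}$, $D_{n+1}^{(2)}$, $A_{2n}^{(2)}$, $A_{2n}^{(2)\dagger}$) runs into the obstruction the paper itself records at the end of Section~\ref{sec:affine_structure}: the operators $e_0,f_0$ on $B^{a,s}$ in these types are defined in~\cite{FOS09} through a \emph{different} virtual construction whose folding does not preserve the affine node $0$, so there is no direct way to compare $f_0$ with $f_0^v=\prod_{b\in\phi^{-1}(0)}\virtual{f}_b^{\,\gamma_0}$ by matching branchings to classical subdiagrams; this is precisely why the conjecture remains open in those cases. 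Your alternative inductive route presupposes a uniqueness theorem for affine crystals with prescribed classical decomposition and an identification of $B^{a,s}$ inside a tensor product of building blocks, neither of which is available at the level of generality required.
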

This conjecture is known for $B^{r,1}$ in types $D_{n+1}^{(2)}$, $A_{2n}^{(2)}$ and $C_n^{(1)}$~\cite{OSS03III} and $B^{1,s}$ for all 
non-exceptional types~\cite{OSS03II}.

\subsection{The (virtual) Kleber algorithm}

Next we recall the \emph{Kleber algorithm}~\cite{Kleber98}. Let $\g$ be an affine type whose canonical classical subalgebra is of simply-laced type.
\begin{dfn}[Kleber algorithm]
\label{def:kleber_algorithm}
Let $B$ be a tensor product of KR crystals of type $\g$ with multiplicity array $L$.
We construct the \emph{Kleber tree} $T(B)$ whose nodes will be labelled by weights in $\overline{P}^+$ and edges are labelled by 
$d_{xy} = x - y \in \overline{Q}^+ \setminus \{0\}$ recursively starting with $T_0$ consisting of a single node of weight $0$.
\begin{itemize}
\item[(K1)] Let $T^{\prime}_{\ell}$ be obtained from $T_{\ell-1}$ by adding $\sum_{a=1}^n \clfw_a \sum_{i \geq \ell} L_i^{(a)}$ to the weight of each node.
\item[(K2)] Construct $T_{\ell}$ from $T^{\prime}_{\ell}$ as follows. Let $x$ be a node at depth $\ell - 1$. Suppose there is a weight 
$y \in \overline{P}^+$ such that $x - y \in \overline{Q}^+ \setminus \{0\}$. If $x$ is not the root, then let $w$ be the parent of $x$. Then 
$(w - x)$ is larger than $(x - y)$ component-wise expressed as a sum of the simple roots $\alpha_i$ (equivalently we have 
$d_{wx} - d_{xy} \in \overline{Q}^+ \setminus \{0\}$). For all such $y$, attach $y$ as a child of $x$.
\item[(K3)] If $T_{\ell} \neq T_{\ell-1}$, then repeat from~(K1); otherwise terminate and return $T(B) = T_{\ell}$.
\end{itemize}
Now we convert the tree to highest weight rigged configurations as follows. Let $x$ be a node at depth $p$ in $T(B)$, and 
$x^{(0)}, x^{(1)}, \dotsc, x^{(p)} = x$ be the weights of nodes on the path from the root of $T(B)$ to $x$. The resulting configuration $\nu$ is given by
\[
m_i^{(a)} = (x^{(i-1)} - 2 x^{(i)} + x^{(i+1)} \mid \clfw_a)
\]
where we make the convention that $x = x^{(k)}$ for all $k > p$. In other words, there are $j$ rows of length $i$ in $\nu^{(a)}$ where $j$ is the 
coefficient of $\alpha_a$ in the difference of the corresponding edge labels. We then take the riggings over all possible values between $0$ 
and $p_i^{(a)}$.
\end{dfn}

For non-simply-laced types, we modify the algorithm by using virtual rigged configurations. The resulting algorithm is known as the 
\emph{virtual Kleber algorithm}~\cite{OSS03II}. 

\begin{dfn}[Virtual Kleber algorithm]
\label{def:virtual_kleber}
The virtual Kleber tree is defined from the Kleber tree of $\virtual{B}$ in the ambient type, but we only add a child in step~(K2) if the 
following conditions are satisfied:
\begin{itemize}
\item[(V1)] $(y \mid \virtual{\alpha}_b) = (y \mid \virtual{\alpha}_{b'})$ for all $b, b' \in \phi^{-1}(a)$.
\item[(V2)] If $\ell - 1 \notin \gamma_a \ZZ$, then for $w$ the parent of $x$, the $a$-th component of $d_{wx}$ and $d_{xy}$ must be equal.
\end{itemize}
Let $\virtual{T}(B)$ be the resulting tree, which we will call the \emph{ambient tree}, and let $\gamma = \max_{a \in I} \gamma_a$. We now 
select nodes which satisfy either:
\begin{itemize}
\item[(A1)] $y$ is at depth $\ell \in \gamma \ZZ$, or
\item[(A2)] $(d_{xy} \mid \virtual{\clfw}_a) = 0$ for every $a$ such that $1 < \gamma = \gamma_a$, where $x$ is the parent of $y$.
\end{itemize}
To construct the rigged configurations from the selected nodes, we take the devirtualization of the resulting virtual rigged configurations (with the 
appropriate riggings) obtained from the usual Kleber algorithm.
\end{dfn}

\section{Crystal operators on rigged configurations in non-simply-laced types}
\label{sec:virtual_rc}

In this section, we show using Theorem~\ref{thm:virtual_highest_weight} that there exists a classical crystal structure on rigged configurations 
given by Definition~\ref{def:rc_crystal_ops} for non-simply-laced finite types.

\subsection{Virtualization map}

We define the virtualization map on rigged configurations as in~\cite{OSS03III,OSS03II,S06II,SS2006}.
In order to do so, we must make a modification to the scaling factors $(\gamma_a)_{a \in I}$ by
\[
\widetilde{\gamma}_a = \begin{cases}
\gamma_a & \text{if $a \neq n$ or } \g \neq A_{2n}^{(2)}, A_{2n}^{(2)\dagger}, \\
1 & \text{if $a = n$ and } \g = A_{2n}^{(2)}, \\
2 & \text{if $a = n$ and } \g = A_{2n}^{(2)\dagger}.
\end{cases}
\]
The reason for this modification is due to the fact that we use the virtual embedding $B^{n,s} \lhook\joinrel\longrightarrow B^{n,s} \otimes B^{n,s}$ 
of the type $A_{2n}^{(2)}$ or $A_{2n}^{(2)\dagger}$ KR crystal into type $A_{2n-1}^{(1)}$, rather than $B^{n,\gamma_n s}$ of type $A_{2n-1}^{(1)}$.
In light of Conjecture~\ref{conj:KR_virtualization}, we define
\begin{equation*}
\begin{aligned}
\virtual{L}_{\gamma_a i}^{(b)} & = L_i^{(a)} && \text{for $b\in \phi^{-1}(a)$,}\\
\virtual{L}_{j}^{(b)} & = 0 && \text{if $j\not \in \gamma_a \mathbb{Z}$,}
\end{aligned}
\end{equation*}
except for $ \g = A_{2n}^{(2)}, A_{2n}^{(2)\dagger}$ and $a=n$ in which case $\virtual{L}_i^{(n)} = 2L_i^{(n)}$.

The virtualization map $v$ from rigged configurations of type $\g \neq A_{2n}^{(2)\dagger}$ to rigged configurations of type $\virtual{\g}$ is given 
by~\cite[Thm.~4.2]{OSS03II}
\begin{equation}
\label{eq:rc_virtualization_map}
\begin{aligned}
\virtual{m}_{\widetilde{\gamma}_a i}^{(b)} & = m_i^{(a)},
\\ \virtual{J}_{\widetilde{\gamma}_a i}^{(b)} & = \gamma_a J_i^{(a)},
\end{aligned}
\end{equation}
for all $b \in \phi^{-1}(a)$. For $\g = A_{2n}^{(2)\dagger}$, we use the virtualization map~\cite[Def.~7.1/Thm.~7.2]{SS2006}
\begin{equation}
\label{eq:rc_virtualization_map_A2dual}
\begin{aligned}
\virtual{m}_i^{(b)} & = m_i^{(a)},
\\ \virtual{J}_i^{(b)} & = \widetilde{\gamma}_a J_i^{(a)}.
\end{aligned}
\end{equation}
We note that this is the same as Equation~\eqref{eq:rc_virtualization_map} except for $a = n$.
Let $(\virtual{\nu}, \virtual{J})$ denote the resulting virtual rigged configuration. Under the virtualization map, we have~\cite{OSS03II}
\begin{align}
\label{eq:vacancy_virtualization}
\virtual{p}_{\widetilde{\gamma}_a i}^{(b)} & = \gamma_a p_i^{(a)},
\\ \label{eq:cocharge_virtualization}
\cc(\virtual{\nu}, \virtual{J}) & = \gamma_0 \cc(\nu, J),
\end{align}
for all $b \in \phi^{-1}(a)$, except for $\g = A_{2n}^{(2)\dagger}$ and $a = n$, where we have
\[
\virtual{p}_i^{(n)} = \widetilde{\gamma}_n p_i^{(n)}.
\]

\subsection{Crystal operators}

We begin by giving an explicit description of $e_a$ and $f_a$ for $a \in I_0$ for non-simply-laced rigged configurations.

\begin{dfn}
\label{def:crystal_ops_nsl} 
Define the crystal operators $f_a$ and $e_a$ for all types except $A_{2n}^{(2)}$ and $A_{2n}^{(2)\dagger}$ with $a = n$ as in
Definition~\ref{def:rc_crystal_ops}.
For $A_{2n}^{(2)\dagger}$, the algorithm is modified for $e_n$ by adding $1/2$ 
to the new label and for $f_n$ by removing $1/2$ from the new label or the added length 1 string is given a label of $-1/2$.
For $A_{2n}^{(2)}$, the algorithm for $e_n$ and $f_n$ for type $A_{2n}^{(2)\dagger}$ is performed twice.

The maps $\varepsilon_a$ and $\varphi_a$ are defined by~\eqref{equation.eps phi} and the weight is given by~\eqref{eq:weight_function}. 
\end{dfn}

\begin{ex}
\label{ex:crystal_operator_A2even}
Consider $\g$ of type $A_6^{(2)}$ and $(\nu, J) \in \RC(B^{3,2})$ to be
\[
\raisebox{-6pt}{$\emptyset$}
\quad\quad
 {
\begin{array}[t]{r|c|l}
 \cline{2-2} 0 &\phantom{|}& 0 \\
 \cline{2-2} 
\end{array}
} 
\quad
\raisebox{13pt}{
\begin{array}[t]{r|c|l}
 \cline{2-2} 0 &\phantom{|}& 0 \\
 \cline{2-2}  &\phantom{|}& 0 \\
 \cline{2-2} 
\end{array}
}
\raisebox{-12pt}{.}
\]
The first application of the algorithm for $f_3$ in type $A_6^{(2)\dagger}$ results in
\[
\raisebox{-6pt}{$\emptyset$}
\quad\quad
 {
\begin{array}[t]{r|c|l}
 \cline{2-2} 0 &\phantom{|}& 0 \\
 \cline{2-2} 
\end{array}
} 
\quad
\raisebox{13pt}{
\begin{array}[t]{r|c|c|l}
 \cline{2-2} 0 &\phantom{|}& \multicolumn{2 }{l}{ 0 } \\
 \cline{2-3} 0 &\phantom{|}&\phantom{|}& -\frac{1}{2} \\
 \cline{2-3} 
\end{array}
}
\raisebox{-12pt}{,}
\]
and so we have
\[
\raisebox{-6pt}{$f_3(\nu, J) =$}
\hspace{10pt}
\raisebox{-6pt}{$\emptyset$}
\quad\quad
 {
\begin{array}[t]{r|c|l}
 \cline{2-2} 0 &\phantom{|}& 0 \\
 \cline{2-2} 
\end{array}
} 
\quad
\raisebox{13pt}{
\begin{array}[t]{r|c|c|c|l}
 \cline{2-2} 0 &\phantom{|}& \multicolumn{3 }{l}{ 0 } \\
 \cline{2-4} -1 &\phantom{|}&\phantom{|}&\phantom{|}& -1 \\
 \cline{2-4} 
\end{array}
}
\raisebox{-12pt}{.}
\]
\end{ex}

\begin{prop}
\label{prop:ep_phi}
Let $\g$ be of affine type. Fix some $a \in I_0$.
Let $x$ be the smallest label of $(\nu, J)^{(a)}$, $s = \min(0, x)$, and $k_a = 1$ except $k_n = 2$ for type $A_{2n}^{(2)\dagger}$. Then we have
\[
\varepsilon_a(\nu, J) = -k_a s \quad \quad \varphi_a(\nu, J) = k_a (p_{\infty}^{(a)} - s).
\]
\end{prop}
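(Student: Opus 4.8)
The plan is to compute $\varepsilon_a$ and $\varphi_a$ directly from the combinatorial definition of $e_a$ and $f_a$ in Definition~\ref{def:crystal_ops_nsl}, using the characterization~\eqref{equation.eps phi} of these maps in terms of the maximal number of applicable crystal operators. I would treat the generic types (all types except $A_{2n}^{(2)}$ and $A_{2n}^{(2)\dagger}$ with $a = n$) first, where $k_a = 1$, and then handle the two special cases separately.

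For the generic case, first I would compute $\varepsilon_a(\nu, J)$. Recall that $e_a(\nu,J) = 0$ exactly when the smallest label $x$ of $(\nu,J)^{(a)}$ satisfies $x \geq 0$, i.e.\ when $s = \min(0,x) = 0$; this matches $\varepsilon_a = -s = 0$. When $x < 0$, applying $e_a$ replaces a string $(\ell, x)$ with label $x$ of minimal such length by $(\ell - 1, x+1)$ while keeping all colabels fixed. The key observation is that this operation increases the smallest label by exactly $1$ (one checks that $\ell - 1 \geq 0$ is automatic when $x < 0$ since singular strings of length $0$ are not allowed, and that no other label drops below $x+1$ because colabels are preserved and vacancy numbers behave monotonically — this is essentially the content of the proof that $e_a$ is well-defined and is an involution-partner of $f_a$, cf.\ Theorem~\ref{S06-thm} and its extension to non-simply-laced types via virtualization). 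Iterating, one can apply $e_a$ exactly $-x = -s$ times until the smallest label reaches $0$, giving $\varepsilon_a(\nu,J) = -s$.

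Next I would compute $\varphi_a(\nu,J)$, and here the cleanest route is to invoke the crystal axiom~(1) from Definition~\ref{definition.abstract crystal} together with~\eqref{eq:weight_p}: since $\varphi_a = \varepsilon_a + \langle \alpha_a^\vee, \overline{\wt}(\nu,J)\rangle$ and $\langle \alpha_a^\vee, \overline{\wt}(\nu,J)\rangle = k_a p_\infty^{(a)}$, in the generic case ($k_a = 1$) we immediately get $\varphi_a(\nu,J) = -s + p_\infty^{(a)} = p_\infty^{(a)} - s$, as claimed. (Alternatively one can verify this directly: applying $f_a$ either adds a length-$1$ string with label $-1$ when $x > 0$, or lengthens an existing string of label $x = s \leq 0$; tracking how $p_\infty^{(a)}$ and the smallest label evolve shows $f_a$ can be applied $p_\infty^{(a)} - s$ times before the result fails to be a valid rigged configuration. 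But the weight-axiom argument is shorter and avoids re-deriving the combinatorics.) Once the generic case is settled, I would need to know that the crystal axioms indeed hold in non-simply-laced types so that axiom~(1) is available — but this is exactly what Theorem~\ref{thm:virtual_highest_weight} and the virtualization setup of Section~\ref{sec:virtual_rc} provide.

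For the special cases $A_{2n}^{(2)}$ and $A_{2n}^{(2)\dagger}$ with $a = n$, the operators $e_n, f_n$ are the modified versions (with $\pm 1/2$ adjustments, applied once or twice). Here I would argue via the virtualization map $v$ of~\eqref{eq:rc_virtualization_map} or~\eqref{eq:rc_virtualization_map_A2dual}: by Definition~\ref{dfn:virtual_crystal} we have $\varepsilon_n(\nu,J) = \widetilde{\gamma}_n^{-1}\,\virtual{\varepsilon}_b(\virtual\nu,\virtual J)$ for $b \in \phi^{-1}(n)$, and $\virtual{\varepsilon}_b$ is computed by the already-established generic formula in the ambient simply-laced type. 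Combining this with the relation between labels under virtualization (the factor $\gamma_n$ resp.\ $\widetilde\gamma_n$ multiplying the riggings) and~\eqref{eq:vacancy_virtualization} relating vacancy numbers, one unwinds the scaling to recover $\varepsilon_n = -k_n s$ and $\varphi_n = k_n(p_\infty^{(n)} - s)$, with $k_n = 2$ in type $A_{2n}^{(2)\dagger}$ accounting for the extra factor. I expect this bookkeeping with the scaling factors $\gamma_n$, $\widetilde\gamma_n$, $k_n$ and the half-integer labels to be the main obstacle: one must be careful that $s = \min(0,x)$ for the original configuration transforms correctly to the smallest label of the virtual configuration, and that the single-vs-double application of the modified operator matches $|\phi^{-1}(n)|$ and the scaling factor. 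The generic combinatorial argument, by contrast, is routine once one has the well-definedness of $e_a, f_a$ in hand.
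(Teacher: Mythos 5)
Your computation of $\varepsilon_a$ in the generic case is fine and matches the paper's route (the paper simply cites \cite{Sakamoto13} for the simply-laced argument and observes that it only involves $(\nu,J)^{(a)}$, whose vacancy numbers change exactly as in the simply-laced case, so it carries over verbatim). The problem is with the other two parts of your argument, both of which are circular within the paper's logical structure.

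First, deriving $\varphi_a$ from axiom~(1) of Definition~\ref{definition.abstract crystal} together with~\eqref{eq:weight_p} assumes that $\varphi_a - \varepsilon_a = \langle \alpha_a^\vee, \overline{\wt}(\nu,J)\rangle$, where $\varepsilon_a,\varphi_a$ are the string lengths of~\eqref{equation.eps phi}. But that identity is precisely what the remark \emph{following} Proposition~\ref{prop:ep_phi} deduces \emph{from} the proposition in order to conclude that Definition~\ref{def:crystal_ops_nsl} yields an abstract crystal at all. At the point where the proposition is proved, there is no independent justification that the string lengths satisfy axiom~(1); invoking Theorem~\ref{thm:virtual_highest_weight} does not help, since that theorem concerns highest weight crystals $B(\lambda)$, and transferring it to rigged configurations requires Lemma~\ref{lemma:rc_virtual_crystal}, whose proof explicitly uses Proposition~\ref{prop:ep_phi}. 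The paper avoids this by verifying $\varphi_a = p_\infty^{(a)} - s$ directly, citing \cite[Lemma~3.6]{S06} for the combinatorics; your parenthetical ``alternatively one can verify this directly'' is in fact the only non-circular option and should be the main argument, not an aside.

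Second, the same circularity afflicts your treatment of $A_{2n}^{(2)}$ and $A_{2n}^{(2)\dagger}$ at $a=n$: the identity $\varepsilon_n = \widetilde{\gamma}_n^{-1}\virtual{\varepsilon}_b$ is the \emph{definition} of $\varepsilon$ on a virtual crystal, and the assertion that the string-length $\varepsilon_n$ of $\RC(L)$ agrees with it is the ``aligned'' property, which is established only in Lemma~\ref{lemma:rc_virtual_crystal} using this very proposition. The paper instead handles these cases directly: the modified algorithm changes the relevant label and $p_\infty^{(n)}$ by $1/2$ at each step, and one either applies it twice (type $A_{2n}^{(2)}$, so $k_n=1$ overall) or multiplies by $k_n=2$ (type $A_{2n}^{(2)\dagger}$). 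You would need to replace the virtualization detour by this direct scaling argument for the proof to stand.
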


\begin{proof}
We note that unless $\g = A_{2n}^{(2)}, A_{2n}^{(2)\dagger}$ and $a = n$, we have $k_a = 1$. The proof that 
$\varphi_a(\nu, J) = p_{\infty}^{(a)} - s$ was originally given for simply-laced types in~\cite[Lemma 3.6]{S06}, whereas
$\varepsilon_a(\nu, J) = -s$ for types $A_n^{(1)}$ and $D_n^{(1)}$ was given in~\cite[Theorem 3.8]{Sakamoto13}. 
For non-simply-laced types, we separate the proof into cases depending on the value of $k_a$.

\case{Case $k_a = 1$}

\noindent
Let either $a \neq n$ or $\g$ not be of type $A_{2n}^{(2)}$. 
The proofs for  $\varphi_a$ and $\varepsilon_a$ hold verbatim because the vacancy numbers for $\nu^{(a)}$ change as in the simply-laced case and the 
proof only involves $\nu^{(a)}$.

Thus assume $\g$ is of type $A_{2n}^{(2)}$ and $a = n$. The proof is the same as in~\cite{S06,Sakamoto13} except everything is scaled by $1/2$. 
Since we apply this algorithm twice, the claim follows.

\case{Case $k_a = 2$}

\noindent
Therefore $\g = A_{2n}^{(2)\dagger}$ and $a = n$. 
The proof that $p_{\infty}^{(n)} - s$ changes by $1/2$ is the same as in~\cite{S06} except everything is scaled by $1/2$. 
Since we multiply this by $k_n=2$, the claim follows. Similar changes apply to $\varepsilon_a$ in comparison 
to~\cite{Sakamoto13}.
\end{proof}

\begin{remark}
Proposition~\ref{prop:ep_phi} immediately implies that $\langle \alpha_a^\vee, \wt(\nu,J)\rangle = \varphi_a(\nu,J) - \varepsilon_a(\nu,J)$ using
Equation~\eqref{eq:weight_p}. This shows that Definition~\ref{def:crystal_ops_nsl} defines an abstract crystal in the sense of 
Definition~\ref{definition.abstract crystal}.
\end{remark}

\subsection{Crystal operators and virtualization}

To show that the virtualization map $v$ defines a virtual crystal, we must first prove a lemma showing that applying $f_a^{\gamma_a}$ 
and $e_a^{\gamma_a}$ on the ambient rigged configurations gives us another element in our image under $v$.

\begin{lemma}
\label{lemma:act_same_string}
Fix $a \in I_0$ and $\gamma \in \ZZ_{>0}$. Consider a rigged configuration $(\nu, J)$ with $m_i^{(a)} = 0$ for all $i \notin \gamma \ZZ$ 
and $x \in \gamma \ZZ$ for all $x \in J_i^{(a)}$ and $i \in \ZZ_{>0}$. Let $1 \leq k \leq \gamma$, and suppose $e_a$ and $f_a$ act on 
the string $(\ell, x_{\ell})$ in $(\nu,J)^{(a)}$. Then $e_a^k(\nu, J)$ and $f_a^k(\nu, J)$ send $(\ell, x_{\ell})$ to the string $(\ell \mp k, x_{\ell} \pm k)$. 
Moreover, $f_a^{\gamma}(\nu, J)$ and $e_a^{\gamma}(\nu, J)$ both have $m_i^{(a)} = 0$ for all $i \notin \gamma \ZZ$ and 
$x \in \gamma \ZZ$ for all $x \in J_i^{(a)}$ and $i \in \ZZ_{>0}$.
\end{lemma}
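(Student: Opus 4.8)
The plan is to track carefully what the crystal operators of Definition~\ref{def:rc_crystal_ops} do to the $a$-th rigged partition, using the fact that the hypotheses on $(\nu,J)$ force all lengths to lie in $\gamma\ZZ$ and all labels (hence, by \eqref{eq:vacancy}, all vacancy numbers $p_i^{(a)}$ restricted to $i \in \gamma\ZZ$, and thus all colabels) to lie in $\gamma\ZZ$. First I would observe that since every string in $(\nu,J)^{(a)}$ has length in $\gamma\ZZ$ and label in $\gamma\ZZ \subseteq \ZZ$, the selection rule for $e_a$ (resp.\ $f_a$) — pick the minimal (resp.\ maximal) length among strings with the smallest label $x$ — applies, and when $x<0$ for $f_a$ or $x<0$ for $e_a$ we modify the string $(\ell,x_\ell)\mapsto(\ell\mp1,x_\ell\pm1)$; the subtlety is the ``fallthrough'' cases ($x\ge0$ for $e_a$, giving $0$; $x>0$ for $f_a$, adding a string $(1,-1)$). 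Because all labels are in $\gamma\ZZ$ and $\gamma\ge1$, the condition $x\ge 0$ is equivalent to $x=0$ being the only non-negative possibility that triggers the $e_a=0$ case only when $x\ge0$; more importantly, as long as $1\le k\le\gamma$ and we are in the ``generic'' branch (the branch where $e_a$/$f_a$ genuinely acts on a string rather than adding or killing), I claim the same string $(\ell,x_\ell)$ remains the one acted on at each of the $k$ successive steps.

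The key step is the induction on $k$. Suppose $e_a$ acts on the string $(\ell, x_\ell)$ in $(\nu,J)^{(a)}$, so $x_\ell = x <0$ is the smallest label and $\ell$ is minimal among strings with label $x$. After one application, that string becomes $(\ell-1, x+1)$ and all other labels change so that colabels are preserved; crucially the \emph{new} smallest label is now $x+1$ (the other strings originally had label $\ge x$, and preserving colabels while the vacancy numbers $p_i^{(a)}$ decrease appropriately keeps the modified string's label minimal — this is exactly the mechanism already analysed in \cite[Lemma~3.6]{S06} and \cite[Theorem~3.8]{Sakamoto13}). So after $j$ steps ($0\le j<k\le\gamma$) the string of interest is $(\ell-j, x+j)$ with smallest label $x+j$; since $x\in\gamma\ZZ$ and $j<\gamma$ we still have $x+j<0$, so we are still in the generic branch, and $e_a$ acts on this string again, sending it to $(\ell-j-1, x+j+1)$. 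This proves $e_a^k(\nu,J)$ sends $(\ell,x_\ell)$ to $(\ell-k, x_\ell+k)$; the argument for $f_a^k$ is the mirror image, using maximality of $\ell$ and that $x+j>0$ fails for $j<\gamma$ when $x\le 0$. One must also check $f_a^k$ does not return $0$ along the way, which follows because at each intermediate step the colabels are non-negative integers (inherited from the hypothesis that colabels lie in $\gamma\ZZ_{\ge0}$, decremented by at most $\gamma-1$, hence still $\ge1>0$ until the very last step).

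Finally, for the ``moreover'' clause: after applying $e_a^\gamma$ (resp.\ $f_a^\gamma$), the modified string has moved from $(\ell, x_\ell)$ to $(\ell\mp\gamma, x_\ell\pm\gamma)$; since $\ell\in\gamma\ZZ$ and we shift length by $\gamma$, the new length is again in $\gamma\ZZ$ (and if $\ell=\gamma$ for $e_a$, the string of length $0$ simply disappears, which is fine). All other strings' lengths are untouched, so $\widetilde m_i^{(a)}=0$ for $i\notin\gamma\ZZ$ persists. For the labels: the acted-on string acquires label $x_\ell\pm\gamma\in\gamma\ZZ$; every other label changes by $\widetilde p_i^{(a)}-p_i^{(a)}$ where the vacancy numbers are evaluated before and after, and by \eqref{eq:vacancy} this difference is an integer combination of $\gamma$ (since all $m_j^{(a)}$ supported on $j\in\gamma\ZZ$ and the cross terms from other $\nu^{(b)}$ are unchanged) — here I would spell out that $p_i^{(a)}$ changes by $\pm\gamma$ times the relevant coefficient because the only multiplicities that changed are $m_\ell^{(a)}$ and $m_{\ell\mp\gamma}^{(a)}$, both indexed by multiples of $\gamma$. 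Hence all labels remain in $\gamma\ZZ$. The main obstacle I anticipate is the bookkeeping in this last paragraph: verifying that "change all other labels so colabels are preserved" really does shift every label by an element of $\gamma\ZZ$, which requires unwinding \eqref{eq:vacancy} to see that $p_i^{(a)}$ is an $\ZZ$-linear combination of the $m_j^{(a)}$ with the $i$-dependence entering only through $\min(t_b\upsilon_a i, t_a\upsilon_b j)$ — one needs that the indices that moved are in $\gamma\ZZ$ so their contributions to any $p_i^{(a)}$ are in $\gamma\ZZ$; I would cite \eqref{eq:vacancy} and the fact that $\gamma\in\{1,2,3\}$ divides the relevant $t$'s in the cases at hand, or argue more cleanly that since colabels lie in $\gamma\ZZ_{\ge0}$ before and the colabels are preserved, and the new labels of the unmoved strings equal (new vacancy) $-$ (old colabel), it suffices that the new vacancy numbers lie in $\gamma\ZZ$, which is \eqref{eq:vacancy} with all inputs in $\gamma\ZZ$.
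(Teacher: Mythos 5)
Your overall strategy coincides with the paper's: induct on the number of applications, track which string is selected at each step, check that the branch conditions ($x<0$ for $e_a$, $x\le 0$ for $f_a$) persist for $j<\gamma$ steps, and finish with a divisibility check for the ``moreover'' clause. However, the central step --- that after each application the operator selects the \emph{same} (modified) string again --- is asserted rather than proved, and the justification you offer does not cover it. You write that ``the other strings originally had label $\ge x$'' and appeal to the mechanism of \cite[Lemma~3.6]{S06} and \cite[Theorem~3.8]{Sakamoto13}, but those results compute $\varphi_a$ and $\varepsilon_a$ and say nothing about which string is acted on, and ``label $\ge x$'' is not enough: for a general rigged configuration $e_a^2$ need \emph{not} act twice on the same string (if some string $(\ell',x+1)$ with $\ell'<\ell-1$ were present, then after $e_a$ produces $(\ell-1,x+1)$ the next $e_a$ would select $(\ell',x+1)$ instead). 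The hypotheses exist precisely to rule this out and must be used quantitatively: since all labels of $(\nu,J)^{(a)}$ lie in $\gamma\ZZ$, any string whose label exceeds $x_\ell$ has label at least $x_\ell+\gamma$; each application moves the selected string's label by $1$ while every other label moves by $0$ or by $2$ (according to whether its length lies below or above that of the selected string), so the gap between the selected string's label and any competitor's shrinks by at most $1$ per step and is still at least $\gamma-j>0$ after $j<\gamma$ steps. Strings sharing the original label $x_\ell$ are disposed of by the minimality (resp.\ maximality) of $\ell$ together with $m_i^{(a)}=0$ for $i\notin\gamma\ZZ$. This is exactly the inequality the paper's proof records as $x\ge(x_\ell-k)+\gamma-k$; without it your induction does not close.

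Two smaller points. For $e_a$ the relevant vacancy numbers $p_i^{(a)}$ with $i\ge\ell$ \emph{increase}, so the parenthetical in your key step has the direction reversed. And your non-vanishing argument for $f_a^k$ assumes the colabels lie in $\gamma\ZZ_{\ge0}$, which is not among the stated hypotheses (colabels involve the vacancy numbers, hence the other $\nu^{(b)}$ and $L$, not only $(\nu,J)^{(a)}$); the paper does not attempt to prove non-vanishing inside this lemma and handles it later via Equation~\eqref{eq:vacancy_virtualization}. Your treatment of the ``moreover'' clause is essentially correct once one notes that the only strings present after $\gamma$ steps have lengths in $\gamma\ZZ$, so only the changes of $p_i^{(a)}$ for $i\le\ell$ (zero) and $i\ge\ell\pm\gamma$ (a multiple of $2\gamma$) matter.
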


\begin{proof}
We consider $f_a^k(\nu, J)$. Let $\ell$ be the maximal length of all strings of the smallest label $x_{\ell}$ in $(\nu,J)^{(a)}$. Since $\ell$ is the 
largest such string, all strings of length at least $\ell$ have labels $x \geq x_{\ell} + \gamma$. Thus when we apply $f_a$ to 
$(\nu, J)$, the new string in $(\nu,J)^{(a)}$ is $(\ell + 1, x_{\ell}^{\prime})$ with $x_{\ell+1}^{\prime} = x_{\ell} - 1$ and all strings $(i,x)\in (\nu,J)^{(a)}$ 
of length $i\ge \ell + 1$ have labels $x^{\prime} = x - 2$. Thus we have $x^{\prime} \geq x_{\ell}^{\prime} + \gamma - 1$. Therefore 
applying $f_a$ to $(\nu^{\prime}, J^{\prime})$, we act on $(\ell+1, x_{\ell}-1)$ as before. Also note that $p_i^{(a)}$ does not 
change for any $i < \ell$, therefore $J_i^{(a)}$ for $i < \ell$ does not change either. Iterating this we obtain a new string
$(\ell+k,x_\ell-k)$ and $x\ge (x_\ell-k) +\gamma -k$ for any string $(i,x) \in f_a^k(\nu,J)^{(a)}$ with $i\ge \ell+\gamma$. Hence
$f_a$ acts on the string $(\ell+k,x_\ell-k)$ again.
Taking $k = \gamma$ we get our second claim. The proof for $e_a$ is similar.
\end{proof}

In other words, given the conditions of Lemma~\ref{lemma:act_same_string}, $e_a^k$ and $f_a^k$ act on the same string for all $1 \leq k \leq \gamma$
(see also~\cite[Proposition 4.10]{Sakamoto13}).

\begin{remark}
Consider type $A_{2n}^{(2)}$ with $f_n$. We note that since all riggings in $(\nu,J)$ are integral and the first application of the 
type $A_{2n}^{(2)\dagger}$ algorithm changes the selected rigging by 1/2, the second application of the algorithm will act on 
the same string similar to Lemma~\ref{lemma:act_same_string}. Thus all riggings in $f_n(\nu, J)$ will be integral. A similar 
statement holds for $e_n$.
\end{remark}

We also need the following key fact.

\begin{prop}
\label{prop:crystal_ops_commute_virtual}
The crystal operators defined in Definition~\ref{def:crystal_ops_nsl} commute with the virtualization map 
$v \colon \RC(L) \to V \subseteq \RC(\virtual{L})$, where $V$ is defined by Equations~\eqref{eq:rc_virtualization_map} 
and~\eqref{eq:rc_virtualization_map_A2dual}.
\end{prop}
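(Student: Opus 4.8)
The plan is to reduce the statement to a string-by-string comparison of the crystal operator algorithm in type $\g$ against the composite $\gamma_a$-fold application in type $\virtual{\g}$, using Lemma~\ref{lemma:act_same_string} as the main engine. First I would set up notation: fix $a \in I_0$, let $b \in \phi^{-1}(a)$, and recall that under $v$ the image $(\virtual{\nu}, \virtual{J})$ has $\virtual{m}_{\widetilde{\gamma}_a i}^{(b)} = m_i^{(a)}$ and $\virtual{J}_{\widetilde{\gamma}_a i}^{(b)} = \gamma_a J_i^{(a)}$ (or the $A_{2n}^{(2)\dagger}$ variant~\eqref{eq:rc_virtualization_map_A2dual}), while $\virtual{p}_{\widetilde{\gamma}_a i}^{(b)} = \gamma_a p_i^{(a)}$ by~\eqref{eq:vacancy_virtualization}. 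Thus the $b$-th rigged partition of $(\virtual{\nu},\virtual{J})$ is exactly a $\gamma$-scaled (here $\gamma = \widetilde{\gamma}_a$) copy of $(\nu,J)^{(a)}$ for each $b \in \phi^{-1}(a)$, and these copies satisfy precisely the hypothesis ``$m_i^{(b)} = 0$ for $i \notin \gamma\ZZ$ and all labels in $\gamma\ZZ$'' of Lemma~\ref{lemma:act_same_string}.

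The core argument then runs as follows. Since $f^v_a = \prod_{b \in \phi^{-1}(a)} \virtual{f}_b^{\,\gamma_a}$ and the factors for distinct $b$ commute and act on disjoint rigged partitions, it suffices to treat a single $b$. On $(\virtual{\nu},\virtual{J})^{(b)}$ the smallest label is $\gamma_a$ times the smallest label $x$ of $(\nu,J)^{(a)}$; if $x > 0$ the algorithm for $\virtual{f}_b$ adds a length-$1$ string with label $-1$, and Lemma~\ref{lemma:act_same_string} (with that new string playing the role of $(\ell,x_\ell)$, noting that after one step all longer strings have colabels shifted so that $\virtual{f}_b$ keeps selecting it) shows $\virtual{f}_b^{\,\gamma_a}$ produces a length-$\gamma_a$ string of label $-\gamma_a$ adjoined to the $\gamma$-scaled partition — which is exactly the $v$-image of $f_a(\nu,J)$, where a single length-$1$ string of label $-1$ was added. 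If $x \le 0$ the same lemma shows $\virtual{f}_b^{\,\gamma_a}$ sends the selected string $(\gamma_a \ell, \gamma_a x)$ to $(\gamma_a \ell + \gamma_a, \gamma_a x - \gamma_a) = (\gamma_a(\ell+1), \gamma_a(x-1))$, again matching $v \circ f_a$; one must also check that the ``change all other labels so colabels are fixed'' bookkeeping is compatible with scaling, which is immediate from $\virtual{p}_{\gamma_a i}^{(b)} = \gamma_a p_i^{(a)}$ and the fact that colabels scale by $\gamma_a$. The well-definedness clause ($f_a(\nu,J) = 0$ iff the virtual result leaves $V$, iff it is $0$ in $\RC(\virtual{L})$) follows because the constraint $\max J_i^{(a)} \le p_i^{(a)}$ scales to $\max \virtual{J}_{\gamma_a i}^{(b)} \le \virtual{p}_{\gamma_a i}^{(b)}$. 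The argument for $e_a$ is symmetric.

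Finally I would handle the exceptional indices separately: for $\g = A_{2n}^{(2)}$ or $A_{2n}^{(2)\dagger}$ with $a = n$, the operator $f_n$ (resp.\ $e_n$) on $\RC(L)$ is itself defined (Definition~\ref{def:crystal_ops_nsl}) by performing the $A_{2n}^{(2)\dagger}$ half-integer algorithm twice for $A_{2n}^{(2)}$, or once with the $1/2$-shift for $A_{2n}^{(2)\dagger}$, while on the ambient side $\virtual{f}_n^{\,\widetilde{\gamma}_n}$ acts on the two factors $\virtual{B}^{n,s}\otimes\virtual{B}^{n,s}$ with $\widetilde{\gamma}_n = 1$ (type $A_{2n}^{(2)}$) or $\widetilde{\gamma}_n = 2$ (type $A_{2n}^{(2)\dagger}$) and the virtualization~\eqref{eq:rc_virtualization_map_A2dual} doubles the multiplicities and uses $\virtual{p}_i^{(n)} = \widetilde{\gamma}_n p_i^{(n)}$. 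Here the Remark following Lemma~\ref{lemma:act_same_string} is exactly what is needed: since all riggings are integral, the first half-integer step makes the second step act on the same string, so the doubled-up algorithm matches the single integral step it should. Invoking Lemma~\ref{lemma:act_same_string} once more on each of the two copies, with $\gamma = 2$ where appropriate, completes this case.

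The main obstacle I expect is not conceptual but the bookkeeping of the ``change all other labels so that colabels remain fixed'' step: one must verify that this relabelling, applied on the ambient side and then pulled back under $v^{-1}$, really coincides with the relabelling prescribed on the $\g$ side, uniformly across all the different values of $\gamma_a$, the spin-node normalizations $\upsilon_a$, and the half-integer conventions in types $A_{2n}^{(2)}, A_{2n}^{(2)\dagger}$. All of these reduce to the identities $\virtual{p}_{\widetilde{\gamma}_a i}^{(b)} = \gamma_a p_i^{(a)}$ and the scaling of labels by $\gamma_a$ (or $\widetilde{\gamma}_a$), so the proof is really a careful unwinding rather than a new idea, but it is the step where a sign or factor-of-two error is easiest to make.
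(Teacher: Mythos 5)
Your overall strategy is the same as the paper's: scale everything by $\gamma_a$, let Lemma~\ref{lemma:act_same_string} control which string $\virtual{f}_b^{\,\gamma_a}$ acts on, check that the image stays in $V$ via the vacancy-number identity, and treat $A_{2n}^{(2)}$ and $A_{2n}^{(2)\dagger}$ separately at the end. The treatment of the selected string in $(\virtual{\nu},\virtual{J})^{(b)}$ and of the exceptional nodes is essentially correct.

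The gap is in the sentence ``the factors for distinct $b$ commute and act on disjoint rigged partitions, [so] it suffices to treat a single $b$,'' together with the claim that the relabelling of all other strings is ``immediate from $\virtual{p}_{\widetilde{\gamma}_a i}^{(b)} = \gamma_a p_i^{(a)}$.'' The operators $\virtual{f}_b$ for $b\in\phi^{-1}(a)$ do act on distinct rigged partitions $\virtual{\nu}^{(b)}$, but they are \emph{not} disjoint in their effect: each one changes the vacancy numbers, and hence (via the ``colabels fixed'' rule) the labels, of every neighboring rigged partition $\virtual{\nu}^{(c)}$. When $c$ is a common neighbor of $\virtual{\nu}^{(b)}$ and $\virtual{\nu}^{(b')}$ for $b\neq b'\in\phi^{-1}(a)$ --- which happens precisely at the fold, e.g.\ $a=n-1$, $k=n$ in type $C_n^{(1)}$, or $a=n$, $k=n-1$ in type $B_n^{(1)}$ --- the labels in $\virtual{J}^{(c)}$ are shifted by the \emph{combined} amount $\absval{\phi^{-1}(a)}$, and one must check both that this shift is a multiple of $\gamma_k$ (so the result lies in $V$) and that it equals $\gamma_k\cdot(-A_{ka})$, the scaled shift produced by $f_a$ on the $\g$ side. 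The identity you cite, $\virtual{p}_{\widetilde{\gamma}_a i}^{(b)}=\gamma_a p_i^{(a)}$, concerns the node $a$ itself and does not deliver this; what is needed is that the \emph{increment} of $\virtual{p}^{(c)}$ under $f^v_a$ matches $\gamma_k$ times the increment of $p^{(k)}$ under $f_a$, which is a statement about how the Cartan matrix folds (the paper verifies $-A_{ka}=\gamma_k^{-1}\absval{\phi^{-1}(a)}$ by direct computation and uses Remark~\ref{rem:phi_gamma} to see that $\gamma_k\neq 1$ forces $\gamma_k=\absval{\phi^{-1}(a)}$). This neighboring-node check is the actual content of the proof and cannot be reduced to a single $b$ nor dismissed as bookkeeping; as written, your argument never performs it.
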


Before we give the proof, let us provide an example.

\begin{ex}
Using the rigged configuration $(\nu, J)$ from Example~\ref{ex:crystal_operator_A2even}, we have
\begin{align*}
\raisebox{-6pt}{$(\virtual{\nu}, \virtual{J}) = $}
& \hspace{10pt}
\raisebox{-6pt}{$\emptyset$}
\quad\quad
{
\begin{array}[t]{r|c|l}
\cline{2-2} 0 &\phantom{|}& 0 \\
\cline{2-2}
\end{array}
}
\hspace{19pt}
\raisebox{13pt}{
\begin{array}[t]{r|c|l}
\cline{2-2} 0 &\phantom{|}& 0 \\
\cline{2-2} &\phantom{|}& 0 \\
\cline{2-2}
\end{array}
}
\hspace{45pt}
{
\begin{array}[t]{r|c|l}
\cline{2-2} 0 &\phantom{|}& 0 \\
\cline{2-2}
\end{array}
}
\quad\quad
\raisebox{-6pt}{$\emptyset$}
\;\raisebox{-9pt}{,}
\\
\raisebox{-6pt}{$\virtual{f}_3(\virtual{\nu}, \virtual{J}) = $}
& \hspace{10pt}
\raisebox{-6pt}{$\emptyset$}
\quad\quad
{
\begin{array}[t]{r|c|l}
\cline{2-2} 0 &\phantom{|}& 0 \\
\cline{2-2}
\end{array}
}
\quad
\raisebox{13pt}{
\begin{array}[t]{r|c|c|l}
\cline{2-2} 0 &\phantom{|}& \multicolumn{2 }{l}{ 0 } \\
\cline{2-3} -1 &\phantom{|}&\phantom{|}& -1 \\
\cline{2-3}
\end{array}
}
\hspace{24pt}
{
\begin{array}[t]{r|c|l}
\cline{2-2} 0 &\phantom{|}& 0 \\
\cline{2-2}
\end{array}
}
\quad\quad
\raisebox{-6pt}{$\emptyset$}
\;\raisebox{-9pt}{,}
\\
\raisebox{-6pt}{$f_3^v(\virtual{\nu}, \virtual{J}) = $}
& \hspace{10pt}
\raisebox{-6pt}{$\emptyset$}
\quad\quad
{
\begin{array}[t]{r|c|l}
\cline{2-2} 0 &\phantom{|}& 0 \\
\cline{2-2}
\end{array}
}
\quad
\raisebox{13pt}{
\begin{array}[t]{r|c|c|c|l}
\cline{2-2} 0 &\phantom{|}& \multicolumn{3 }{l}{ 0 } \\
\cline{2-4} -2 &\phantom{|}&\phantom{|}&\phantom{|}& -2 \\
\cline{2-4}
\end{array}
}
\quad
{
\begin{array}[t]{r|c|l}
\cline{2-2} 0 &\phantom{|}& 0 \\
\cline{2-2}
\end{array}
}
\quad\quad
\raisebox{-6pt}{$\emptyset$}
\;\raisebox{-9pt}{.}
\end{align*}
The last line can be seen as the virtualization of $f_3(\nu,J)$ in Example~\ref{ex:crystal_operator_A2even}.
\end{ex}

\begin{proof}[Proof of Proposition~\ref{prop:crystal_ops_commute_virtual}]
We will handle types $A_{2n}^{(2)}$ and $A_{2n}^{(2)\dagger}$ separately, so we consider the case when 
$\widetilde{\gamma}_a = \gamma_a$ 
for all $a \in I$. If we write $p_i^{(a)} = \sum_{j \geq 1} \min(i, j) L_j^{(a)} - q_i^{(a)}$, from~\cite[Eq.~(3.2)]{S06II} we can express
\begin{equation}
\label{eq:alt_vac_nums}
q_i^{(a)} = \sum_{b \in I_0}  \frac{A_{ab}}{\gamma_b} \sum_{j \in \ZZ} \min(\gamma_a i, \gamma_b j) m_{j}^{(b)}.
\end{equation}

For explicit expressions for $q_i^{(a)}$ in all non-exceptional affine types, see~\cite{OSS03} (for example Equation~(4.2)). From the definition 
of the (virtual) crystal operators, Equation~\eqref{eq:vacancy_virtualization}, and Lemma~\ref{lemma:act_same_string}, we have $f_a(\nu, J) = 0$ 
if and only if $f^v_a \bigl( v(\nu, J) \bigr) = 0$. Note that in the virtualization map, we do not simultaneously have $\gamma_a \neq 1$ and 
$\absval{\phi^{-1}(a)} \neq 1$ for any fixed $a \in I_0$ by Remark~\ref{rem:phi_gamma}. Thus we have the following 3 disjoint cases. 
Let $(\virtual{\nu},\virtual{J}) = v(\nu,J)$.

\case{Case $\gamma_a = 1$ and $\absval{\phi^{-1}(a)} \neq 1$}

\noindent 
Since $\virtual{\nu}^{(b)} = \virtual{\nu}^{(b^{\prime})}$ for all $b,b^{\prime} \in \phi^{-1}(a)$, their 
images under $f^v_a$ agree, so we only need to check the vacancy numbers on neighboring $\nu^{(k)}$, that is $k$ such that $\{k, a\}$ is 
an edge in the Dynkin diagram of $\g$. If for any $c \in \phi^{-1}(k)$ such that $\virtual{\nu}^{(c)}$ is not a common neighbor of $\virtual{\nu}^{(b)}$ 
and $\virtual{\nu}^{(b^{\prime})}$ (if it holds for some $c \in \phi^{-1}(k)$, then it holds for all $c \in \phi^{-1}(k)$), we have $\gamma_k = 1$ and so 
$f^v_a(\virtual{\nu}, \virtual{J}) \in V$. Also for all $b \in \phi^{-1}(a)$ and $c \in \phi^{-1}(k)$ adjacent to $b$, we have $A_{ak} = A_{bc}$, and 
therefore $v \circ f_a = f^v_a \circ v$.

Now suppose $c = \phi^{-1}(k)$ is a common neighbor (note that $\absval{\phi^{-1}(k)} = 1$). This occurs for $a = n-1$ and $k = n$ with 
$\g = C_n^{(1)}, D_{n+1}^{(2)}, A_{2n}^{(2)}, A_{2n}^{(2)\dagger}$, for $a = n$ and $k = n-1$ with $\g = B_n^{(1)}, A_{2n-1}^{(2)}$, for $a = 2$ 
and $k = 3$ with $\g = F_4^{(1)}, E_6^{(2)}$, and $a = 1$ and $k = 2$ with $\g = G_2^{(1)}, D_4^{(3)}$. 
In this case $\virtual{p}_j^{(c)}$ is increased by $\absval{\phi^{-1}(a)}$ for all $j \geq \gamma_k^{-1} \gamma_a i$, where $i$ is the length of 
string $f_a$ acts on (equivalently $f_a^v$). Thus the riggings are also increased by $\absval{\phi^{-1}(a)}$. Recall from 
Remark~\ref{rem:phi_gamma} that if $\gamma_k \neq 1$, then $\gamma_k = \absval{\phi^{-1}(a)}$, so $f^v_a(\virtual{\nu}, \virtual{J}) \in V$. 
Looking at $f_a(\nu, J)$, from Equation~\eqref{eq:alt_vac_nums} we see the change to $p_j^{(k)}$ is $-A_{ka}$ for all 
$j \geq \gamma_k^{-1} \gamma_a i$. We note that $-A_{ka} = \gamma_k^{-1} \absval{\phi^{-1}(a)}$, which can be seen by direct computation. 
Therefore we have $v \circ f_a = f^v_a \circ v$.

\case{Case $\gamma_a \neq 1$ and $\absval{\phi^{-1}(a)} = 1$}

\noindent
By Equation~\eqref{eq:rc_virtualization_map} and Lemma~\ref{lemma:act_same_string}, applying $f_a^v = \virtual{f}_a^{\gamma_a}$ adds boxes 
to the same string and changes the rigging by $-\gamma_a$. Thus $f^v_a(\virtual{\nu}, \virtual{J}) \in V$, and from the definition of $f_a$ and a 
straightforward check of the vacancy numbers similar to the previous case, we have $v \circ f_a = f^v_a \circ v$.

\case{Case $\gamma_a = 1$ and $\absval{\phi^{-1}(a)} = 1$}

\noindent
Trivially we have $f^v_a(\virtual{\nu}, \virtual{J}) \in V$, and a straightforward check of the vacancy numbers similar to the first case implies 
$v \circ f_a = f^v_a \circ v$.

\vspace{12pt}
Now for type $A_{2n}^{(2)}$, we have $\widetilde{\gamma}_a = 1$ for all $a \in I_0$. Furthermore because $f^v_n = \virtual{f}_n^2$ and the algorithm
for $f_n$ does the usual algorithm twice but the changes in the riggings and $p_i^{(n)}$ are scaled by $1/2 = \gamma_n^{-1}$, we have 
$v \circ f_a = f^v_a \circ v$ for all $a \in I_0$. 
For type $A_{2n}^{(2)\dagger}$, it has the same virtualization map on the partitions as type $A_{2n}^{(2)}$, but with $f^v_n = \virtual{f}_n$. 
Thus from the definition of $f_a$, we have $v \circ f_a = f^v_a \circ v$. 

Similarly for all types/cases we have $v \circ e_a = e^v_a \circ v$ for all $a \in I_0$.
\end{proof}

\begin{lemma}
\label{lemma:rc_virtual_crystal}
Let $\g$ be of affine type. The crystal $\RC(L; \lambda)$ of type $\g_0$ virtualizes into $\RC(\virtual{L}; \Psi(\lambda))$ with 
virtualization map $v$ given by Equations~\eqref{eq:rc_virtualization_map} and~\eqref{eq:rc_virtualization_map_A2dual}.
\end{lemma}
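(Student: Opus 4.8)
The plan is to verify the three defining requirements of a virtualization map (Definition~\ref{dfn:virtual_crystal}): that $v$ is injective, that its image $V := v\bigl(\RC(L;\lambda)\bigr)$ together with $\RC(\virtual{L};\Psi(\lambda))$ forms a virtual crystal, and that $v$ is a $U_q(\g_0)$-crystal isomorphism onto $V$. Injectivity is immediate from the explicit formulas~\eqref{eq:rc_virtualization_map} and~\eqref{eq:rc_virtualization_map_A2dual}: the assignments $\virtual{m}_{\widetilde\gamma_a i}^{(b)} = m_i^{(a)}$ and $\virtual{J}_{\widetilde\gamma_a i}^{(b)} = \gamma_a J_i^{(a)}$ (resp.\ the $A_{2n}^{(2)\dagger}$ variant) can be inverted on the nose, since multiplication by $\gamma_a$ on labels and rescaling of part lengths are both invertible operations. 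One also checks that $v$ does land in $\RC(\virtual L;\Psi(\lambda))$: the configuration condition~\eqref{eq:LL-config} is preserved because $\Psi$ sends $\widetilde\alpha_a$ to $\gamma_a\sum_{b\in\phi^{-1}(a)}\virtual\alpha_b$ and $\clfw_a$ correspondingly, and the constraint $\max \virtual J_i^{(b)} \le \virtual p_i^{(b)}$ follows from~\eqref{eq:vacancy_virtualization}, which gives $\virtual p_{\widetilde\gamma_a i}^{(b)} = \gamma_a p_i^{(a)}$ (with the stated $A_{2n}^{(2)\dagger}$ exception).

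Next I would identify the crystal structure. The key input is Proposition~\ref{prop:crystal_ops_commute_virtual}, which says $v\circ f_a = f_a^v\circ v$ and $v\circ e_a = e_a^v\circ v$, where $f_a^v = \prod_{b\in\phi^{-1}(a)}\virtual f_b^{\,\gamma_a}$ and similarly for $e_a^v$. This already shows $V$ is stable under the virtual crystal operators and that $v$ intertwines them. To get that $(V,\RC(\virtual L;\Psi(\lambda)))$ is genuinely a virtual crystal, I must check the ``aligned'' condition: that $\varepsilon_a := \gamma_a^{-1}\virtual\varepsilon_b$ and $\varphi_a := \gamma_a^{-1}\virtual\varphi_b$ (for any $b\in\phi^{-1}(a)$) are the actual $e_a$- and $f_a$-string lengths on $V$. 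This follows by combining Proposition~\ref{prop:ep_phi} — which computes $\varepsilon_a(\nu,J) = -k_a s$ and $\varphi_a(\nu,J) = k_a(p_\infty^{(a)}-s)$ with $s = \min(0,x)$ — with the ambient-type version of the same proposition applied to $(\virtual\nu,\virtual J)$, using $\virtual p_\infty^{(b)} = \gamma_a p_\infty^{(a)}$ from~\eqref{eq:vacancy_virtualization} and the fact that the smallest label of $(\virtual\nu,\virtual J)^{(b)}$ is $\gamma_a$ times that of $(\nu,J)^{(a)}$. One must handle the $A_{2n}^{(2)}$ and $A_{2n}^{(2)\dagger}$ cases with $a=n$ separately using the modified scaling factors $\widetilde\gamma_n$ and the doubled embedding $B^{n,s}\hookrightarrow B^{n,s}\otimes B^{n,s}$, exactly as Proposition~\ref{prop:crystal_ops_commute_virtual} already does; the factor $k_n$ is precisely what bookkeeps this.

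Finally, to conclude that $v\colon \RC(L;\lambda)\to V$ is a crystal isomorphism, I note it is a bijection onto $V$ by construction, it commutes with $e_a,f_a$ by Proposition~\ref{prop:crystal_ops_commute_virtual}, it preserves weights because $\wt = \Psi^{-1}\circ\virtual\wt$ is built into the definition~\eqref{equation.crystal virtual} and~\eqref{eq:weight_function} is compatible with $\Psi$, and it matches $\varepsilon_a,\varphi_a$ by the alignedness just established. I expect the main obstacle to be the careful case analysis for the exceptional behavior at the spin node $n$ in types $A_{2n}^{(2)}$ and $A_{2n}^{(2)\dagger}$, where $\widetilde\gamma_n\neq\gamma_n$, the riggings may be half-integers, and the virtual KR crystal is a tensor square rather than a single factor — but all the needed care has already been exercised in Proposition~\ref{prop:crystal_ops_commute_virtual} and Proposition~\ref{prop:ep_phi}, so the proof should amount to assembling these pieces and invoking Theorem~\ref{thm:virtual_highest_weight} on highest weight vertices to pin down that $V$ is exactly the set of elements reachable from $v(\hwRC(\virtual L;\Psi(\lambda))\cap V)$.
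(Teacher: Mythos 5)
Your proposal is correct and follows essentially the same route as the paper: the paper likewise verifies the crystal-morphism conditions by citing Proposition~\ref{prop:crystal_ops_commute_virtual} for commutation with $e_a,f_a$, Proposition~\ref{prop:ep_phi} together with Equations~\eqref{eq:rc_virtualization_map} and~\eqref{eq:vacancy_virtualization} for the agreement of $\varepsilon_a,\varphi_a$ and the weights, and Lemma~\ref{lemma:act_same_string} plus regularity for bijectivity. The only cosmetic difference is your closing appeal to Theorem~\ref{thm:virtual_highest_weight}, which the paper reserves for the subsequent Theorem~\ref{thm:hw_crystals_nsl} rather than using here.
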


\begin{proof}
By the definition of $v$, condition~(\ref{mor:1}) for a crystal morphism is satisfied. Condition~(\ref{mor:3}) is satisfied by 
Proposition~\ref{prop:crystal_ops_commute_virtual}. The condition that the weights agree is easy to see from our 
definition of the virtualization map. The remainder of condition~(\ref{mor:2}) holds because of the computation of 
$\varepsilon_a$ and $\varphi_a$ in Proposition~\ref{prop:ep_phi}, Equation~\eqref{eq:rc_virtualization_map}, and 
Equation~\eqref{eq:vacancy_virtualization}. By Lemma~\ref{lemma:act_same_string}, Proposition~\ref{prop:ep_phi}, and 
the fact that we have defined these as regular crystals, we have that $v$ is a bijection. Therefore $v$ is a crystal isomorphism.
\end{proof}

We note that we can characterize the image of $v$ by using Equations~\eqref{eq:rc_virtualization_map} 
and~\eqref{eq:rc_virtualization_map_A2dual}. Moreover, by weight considerations this virtualization map is the unique virtualization of $\RC(L)$ 
into $\RC(\virtual{L})$ up to permutation of the classical components.
Thus we have the extension of Theorem~\ref{thm:crystal_structure} to all finite types (with possibly different rigged 
configurations coming from the affine type).

\begin{thm}
\label{thm:hw_crystals_nsl}
Let $\g$ be an affine Lie algebra. For $(\nu, J) \in \hwRC(L; \lambda)$, let $X_{(\nu, J)}$ be the crystal generated by $(\nu, J)$ 
and $e_a, f_a$ for $a \in I_0$. Then $X_{(\nu, J)}$ is isomorphic to the crystal graph $B(\lambda)$ as $U_q(\g_0)$-crystals.
\end{thm}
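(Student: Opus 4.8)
The plan is to bootstrap from the simply-laced result (Theorem~\ref{thm:crystal_structure}) using the virtualization machinery assembled above; the argument is uniform across all non-simply-laced types, with the type-dependent casework entirely absorbed into the cited ingredients. Fix $(\nu, J) \in \hwRC(L; \lambda)$ and let $(\virtual{\nu}, \virtual{J}) = v(\nu, J)$ be its image under the virtualization map of Lemma~\ref{lemma:rc_virtual_crystal}, an element of the rigged configurations $\RC(\virtual{L})$ for the ambient simply-laced affine algebra $\virtual{\g}$. By Lemma~\ref{lemma:rc_virtual_crystal}, $v$ is a $U_q(\g_0)$-crystal isomorphism onto its image, and by Proposition~\ref{prop:crystal_ops_commute_virtual} it intertwines $e_a, f_a$ with the virtual operators $e_a^v, f_a^v$. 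Hence $X_{(\nu, J)}$ is carried isomorphically onto the sub-$\g_0$-crystal $V'$ of $\RC(\virtual{L})$ generated from $(\virtual{\nu}, \virtual{J})$ under $e_a^v, f_a^v$, and it suffices to identify $V'$ with a copy of $B(\lambda)$ sitting inside an ambient highest weight crystal $B(\Psi(\lambda))$.

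The steps, in order, are as follows. First I would verify that $(\virtual{\nu}, \virtual{J}) \in \hwRC(\virtual{L}; \Psi(\lambda))$: the explicit formulas~\eqref{eq:rc_virtualization_map}/\eqref{eq:rc_virtualization_map_A2dual} write each rigging of $(\virtual{\nu}, \virtual{J})$ as a nonnegative multiple of a rigging of $(\nu, J)$, and~\eqref{eq:vacancy_virtualization} scales the vacancy numbers by the matching factor, so the constraints $0 \leq x \leq p_i^{(a)}$ together with $\nu \in C^*(L, \lambda)$ force $(\virtual{\nu}, \virtual{J})$ to satisfy the defining inequalities of a highest weight rigged configuration; that $v$ intertwines weights with $\Psi$ (Definition~\ref{dfn:virtual_crystal} and Lemma~\ref{lemma:rc_virtual_crystal}) pins the weight down to $\Psi(\lambda)$. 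In particular $(\virtual{\nu}, \virtual{J})$ is $\virtual{\g}_0$-highest weight, so Theorem~\ref{thm:crystal_structure} applied in the simply-laced type $\virtual{\g}$ yields an isomorphism $X_{(\virtual{\nu}, \virtual{J})} \cong B(\Psi(\lambda))$ sending $(\virtual{\nu}, \virtual{J})$ to the highest weight vector $u_{\Psi(\lambda)}$. Next, since $e_a(\nu, J) = 0$ for all $a \in I_0$ we get $e_a^v(\virtual{\nu}, \virtual{J}) = 0$, so $V'$ lies inside $X_{(\virtual{\nu}, \virtual{J})}$ and is the connected component of $(\virtual{\nu}, \virtual{J})$ for the virtual $\g_0$-crystal structure; transporting it across the isomorphism, $V'$ becomes the connected component of $u_{\Psi(\lambda)}$ in the virtual $\g_0$-crystal structure on $B(\Psi(\lambda))$. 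Finally, by Theorem~\ref{thm:virtual_highest_weight} that component is precisely the virtualization of $B(\lambda)$ and is $U_q(\g_0)$-isomorphic to $B(\lambda)$; chaining the isomorphisms gives $X_{(\nu, J)} \cong V' \cong B(\lambda)$.

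I expect the only real work to be the bookkeeping in the first step, namely checking that $v$ maps $\hwRC(L; \lambda)$ into $\hwRC(\virtual{L}; \Psi(\lambda))$ with the correct ambient weight, and in particular the nonstandard cases $A_{2n}^{(2)}$ and $A_{2n}^{(2)\dagger}$ with $a = n$, where $\widetilde{\gamma}_n \neq \gamma_n$, the virtual KR crystal is $B^{n,s} \otimes B^{n,s}$, and the vacancy-number scaling is the exceptional $\virtual{p}_i^{(n)} = \widetilde{\gamma}_n p_i^{(n)}$; these must be traced through~\eqref{eq:rc_virtualization_map}, \eqref{eq:rc_virtualization_map_A2dual}, and~\eqref{eq:vacancy_virtualization} separately. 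A second point that needs a word is that the final step invokes two a priori different virtualizations, the one on rigged configurations from Lemma~\ref{lemma:rc_virtual_crystal} and the abstract one on highest weight crystals from Theorem~\ref{thm:virtual_highest_weight}, but they agree because both are the unique virtual $\g_0$-subcrystal of $B(\Psi(\lambda))$ containing $u_{\Psi(\lambda)}$ and closed under the $f_a^v$, so no genuine obstacle arises there. Everything else is formal manipulation of crystal morphisms and connected components.
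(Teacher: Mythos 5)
Your proposal is correct and follows essentially the same route as the paper, which proves this theorem by combining exactly the three ingredients you use: Theorem~\ref{thm:crystal_structure} in the ambient simply-laced type, Theorem~\ref{thm:virtual_highest_weight}, and Lemma~\ref{lemma:rc_virtual_crystal}. The extra bookkeeping you flag (that $v$ sends highest weight rigged configurations to highest weight rigged configurations of weight $\Psi(\lambda)$, and that the two virtualizations of $B(\lambda)$ inside $B(\Psi(\lambda))$ coincide) is left implicit in the paper but is handled correctly in your write-up.
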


\begin{proof}
This follows from Theorem~\ref{thm:crystal_structure}, Theorem~\ref{thm:virtual_highest_weight}, and Lemma~\ref{lemma:rc_virtual_crystal}.
\end{proof}

We also have that cocharge is invariant on each classical component.

\begin{prop}
\label{prop:constant_cocharge}
Consider a classical component $X_{(\nu, J)}$ as in Theorem~\ref{thm:hw_crystals_nsl}. The cocharge $\cc$ is constant on $X_{(\nu,J)}$.
\end{prop}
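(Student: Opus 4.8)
The plan is to reduce the claim to the simply-laced case handled in~\cite{S06}, using the virtualization machinery just established. First I would recall that for simply-laced types the statement is essentially immediate from the fact that the crystal operators $e_a, f_a$ of Definition~\ref{def:rc_crystal_ops} either leave $\cc(\nu,J)$ unchanged or, more precisely, that $\cc$ depends only on the classical component: when $f_a$ adds the string $(1,-1)$ the contribution $\cc(\nu) + \sum t_a^\vee |J_i^{(a)}|$ is arranged so that the increase in $\cc(\nu)$ is cancelled by the new rigging $-1$ together with the colabel-preserving adjustments on neighboring partitions; and when $f_a$ replaces $(\ell,x)$ by $(\ell+1,x-1)$ keeping all colabels fixed, the sum of riggings plus $\cc(\nu)$ is again invariant because $x = p_\ell^{(a)} - (\text{colabel})$ and $\sum_i t_a^\vee |J_i^{(a)}|$ re-expresses via vacancy numbers and fixed colabels. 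So in the simply-laced case the proposition already follows from Theorem~\ref{thm:crystal_structure} together with a direct check; I would cite~\cite{S06} for this (it is implicit there, since $\Phi$ sends $\cc$ to energy and energy is constant on classical components, or one checks it directly).

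For the general affine case, the key is Equation~\eqref{eq:cocharge_virtualization}: under the virtualization map $v$ we have $\cc(\virtual\nu, \virtual J) = \gamma_0\, \cc(\nu,J)$. By Lemma~\ref{lemma:rc_virtual_crystal}, $v$ is a $U_q(\g_0)$-crystal isomorphism from $\RC(L;\lambda)$ onto a subcrystal $V \subseteq \RC(\virtual L; \Psi(\lambda))$, and in particular $v$ maps the classical component $X_{(\nu,J)}$ isomorphically onto a classical component of $\RC(\virtual L)$ — more precisely onto (a subset of) the component $X_{v(\nu,J)}$ — commuting with all $e_a, f_a$ via Proposition~\ref{prop:crystal_ops_commute_virtual}. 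Since $\virtual\g_0$ is simply-laced, the simply-laced case of the proposition gives that $\cc$ is constant on $X_{v(\nu,J)}$, hence constant on $v(X_{(\nu,J)})$. Dividing by the nonzero constant $\gamma_0$ and using Equation~\eqref{eq:cocharge_virtualization}, we conclude $\cc$ is constant on $X_{(\nu,J)}$.

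So the steps, in order, are: (1) establish (or cite) the proposition for simply-laced types by a direct computation with the crystal operators of Definition~\ref{def:rc_crystal_ops}, tracking how $\cc(\nu)$ and the rigging sums change and cancelling them against the colabel-preserving rule; (2) invoke Lemma~\ref{lemma:rc_virtual_crystal} and Proposition~\ref{prop:crystal_ops_commute_virtual} to see that $v$ carries each classical component $X_{(\nu,J)}$ into a single classical component of the ambient simply-laced rigged configurations; (3) apply step (1) to the ambient component; (4) transport the conclusion back along Equation~\eqref{eq:cocharge_virtualization}. The main obstacle is step (1): one must verify carefully that the "change all other labels so that all colabels remain fixed" clause, combined with the specific $t_a^\vee$ weighting in~\eqref{eq:cocharge} and the quadratic form in~\eqref{eq:cocharge_nu}, produces exactly the cancellation that makes $\cc$ invariant — this is a bookkeeping argument but it is where all the content lies, and it is cleanest to phrase it as "$\cc(\nu,J)$ equals $\cc(\nu)$ plus $\sum_{(a,i)} t_a^\vee(p_i^{(a)} - \text{colabel}_i^{(a)})\cdot(\text{multiplicity})$ and both pieces transform so as to cancel," or simply to reduce to the already-proved statement that $\Phi\circ\theta$ sends $\cc$ to energy on highest weight elements and extend by the crystal isomorphism property.
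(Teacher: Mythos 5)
Your proposal is correct and follows essentially the same route as the paper: cite the simply-laced case (which is explicit as \cite[Thm.~3.9]{S06}, not merely implicit) and transport it through the virtualization map via Equation~\eqref{eq:cocharge_virtualization}. The paper's proof is exactly this two-line argument, and it likewise notes afterwards that a direct computation in the style of \cite[Thm.~3.9]{S06} would also work, so the extra bookkeeping you sketch in step (1) is not needed.
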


\begin{proof}
In~\cite[Thm.~3.9]{S06}, it was shown that cocharge is constant on classical components in simply-laced types. From 
Equation~\eqref{eq:cocharge_virtualization}, we have that cocharge is constant on classical components.
\end{proof}

We could also prove Proposition~\ref{prop:constant_cocharge} directly by a similar argument to~\cite[Theorem~3.9]{S06}.

\begin{cor}
\label{cor:ls_lt_morphisms}
The maps $\ls$ and $\lt$ are strict crystal embeddings.
\end{cor}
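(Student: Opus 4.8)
The plan is to verify directly that $\ls$ and $\lt$ satisfy the three conditions in the definition of a crystal morphism, and that each is injective. Recall that $\ls \colon \RC(B^{r,s} \otimes B^*) \hookrightarrow \RC(B^{r,1} \otimes B^{r,s-1} \otimes B^*)$ acts as the identity on the underlying rigged configuration (the multiplicity array changes, so the vacancy numbers may increase, but the partitions $\nu$ and the riggings $J$ are unchanged), and $\lt \colon \RC(B^{r,1} \otimes B^*) \hookrightarrow \RC(B^{1,1} \otimes B^{r-1,1} \otimes B^*)$ adds a length-$1$ singular string to $(\nu,J)^{(a)}$ for each $1 \le a < r$. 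Injectivity is then immediate for both: $\ls$ is literally the identity map on rigged configurations, and $\lt$ is inverted by deleting the distinguished length-$1$ strings in rows $1$ through $r-1$, provided those strings can be recognized, which they can since $\lt$ only ever adds such strings and never removes them.

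First I would check that the crystal operators $e_a, f_a$ ($a \in I_0$) from Definition~\ref{def:crystal_ops_nsl} commute with each map, which establishes condition~(\ref{mor:3}) (and in fact strictness). For $\ls$ this is clear because $e_a$ and $f_a$ are defined purely in terms of the partition $\nu^{(a)}$, its riggings, and the colabels; since $\ls$ does not change $\nu$ or $J$, and since the rule for $e_a, f_a$ ``change all other labels so that colabels remain fixed'' is insensitive to a uniform shift of vacancy numbers on row $a$, the image under $e_a, f_a$ is the same rigged configuration viewed in the larger $\RC$. For $\lt$, the extra length-$1$ singular strings sit in components $1 \le a < r$; I would argue that $e_a$ and $f_a$ never interact with these strings in a way that breaks commutativity: the added strings have the maximal label $p_1^{(a)}$ and length $1$, so when computing $e_a$ (minimal length among strings of the smallest label) or $f_a$ (maximal length among strings of the smallest label, then add a box), the behavior on the rest of $(\nu,J)^{(a)}$ is unaffected, and the colabel-preservation bookkeeping treats the added string consistently on both sides. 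This is the step I expect to require the most care — essentially a case check that for each $a$, acting by $e_a$ or $f_a$ before or after inserting the singular length-$1$ rows in components $1,\dots,r-1$ yields the same result — but it is routine given the explicit description of the operators.

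Next, condition~(\ref{mor:2}) — that $\wt$, $\varepsilon_a$, and $\varphi_a$ are preserved — follows from the weight and string-length formulas. For $\wt$: by Equation~\eqref{eq:weight_function}, $\overline{\wt}(\nu,J) = \sum i L_i^{(a)} \clfw_a - \sum_a |\nu^{(a)}| \eta^{-1}(\widetilde{\alpha}_a)$. Under $\ls$ the sizes $|\nu^{(a)}|$ are unchanged and the multiplicity contribution $\sum_i i L_i^{(a)}$ is preserved (splitting $B^{r,s}$ into $B^{r,1} \otimes B^{r,s-1}$ replaces the term $s\clfw_r$ by $1\cdot\clfw_r + (s-1)\clfw_r$); under $\lt$ the analogous check is that adding one box to each of $\nu^{(1)},\dots,\nu^{(r-1)}$ exactly compensates the change $\clfw_r \rightsquigarrow \clfw_1 + \clfw_{r-1}$ in the multiplicity term, which is the identity $\clfw_r - \clfw_{r-1} - \clfw_1 = -\sum_{a=1}^{r-1}(\text{contribution of one box in component }a)$ — this is precisely the standard telescoping relation among fundamental weights and simple roots in the underlying classical type, and I would cite it as a direct computation. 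For $\varepsilon_a$ and $\varphi_a$: by Proposition~\ref{prop:ep_phi}, $\varepsilon_a = -k_a s$ and $\varphi_a = k_a(p_\infty^{(a)} - s)$ where $s = \min(0, x)$ with $x$ the smallest label of $(\nu,J)^{(a)}$. For $\ls$, $x$ is unchanged and $p_\infty^{(a)}$ is unchanged (the vacancy number at $i = \infty$ depends on $\sum_j L_j^{(a)} j$, which is preserved), so $\varepsilon_a, \varphi_a$ are preserved. For $\lt$, in components $1 \le a < r$ the added string has label $p_1^{(a)} \ge 0$, so it does not decrease the minimum label; one checks $s$ is unchanged and that $p_\infty^{(a)}$ is also unchanged (the added string contributes to $q_\infty^{(a)}$ but the formula~\eqref{eq:alt_vac_nums} shows this is exactly offset — or more simply, by Equation~\eqref{eq:weight_p}, $\langle\alpha_a^\vee, \overline{\wt}\rangle = k_a p_\infty^{(a)}$, and we have already shown $\wt$ is preserved). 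Finally I would note condition~(\ref{mor:1}) $\psi(0) = 0$ is vacuous since these are genuine maps defined on all of the source. Combining injectivity with the morphism conditions gives that $\ls$ and $\lt$ are crystal embeddings, and since $e_a, f_a$ commute with them they are strict, completing the proof.
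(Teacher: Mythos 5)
Your overall strategy coincides with the paper's: the paper likewise reduces the claim to (i) weight preservation, observing that $\ls$ is the identity on rigged configurations and that $\lt$ adds singular length-$1$ strings while preserving vacancy numbers, and (ii) commutation with the classical operators, which it defers to an argument ``similar to \cite[Lemma~C.3]{DS06}'' before invoking Theorem~\ref{thm:hw_crystals_nsl}. Your bookkeeping for $\wt$, $\varepsilon_a$, and $\varphi_a$ via Proposition~\ref{prop:ep_phi} and Equation~\eqref{eq:weight_p} is correct and more explicit than what the paper records, and the injectivity discussion is fine.

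There is, however, one concrete point where your justification of the commutation for $\lt$ breaks down. You assert that the inserted string $(1,p_1^{(a)})$ is never the string selected by $e_a$ or $f_a$ because it carries the maximal label. This fails precisely when $p_1^{(a)}=0$ while every label of $(\nu,J)^{(a)}$ is positive (or the row is empty): then the inserted string $(1,0)$ has the \emph{smallest} label in $\lt(\nu,J)^{(a)}$, so by Definition~\ref{def:rc_crystal_ops} the operator $f_a$ applied after $\lt$ tries to grow it to $(2,-1)$, whereas $f_a$ applied before $\lt$ tries to create a new string $(1,-1)$ --- two genuinely different operations. The statement survives because both attempts return $0$: the first requires $p_2^{(a)}\ge 1$ and the second requires $p_1^{(a)}\ge 1$, and the convexity relation $p_0^{(a)}-2p_1^{(a)}+p_2^{(a)}=-L_1^{(a)}+2m_1^{(a)}-\sum_{b\sim a}m_1^{(b)}$ together with $p_0^{(a)}=p_1^{(a)}=m_1^{(a)}=0$ forces $p_2^{(a)}\le 0$. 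Alternatively, all vanishing cases (for both $\ls$ and $\lt$) can be handled at once by the preservation of $\varphi_a$ and $\varepsilon_a$ you have already established, combined with regularity from Theorem~\ref{thm:hw_crystals_nsl}. You should add one of these two repairs; with it, your case check goes through and the proof is complete.
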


\begin{proof}
Since $\ls$ is the identity map on the rigged configurations and preserves the weights, it must be a strict crystal embedding by 
Theorem~\ref{thm:hw_crystals_nsl}. Since $\lt$ adds singular strings of length 1 to $\nu^{(a)}$ for $a < r$ and preserves the 
vacancy numbers, a straightforward check shows the resulting rigged configuration also preserves the weight. Additionally $\lt$ 
commutes with $e_a$ and $f_a$ by similar argument to~\cite[Lemma~C.3]{DS06}. Therefore by Theorem~\ref{thm:hw_crystals_nsl}, 
it must be a strict crystal embedding.
\end{proof}

\section{The filling map}
\label{sec:filling_map}

In this section we describe the filling map for all non-exceptional types on a case-by-case basis, extending the results
for type $D_n^{(1)}$ in~\cite{OSS13}. Philosophically, the map $\Phi$ between tensor products of KR crystals and
rigged configurations consists of a sequence of splitting maps and $\delta$. For each factor $B^{r,s}$ in the tensor product,
the map $\delta$ is applied $rs$ times and results in $rs$ letters in $B^{1,1}$. However, if an element $b\in B^{r,s}$ is in the classical 
component $B(\lambda) \subseteq B^{r,s}$, its KN tableaux representation has shape $\lambda$.
The filling map $\fillmap \colon B^{r,s} \to (B^{1,1})^{\otimes rs}$ makes the link between the KN tableaux and the map $\Phi$
by effectively ``filling in'' the shape $\lambda$ to an $r \times s$ rectangle.
Here we describe the explicit image of $\fillmap$ for the classically highest weight elements and then extend it as a classical crystal morphism. 
We consider $(B^{1,1})^{\otimes rs}$ as an $r \times s$ rectangle where the classical crystal structure is given by the column reading 
word. We denote this (classical) crystal by $T^{r,s}$ and call the resulting column-strict tableaux \emph{Kirillov-Reshetikhin (KR) tableaux} 
following~\cite{OSS13}. We will also show that the filling map corresponds to the exact image of $\Phi$ on highest weight elements 
for a single factor $B^{r,s}$.

\begin{remark}
\label{remark:spin_tableaux}
In~\cite{KN94,HK02}, the tableaux for the spin cases of $B(\clfw_n)$ in type $B_n$ and $B(\clfw_{n-1})$ and $B(\clfw_n)$ in 
type $D_n$ are given in terms of half-width columns of height $n$ filled with $\{+, -\}$. These tableaux can be identified
with full-width columns where $+$ (resp. $-$) at height $i$ goes to $i$ (resp. $\overline{i}$) sorted to be strictly increasing.
This is a virtualization map with $\gamma_a=2$ for all $a\in I_0$, so that in particular $e^v_a = \virtual{e}_a^2$ and 
$f^v_a = \virtual{f}_a^2$ for all $a \in I_0$. This corresponds to the natural embedding of $B(\lambda)$ in 
$B(2\lambda)$ (with $\virtual{e}_a$ and $\virtual{f}_a$ as in~\cite{HK02}).
\end{remark}

Note that for type $A_n^{(1)}$, the filling map is the identity since $B^{r,s} \cong B(s\Lambda_r)$ classically in this case. We begin in 
Section~\ref{sec:typeD_fill} by recalling the filling map in type $D_n^{(1)}$~\cite{OSS13} and then proceed onto all other 
non-exceptional non-simply-laced types.

\subsection{Filling map for type $D_n^{(1)}$}
\label{sec:typeD_fill}

We first recall from~\cite{Kleber98} the structure of the Kleber tree for $B^{r,s}$ in type $D_n^{(1)}$ as it will be needed later. We begin by 
considering the spinor cases, i.e. $r = n-1,n$, in which case the Kleber tree is trivial -- it consists only of the root. Thus the only highest weight 
rigged configuration for both spinor cases is the empty rigged configuration (where $\nu^{(a)} = \emptyset$ for all $a \in I_0$). Hence the unique 
highest weight tableau is given by $s$ columns of the form $\column{1, \dotsc, n-1, \overline{n}}$ for $r = n-1$ and of the form 
$\column{1, \dotsc, n}$ for $r = n$ and the filling map is the identity map (recall that we are using doubled spin columns, see 
Remark~\ref{remark:spin_tableaux}).

Next we consider the case $r < n-1$. The Kleber tree structure was originally described in~\cite[Sec.~3.5]{Kleber98} and the 
resulting rigged configurations were given in~\cite[Prop.\ 3.3]{OSS13}. We give a proof here for completeness as some of the details
are used for other types.
Let $\overline{\lambda}$ denote the complement shape of $\lambda$ in an $r \times s$ rectangle and let $\mu^{[m]}$ denote the partition 
$\mu$ but with the first $m$ rows removed. Let $\mu^{1/2}$ be the partition with multiplicities $m_i(\mu) / 2$.

\begin{lemma}\cite[Prop.\ 3.3]{OSS13}
\label{lemma:hw_typeD}
Let $B^{r,s}$ be a KR crystal of type $D_n^{(1)}$ with $r < n$. We have
\[
\RC(B^{r,s}) = \bigoplus_{\lambda} \RC(B^{r,s}; \lambda),
\]
where $\lambda$ is obtained by removing vertical dominoes from an $r \times s$ rectangle.
Moreover, the highest weight rigged configuration in $\RC(B^{r,s}; \lambda)$ is
\begin{equation}
\label{eq:highest_weight_typeD}
\nu^{(a)} = \begin{cases}
\overline{\lambda}^{[r-a]} & 1 \leq a < r, \\
\overline{\lambda} & r \leq a < n - 1, \\
\overline{\lambda}^{1/2} & a = n-1, n,
\end{cases}
\end{equation}
with all riggings 0.
\end{lemma}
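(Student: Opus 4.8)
The plan is to run the Kleber algorithm of Definition~\ref{def:kleber_algorithm} for the multiplicity array $L$ with $L_i^{(a)} = \delta_{a,r}\,\delta_{i,s}$ in type $D_n^{(1)}$ (with $r < n-1$), and then convert the resulting tree $T(B)$ to highest weight rigged configurations. First I would recall from~\cite[Sec.~3.5]{Kleber98} the structure of $T(B)$ in this case. Since $L$ is supported at the single node $(r,s)$, step~(K1) adds $\clfw_r$ to every node exactly at depths $1,\dots,s$ and nothing afterwards, so every weight occurring in $T(B)$ is a non-spin dominant weight, i.e.\ an ordinary partition with at most $r \le n-2$ rows sitting inside the $r \times s$ rectangle; in particular the two spin nodes never intervene. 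One then checks (this is where~\cite{Kleber98} does the combinatorial bookkeeping) that the nodes of $T(B)$ are in bijection with the partitions $\lambda$ obtained from $(s^r)$ by removing vertical dominoes, the final weights along the root-to-node path for $\lambda$ being $x^{(0)} = s\clfw_r,\ x^{(1)},\dots,\ x^{(p)} = \lambda$.

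Next I would apply the conversion $m_i^{(a)} = \bigl( x^{(i-1)} - 2x^{(i)} + x^{(i+1)} \mid \clfw_a \bigr)$, writing $d_i = x^{(i-1)} - x^{(i)} \in \overline{Q}^+$ for the edge labels. The defining identity then says exactly that the coefficient of $\alpha_a$ in $d_i$ equals the number of parts of $\nu^{(a)}$ of size $\ge i$; telescoping gives $\sum_{i\ge1} d_i = \sum_{a\in I_0} \lvert \nu^{(a)}\rvert\,\alpha_a$, which one checks equals $s\clfw_r - \lambda$ (so that the path indeed ends at $\overline{\wt}(\nu) = \lambda$), and the $d_i$ are weakly decreasing since conjugate partitions are. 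Feeding the explicit path weights into this identity identifies $\nu$ with the configuration in~\eqref{eq:highest_weight_typeD}; equivalently — and this is how I would organize the argument in practice — one starts from the right-hand side of~\eqref{eq:highest_weight_typeD}, forms $d_i = \sum_{a} (\nu^{(a)})'_i\,\alpha_a$ and $x^{(i)} = s\clfw_r - \sum_{j\le i} d_j$, and checks that this yields a node of $T(B)$ and that all nodes arise this way. Two elementary facts keep the bookkeeping honest: that $\lambda$ is a vertical-domino removal forces $\overline{\lambda}'_i$ to be even for every $i$, which is what makes $\overline{\lambda}^{1/2}$ an honest partition in~\eqref{eq:highest_weight_typeD}; and $\overline{\lambda}^{[r-a]}$ is precisely the bottom $a$ rows of $\overline{\lambda}$.

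The point that remains — and that subsumes dominance of the Kleber weights — is that the riggings are forced to be $0$, i.e.\ that $p_i^{(a)}(\nu) = 0$ whenever $m_i^{(a)} > 0$, while $p_i^{(a)}(\nu) \ge 0$ always. For type $D_n^{(1)}$, which is simply-laced, \eqref{eq:vacancy} collapses to $p_i^{(a)} = \min(i,s)\,\delta_{a,r} - \sum_{b\in I_0} A_{ab} \sum_{k=1}^{i} (\nu^{(b)})'_k$ with $A$ the Cartan matrix of $D_n$, and comparing with $\langle \alpha_a^\vee, x^{(i)}\rangle = s\,\delta_{a,r} - \sum_{k=1}^{i} \langle \alpha_a^\vee, d_k\rangle$ yields $\langle \alpha_a^\vee, x^{(i)}\rangle = p_i^{(a)} + \max(s-i,0)\,\delta_{a,r}$, so that $p_i^{(a)} \ge 0$ for all $(a,i)$ is equivalent to all the $x^{(i)}$ being dominant. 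Substituting the explicit $\nu$ of~\eqref{eq:highest_weight_typeD} and splitting into cases on $i$ (by which parts of $\overline{\lambda}$ it meets) and on $a$ (the ranges $a < r$, $r \le a < n-1$, and $a \in \{n-1,n\}$) then establishes both inequalities. Hence every node of $T(B)$ produces a unique highest weight rigged configuration with all riggings $0$; these are indexed by the vertical-domino shapes $\lambda$; and Theorem~\ref{thm:crystal_structure} identifies the classical component $X_{(\nu,J)}$ generated by that element with $B(\lambda)$. Taking the disjoint union over $\lambda$ gives the decomposition $\RC(B^{r,s}) = \bigoplus_\lambda \RC(B^{r,s};\lambda)$.

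I expect the case analysis in the vacancy-number computation to be the main obstacle: the Cartan entries at the fork of the $D_n$ diagram ($A_{n-2,n-1} = A_{n-2,n} = -1$ while $A_{n-1,n} = 0$) make the spin nodes $a = n-1,n$ behave differently from the nodes along the tail, and the evenness of $\overline{\lambda}'$ is exactly what is needed to reconcile the two; the precise description of $T(B)$ in the first paragraph is the other place where genuine work is hidden, though it can essentially be imported from~\cite{Kleber98}.
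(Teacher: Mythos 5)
Your proposal follows essentially the same route as the paper: both run the Kleber algorithm for the single node $(r,s)$, identify the nodes of $T(B)$ with weakly decreasing sequences of vertical-domino removals, convert via $m_i^{(a)} = \bigl(x^{(i-1)} - 2x^{(i)} + x^{(i+1)} \mid \clfw_a\bigr)$ to get the configuration \eqref{eq:highest_weight_typeD}, and conclude that the vanishing of the relevant vacancy numbers forces all riggings to be $0$ (the paper simply re-derives the tree structure in-line rather than importing it from Kleber, since it reuses those details for other types). One small caveat: your asserted equivalence between $p_i^{(a)} \ge 0$ for all $(a,i)$ and dominance of the final weights $x^{(i)}$ holds only in one direction --- for $a = r$ and $i < s$, dominance of $x^{(i)}$ gives only $p_i^{(r)} \ge -(s-i)$, the correct equivalence being with dominance of the weights at the time of their creation in the algorithm --- but this does not affect your argument, since you verify $p_i^{(a)} \ge 0$ and $p_i^{(a)} = 0$ on nonempty strings directly from the explicit $\nu$.
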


\begin{proof}
For $T_1$ in Definition~\ref{def:kleber_algorithm}, we have one node $t_0 := \clfw_r$. Next, to obtain other dominant weights, we can only subtract
\[
	\alpha^{(k_1)} := \alpha_{r-2k_1+1} + 2 \alpha_{r-2k_1+2} + \cdots + 2k_1 \alpha_r + \cdots + 2k_1 \alpha_{n-2} + k_1 \alpha_{n-1} 
	+ k_1 \alpha_n = \clfw_r - \clfw_{r-2k_1}
\]
by (K2) of Definition~\ref{def:kleber_algorithm} where $1 \leq k_1 \leq r/2$, resulting in $\clfw_{r-2k_1}$. Pictorially, this removes 
$k_1$ vertical dominoes from the single column of height $r$. This yields all possible children of $t_0$.

Next we add $\clfw_r$ to all nodes of $T_1$ to get $T_2'$. We now consider a particular leaf $x$ that was obtained from its parent using $\alpha^{(k_1)}$. To obtain all children of $x$, we can only subtract $\alpha^{(k_2)}$ where $k_1 \geq k_2 > 0$ by the additional conditions in Step~(K2) of Definition~\ref{def:kleber_algorithm}. Thus this changes the newly added $\clfw_r$ to $\clfw_{r-2k_2}$, and this is the only possibility because otherwise we would have to subtract $\alpha_a$ for some $a < r-2k_1+1$, violating~(K2). Ranging over all leaves we obtain $T_2$. We can iterate the above to see that for any leaf in $T(B^{r,s})$, we must have a sequence $r/2 \geq k_1 \geq k_2 \geq \cdots \geq k_s > 0$. Note that there are exactly $s$ steps needed to construct $T(B^{r,s})$ since we can only change the newly added weight at each step. Furthermore, each sequence gives rise to a unique dominant weight.

Fix a node $x \in T(B^{r,s})$ at depth $p$ and a sequence $r/2 \geq k_1 \geq k_2 \geq \cdots \geq k_p > 0$ which denotes the path to $x$ and $\alpha^{(k_i)} = x^{(i-1)} - x^{(i)}$. Following the convention given in Definition~\ref{def:kleber_algorithm}, we have $k_q = 0$ for all $q > p$. Now fix some $1 \leq i \leq p$, and let $d_i = k_i - k_{i+1}$. We have
\begin{align*}
\alpha^{(k_i)} - \alpha^{(k_{i+1})} = & \alpha_{r-2k_i+1} + 2 \alpha_{r-2k_i+2} + \cdots + 2d_i \alpha_{r-2k_{i+1}} + \cdots + 2d_i \alpha_{n-2} + d_i \alpha_{n-1} + d_i \alpha_n.
\end{align*}
Therefore, we have
\[
m_i^{(a)} =
\begin{cases}
0 & 1 \leq a \leq r-2k_i \\
2 \bigl( a - (r-2k_i) \bigr) & r-2k_i < a \leq r - 2k_{i+1} \\
2d_i & r - 2k_{i+1} < a < n-1, \\
d_i & a = n-1, n,
\end{cases}
\]
since $(\alpha_a \mid \clfw_b) = \delta_{ab}$. It is straightforward to see this is our desired $\nu$ (you can also consider the term $c_a \alpha_a$ in $\alpha^{(k_i)}$ corresponding to adding a column of height $2c_a$ to $\nu^{(a)}$). Since all of the resulting vacancy numbers are $0$, the only possible riggings are all $0$. Thus we have the desired rigged configuration.
\end{proof}

Note that since we remove vertical dominoes here, $\overline{\lambda}^{1/2}$ is well-defined.

\begin{ex}
Consider $T(B^{12, 8})$ in type $D_{25}^{(1)}$ and the sequence
\[
k_1 = 5 \quad \geq \quad k_2 = 5 \quad \geq \quad k_3 = 3 \quad \geq \quad k_4 = 2 \quad \geq \quad k_5 = 2 \quad \geq \quad k_6 = 1.
\]
Therefore $\lambda = 2\clfw_{12} + \clfw_{10} + 2\clfw_8 + \clfw_6 + 2\clfw_2$, and we have
\[
\begin{tikzpicture}[baseline]
\draw (-4,0) node {$\nu^{(12)} =$};
\draw (-2.6,2) node {$d_1 = 0$};
\draw (-2.6,0.97) node {$d_2 = 2$};
\draw (-2.6,-0.5) node {$d_3 = 1$};
\draw (-2.6,-1) node {$d_4 = 0$};
\draw (-2.6,-1.5) node {$d_5 = 1$};
\draw (-2.6,-2.5) node {$d_6 = 1$};
\draw (3,0) node {$= \overline{\lambda},$};
\matrix [matrix of math nodes,column sep=-.4, row sep=-.5,text height=8,text width=8,align=center,inner sep=3] 
{
\node[draw,fill=gray!50]{}; & \node[draw,fill=gray!50]{}; & \node[draw,fill=gray!50]{}; & \node[draw,fill=gray!50]{}; & \node[draw,fill=gray!50]{}; & \node[draw,fill=gray!50]{}; & \node[draw,fill=gray!50]{}; & \node[draw,fill=gray!50]{}; \\
\node[draw,fill=gray!50]{}; & \node[draw,fill=gray!50]{}; & \node[draw,fill=gray!50]{}; & \node[draw,fill=gray!50]{}; & \node[draw,fill=gray!50]{}; & \node[draw,fill=gray!50]{}; & \node[draw,fill=gray!50]{}; & \node[draw,fill=gray!50]{}; \\
\node[draw]{}; & \node[draw]{}; & \node[draw,fill=gray!50]{}; & \node[draw,fill=gray!50]{}; & \node[draw,fill=gray!50]{}; & \node[draw,fill=gray!50]{}; & \node[draw,fill=gray!50]{}; & \node[draw,fill=gray!50]{}; \\
\node[draw]{}; & \node[draw]{}; & \node[draw,fill=gray!50]{}; & \node[draw,fill=gray!50]{}; & \node[draw,fill=gray!50]{}; & \node[draw,fill=gray!50]{}; & \node[draw,fill=gray!50]{}; & \node[draw,fill=gray!50]{}; \\
\node[draw]{}; & \node[draw]{}; & \node[draw,fill=gray!50]{}; & \node[draw,fill=gray!50]{}; & \node[draw,fill=gray!50]{}; & \node[draw,fill=gray!50]{}; & \node[draw,fill=gray!50]{}; & \node[draw,fill=gray!50]{}; \\
\node[draw]{}; & \node[draw]{}; & \node[draw,fill=gray!50]{}; & \node[draw,fill=gray!50]{}; & \node[draw,fill=gray!50]{}; & \node[draw,fill=gray!50]{}; & \node[draw,fill=gray!50]{}; & \node[draw,fill=gray!50]{}; \\
\node[draw]{}; & \node[draw]{}; & \node[draw]{}; & \node[draw,fill=gray!50]{}; & \node[draw,fill=gray!50]{}; & \node[draw,fill=gray!50]{}; & \node[draw,fill=gray!50]{}; & \node[draw,fill=gray!50]{}; \\
\node[draw]{}; & \node[draw]{}; & \node[draw]{}; & \node[draw,fill=gray!50]{}; & \node[draw,fill=gray!50]{}; & \node[draw,fill=gray!50]{}; & \node[draw,fill=gray!50]{}; & \node[draw,fill=gray!50]{}; \\
\node[draw]{}; & \node[draw]{}; & \node[draw]{}; & \node[draw]{}; & \node[draw]{}; & \node[draw,fill=gray!50]{}; & \node[draw,fill=gray!50]{}; & \node[draw,fill=gray!50]{}; \\
\node[draw]{}; & \node[draw]{}; & \node[draw]{}; & \node[draw]{}; & \node[draw]{}; & \node[draw,fill=gray!50]{}; & \node[draw,fill=gray!50]{}; & \node[draw,fill=gray!50]{}; \\
\node[draw]{}; & \node[draw]{}; & \node[draw]{}; & \node[draw]{}; & \node[draw]{}; & \node[draw]{}; & \node[draw,fill=gray!50]{}; & \node[draw,fill=gray!50]{}; \\
\node[draw]{}; & \node[draw]{}; & \node[draw]{}; & \node[draw]{}; & \node[draw]{}; & \node[draw]{}; & \node[draw,fill=gray!50]{}; & \node[draw,fill=gray!50]{}; \\
};
\end{tikzpicture}
\]
where the grey shaded region corresponds to $\lambda$.
\end{ex}

Consider the classical component $B(\lambda) \subseteq B^{r,s}$ corresponding to some shape 
$\lambda = k_r \clfw_r + k_{r-2} \clfw_{r-2} + \cdots$. We want to describe the image of the highest weight 
element of highest weight $\lambda$ under the map $\fillmap \colon B(\lambda) \hookrightarrow T^{r,s}$. The resulting tableau $t$ 
was described in~\cite{OSS13}
 and is constructed as follows. Let $k_c$ be the first odd integer in the sequence $(k_{r-2}, k_{r-4}, \ldots)$ and if $k_c$ does 
 not exist, then set $c = -1$. The process proceeds by induction on the columns of $t$ from left to right.
\begin{enumerate}[(1)]
\item The first $k_r$ columns of $t$ are filled with $\column{1, 2, \dotsc, r-1, r}$.

\item For $k_h$ where $r > h \geq c$, add $\lfloor k_h / 2 \rfloor$ times the pair of columns 
$\column{1, \dotsc, h, \overline{r}, \dotsc, \overline{h+1}}$ then $[1, 2, \dotsc, r-1, r]$.

\item \label{step:repeat} Let $h$ be a column of $\lambda$ and $x = c + 1$. Add the column
\[
\column{1, \dotsc, h-1, h, r - (x-h-2), \dotsc, r-1, r, \overline{r}, \dotsc, \overline{x+1}, \overline{x}}.
\]
Now set $x$ to be the $(h+1)$-th letter of the previously added column and repeat~(\ref{step:repeat}) for all columns of height $h < c$.

\item \label{step:final} If $c > -1$, let $x$ be the final value we obtained from~(\ref{step:repeat}). The rightmost column is filled with
\[
\column{1, 2, \dotsc, (r+x-1)/2, \overline{(r+x-1)/2}, \dotsc, \overline{x+1}, \overline{x}}.
\]
\end{enumerate}

Recall the (affine) crystal isomorphism $\iota \colon B^{r,s}\to \RC(B^{r,s})$ given in~\cite{OSS13}. This map is natural in the 
sense that it maps classically highest weight elements to their unique corresponding classically highest weight rigged 
configurations of the same (classical) weight. The uniqueness comes from the fact that the classical decomposition is multiplicity free.

\begin{dfn}
\label{def:fill_D}
The (classical) crystal morphism $\fillmap \colon B^{r,s} \to T^{r,s}$ is given by the filling procedure above on highest weight 
elements and extending it as a crystal morphism.
\end{dfn}

\begin{thm}[{\cite[Thm.~5.9]{OSS13}}]
Let $B^{r,s}$ be a KR crystal of type $D_n^{(1)}$. Then
\[
\Phi = \fillmap \circ \iota^{-1}
\]
on highest weight elements with $\fillmap$ as in Definition~\ref{def:fill_D}.
\end{thm}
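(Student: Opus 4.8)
The plan is to compute $\Phi$ directly on the explicit classically highest weight rigged configurations of $\RC(B^{r,s})$ (those of Lemma~\ref{lemma:hw_typeD}, together with the empty rigged configuration in the spinor cases) and to match the output column by column with the tableau produced by the filling procedure. Since $\iota$ sends the classically highest weight element of each component $B(\lambda) \subseteq B^{r,s}$ to the unique classically highest weight rigged configuration of the same classical weight, and the classical decomposition of $B^{r,s}$ in type $D_n^{(1)}$ is multiplicity free, it suffices to fix a shape $\lambda$ obtained by removing vertical dominoes from an $r\times s$ rectangle and to show that $\Phi$ applied to the rigged configuration in~\eqref{eq:highest_weight_typeD} equals the tableau $t$ of shape $\lambda$ constructed in~(1)--(4). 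Recall that for a single factor $B^{r,s}$ the map $\Phi$ is realized by first applying $\ls$ repeatedly to split off the leftmost column $B^{r,1}$, and then, on that column, applying $\delta' = \delta \circ \lt$ a total of $r$ times (with the additional doubling map, $\delta^{(r)}$, $\widetilde{\delta}^{(r)}$, and halving steps when $r = n-1, n$). Since $\ls$ and $\lt$ are strict crystal embeddings by Corollary~\ref{cor:ls_lt_morphisms} and $\Phi$ is defined to commute with $\ls$, this sets up an induction on $s$.

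For the spinor cases $r = n-1$ and $r = n$, the Kleber tree is trivial, the only highest weight rigged configuration is the empty one, and $\fillmap$ is the identity. Here I would simply trace the empty rigged configuration through $\ls$, the doubling map of~\eqref{eq:doubling_map}, $\delta^{(r)}$, $\widetilde{\delta}^{(r)}$, the subsequent $\lt$ and $\delta$ steps, and the halving map: because there are no strings to select at any stage, every $\delta$-type step terminates immediately, so the output column is forced, and one checks directly that it is $\column{1,2,\dotsc,n}$ for $r = n$ and $\column{1,2,\dotsc,n-1,\overline{n}}$ for $r = n-1$. Iterating this $s$ times gives the claim in these cases.

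For $r < n-1$ I would argue by induction on $s$, with the trivial base case $s=0$. Apply $\ls$ to split $B^{r,s} \to B^{r,1} \otimes B^{r,s-1}$; the rigged configuration is unchanged, and since all riggings of the configuration in~\eqref{eq:highest_weight_typeD} are $0$, every string that needs to be singular still is (only vacancy numbers grow). Next apply $\delta' = \delta \circ \lt$: the map $\lt$ adds a length-$1$ singular string to each of $\nu^{(1)}, \dotsc, \nu^{(r-1)}$, and $\delta$ then sweeps $B^{1,1}$ from classically highest to lowest weight, removing along each edge labelled $a$ the shortest singular string in $(\nu,J)^{(a)}$ of length at least the previously selected length. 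Using the explicit shapes $\nu^{(a)} = \overline{\lambda}^{[r-a]}$ for $a < r$, $\nu^{(a)} = \overline{\lambda}$ for $r \le a < n-1$, and $\nu^{(n-1)} = \nu^{(n)} = \overline{\lambda}^{1/2}$ (all riggings $0$, so every string of vanishing vacancy number is singular), one determines precisely which strings are removed, hence the $r$ letters read off; the relevant data are the parities $k_{r-2}, k_{r-4}, \dotsc$, which locate the first odd column $k_c$. I would arrange this so that the returned column agrees with the column prescribed by whichever of~(1)--(4) applies at the current stage, and simultaneously verify that the remaining rigged configuration is exactly the highest weight rigged configuration of Lemma~\ref{lemma:hw_typeD} for $B^{r,s-1}$ with that column deleted from $\lambda$; feeding this back into the induction hypothesis then finishes the proof.

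The main obstacle is the $\delta$-bookkeeping in the case $r < n-1$, specifically the behaviour around the odd column $k_c$ and the columns of height strictly less than $c$: this is exactly where steps~(3) and~(4) of the filling procedure produce the non-rectangular ``staircase'' columns, and one must track both the ordered sequence of letters $\delta$ returns and the fact that after a full column is extracted every string that must be singular for the next pass is again singular with vacancy number matching the truncated complement shape. The spinor steps $\delta^{(r)}$ and $\widetilde{\delta}^{(r)}$ together with the doubling/halving maps also require some care, but since the starting rigged configuration is empty they are essentially automatic. Everything else — the reduction to highest weight elements of a single fixed component, the commutation of $\Phi$ with $\ls$, and compatibility with $\iota$ — is formal given Theorem~\ref{thm:hw_crystals_nsl}, Corollary~\ref{cor:ls_lt_morphisms}, and multiplicity-freeness, while the fact that $\cc$ is constant on classical components (Proposition~\ref{prop:constant_cocharge}) provides a useful consistency check at each step.
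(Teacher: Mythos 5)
This theorem is quoted from \cite[Thm.~5.9]{OSS13} and is not reproved in the paper, so the closest internal point of comparison is the proof of the type $C_n^{(1)}$ analogue, Proposition~\ref{prop:crystal_iso_C}, which follows exactly the strategy you describe: reduce to a single classically highest weight element per component via multiplicity-freeness and the explicit form of $\iota$, induct on $s$, split off the leftmost column with $\ls$, and trace $\delta' = \delta \circ \lt$ through the explicit all-riggings-zero configuration of Lemma~\ref{lemma:hw_typeD}, matching the $r$ returned letters against the prescribed column and checking that the residual configuration is again the highest weight configuration for the truncated shape. Your formal scaffolding (Corollary~\ref{cor:ls_lt_morphisms}, the trivial spinor cases, the reduction to equation~\eqref{eq:highest_weight_typeD}) is all correct and is the scaffolding used in \cite{OSS13}.

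The gap is that the substance of the theorem is precisely the computation you defer with ``I would arrange this so that the returned column agrees with the column prescribed by whichever of~(1)--(4) applies.'' In type $C_n^{(1)}$ only cases~(1) and~(2) of the filling occur and the bookkeeping closes in a few lines; in type $D_n^{(1)}$ the staircase columns of steps~(3) and~(4) are where all the work lies, and two things must actually be computed rather than asserted. First, during the extraction of a staircase column the intermediate states are \emph{not} of the Lemma~\ref{lemma:hw_typeD} form: after each box removal only the affected string is reset to singular, the vacancy numbers of neighboring rigged partitions shift, and one must track at each of the $r$ sub-steps exactly which strings are singular of admissible length to see that $\delta$ returns $r-(x-h-2), \dotsc, r, \overline{r}, \dotsc, \overline{x}$ rather than terminating early (the parameter $x$ propagating from one column to the next is determined by this trace, so an error here cascades). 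Second, the claim that after a full column is extracted the residual configuration, including its riggings, equals the configuration of Lemma~\ref{lemma:hw_typeD} for $\lambda$ with that column deleted inside an $r\times(s-1)$ rectangle is itself a computation (it is how the induction closes), not a consequence of anything established beforehand; note in particular that it cannot be deduced from $\Phi$ preserving classical highest weightness, since the intermediate element of $\RC(B^{r,s-1})$ sits inside a larger tensor product and need not be classically highest weight a priori. So the plan is the right one, but as written it restates the theorem at its hardest point rather than proving it.
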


\subsection{Filling map for type $C_n^{(1)}$}

\subsubsection{$r < n$}

Let $\frac{1}{2} \mu$ denote the partition by scaling each row by $1/2$.

\begin{lemma}
\label{lemma:hw_typeC}
Let $B^{r,s}$ be a KR crystal of type $C_n^{(1)}$ with $r < n$. We have
\begin{equation}
\label{equation.RC decomp}
\RC(B^{r,s}) = \bigoplus_{\lambda} \RC(B^{r,s}; \lambda),
\end{equation}
where $\lambda$ is obtained by removing horizontal dominoes from an $r \times s$ rectangle. Moreover, the highest weight 
rigged configuration in $\RC(B^{r,s}; \lambda)$ is given by
\[
\nu^{(a)} = \begin{cases}
\overline{\lambda}^{[r-a]} & 1 \leq a < r, \\
\overline{\lambda} & r \leq a < n, \\
\frac{1}{2} \overline{\lambda} & r = n,
\end{cases} 
\]
with all riggings 0.
\end{lemma}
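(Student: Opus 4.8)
The plan is to use the virtual Kleber algorithm (Definition~\ref{def:virtual_kleber}) for the embedding $C_n^{(1)} \lhook\joinrel\longrightarrow A_{2n-1}^{(1)}$, exactly paralleling the proof of Lemma~\ref{lemma:hw_typeD}, but tracking the devirtualization carefully. First I would record the relevant scaling data: for this embedding $\phi$ identifies node $a$ with node $2n-a$ in $A_{2n-1}^{(1)}$ for $1 \le a \le n-1$, fixes the node $n$ and the affine node $0$, and (since the arrow in $C_n^{(1)}$ points away from $0$ and $n$) we have $\gamma_0 = \gamma_n = 2$, $\gamma_a = 1$ for $1 \le a \le n-1$, so $\gamma = \max_a \gamma_a = 2$. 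The ambient multiplicity array is $\virtual{L}_{2s}^{(r)} = L_s^{(r)}$ for $r < n$, i.e.\ the ambient crystal is $B^{r,2s}$ of type $A_{2n-1}^{(1)}$. Since the classical subalgebra $A_{2n-1}$ is simply-laced and $B^{r,2s} \cong B(2s\virtual{\clfw}_r)$ there, the ambient Kleber tree $\virtual{T}(B)$ is just the type-$A$ Kleber tree for a single rectangle of width $2s$ and height $r$; its nodes are indexed by partitions $\mu \subseteq (r \times 2s)$ (read as differences from $\virtual\clfw_r$), and at depth $p$ a node corresponds to a column-strict filling of the shape, just as in~\cite[Sec.~3.5]{Kleber98}.

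Next I would impose the virtual restrictions. Conditions (V1) and (V2): since $\gamma_a = 1$ for $1 \le a \le n-1$ the only genuine constraint is the symmetry $(y \mid \virtual\alpha_b) = (y \mid \virtual\alpha_{2n-b})$ forced by (V1) at the fixed node $n$ — concretely this says the partitions appearing must have shape symmetric under the $A_{2n-1}$ diagram automorphism, which on the level of the ambient rectangle means the selected $\virtual\nu$ satisfies $\virtual\nu^{(b)} = \virtual\nu^{(2n-b)}$. Then the selection step (A1)/(A2) with $\gamma = 2$: a node is selected iff it lies at even depth (A1), or the last edge label $d_{xy}$ is orthogonal to $\virtual\clfw_n$ (A2) — but in a shape of height $r < n$ the $n$-th coordinate is never touched, so (A2) is automatic and in fact \emph{every} node of $\virtual T(B)$ at depth $\le s$ is retained; equivalently, a classically highest weight rigged configuration corresponds to choosing any $\mu \subseteq (r \times s)$ (a full step of the original width-$2s$ algorithm produces two columns, so the selected data is parametrized by shapes inside $r \times s$) and the surviving ambient configurations are exactly those whose shape-complement in $r \times 2s$ is $\virtual\lambda = $ the double of some partition obtained by removing horizontal dominoes from $r \times s$. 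Devirtualizing via the inverse of~\eqref{eq:rc_virtualization_map} — which here reads $\virtual m_{i}^{(b)} = m_{i}^{(a)}$ for $1 \le a \le n-1$ (since $\gamma_a = 1$, $\widetilde\gamma_a = 1$) and $\virtual m_{2i}^{(n)} = m_i^{(n)}$ (since $\gamma_n = 2$) — turns the ambient $\virtual\nu^{(a)}$ into $\nu^{(a)}$: for $a < n$ one keeps the partition unchanged, and for $a = n$ one halves all part lengths, which is precisely the asserted $\frac{1}{2}\overline\lambda$. Computing $\virtual\nu^{(a)}$ itself is the same column-counting argument as in Lemma~\ref{lemma:hw_typeD}: a decomposition $d_i$ contributing at step $i$ adds columns whose heights stabilize to the ``staircase below $r$'' pattern, giving $\virtual\nu^{(a)} = \overline{\virtual\lambda}^{[r-a]}$ for $a < r$ and $\overline{\virtual\lambda}$ for $r \le a \le 2n-r$; restricting $a \le n$ and devirtualizing yields the three cases in the statement. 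Finally, all ambient vacancy numbers vanish (same computation as in type $D$), hence by~\eqref{eq:vacancy_virtualization} all vacancy numbers $p_i^{(a)}(\nu)$ vanish, forcing all riggings to be $0$; this also gives the direct-sum decomposition~\eqref{equation.RC decomp} since distinct $\mu$ give distinct classical weights $\overline\wt$ and the decomposition is multiplicity free.

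The main obstacle I anticipate is bookkeeping at the special node $n$: one has to check that the (A1)/(A2) selection and the symmetry condition (V1) interact correctly so that exactly the ``remove horizontal dominoes'' shapes survive — in particular that the parity built into $\gamma_n = 2$ (width doubling) is what converts ``vertical dominoes in the ambient $A_{2n-1}$ picture'' into ``horizontal dominoes in the $C_n$ picture'', and that halving $\virtual\nu^{(n)}$ always produces an honest partition (which it does precisely because horizontal dominoes were removed, so $\overline\lambda$ has all even column-heights at level $n$ — dually, $\overline{\virtual\lambda}$ has all even rows there). Once that compatibility is nailed down, the rest is the same linear-algebra identity $(\alpha_a \mid \clfw_b) = \delta_{ab}$ used in the proof of Lemma~\ref{lemma:hw_typeD}, transported through the virtualization. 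I would therefore structure the write-up as: (i) set up the ambient tree and cite Lemma~\ref{lemma:hw_typeD}'s computation verbatim, (ii) verify (V1), (V2), (A1), (A2) reduce to ``all $\mu \subseteq r \times s$'', (iii) devirtualize and read off $\nu^{(a)}$, (iv) note vacancy numbers are $0$ and conclude.
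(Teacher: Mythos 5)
Your setup of the ambient object is incorrect, and the error propagates through the whole argument. For the embedding $C_n^{(1)} \hookrightarrow A_{2n-1}^{(1)}$ the scaling factors are $\gamma_a = 1$ for $1 \le a \le n-1$ and $\gamma_0 = \gamma_n = 2$ (you state this correctly), so for $r < n$ the rule $\virtual{L}_{\gamma_a i}^{(b)} = L_i^{(a)}$ for $b \in \phi^{-1}(a)$ gives $\virtual{L}_s^{(r)} = \virtual{L}_s^{(2n-r)} = 1$: the ambient crystal is the \emph{tensor product} $B^{r,s} \otimes B^{2n-r,s}$ of type $A_{2n-1}^{(1)}$, not $B^{r,2s}$. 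Your formula $\virtual{L}_{2s}^{(r)} = L_s^{(r)}$ would require $\gamma_r = 2$, which only happens at $r = n$. This matters: the ambient Kleber tree for a \emph{single} rectangle in type $A$ is trivial (the classical decomposition of $B^{r,2s}$ is irreducible), so your claim that its nodes are indexed by all $\mu \subseteq (r \times 2s)$ is not consistent even with your own setup. The correct tree starts from $\virtual{\clfw}_r + \virtual{\clfw}_{2n-r}$ and its edges are labelled by the roots $\virtual{\alpha}^{(k)} = \virtual{\alpha}_{r-k,2n-r+k} + \cdots + \virtual{\alpha}_{r,2n-r}$, which move $k$ boxes from the column of height $r$ to the column of height $2n-r$.

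The second gap is in the selection step. You assert that (A2) is automatic because ``in a shape of height $r<n$ the $n$-th coordinate is never touched,'' and conclude that every node is retained. In the correct ambient picture the second factor has height $2n-r > n$, and every $\virtual{\alpha}^{(k)}$ with $k>0$ contains $k\,\virtual{\alpha}_n$, so node $n$ is always touched. Condition (V2) at $a=n$ (where $\gamma_n = 2$) then forces $k_i = k_{i-1}$ for $i \in 2\ZZ$, i.e.\ the steps pair up, and only even-depth nodes are selected. This pairing is exactly the mechanism that restricts the surviving weights to those obtained by removing \emph{horizontal dominoes}; without it you would obtain arbitrary single-box removals, which is the classical decomposition of type $A_{2n}^{(2)}$ or $D_{n+1}^{(2)}$, not $C_n^{(1)}$. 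You correctly sense that the parity coming from $\gamma_n = 2$ is what produces horizontal dominoes, but you locate it in a width-doubling of the rectangle rather than in the (V2) constraint on the coefficient of $\virtual{\alpha}_n$, and the former is not what happens for $r<n$. The devirtualization statement $\nu^{(n)} = \tfrac{1}{2}\overline{\lambda}$ and the vacancy-number/rigging conclusion at the end are fine once the correct ambient tree is in place.
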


\begin{proof}
Recall that we realize $B^{r,s}$ of type $C_n^{(1)}$ as a virtual crystal in $B^{r,s} \otimes B^{2n-r,s}$ of type $A_{2n-1}^{(1)}$.
We will prove the assertion by constructing the ambient Kleber tree, selecting the virtual nodes, and then pulling back to the 
type $C_n^{(1)}$ setting. Let us begin by constructing the ambient Kleber tree. For $\virtual{T}_1^{\prime}$ in 
Definition~\ref{def:kleber_algorithm} we have one node 
$t_0 := \virtual{\clfw}_r + \virtual{\clfw}_{2n-r}$. Next to obtain other dominant weights, we first consider moving to the 
``nearest'' dominant weight $\virtual{\clfw}_{r-1} + \virtual{\clfw}_{2n-r+1}$ by subtracting the root $\virtual{\alpha}_{r,2n-r} 
:= \virtual{\alpha}_r + \virtual{\alpha}_{r+1} + \cdots + \virtual{\alpha}_{2n-r}$. Pictorially,
this is moving a box from the column of height $r$ to the column of height $2n-r$. Now to obtain the next sibling weight, 
we add $\virtual{\alpha}_{r-1,2n-r+1}$. In general, to get to all possible children of $t_0$, we add
\begin{equation}
\label{equation.alpha}
\begin{split}
\virtual{\alpha}^{(k_1)} & := \virtual{\alpha}_{r-k_1,2n-r+k_1} + \virtual{\alpha}_{r-k_1+1,2n-r+k_1-1} + \cdots 
+ \virtual{\alpha}_{r,2n-r}
\\ & = \virtual{\alpha}_{r-k_1} + 2 \virtual{\alpha}_{r-k_1+1} + \cdots + (k_1 - 1) \virtual{\alpha}_{r-1} + k_1 \virtual{\alpha}_r 
+ k_1 \virtual{\alpha}_{r+1}
\\ & \hspace{20pt} + \cdots + k_1 \virtual{\alpha}_{2n-r} + (k_1 - 1) \virtual{\alpha}_{2n-r+1} + \cdots + \virtual{\alpha}_{2n-r+k_1}
\end{split}
\end{equation}
for some $r \geq k_1 > 0$ to $t_0$ to obtain the other weights in $\virtual{T}_1$. Next we add 
$\virtual{\clfw}_r + \virtual{\clfw}_{2n-r}$ to all weights of $\virtual{T}_1$ to get $\virtual{T}_2^{\prime}$. We consider a particular 
leaf $x$ that was obtained from its parent using $\virtual{\alpha}^{(k_1)}$. To 
obtain all children of $x$, we can only subtract $\virtual{\alpha}^{(k_2)}$ where $k_1 \geq k_2 > 0$ by the additional conditions 
in Step~(K2) of Definition~\ref{def:kleber_algorithm}. This is because from~(K2), we can only change the newly added 
$\virtual{\clfw}_r$ and
$\virtual{\clfw}_{2n-r}$ to $\virtual{\clfw}_{r-k_2}$ and $\virtual{\clfw}_{2n-r+k_2}$, respectively, as otherwise we would subtract 
$\virtual{\alpha}_a$ for some $a \leq k_1$ and/or $a \geq 2n-k_1$ (i.e. move boxes from the recently added column of height 
$r$ to the one of height $2n-r$ and keeping the partition shape), violating~(K2). Ranging over all leaves, we obtain 
$\virtual{T}_2$. We can iterate the above to see that for any leaf in $\virtual{T}(B)$, we must have a sequence 
$r \geq k_1 \geq k_2 \geq \cdots \geq k_s > 0$. Note that there are exactly $s$ steps needed to construct $\virtual{T}(B)$ 
since we can only change the newly added weights at each step. Furthermore, each sequence gives rise to a unique 
dominant weight.

For the virtual Kleber tree, condition~(V1) of Definition~\ref{def:virtual_kleber} is satisfied by the symmetry of the 
$\virtual{\alpha}^{(k_i)}$ under the folding; namely by Equation~\eqref{equation.alpha} the coefficient of $\virtual{\alpha}_a$
in $\virtual{\alpha}^{(k_i)}$ is the same as the coefficient of $\virtual{\alpha}_{2n-a}$.
In order to satisfy~(V2) of Definition~\ref{def:virtual_kleber}, we must have $k_i = k_{i-1}$ for all $i \in 2\ZZ$ using the 
convention that $k_j = k_s$ for all $j > s$. This comes from the fact that $\gamma_n=2$ and $\gamma_a=1$
for $1\le a<n$, so that (V2) requires the coefficient of $\virtual{\alpha}_n$ in $\virtual{\alpha}^{(k_i)}$ to be equal to the same coefficient in $\virtual{\alpha}^{(k_{i-1})}$, which means $k_i=k_{i-1}$. Thus we can only select nodes at even depth and the 
selected weights after devirtualization correspond to removing horizontal dominoes. We note that $k_i$ is the number 
of horizontal dominoes removed from the $(r-i)$-th row of an $r \times s $ rectangle. From the description of 
$\virtual{\alpha}^{(k_i)}$, we have the desired rigged configuration.
\end{proof}

Recall that the classical decomposition of $B^{r,s}$ for $r < n$ is given by weights obtained from $s\clfw_r$ by removing horizontal 
dominoes. We note that Lemma~\ref{lemma:hw_typeC} and Theorem~\ref{thm:hw_crystals_nsl} imply that there exists a natural (classical) 
crystal morphism $\iota$, which maps the classical component $\RC(B^{r,s}; \lambda)$ to the unique 
corresponding classical component $B(\lambda) \subseteq B^{r,s}$. This morphism is a (classical) crystal isomorphism, in analogy 
to type $D_n^{(1)}$ given in Section~\ref{sec:typeD_fill}. In subsequent sections, we prove lemmas analogous to 
Lemma~\ref{lemma:hw_typeC} in other types. Hence we obtain crystal isomorphisms $\iota$ in all such cases.

\begin{dfn}[Type $C_n^{(1)}$ filling map]
\label{def:fill_C}
Let $r < n$ and consider a dominant weight $\lambda = \sum_{i \in I_0} k_i \clfw_i$ in the decomposition~\eqref{equation.RC decomp}
(note that only $k_r$ can be odd since we are removing horizontal dominoes and $k_i=0$ for $i>r$) and define $k_0 := s - \sum_{i \in I_0} k_i$. 
The image under $\fillmap$ of the (unique) classically highest weight element $u_{\lambda} \in B^{r,s}$ of classical weight $\lambda$ is given 
on columns from right to left by filling in $\column{1, \dotsc, r}$ followed by $\column{1, \dotsc, h, \overline{r}, \dotsc, \overline{h+1}}$, repeating this 
$\lfloor k_h / 2 \rfloor$ times, for $h = 0, 1, \dotsc, r$. If $k_r$ is odd, we add a leftmost column of $\column{1, \dotsc, r}$.
\end{dfn}

Note that Definition~\ref{def:fill_C} is a special case of Definition~\ref{def:fill_D}; namely only cases (1) and (2) occur.
Alternatively, consider a classically highest weight element $u_{\lambda} \in B(\lambda) \subseteq B^{r,s}$ of classical weight 
$\lambda$. Then $u_\lambda$ can be regarded as a tableau of shape $\lambda$ whose $k$-th row is filled by the letter $k$. The filling map 
on $u_{\lambda}$ is obtained by adding pairs $[\bar{k} \mid k]$ into the $k$-th row of $u_\lambda$ (for each horizontal domino 
removed) and then sorting elements increasingly from bottom to top within each column as necessary.

\begin{ex}
Consider $\lambda = 2\clfw_2 + \clfw_3$ for type $C_4^{(1)}$ and $B^{3,5}$. Then $\fillmap(u_{\lambda})$ is:
\[
\begin{tikzpicture}[baseline]
\matrix [matrix of math nodes,column sep=-.4, row sep=-.5,text height=8,text width=8,align=center,inner sep=3] 
{
\node[draw]{3}; & \node[draw,fill=gray!30]{\overline{3}}; & \node[draw,fill=gray!30]{3}; & \node[draw,fill=gray!30]{\overline{1}}; & \node[draw,fill=gray!30]{3}; \\
\node[draw]{2}; & \node[draw]{2}; & \node[draw]{2}; & \node[draw,fill=gray!30]{\overline{2}}; & \node[draw,fill=gray!30]{2}; \\
\node[draw]{1}; & \node[draw]{1}; & \node[draw]{1}; & \node[draw,fill=gray!30]{\overline{3}}; & \node[draw,fill=gray!30]{1}; \\
};
\end{tikzpicture}
\ ,
\]
where the filled in portion is shaded in grey.
\end{ex}

\begin{ex}
Consider $\lambda = 2\clfw_1 + 2 \clfw_2 + 2\clfw_4$ for type $C_{128}^{(1)}$ and $B^{5,6}$. Then $\fillmap(u_{\lambda})$ is:
\[
\begin{tikzpicture}[baseline]
\matrix [matrix of math nodes,column sep=-.4, row sep=-.5,text height=8,text width=8,align=center,inner sep=3] 
{
\node[draw,fill=gray!30]{\overline{5}}; & \node[draw,fill=gray!30]{5}; & \node[draw,fill=gray!30]{\overline{3}}; & \node[draw,fill=gray!30]{5}; & \node[draw,fill=gray!30]{\overline{2}}; & \node[draw,fill=gray!30]{5}; \\
\node[draw]{4}; & \node[draw]{4}; & \node[draw,fill=gray!30]{\overline{4}}; & \node[draw,fill=gray!30]{4}; & \node[draw,fill=gray!30]{\overline{3}}; & \node[draw,fill=gray!30]{4}; \\
\node[draw]{3}; & \node[draw]{3}; & \node[draw,fill=gray!30]{\overline{5}}; & \node[draw,fill=gray!30]{3}; & \node[draw,fill=gray!30]{\overline{4}}; & \node[draw,fill=gray!30]{3}; \\
\node[draw]{2}; & \node[draw]{2}; & \node[draw]{2}; & \node[draw]{2}; & \node[draw,fill=gray!30]{\overline{5}}; & \node[draw,fill=gray!30]{2}; \\
\node[draw]{1}; & \node[draw]{1}; & \node[draw]{1}; & \node[draw]{1}; & \node[draw]{1}; & \node[draw]{1}; \\
};
\end{tikzpicture}
\ .
\]
\end{ex}

We recall the algorithm for $\delta$ given in~\cite{OSS03} (with appropriate modifications due to our convention, see 
Remark~\ref{rem:rc_convention}). Suppose the leftmost factor is $B^{r,1}$. Set $\ell^{(r-1)} = 0$
and repeat the following process for $a=r,r+1,\ldots,n-1$ or until stopped. Find the minimal index $i\ge
\ell^{(a-1)}$ such that $(\nu,J)^{(a)}$ has a singular string of length $i$. If no such $i$ exists, return $a$ and stop. 
Otherwise set $\ell^{(a)}=i$ and continue. If the process has not terminated at $a=n-1$, find the minimal $i \geq \ell^{(n-1)}/2$ such that $(\nu,J)^{(n)}$ has a singular string of length $i$. If no such $i$ exists, return $n$ and stop. Otherwise set $\overline{\ell}^{(n)} = i$ and continue.
Next find the smallest index $i\ge 2 \overline{\ell}^{(n)}$ such that $(\nu,J)^{(n-1)}$ has a singular string of length $i$
and set $\overline{\ell}^{(n-1)}=i$; if no such singular string exists return $\overline{n}$ and stop.
If the process has not stopped continue as follows for $a=n-2,n-3,\ldots,1$ or until stopped. 
Find the minimal index $i\ge \overline{\ell}^{(a+1)}$ such that $(\nu,J)^{(a)}$ has a singular string
of length $i$. If no such $i$ exists, return $\overline{a+1}$ and stop.
Otherwise set $\overline{\ell}^{(a)}=i$ and continue. If the process does not
stop for $a\ge 1$ return $\overline{1}$.

Next we modify the rigged configuration by removing a box from the singular string of length 
$\ell^{(a)}$ for $a = r, \dotsc, n-1$ and $\overline{\ell}^{(a)}$ for $a = n, \dotsc, 1$ if such values are defined 
(if $\ell^{(a)} = \overline{\ell}^{(a)}$, then we remove 2 boxes from the same string). We then make the affected 
rows singular.

\begin{prop}
\label{prop:crystal_iso_C}
Let $B^{r,s}$ be a KR crystal of type $C_n^{(1)}$ and $r < n$. Then
\[
\Phi = \fillmap \circ \iota^{-1}
\]
on highest weight elements with $\fillmap$ as in Definition~\ref{def:fill_C}.
\end{prop}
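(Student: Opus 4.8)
The plan is to compute $\Phi$ directly on the explicit highest weight rigged configurations of Lemma~\ref{lemma:hw_typeC} and match the output with the filling procedure of Definition~\ref{def:fill_C}, using throughout that $\ls$ is a strict crystal embedding which is the identity on rigged configurations (Corollary~\ref{cor:ls_lt_morphisms}), that $\Phi$ commutes with $\ls$ and $\delta' = \delta \circ \lt$ by construction, and that the crystal isomorphism $\iota$ sends the unique highest weight rigged configuration of $\RC(B^{r,s};\lambda)$ to $u_\lambda \in B(\lambda) \subseteq B^{r,s}$, so $\iota^{-1}(\nu,J) = u_\lambda$.

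First I record the input. By Lemma~\ref{lemma:hw_typeC} the highest weight rigged configuration $(\nu,J) \in \RC(B^{r,s};\lambda)$ has $\nu^{(a)} = \overline{\lambda}^{[r-a]}$ for $1 \le a < r$, $\nu^{(a)} = \overline{\lambda}$ for $r \le a < n$, and $\nu^{(n)} = \frac{1}{2}\overline{\lambda}$, all riggings $0$, and (as in the proof of Lemma~\ref{lemma:hw_typeD}) all vacancy numbers vanishing, so that every string is singular. Hence running the $\delta$ algorithm recalled just before the statement reduces to a purely shape-theoretic box-removal game: at each node one deletes a box from the shortest admissible row, the ``make singular'' step being automatic since all colabels are $0$. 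Since $\overline{\lambda}$ has only even parts, $\frac{1}{2}\overline{\lambda}$ is a genuine partition and the rescaled bounds $i \ge \ell^{(n-1)}/2$ and $i \ge 2\overline{\ell}^{(n)}$ used at the distinguished node $n$ always land on integer lengths.

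Next I would trace the $rs$ successive applications of $\delta'$ (equivalently, $\delta$ started successively at $\nu^{(r)}, \nu^{(r-1)}, \dots, \nu^{(1)}$, each extracting one entry of a column from top to bottom), organizing the bookkeeping by induction on $s$ so that one round of $r$ applications peels off a single column and leaves a residual rigged configuration in $\RC(B^{r,s-1})$. The computation shows that each round returns either the full column $\column{1,\dots,r}$ or, when the removed horizontal dominoes force the $\delta$ algorithm to turn around at node $n$ at some height $h$, the column $\column{1,\dots,h,\overline{r},\dots,\overline{h+1}}$; these are exactly the two column types of Definition~\ref{def:fill_C}, occurring in the stated order and with multiplicities $\lfloor k_h/2 \rfloor$. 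One then checks that the residual is again highest weight, of a shape $\lambda'$ that is a valid weight for $B^{r,s-1}$ (here using that $k_i$ is even for $i < r$), so the induction hypothesis handles the remaining $s-1$ columns; the base case $s = 1$ forces $\lambda = \clfw_r$, the empty rigged configuration, and the column $\column{1,\dots,r}$, consistent with Definition~\ref{def:fill_C}. Assembling the columns and using $\iota^{-1}(\nu,J) = u_\lambda$ gives $\Phi = \fillmap \circ \iota^{-1}$ on highest weight elements.

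The main obstacle is the bookkeeping at node $n$: since $\gamma_n = 2$ the partition $\nu^{(n)} = \frac{1}{2}\overline{\lambda}$ is half-size, and I must show that the two boxes deleted there (one on the descent into the barred portion, one on the way back out of $\nu^{(n-1)}$) together correspond to removing a single horizontal domino from row $h+1$ of $\lambda$, with the parity constraint on $\lambda$ pinning down the turnaround height $h$. A secondary delicate point is verifying that after each round the residual configuration is highest weight of the predicted shape; this is the partition arithmetic that makes the induction run, and it is the type $C_n^{(1)}$ specialization of the corresponding step in the proof of \cite[Thm.~5.9]{OSS13}. Since Definition~\ref{def:fill_C} realizes only cases (1) and (2) of Definition~\ref{def:fill_D}, the overall argument is strictly simpler than in type $D_n^{(1)}$.
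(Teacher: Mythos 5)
Your overall strategy --- induction on $s$, peeling off one column at a time with $r$ applications of $\delta$, and matching the two resulting column types against Definition~\ref{def:fill_C} --- is exactly the route the paper takes. But there is a genuine error in the mechanism you propose for running $\delta$. You claim that since all vacancy numbers of the highest weight rigged configuration vanish, every string is singular and $\delta$ reduces to a shape-only game in which one always deletes a box from the shortest admissible row, the re-singularization step being automatic because all colabels are $0$. This is false once the column has been split off: $\ls$ changes the multiplicity array to that of $B^{r,1}\otimes B^{r,s-1}$, which raises $p_i^{(r)}$ by $1$ for every $i<s$ while leaving the riggings at $0$, so every string of $(\nu,J)^{(r)}$ of length less than $s$ becomes non-singular. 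That non-singularity is the whole engine of the computation: when $\overline{\lambda}_1<s$ it forces $\delta$ to stop immediately at node $r$ and return the letter $r$ (under your ``all singular, take the shortest admissible row'' rule, $\delta$ would instead remove a box from the shortest row of $\overline{\lambda}$ and keep going, producing the wrong letter), and when $\overline{\lambda}_1=s$ it forces $\delta$ to select the strings of length exactly $s$ --- the longest rows, not the shortest. The same issue recurs within a column: after each application of $\delta$ one sits in $\RC(B^{a,1}\otimes B^{r,s-1})$ for decreasing $a$, and it is the evolving singular/non-singular pattern, not the shapes alone, that dictates the selections.

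Relatedly, the branch between the two column types is not decided by a ``turnaround at node $n$ at some height $h$'': in the case $\overline{\lambda}_1=s$ the algorithm always reaches node $n$ (removing one box from $\frac{1}{2}\overline{\lambda}$ and two boxes from the same length-$s$ row of $\nu^{(a)}$ for each $r\le a\le n-1$), and the height at which the barred letters stop is determined on the way back down by the shapes $\nu^{(a)}=\overline{\lambda}^{[r-a]}$ for $a<r$: the algorithm returns $\overline{h+1}$ at the first $a=h$ for which $\overline{\lambda}^{[r-h]}$ has no row of length $s$, i.e.\ $h$ maximal with $k_h\neq 0$. Once you replace the ``everything is singular'' shortcut by the correct bookkeeping of which strings $\ls$ and the successive applications of $\delta$ leave singular, the rest of your outline goes through and coincides with the paper's argument.
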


\begin{proof}
We show the claim by induction on $s$ by noting that removing the leftmost column gives a highest element in $B^{r,s-1}$ (resp.
$\RC(B^{r,s-1})$) corresponding to the partition obtained from $\overline{\lambda}$ by replacing all rows of length $s$ (if they exist) 
by rows of length $s - 2$.

Consider a highest weight $\lambda$ and the corresponding rigged configuration $(\nu, J)$ given by Lemma~\ref{lemma:hw_typeC}. 
Suppose $\overline{\lambda}_1 < s$, so that the desired leftmost column of the filled tableau is $\column{1, \dotsc, r}$. We now check that we 
obtain this after removing the leftmost column by $\Phi$. Note that all rows in $\nu^{(r)}$ have length less than $s$, so after we split off the 
leftmost column, all of the vacancy numbers for $\nu^{(r)}$ are 1. However, all of the riggings are 0, so we stop and $\delta$ returns $r$. 
We note that this implies that $\nu$ is unchanged and we are now in $\RC(B^{r-1,1} \otimes B^{r,s-1})$.
This implies that all vacancy numbers for 
$\nu^{(r-1)}$, which again has only rows of length less than $s$, are 1. We are in a similar case to before, so $\delta$ returns $r-1$. 
We can repeat this for the entire column to obtain $\column{1, \dotsc, r}$. Note that the resulting rigged configuration is what we started with.

Now suppose that $\overline{\lambda}_1 = s$, so our desired leftmost column is $\column{1, \dotsc, h, \overline{r}, \dotsc, \overline{h+1}}$
for $h < r$. Note that $h$ is maximal such that $k_h\neq 0$.
Thus the shortest column in $\overline{\lambda}$ has height $r - h$ and
\[
m_s^{(a)} =
\begin{cases}
r- h & a \geq r, \\
a -h & r > a \geq h, \\
0 & h >a,
\end{cases}
\]
by Lemma~\ref{lemma:hw_typeC}.
Now splitting off the leftmost column increases $p_i^{(r)}$ for $i<s$ by 1 and leaves all other vacancy numbers unchanged. 
Hence after applying $\delta$, we remove 2 boxes from the same row in each of the longest rows of $\nu^{(a)}$ for $a \geq r$ and
1 box from $\nu^{(a)}$ for $h<a<r$. Thus the algorithm returns $\overline{h+1}$. The resulting riggings on the selected 
rows will be $0$. Next we are in $\RC(B^{r-1,1} \otimes B^{r,s-1})$, and now $\delta$ removes a box from the (unique) row of length 
$s-1$ in $\nu^{(r-1)}$, two boxes from the same row of length $s$ in $\nu^{(a)}$ for $a \geq r$, and a single box from a row of length 
$s$ in $\nu^{(a)}$ for $h+1 < a < r$. Hence $\delta$ returns $\overline{h+2}$ and the riggings on the selected rows will be $0$. Therefore 
by using a similar procedure, we can continue until we return $\overline{r}$, in which case all strings in $(\nu,J)^{(h)}$ are now non-singular 
because there are no rows of length (at least) $\overline{\lambda}_1 = s$ in $(\nu,J)^{(a)}$ for all $a \in I_0$ (and we are in 
$\RC(B^{h,1} \otimes B^{r,s-1})$). Then we fall back into the case when the 
column was $\column{1, \dotsc, r}$ since there are no singular strings. Thus we have removed two boxes from all strings 
of length $s$ in $(\nu,J)^{(a)}$ (if they exist), specifically from $\nu^{(r)} = \overline{\lambda}$ to obtain the desired highest weight element.
\end{proof}

\subsubsection{$r = n$}

Recall that $B^{n,s} = B(s\clfw_n)$.

\begin{prop}
\label{prop:crystal_iso_Cn}
Let $B^{n,s}$ be a KR crystal of type $C_n^{(1)}$. Then
\[
\Phi = \fillmap \circ \iota^{-1}
\]
on highest weight elements with $\fillmap$ the trivial filling map (i.e. the identity map on the unique highest weight tableau) 
and $\iota \colon B^{r,s} \to \RC(B^{r,s})$ is the natural crystal isomorphism.
\end{prop}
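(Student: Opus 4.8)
The plan is to reduce the claim to a short computation with the empty rigged configuration. First I would identify $\hwRC(B^{n,s})$. Since $B^{n,s} \iso B(s\clfw_n)$ has a single (hence multiplicity-free) classical component, Theorem~\ref{thm:hw_crystals_nsl} shows that $\RC(B^{n,s})$ has a unique classically highest weight rigged configuration, of classical weight $s\clfw_n$; solving the configuration equation~\eqref{eq:LL-config} with $L_s^{(n)} = 1$ and $\lambda = s\clfw_n$ gives $\sum_{(a,i)\in\HH_0} i m_i^{(a)} \widetilde{\alpha}_a = s\clfw_n - s\clfw_n = 0$, and linear independence of the $\widetilde{\alpha}_a$ forces $m_i^{(a)} = 0$ for all $(a,i) \in \HH_0$. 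Thus the unique highest weight rigged configuration $(\nu, J)$ is the empty one, and by naturality of $\iota$ we have $\iota^{-1}(\nu, J) = u_{s\clfw_n} \in B^{n,s}$, whose KN tableau is the $n \times s$ rectangle all of whose columns equal $\column{1, 2, \dotsc, n}$. Since this tableau is already rectangular, $\fillmap$ fixes it, so $\fillmap \circ \iota^{-1}$ sends $(\nu, J)$ to that rectangle, and it remains to check that $\Phi$ does too.

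Next I would compute $\Phi(\nu, J)$ by induction on $s$, in the style of the proof of Proposition~\ref{prop:crystal_iso_C}: applying $\ls$ puts us in $\RC(B^{n,1} \otimes B^{n,s-1})$ with the (still empty) rigged configuration, and the first column of $\Phi(\nu, J)$ is produced by the $n$ successive applications of $\delta' = \delta \circ \lt$ that peel off $B^{n,1}$. Running the $\delta$ algorithm recalled before Proposition~\ref{prop:crystal_iso_C} on the empty rigged configuration, with left factor $B^{r,1}$ for $r = n, n-1, \dotsc, 1$: the algorithm first inspects $\nu^{(r)}$ (for $r = n$ it inspects $\nu^{(n)}$ directly), finds no singular string since the partition is empty, removes no box, and returns $r$. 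Hence each $\delta'$ returns the letter $r$ and leaves the rigged configuration unchanged, so the first column of $\Phi(\nu, J)$, read from bottom to top, is $\column{1, 2, \dotsc, n}$ and we return to $\RC(B^{n,s-1})$ with the empty rigged configuration. The induction hypothesis then gives $\Phi(\nu, J) = \column{1, \dotsc, n}^{\otimes s}$, which is precisely the $n \times s$ rectangle above, completing the equality $\Phi = \fillmap \circ \iota^{-1}$ on the highest weight element.

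Because $B^{n,s} \iso B(s\clfw_n)$, there is no domino removal, the filling map is trivial, and this case is essentially degenerate: there is no genuine obstacle. The only point requiring a little care is the bookkeeping for $\delta'$ in type $C_n^{(1)}$, where node $n$ receives special treatment in the $\delta$ algorithm and where $\lt$ introduces transient length-one singular strings; but since the rigged configuration stays empty throughout, every invocation of $\delta'$ terminates at its very first step, so none of this special behavior ever comes into play. Verifying exactly this — that the empty rigged configuration is a fixed point of each $\delta'$ and that the returned letter is $r$ — is the whole content of the proof.
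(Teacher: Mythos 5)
The second half of your argument --- that $\Phi$ applied to the empty rigged configuration produces the tableau all of whose columns are $\column{1,\dotsc,n}$, and that $\fillmap$ fixes this tableau --- is correct and is essentially the computation the paper relies on (it is the same as the $\overline{\lambda}_1<s$ case in the proof of Proposition~\ref{prop:crystal_iso_C}). The gap is in your first step, where you determine $\hwRC(B^{n,s})$. You assert that since $B^{n,s}\iso B(s\clfw_n)$ has a single classical component, Theorem~\ref{thm:hw_crystals_nsl} shows that $\RC(B^{n,s})$ has a unique classically highest weight rigged configuration, of weight $s\clfw_n$. This is circular: Theorem~\ref{thm:hw_crystals_nsl} tells you that each component $X_{(\nu,J)}$ generated by a highest weight rigged configuration is isomorphic to $B(\lambda)$, but it says nothing about \emph{which} dominant weights $\lambda$ actually occur in $\hwRC(B^{n,s})$. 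Matching the decomposition of $\RC(B^{r,s})$ to the known classical decomposition of $B^{r,s}$ is precisely the content of Lemma~\ref{lemma:hw_typeC} and its analogues (equivalently, the single-factor $X=M$ statement), so it cannot be assumed here. Your computation with Equation~\eqref{eq:LL-config} only shows that \emph{if} $\lambda=s\clfw_n$ then $\nu=\emptyset$; it does not exclude other dominant $\lambda$ with $s\clfw_n-\lambda\in\overline{Q}^+$, for instance $\lambda=s\clfw_n-\widetilde{\alpha}_n$ (dominant for $s\ge 2$, corresponding to $\nu^{(n)}=(1)$), for which one must check that every candidate configuration has a negative vacancy number.

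The paper closes exactly this gap in one line via the virtual Kleber algorithm: the ambient Kleber tree for $B^{n,s}$ of type $C_n^{(1)}$ is $\virtual{T}(B^{n,2s})$ in type $A_{2n-1}^{(1)}$, which consists of a single node of weight $2s\virtual{\clfw}_n$ (the weight $\virtual{\clfw}_n$ is minuscule in type $A_{2n-1}$, so step~(K2) never produces a child), and hence the only highest weight rigged configuration is the empty one. If you insert this argument --- or a direct verification that $p_i^{(a)}<0$ for any nonempty admissible configuration --- the remainder of your proof goes through.
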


\begin{proof}
The ambient Kleber tree is $\virtual{T}(B^{n,2s})$ in type $A_{2n-1}^{(1)}$ and consists of a single node of weight $2s \clfw_n$.
Thus the only highest weight rigged configuration is the empty rigged configuration. Hence all columns of the highest weight 
tableau are filled with $[1, \dotsc, n]$, and the filling map is trivial on the highest weight tableau.
\end{proof}

\begin{remark}
\label{remark:fillmap_identity}
The filling map is actually the identity on \emph{all} elements of $B^{n,s}$.
\end{remark}


\subsection{Filling map for type $A_{2n-1}^{(2)}$}

The analysis of this type is  similar to type $D_n^{(1)}$.

\begin{lemma}
\label{lemma:hw_typeA2odd}
Let $B^{r,s}$ be a KR crystal of type $A_{2n-1}^{(2)}$. We have
\[
\RC(B^{r,s}) = \bigoplus_{\lambda} \RC(B^{r,s}; \lambda),
\]
where $\lambda$ is obtained by removing vertical dominoes from an $r\times s$ rectangle. Moreover, the highest weight rigged 
configuration in $\RC(B^{r,s}; \lambda)$ is given by
\[
\nu^{(a)} = \begin{cases}
\overline{\lambda}^{[r-a]} & 1 \leq a < r,\\
\overline{\lambda} & r \leq a < n, \\
\overline{\lambda}^{1/2} & a = n,
\end{cases} 
\]
with all riggings 0.
\end{lemma}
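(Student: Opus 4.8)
The plan is to run the virtual Kleber algorithm (Definition~\ref{def:virtual_kleber}) in the ambient type $D_{n+1}^{(1)}$, exactly paralleling the proof of Lemma~\ref{lemma:hw_typeD}. Recall from~\eqref{eqn:affine_embeddings} that $A_{2n-1}^{(2)}$ embeds in $D_{n+1}^{(1)}$ with folding $\phi$ satisfying $\phi^{-1}(n) = \{n, n+1\}$ and $\phi^{-1}(a) = \{a\}$ for $a < n$, and (Remark~\ref{rem:phi_gamma}) with all scaling factors $\gamma_a = 1$; hence $\virtual{B}^{r,s} = B^{r,s}$ of type $D_{n+1}^{(1)}$ when $r < n$ and $\virtual{B}^{n,s} = B^{n,s} \otimes B^{n+1,s}$ when $r = n$. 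Since $\gamma = \max_{a} \gamma_a = 1$, condition~(V2) and the selection conditions~(A1), (A2) are all vacuous, so every node of the ambient $D_{n+1}^{(1)}$ Kleber tree that survives~(V1) is selected, and the highest weight rigged configurations of $\RC(B^{r,s})$ are obtained by devirtualizing the corresponding ambient ones through~\eqref{eq:rc_virtualization_map} (with $\widetilde{\gamma}_a = \gamma_a = 1$), which simply collapses the identical pair $\virtual{\nu}^{(n)} = \virtual{\nu}^{(n+1)}$, $\virtual{J}^{(n)} = \virtual{J}^{(n+1)}$ into a single part $(\nu, J)^{(n)}$ without altering lengths or labels.

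For $1 \le r < n$, the ambient crystal $B^{r,s}$ lies at a non-spinor node of $D_{n+1}^{(1)}$ (as $r \le n-1 < (n+1)-1$), so its Kleber tree is precisely the one computed in the proof of Lemma~\ref{lemma:hw_typeD} (with $n$ there replaced by $n+1$): it is indexed by sequences $r/2 \ge k_1 \ge \cdots \ge k_s > 0$, the node weights being partial sums of the $\clfw_{r-2k_i}$ and therefore lying in the span of $\epsilon_1, \dotsc, \epsilon_{r-1}$. In particular every node weight $y$ has vanishing $\epsilon_n$- and $\epsilon_{n+1}$-coordinates, so $(y \mid \virtual{\alpha}_n) = (y \mid \epsilon_n - \epsilon_{n+1}) = (y \mid \epsilon_n + \epsilon_{n+1}) = (y \mid \virtual{\alpha}_{n+1})$, i.e.~(V1) holds automatically and the virtual Kleber tree equals the full ambient tree. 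By Lemma~\ref{lemma:hw_typeD} the ambient rigged configuration attached to the node of shape $\lambda$ (obtained from the $r \times s$ rectangle by removing vertical dominoes, with complement $\overline{\lambda}$) is $\virtual{\nu}^{(a)} = \overline{\lambda}^{[r-a]}$ for $1 \le a < r$, $\virtual{\nu}^{(a)} = \overline{\lambda}$ for $r \le a < n$, and $\virtual{\nu}^{(n)} = \virtual{\nu}^{(n+1)} = \overline{\lambda}^{1/2}$, with all vacancy numbers and hence all labels equal to $0$. Devirtualizing yields exactly the asserted $(\nu, J)$; and since the $\lambda$ range over all shapes obtained from $r \times s$ by removing vertical dominoes and the tree is in bijection with them (so the decomposition is multiplicity free), Theorem~\ref{thm:hw_crystals_nsl} gives the claimed direct-sum decomposition.

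For $r = n$ the ambient crystal is the tensor product of spinor crystals $B^{n,s} \otimes B^{n+1,s}$ of type $D_{n+1}^{(1)}$, with multiplicity array $\virtual{L}_s^{(n)} = \virtual{L}_s^{(n+1)} = 1$. Here I would run the Kleber algorithm directly: the root acquires weight $\clfw_n + \clfw_{n+1} = \epsilon_1 + \cdots + \epsilon_n$, and the only dominant weights reachable from it by subtracting positive roots are the $\clfw_{n-2k}$, since any other choice forces the difference to involve $\epsilon_n$ alone, which is not in $\overline{Q}$; concretely one subtracts $\beta^{(k)} := \epsilon_{n-2k+1} + \cdots + \epsilon_n = (\clfw_n + \clfw_{n+1}) - \clfw_{n-2k}$, which plays the role of the root $\alpha^{(k)}$ of Lemma~\ref{lemma:hw_typeD} and carries coefficient $k$ on both fork nodes $\virtual{\alpha}_n, \virtual{\alpha}_{n+1}$. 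Exactly as there, the tree is indexed by sequences $n/2 \ge k_1 \ge \cdots \ge k_s > 0$ (equivalently, only vertical dominoes are removed from the $n \times s$ rectangle), all node weights again have vanishing $\epsilon_{n+1}$-coordinate so~(V1) is automatic, and the attached ambient rigged configuration is $\virtual{\nu}^{(a)} = \overline{\lambda}^{[n-a]}$ for $1 \le a < n$ and $\virtual{\nu}^{(n)} = \virtual{\nu}^{(n+1)} = \overline{\lambda}^{1/2}$ with all labels $0$; devirtualizing gives the claim for $r = n$.

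The main obstacle is the $r = n$ case: the Kleber tree of a tensor of two spinor crystals in $D_{n+1}^{(1)}$ requires more care than a single-node crystal, both in checking that no dominant weight other than the $\clfw_{n-2k}$ can appear (so that only vertical dominoes are removed and $\overline{\lambda}^{1/2}$ is well-defined) and in tracking the resulting multiplicities $m_i^{(a)}$ — in particular the identification $\virtual{\nu}^{(n)} = \virtual{\nu}^{(n+1)} = \overline{\lambda}^{1/2}$ and the vanishing of all vacancy numbers. These are, however, the exact analogues of the computations at the fork of $D_n^{(1)}$ in the proof of Lemma~\ref{lemma:hw_typeD}, and are carried out in the same way.
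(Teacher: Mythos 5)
Your proposal is correct and follows essentially the same route as the paper's proof: both cases ($r<n$ and $r=n$) are handled via the virtual Kleber algorithm in the ambient type $D_{n+1}^{(1)}$, with the $r<n$ case reducing to Lemma~\ref{lemma:hw_typeD} after identifying $\virtual{\nu}^{(n)}=\virtual{\nu}^{(n+1)}$, and the $r=n$ case computed directly on $B^{n,s}\otimes B^{n+1,s}$ using the roots $\alpha^{(k)}=(\clfw_n+\clfw_{n+1})-\clfw_{n-2k}$. Your explicit verification of condition~(V1) via the vanishing $\epsilon_{n+1}$-coordinate is a slightly more detailed justification of a step the paper leaves as ``easy to see,'' but the argument is the same.
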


\begin{proof}
We split the proof into two cases: $r < n$ and $r = n$. We note that the virtualization of type $A_{2n-1}^{(2)}$ is into type $D_{n+1}^{(1)}$ 
and $\gamma_a = 1$ for all $a$.

\case{Case $r<n$}

\noindent
In this case we have $\virtual{B}^{r,s} = B^{r,s}$ of type $D_{n+1}^{(1)}$. Hence the ambient Kleber tree is exactly the same as the usual 
type $D_{n+1}^{(1)}$ Kleber tree, and we select all of the nodes for the virtual Kleber tree. The
only modification needed to Lemma~\ref{lemma:hw_typeD} is that $\nu^{(n)} = \virtual{\nu}^{(n)} = \virtual{\nu}^{(n+1)} = \overline{\lambda}^{1/2}$. 
Therefore we have the desired rigged configurations.

\case{Case $r = n$}

\noindent
We note that in this case we have $\virtual{B}^{n,s} = B^{n,s} \otimes B^{n+1,s}$ of type $D_{n+1}^{(1)}$. The roots we can subtract in the 
Kleber tree are of the form
\[
\alpha^{(k)} = k \alpha_{n+1} + k \alpha_n + (2k-1) \alpha_{n-1} + (2k-2) \alpha_{n-2} + \cdots + \alpha_{n+1-2k}
\]
by condition~(K2) of Definition~\ref{def:kleber_algorithm}. This sends the weight $\clfw_n + \clfw_{n+1}$ to $\clfw_{n-2k}$ We note that this 
corresponds to removing $k$ vertical dominoes. Now in building $\virtual{T}(B^{n,s})$, we get a sequence 
$\lfloor n / 2 \rfloor \geq k_1 \geq k_2 \geq \cdots \geq k_n > 0$ and this determines a unique dominant weight. (We note that this is the same 
as in Lemma~\ref{lemma:hw_typeD} where we consider $\clfw_n + \clfw_{n+1}$ as a full column of height $n$.)

It is easy to see that $\alpha^{(k)}$ satisfies the conditions of Definition~\ref{def:virtual_kleber}. Therefore all nodes of $\virtual{T}(B^{r,s})$ are 
constructed and selected, and from the description of $\alpha^{(k)}$, we have the desired rigged configurations.
\end{proof}

\begin{prop}
\label{prop:filling_A2odd}
Let $B^{r,s}$ be a KR crystal of type $A_{2n-1}^{(2)}$. Then
\[
\Phi = \fillmap \circ \iota^{-1}
\]
on highest weight elements with $\fillmap$ being the same as given in Definition~\ref{def:fill_D} and $\iota \colon B^{r,s} \to \RC(B^{r,s})$ is the 
natural crystal isomorphism.
\end{prop}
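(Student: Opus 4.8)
The plan is to run the same argument as in the proof of Proposition~\ref{prop:crystal_iso_C} and of~\cite[Thm.~5.9]{OSS13} for type $D_n^{(1)}$, proceeding by induction on $s$. Removing the leftmost column of $\fillmap(u_\lambda)$ produces a classically highest weight element of $B^{r,s-1}$; correspondingly I must show that splitting off a single column $B^{r,1}$ from the highest weight rigged configuration $(\nu,J) \in \RC(B^{r,s};\lambda)$ of Lemma~\ref{lemma:hw_typeA2odd} -- that is, applying $\ls$ and then the map $\delta$ for type $A_{2n-1}^{(2)}$ from~\cite{OSS03} (with the modifications of Remark~\ref{rem:rc_convention_delta}) -- returns precisely the leftmost column prescribed by Definition~\ref{def:fill_D} and leaves the highest weight rigged configuration of $\RC(B^{r,s-1};\lambda')$, where $\overline{\lambda'}$ is obtained from $\overline{\lambda}$ by replacing every row of length $s$ by a row of length $s-2$. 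Note that, unlike types $B_n^{(1)}$ and $D_n^{(1)}$, no spinor column arises here, so the plain map $\delta$ suffices; the base case $s=1$ is immediate.

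The reason the computation matches type $D_n^{(1)}$ is that the highest weight rigged configuration of Lemma~\ref{lemma:hw_typeA2odd} is formally identical to the one in Lemma~\ref{lemma:hw_typeD}, with the single partition $\nu^{(n)} = \overline{\lambda}^{1/2}$ of type $A_{2n-1}^{(2)}$ playing the role of the two equal spin partitions $\nu^{(n-1)} = \nu^{(n)} = \overline{\lambda}^{1/2}$ of type $D_n^{(1)}$, and with the Cartan data governing the vacancy numbers of $\nu^{(a)}$ for $r \le a \le n$ agreeing once $t_n^{\vee} = 2$ is taken into account. In particular, splitting off a column raises $p_i^{(r)}$ by $1$ for $i < s$ and leaves all other vacancy numbers unchanged, exactly as in the $D_n^{(1)}$ and $C_n^{(1)}$ cases.

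First I would treat the case $\overline{\lambda}_1 < s$: every row of $\nu^{(r)}$ then has length $< s$, so after splitting all strings of $(\nu,J)^{(r)}$ are non-singular, $\delta$ returns $r$, and $(\nu,J)$ is unchanged; iterating in $\RC(B^{r-1,1}\otimes B^{r,s-1}), \dotsc$ gives $r-1, \dotsc, 1$, reproducing the column $\column{1,\dotsc,r}$ of case~(1) of Definition~\ref{def:fill_D}. When $\overline{\lambda}_1 = s$, let $h < r$ be maximal with $k_h \neq 0$, so that $m_s^{(a)} = r-h$ for $r \leq a < n$, $m_s^{(a)} = a-h$ for $h \leq a < r$, and $m_s^{(a)} = 0$ for $a < h$; running $\delta$ through the nodes $r, r+1, \dotsc, n$ and then back down, one tracks the singular string selected at each step, checks that the returned letters assemble into exactly the column of cases~(2)--(4) of Definition~\ref{def:fill_D} with all selected riggings equal to $0$, and checks that the rows of length $s$ of the $\nu^{(a)}$ are shortened so that, by Lemma~\ref{lemma:hw_typeA2odd}, the remaining rigged configuration is the highest weight element of $\RC(B^{r,s-1};\lambda')$. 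This closes the induction.

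The main obstacle is the passage through node $n$. In type $D_n^{(1)}$ the algorithm for $\delta$ branches into the two spin nodes $n-1$ and $n$, with a halving (resp.\ doubling) of lengths between them; in type $A_{2n-1}^{(2)}$ there is only the partition $\nu^{(n)}$, and this scaling reappears through $t_n^{\vee} = 2$ inside $\delta$. One must verify that selecting the singular strings of $\nu^{(n)}$ in type $A_{2n-1}^{(2)}$ has the same effect on the rigged configuration, and returns the same (barred) letters, as the two-spin-node passage of type $D_n^{(1)}$; here one uses that $\nu^{(n)} = \overline{\lambda}^{1/2}$ together with the fact that $\overline{\lambda}$, being the complement of a shape obtained by removing vertical dominoes, has only even multiplicities. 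Once this local compatibility is established, the remaining bookkeeping is identical to the type $D_n^{(1)}$ argument of~\cite{OSS13} and the type $C_n^{(1)}$ argument of Proposition~\ref{prop:crystal_iso_C}.
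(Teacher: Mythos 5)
Your plan is essentially the paper's proof: the paper likewise observes that the highest weight rigged configurations of Lemma~\ref{lemma:hw_typeA2odd} coincide with the type $D$ ones once the single partition $\nu^{(n)}=\overline{\lambda}^{1/2}$ is identified with the two equal spin partitions, checks that $\delta$ in type $A_{2n-1}^{(2)}$ selects the same singular strings and returns the same letters as $\delta_D$, and then invokes the argument of~\cite[Thm.~5.9]{OSS13}. The one indexing point to fix is that the comparison should be with the ambient type $D_{n+1}^{(1)}$ (spin nodes $n$ and $n+1$), not $D_n^{(1)}$: in type $A_{2n-1}^{(2)}$ one still has $\nu^{(n-1)}=\overline{\lambda}$, so the configurations do not literally match those of Lemma~\ref{lemma:hw_typeD} at the same rank.
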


\begin{proof}
From Lemma~\ref{lemma:hw_typeA2odd}, we have the same highest weight rigged configurations given by 
Equation~\eqref{eq:highest_weight_typeD} except with $\nu^{(n)} = \virtual{\nu}^{(n)} = \virtual{\nu}^{(n+1)}$. Now in type $D_{n+1}^{(1)}$, the 
map $\delta_D$ selects the same singular string in $\virtual{\nu}^{(n)}$ and $\virtual{\nu}^{(n+1)}$ as $\delta$ (in type $A_{2n-1}^{(2)}$) for $\nu^{(n)}$. 
Therefore the return value of $\delta$ agrees with $\delta_D$ and the resulting rigged partitions agree (up to the equivalence of the spinor rigged 
partitions). Hence the proof of~\cite[Thm.~5.9]{OSS13} holds for type $A_{2n-1}^{(2)}$, and so $\Phi = \fillmap \circ \iota^{-1}$.
\end{proof}

\subsection{Filling map for type $B_n^{(1)}$}

\subsubsection{$r < n$}

We note that this is similar to type $D_n^{(1)}$.

\begin{lemma}
\label{lemma:hw_typeB}
Let $B^{r,s}$ be a KR crystal of type $B_n^{(1)}$ with $r < n$. We have
\[
\RC(B^{r,s}) = \bigoplus_{\lambda} \RC(B^{r,s}; \lambda),
\]
where $\lambda$ is obtained by removing vertical dominoes from the $r\times s$ rectangle. Moreover, the highest weight rigged 
configuration in $\RC(B^{r,s}; \lambda)$ is given by
\[
\nu^{(a)} = \begin{cases}
\overline{\lambda}^{[r-a]} & 1 \leq a < r, \\
\overline{\lambda} & r \leq a < n, \\
2\overline{\lambda}^{1/2} & a = n,
\end{cases} 
\]
with all riggings 0.
\end{lemma}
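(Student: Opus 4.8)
The plan is to follow the template of Lemma~\ref{lemma:hw_typeD}: run through the virtual Kleber algorithm in the ambient type and then devirtualize. Recall that for $r < n$ the crystal $\RC(B^{r,s})$ of type $B_n^{(1)}$ virtualizes into $\RC(\virtual{B}^{r,s})$ of type $D_{n+1}^{(1)}$ with $\virtual{B}^{r,s} = B^{r,2s}$, since $\phi^{-1}(r) = \{r\}$ and $\gamma_r = 2$; here $\phi^{-1}(a) = \{a\}$ for $a \neq n$, $\phi^{-1}(n) = \{n, n+1\}$, $\gamma_a = 2$ for $0 \le a < n$, and $\gamma_n = 1$. First I would build the ambient Kleber tree $\virtual{T}(B^{r,2s})$. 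By Lemma~\ref{lemma:hw_typeD} applied in type $D_{n+1}^{(1)}$ (valid since $r < n < n+1$), its depth-$p$ nodes are indexed by weakly decreasing sequences $\lfloor r/2 \rfloor \ge k_1 \ge \cdots \ge k_p > 0$, the edge labels are the roots $\virtual{\alpha}^{(k)}$ of that proof, which carry coefficient $2k$ on $\virtual{\alpha}_{r}, \dotsc, \virtual{\alpha}_{n-1}$ and coefficient $k$ on each of $\virtual{\alpha}_n, \virtual{\alpha}_{n+1}$, and the attached ambient highest weight rigged configuration is given by Equation~\eqref{eq:highest_weight_typeD}.

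Next I would apply the selection rules of Definition~\ref{def:virtual_kleber}. Condition (V1) holds automatically by the $n \leftrightarrow n+1$ symmetry of every $\virtual{\alpha}^{(k)}$ and of the root. Since $\gamma_a = 2$ for $a < n$, condition (V2) is active exactly when a node is added at an even depth $\ell$, and since the $(<n)$-components of $\virtual{\alpha}^{(k)}$ determine $k$ (for instance via the coefficient $2k$ of $\virtual{\alpha}_{n-1}$), it forces $k_{\ell-1} = k_\ell$; hence a surviving sequence has the form $k_1 = k_2 \ge k_3 = k_4 \ge \cdots$. With $\gamma = \max_{a} \gamma_a = 2$, step (A1) retains precisely the even-depth nodes and step (A2) is vacuous, because $(\virtual{\alpha}^{(k)} \mid \virtual{\clfw}_{n-1}) = 2k \neq 0$. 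Thus the selected nodes lie at depths $2j$ with $0 \le j \le s$, corresponding to sequences $\lfloor r/2 \rfloor \ge l_1 \ge \cdots \ge l_j > 0$ with $l_i := k_{2i-1} = k_{2i}$; such a node has ambient weight $\virtual{\lambda} = 2\bigl( \sum_{i=1}^j \virtual{\clfw}_{r-2l_i} + (s-j)\virtual{\clfw}_r \bigr) = \Psi(\lambda)$ for $\lambda = \sum_{i=1}^j \clfw_{r-2l_i} + (s-j)\clfw_r$, whose complement $\overline{\lambda}$ in the $r \times s$ rectangle is the partition with columns of even heights $2l_1 \ge \cdots \ge 2l_j$. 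Letting the $l_i$ range shows $\lambda$ runs exactly over the partitions obtained by removing vertical dominoes from the $r \times s$ rectangle; combined with Theorem~\ref{thm:hw_crystals_nsl} this gives the stated decomposition.

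Finally I would devirtualize. Since $\widetilde{\gamma}_a = \gamma_a$ throughout, Equation~\eqref{eq:rc_virtualization_map} gives $\virtual{m}_{2i}^{(a)} = m_i^{(a)}$ for $a < n$ (so $\nu^{(a)}$ is $\virtual{\nu}^{(a)}$ with every row length halved) and $\virtual{m}_i^{(n)} = \virtual{m}_i^{(n+1)} = m_i^{(n)}$ (so $\nu^{(n)} = \virtual{\nu}^{(n)} = \virtual{\nu}^{(n+1)}$). Plugging Equation~\eqref{eq:highest_weight_typeD} (in type $D_{n+1}^{(1)}$) into this, and using that $\virtual{\lambda}$, resp.\ its $r \times 2s$ complement, is obtained from $\lambda$, resp.\ $\overline{\lambda}$, by replacing each column by two copies of itself, i.e.\ $\virtual{\overline{\lambda}} = 2\overline{\lambda}$ (an operation commuting with removing the first $r-a$ rows), I obtain $\nu^{(a)} = \frac{1}{2}(2\overline{\lambda})^{[r-a]} = \overline{\lambda}^{[r-a]}$ for $1 \le a < r$, $\nu^{(a)} = \frac{1}{2}(2\overline{\lambda}) = \overline{\lambda}$ for $r \le a < n$, and $\nu^{(n)} = (2\overline{\lambda})^{1/2} = 2\,\overline{\lambda}^{1/2}$; the riggings are all $0$ since they devirtualize from $0$.

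The step I expect to be the main obstacle is this last paragraph, namely keeping straight the three partition operations involved — doubling each row length (what $\Psi$ does to shapes), halving each row length (the $a < n$ devirtualization), and the multiplicity-halving $(-)^{1/2}$ at the spinor node — and in particular checking that $(2\overline{\lambda})^{1/2}$ is well-defined and equal to $2\,\overline{\lambda}^{1/2}$. This rests on $\overline{\lambda}$ being a genuine union of vertical dominoes, so that all $m_i(\overline{\lambda})$ are even, a fact built into the shape $\overline{\lambda}$ produced in the second paragraph. The Kleber-tree analysis in the first two paragraphs is essentially a transcription of the type $D_n^{(1)}$ argument of Lemma~\ref{lemma:hw_typeD} and of the (V2)-selection analysis of Lemma~\ref{lemma:hw_typeC}, so I do not anticipate difficulty there.
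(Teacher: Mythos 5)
Your proposal is correct and follows essentially the same route as the paper: virtualize into type $D_{n+1}^{(1)}$, reuse the Kleber-tree analysis of Lemma~\ref{lemma:hw_typeD}, observe that (V2) forces equal consecutive steps and (A1)/(A2) select only even-depth nodes, and devirtualize. The paper's proof is just a terser version of this; your explicit bookkeeping of the three partition operations in the final paragraph (including the evenness of $m_i(\overline{\lambda})$ making $(2\overline{\lambda})^{1/2}=2\overline{\lambda}^{1/2}$ well-defined) is sound.
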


\begin{proof}
Recall that the KR crystal of type $B_n^{(1)}$ can be modeled by a virtual crystal of type $D_{n+1}^{(1)}$.
Note that as a virtual $D_{n+1}^{(1)}$ crystal, we have $\gamma_a = 2$ and so $\virtual{\nu}^{(a)} = 2 \nu^{(a)}$ for all $a < n$. 
Additionally $\gamma_n = 1$ and $\virtual{\nu}^{(n)} = \virtual{\nu}^{(n+1)} = \nu^{(n)}$ under $v$. Now from the proof of 
Lemma~\ref{lemma:hw_typeD} and condition~(V2) of the virtual Kleber tree, we must remove another vertical domino at each 
even step, so $2 \clfw_k$ goes to $2 \clfw_{k-2}$ for all $k$. Next since $\gamma_a = \gamma = 2$ for all $a \neq n$ 
by conditions~(A1) and~(A2) of Definition~\ref{def:virtual_kleber} we only select nodes at even levels, so this corresponds to selecting 
nodes by removing $2 \times 2$ boxes. Therefore when converting back to a $B_n^{(1)}$ rigged configuration, we get a removal of vertical 
dominoes. Hence the resulting highest weight rigged configuration is as desired after devirtualization.
\end{proof}

\begin{prop}
Let $B^{r,s}$ be a KR crystal of type $B_n^{(1)}$ and $r < n$. Then
\[
\Phi = \fillmap \circ \iota^{-1}
\]
on highest weight elements with $\fillmap$ being the same as given in Definition~\ref{def:fill_D} and $\iota \colon B^{r,s} \to \RC(B^{r,s})$ 
is the natural crystal isomorphism.
\end{prop}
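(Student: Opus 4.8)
I would prove this in either of two ways, paralleling the proof of Proposition~\ref{prop:crystal_iso_C} (a direct computation of $\Phi$) or of Proposition~\ref{prop:filling_A2odd} (reduction to type $D_{n+1}^{(1)}$ via virtualization). Either way, one starts from Lemma~\ref{lemma:hw_typeB}: since the classical decomposition of $B^{r,s}$ is multiplicity free, Theorem~\ref{thm:hw_crystals_nsl} gives the natural crystal isomorphism $\iota$, and for the highest weight rigged configuration $(\nu,J) \in \hwRC(B^{r,s};\lambda)$ we have $\iota^{-1}(\nu,J) = u_\lambda \in B(\lambda) \subseteq B^{r,s}$. Because $r < n$, the leftmost factor arising from the splitting steps is never a spinor column, so $\Phi$ is simply the iteration of $\ls$, $\lt$ and the (unmodified) map $\delta$ of \cite{OSS03}; thus it suffices to show that running this procedure on $(\nu,J)$ returns the KR tableau prescribed by cases (1)--(4) of Definition~\ref{def:fill_D} (a filling procedure which, for $r < n$, involves only the letters $1,\dots,r,\overline{r},\dots,\overline{1}$, hence is type-independent).

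For the direct route I would induct on $s$, peeling off one column of $(\nu,J)$, running $\delta$ the $r$ times that produce it, and using that each vacancy number met is either $0$ (so the shortest singular string is removed) or $1$ (so $\delta$ stops and returns a letter) to conclude that the produced column is forced; one then checks that it equals the column prescribed by Definition~\ref{def:fill_D} --- $\column{1,\dots,r}$ in case (1), a pair column $\column{1,\dots,h,\overline{r},\dots,\overline{h+1}}$ in case (2), and a staircase column in cases (3) and (4) --- and that the remaining rigged configuration is again the complement-shape highest weight configuration for the truncated rectangle, so the induction closes. For the virtualization route, Lemma~\ref{lemma:hw_typeB} together with the virtual Kleber analysis in its proof shows that $v(\nu,J)$ is the highest weight rigged configuration of $\RC(B^{r,2s};\Psi(\lambda))$ in type $D_{n+1}^{(1)}$, with $\iota_D^{-1}$ of it equal to $u_{\Psi(\lambda)}$; one then checks that the $\delta$ algorithm for type $B_n^{(1)}$ on $(\nu,J)$ tracks the $\delta$ algorithm for type $D_{n+1}^{(1)}$ on $v(\nu,J)$ --- every string of $\nu^{(a)}$ with $a<n$ doubles in length under $v$, so one step of $\delta$ in type $B_n^{(1)}$ corresponds to the two consecutive $D_{n+1}^{(1)}$ steps peeling its two halves, while the short node $a=n$ (with $\nu^{(n)}=2\overline{\lambda}^{1/2}$) plays the role of the spinor pair $\{n,n+1\}$, exactly as the node $n$ of $A_{2n-1}^{(2)}$ did in Proposition~\ref{prop:filling_A2odd} --- so that \cite[Thm.~5.9]{OSS13} transfers and $\Phi = \fillmap \circ \iota^{-1}$.

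The main obstacle, in either route, is the node $a = n$: the half-multiplicities in $\overline{\lambda}^{1/2}$ and the factor $2$ in $\nu^{(n)} = 2\overline{\lambda}^{1/2}$ force one to track the length thresholds of $\delta$ when passing between $\nu^{(n-1)}$ and $\nu^{(n)}$ (equivalently, between the scaled nodes $a < n$ and the unscaled spinor node in the virtual picture) with care, and it is precisely at this interface that the staircase columns of cases (3) and (4) of Definition~\ref{def:fill_D} --- absent in type $C_n^{(1)}$ --- are produced; verifying that these columns come out with the correct entries is the delicate part.
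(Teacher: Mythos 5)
Your second route is precisely the paper's proof: it uses Lemma~\ref{lemma:hw_typeB} to identify $(\virtual{\nu},\virtual{J})$ with a type $D_{n+1}^{(1)}$ highest weight rigged configuration, matches the behaviour of $\delta$ at the short node $n$ (where $\nu^{(n)}=2\overline{\lambda}^{1/2}$ has only even-length strings) with that of $\delta_D$ at the spinor pair $\{n,n+1\}$, and concludes by transferring \cite[Thm.~5.9]{OSS13}, exactly as you describe. Your identification of the $a=n$ interface as the only delicate point is accurate; the direct induction on $s$ is a viable alternative but is not the route the paper takes.
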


\begin{proof}
From Lemma~\ref{lemma:hw_typeB}, we have $\virtual{\nu}^{(a)} = 2\nu^{(a)}$ for all $1 \leq a < n$ and 
$\nu^{(n)} = \virtual{\nu}^{(n)} = \virtual{\nu}^{(n+1)}$, so $(\virtual{\nu}, \virtual{J})$ is a highest weight rigged configuration in 
type $D_{n+1}^{(1)}$. Now in type $D_{n+1}^{(1)}$, the map $\delta_D$ selects the same singular strings 
in $\virtual{\nu}^{(n)}$ and $\virtual{\nu}^{(n+1)}$, and in type $B_n^{(1)}$, we only have even length strings and $\delta_B$ selects 
the corresponding single singular string. Therefore the return value of $\delta_B$ agrees with $\delta_D$ and 
the resulting rigged partitions agree (up to the equivalence of the spinor rigged partitions). Hence the proof of~\cite[Thm.~5.9]{OSS13} 
holds for type $B_n^{(1)}$, and so $\Phi = \fillmap \circ \iota^{-1}$.
\end{proof}

\subsubsection{$r = n$}

We are representing the spinor in this case with doubled columns as well. As such, the classical decomposition corresponding to removing 
$2 \times 2$ boxes as opposed to a vertical domino. Thus the proof is similar to type $C_n^{(1)}$, but by removing $2 \times 2$ boxes.

\begin{lemma}
\label{lemma:hw_typeB_spin}
Let $B^{n,s}$ be a KR crystal type $B_n^{(1)}$. We have
\[
\RC(B^{n,s}) = \bigoplus_{\lambda} \RC(B^{n,s}; \lambda)
\]
where $\lambda$ is obtained by removing vertical dominoes from an $n \times (s/2)$ rectangle. 
Moreover, the highest weight rigged configuration in $\RC(B^{n,s}; \lambda)$ is given by
\[
\nu^{(a)} = \begin{cases}
\overline{\lambda}^{[r-a]} & 1 \leq a < n, \\
2 \overline{\lambda}^{1/2} & a = n,
\end{cases} 
\]
with all riggings 0.
\end{lemma}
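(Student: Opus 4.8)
The plan is to run the virtual Kleber algorithm in the ambient type $D_{n+1}^{(1)}$, in parallel with the case $r<n$ (Lemma~\ref{lemma:hw_typeB}) and with the $r=n$ case in the proof of Lemma~\ref{lemma:hw_typeA2odd}, essentially combining the two. First I recall that $B^{n,s}$ of type $B_n^{(1)}$ is modeled by a virtual crystal of type $D_{n+1}^{(1)}$, with folding $\phi^{-1}(n) = \{n, n+1\}$ and scaling factors $\gamma_a = 2$ for $a < n$ and $\gamma_n = 1$; accordingly the virtual multiplicity array is $\virtual{L}_s^{(n)} = \virtual{L}_s^{(n+1)} = 1$, and the virtualization map sends $\nu^{(a)} \mapsto \virtual{\nu}^{(a)} = 2\nu^{(a)}$ for $a < n$ and $\nu^{(n)} \mapsto \virtual{\nu}^{(n)} = \virtual{\nu}^{(n+1)} = \nu^{(n)}$. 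Hence the ambient Kleber tree $\virtual{T}(B^{n,s})$ is literally the one built in the $r=n$ case of the proof of Lemma~\ref{lemma:hw_typeA2odd}: at each of the first $s$ steps one adjoins $\virtual{\clfw}_n + \virtual{\clfw}_{n+1}$, the path to a node at depth $p$ is encoded by a weakly decreasing sequence $\lfloor n/2\rfloor \ge k_1 \ge k_2 \ge \cdots \ge k_p > 0$, and the edge label at step $i$ is $\alpha^{(k_i)} = k_i \virtual{\alpha}_{n+1} + k_i \virtual{\alpha}_n + (2k_i-1)\virtual{\alpha}_{n-1} + (2k_i-2)\virtual{\alpha}_{n-2} + \cdots + \virtual{\alpha}_{n+1-2k_i}$, which moves $\virtual{\clfw}_n + \virtual{\clfw}_{n+1}$ to $\virtual{\clfw}_{n-2k_i}$ (pictorially, removing $k_i$ vertical dominoes from the newly adjoined height-$n$ column).

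Second, I impose the extra restrictions of the virtual Kleber algorithm (Definition~\ref{def:virtual_kleber}). Condition~(V1) is automatic because each $\alpha^{(k)}$ and the weight $\virtual{\clfw}_n + \virtual{\clfw}_{n+1}$ are symmetric under $n \leftrightarrow n+1$. For~(V2), since $\gamma_a = 2$ for every $a < n$, at each even depth $\ell$ the $\virtual{\alpha}_a$-coefficients (for all $a<n$) of the edge labels $\alpha^{(k_{\ell-1})}$ and $\alpha^{(k_\ell)}$ into the node at depth $\ell-1$ and its child must coincide; comparing their $\virtual{\alpha}_{n-1}$-coefficients, $2k_{\ell-1}-1$ and $2k_\ell-1$, forces $k_\ell = k_{\ell-1}$. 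Thus the admissible sequences are exactly $k_1 = k_2 \ge k_3 = k_4 \ge \cdots$, i.e. vertical dominoes are removed two columns at a time; this is the same mechanism as in the proof of Lemma~\ref{lemma:hw_typeB}. For node selection, $\gamma = \max_a \gamma_a = 2$, so~(A1) keeps the nodes at even depth, while~(A2) would require $(\alpha^{(k_\ell)} \mid \virtual{\clfw}_a) = 0$ for all $a < n$, which is impossible for $k_\ell \ge 1$; hence precisely the even-depth nodes survive.

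Third, I devirtualize and read off the rigged configuration. Applying $\Psi^{-1}$ to a selected depth-$2m$ weight $2m(\clfw_n+\clfw_{n+1}) - 2\sum_{t=1}^{m}\alpha^{(k_{2t-1})}$, using $\Psi(\clfw_n) = \virtual{\clfw}_n+\virtual{\clfw}_{n+1}$, $\Psi(\alpha_a) = 2\virtual{\alpha}_a$ for $a < n$, and $\Psi(\alpha_n) = \virtual{\alpha}_n+\virtual{\alpha}_{n+1}$, one gets $s\clfw_n$ with $k_{2t-1}$ vertical dominoes removed from the $t$-th column of the $n\times(s/2)$ rectangle; since the sequences $\lfloor n/2\rfloor \ge k_1 \ge k_3 \ge k_5 \ge \cdots > 0$ biject with such partitions $\lambda$, the surviving weights are exactly those of the classical decomposition of $B^{n,s}$. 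The highest weight rigged configuration is recovered from the differences $\alpha^{(k_i)} - \alpha^{(k_{i+1})}$ of consecutive edge labels exactly as in the proofs of Lemmas~\ref{lemma:hw_typeD} and~\ref{lemma:hw_typeB}: in the ambient type one finds $\virtual{\nu}^{(a)} = 2\,\overline{\lambda}^{[n-a]}$ for $a<n$ and $\virtual{\nu}^{(n)} = \virtual{\nu}^{(n+1)} = 2\,\overline{\lambda}^{1/2}$, so after devirtualization $\nu^{(a)} = \overline{\lambda}^{[n-a]}$ for $a<n$ and $\nu^{(n)} = 2\,\overline{\lambda}^{1/2}$. By the same vacancy-number computation as in the proof of Lemma~\ref{lemma:hw_typeD}, all ambient vacancy numbers vanish, so by Equation~\eqref{eq:vacancy_virtualization} so do those of $\nu$, which forces all riggings to be $0$. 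Finally, $\RC(B^{n,s}) = \bigoplus_\lambda \RC(B^{n,s};\lambda)$ follows from Theorem~\ref{thm:hw_crystals_nsl}, as each of these highest weight rigged configurations generates a copy of $B(\lambda)$.

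The step I expect to be the main obstacle is the doubled spin-column bookkeeping in the last paragraph: keeping simultaneously straight the half-width/doubled-column identification of Remark~\ref{remark:spin_tableaux}, the realization of $\clfw_n$ in type $B_n$, and the rigged-configuration convention of Remark~\ref{rem:rc_convention}, so that the ambient $D_{n+1}^{(1)}$ data translate into ``removing vertical dominoes from an $n\times(s/2)$ rectangle'' (rather than an $n\times s$ one) and into the precise shapes $\nu^{(a)} = \overline{\lambda}^{[n-a]}$, $\nu^{(n)} = 2\,\overline{\lambda}^{1/2}$. The Kleber-tree combinatorics itself is, by contrast, already essentially contained in Lemmas~\ref{lemma:hw_typeD}, \ref{lemma:hw_typeB}, and~\ref{lemma:hw_typeA2odd}.
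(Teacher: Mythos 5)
Your proposal is correct and follows essentially the same route as the paper: the paper's proof likewise builds the ambient Kleber tree of type $D_{n+1}^{(1)}$ as in the $r=n$ case of Lemma~\ref{lemma:hw_typeA2odd}, uses (V2) with $\gamma_a=2$ for $a<n$ to pair consecutive steps $k_{2t-1}=k_{2t}$ (as in Lemma~\ref{lemma:hw_typeC}), selects only even-depth nodes, and devirtualizes with $\gamma_n=1$ accounting for the factor of $2$ in $\nu^{(n)}$. You simply spell out the verifications of (V1), (A2), the weight computation, and the shape bookkeeping that the paper leaves implicit by reference to the earlier lemmas.
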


\begin{proof}
Recall the construction of the ambient Kleber tree, which is of type $D_{n+1}^{(1)}$, from Lemma~\ref{lemma:hw_typeA2odd} for $r = n$. 
However, here we must have $k_i = k_{i+1}$ for all $i \in 2\ZZ$ since $\gamma_a = 2$ for $a\neq n$
(for reasons similar to the type $C_n^{(1)}$ case given in Lemma~\ref{lemma:hw_typeC}). 
Moreover we only select nodes in $\virtual{T}(B^{r,s})$ in the even levels. Therefore after devirtualization (note that $\gamma_n = 1$, which 
implies the factor of $2$ for $\nu^{(n)}$), we get the desired rigged configurations.
\end{proof}

\begin{prop}
Let $B^{n,s}$ be a KR crystal of type $B_n^{(1)}$. Then
\[
\Phi = \fillmap \circ \iota^{-1}
\]
on highest weight elements with $\fillmap$ the same as given in Definition~\ref{def:fill_C} for double columns and $\iota \colon B^{n,s} \to \RC(B^{n,s})$ 
is the natural crystal isomorphism.
\end{prop}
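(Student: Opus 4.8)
The plan is to reduce the statement to the already-established type $A_{2n-1}^{(2)}$ case (Proposition~\ref{prop:filling_A2odd}) via the doubling map. Recall that for a single factor $B^{n,s}$ of type $B_n^{(1)}$ the bijection is by definition $\Phi = (\text{halving}) \circ \Phi_{A_{2n-1}^{(2)}} \circ (\text{doubling})$, where the doubling map sends $\RC(B^{n,s})$ of type $B_n^{(1)}$ to $\RC(B^{n,s})$ of type $A_{2n-1}^{(2)}$ by $\widetilde{\nu}^{(a)} = 2\nu^{(a)}$ for $a<n$ and $\widetilde{\nu}^{(n)} = \nu^{(n)}$, leaving the labels unchanged, and $\Phi_{A_{2n-1}^{(2)}}$ is the type $A_{2n-1}^{(2)}$ bijection. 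First I would identify the doubled highest weight rigged configuration: applying the doubling map to the configuration of Lemma~\ref{lemma:hw_typeB_spin} for the component $B(\lambda)$ yields $\widetilde{\nu}^{(a)} = 2\overline{\lambda}^{[n-a]} = (2\overline{\lambda})^{[n-a]}$ for $a<n$ and $\widetilde{\nu}^{(n)} = 2\overline{\lambda}^{1/2} = (2\overline{\lambda})^{1/2}$, with all riggings $0$. Setting $\mu$ to be the partition whose parts are twice those of $\lambda$, so that the complement of $\mu$ in the $n\times s$ rectangle is $2\overline{\lambda}$, this is exactly the type $A_{2n-1}^{(2)}$ highest weight rigged configuration of the component $B(\mu) \subseteq B^{n,s}$ given by Lemma~\ref{lemma:hw_typeA2odd}. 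By naturality of the crystal isomorphisms $\iota$ (each carries a highest weight element to the highest weight rigged configuration of the same classical weight), the doubling on configurations intertwines $\iota_{B_n^{(1)}}^{-1}$ and $\iota_{A_{2n-1}^{(2)}}^{-1}$ with the doubling embedding $B^{n,s}\hookrightarrow B^{n,s}$ on the crystal side, which sends $u_{\lambda}$ to $u_{\mu}$.

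Next I would invoke Proposition~\ref{prop:filling_A2odd}, so that $\Phi_{A_{2n-1}^{(2)}}$ applied to the doubled configuration equals $\fillmap(u_{\mu})$ with $\fillmap$ the procedure of Definition~\ref{def:fill_D} applied to the shape $\mu = 2\lambda$. Since every column multiplicity of $\mu$ is even, the first odd entry in the sequence of column multiplicities used in that procedure does not exist, so $c = -1$ and only cases (1) and (2) of Definition~\ref{def:fill_D} occur; in particular the output is, reading columns from right to left, the pattern of $\column{1,\dotsc,n}$-columns interleaved with the pairs $\column{1,\dotsc,h,\overline{n},\dotsc,\overline{h+1}}$, $\column{1,\dotsc,n}$ dictated by the multiplicities, matching Definition~\ref{def:fill_C} applied to the doubled data. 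Applying the halving map to this KR tableau (which acts column-wise) returns the type $B_n^{(1)}$ KR tableau, which one checks to be precisely the double-column filling of $u_{\lambda}$ prescribed by Definition~\ref{def:fill_C}. Chaining these identifications gives $\Phi = (\text{halving}) \circ \fillmap \circ \iota_{A_{2n-1}^{(2)}}^{-1} \circ (\text{doubling}) = \fillmap \circ \iota^{-1}$ on highest weight elements, as claimed.

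As an alternative, one could argue directly in the spirit of the proof of Proposition~\ref{prop:crystal_iso_C}: induct on $s$ by peeling off a pair of columns at a time, tracking the singular strings through the $\delta$ algorithm together with its spin-node modifications ($i \ge \ell^{(n-1)}/2$ when passing to $\nu^{(n)}$ and $i \ge 2\overline{\ell}^{(n)}$ when returning to $\nu^{(n-1)}$), and checking that each pair of columns split off is $\column{1,\dotsc,h,\overline{n},\dotsc,\overline{h+1}}$ followed by $\column{1,\dotsc,n}$ while the residual rigged configuration is again of the form in Lemma~\ref{lemma:hw_typeB_spin} for the $n\times(s/2-1)$ rectangle.

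The main obstacle, in either approach, is the bookkeeping at the spin node $n$: in the reduction approach, verifying that halving the type $A_{2n-1}^{(2)}$ KR tableau really reproduces the double-column tableau of Definition~\ref{def:fill_C} rather than some shift of it; in the direct approach, controlling how the factors $\tfrac12$ and $2$ in the $\delta$ algorithm interact with the change of height of the shortest column of $\overline{\lambda}$, so that the barred letters $\overline{n},\dotsc,\overline{h+1}$ land in the correct rows and no off-by-one error creeps in. This is where the genuine content lies; the rest is formal.
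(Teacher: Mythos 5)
Your argument is correct and follows essentially the same route as the paper's proof: reduce to the type $A_{2n-1}^{(2)}$ case via the doubling map (a virtualization with $\gamma_a = 2$ for $a \neq n$, $\gamma_n = 1$), observe that removing vertical dominoes from the $n \times (s/2)$ rectangle becomes removing $2 \times 2$ blocks so that $c = -1$ and only steps (1)--(2) of Definition~\ref{def:fill_D} occur, invoke Proposition~\ref{prop:filling_A2odd}, and note that the halving map is the identity on the KR tableaux. Your explicit verification that the doubled configuration from Lemma~\ref{lemma:hw_typeB_spin} matches the Lemma~\ref{lemma:hw_typeA2odd} configuration for $\mu = 2\lambda$ is a detail the paper leaves implicit, but it is the same proof.
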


\begin{proof}
We first recall the doubling map for $B^{n,s}$ in the paragraph after Equation~\eqref{eq:doubling_map}, which is a virtualization map into $B^{n,s}$ of type $A_{2n-1}^{(2)}$ with $\gamma_a = 2$ for all $a \neq n$ and $\gamma_n = 1$, so
\[
\virtual{\nu}^{(a)} = \begin{cases} 2\nu^{(a)} & a < n, \\ \nu^{(a)} & a = n, \end{cases}
\]
and similarly for the riggings. Thus the removal of vertical dominoes translates into removing $2 \times 2$ boxes. Recall that the bijection for $B^{n,s}$ of type $B_n^{(1)}$ is given by
first applying the doubling map, then following the algorithm given in Definition~\ref{def:fill_D}, and then taking the halving map, which on the KR 
tableaux is the identity map. Since we are removing $2 \times 2$ boxes, we must have $c = -1$. Thus from Proposition~\ref{prop:filling_A2odd} 
(or the proof of Theorem~5.9 in~\cite{OSS13}), we see that $\Phi = \fillmap \circ \iota^{-1}$.
\end{proof}

We note that our convention choice is visible here. Specifically, if we used half width boxes then we would have $\nu^{(n)} = \overline{\lambda}^{1/2}$ 
and $\virtual{\nu}^{(n)} = 2\nu^{(n)}$.

\subsection{Filling map for type $A_{2n}^{(2)}$}

\begin{lemma}
\label{lemma:hw_typeA2_even}
Consider type $A_{2n}^{(2)}$ and $B^{r,s}$ be a KR crystal. We have
\[
\RC(B^{r,s}) = \bigoplus_{\lambda} \RC(B^{r,s}; \lambda),
\]
where $\lambda$ is obtained by removing single boxes from an $r \times s$ rectangle. 
Moreover, the highest weight rigged configuration in $\RC(B^{r,s}; \lambda)$ is given by
\[
\nu^{(a)} = \begin{cases}
\overline{\lambda}^{[r-a]} & 1 \leq a < r, \\
\overline{\lambda} & r \leq a \leq n,
\end{cases} 
\]
with all riggings 0.
\end{lemma}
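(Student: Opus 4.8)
The plan is to follow the template of the proofs of Lemma~\ref{lemma:hw_typeC} and Lemma~\ref{lemma:hw_typeA2odd}: build the ambient Kleber tree for $\virtual{B}^{r,s}$ in type $A_{2n-1}^{(1)}$, determine which of its nodes survive the virtual Kleber algorithm of Definition~\ref{def:virtual_kleber}, read off the virtual rigged configurations, and devirtualize. Recall that $A_{2n}^{(2)}$ virtualizes into $A_{2n-1}^{(1)}$ with the classical side passing through $\eta$ to type $B_n$, that $\virtual{L}$ is obtained from $L$ by $\virtual{L}_{\gamma_a i}^{(b)} = L_i^{(a)}$ for $b \in \phi^{-1}(a)$ with the exception $\virtual{L}_i^{(n)} = 2 L_i^{(n)}$, and that the decisive feature distinguishing this type from $C_n^{(1)}$ is the modified scaling factor $\widetilde{\gamma}_n = 1$, so that the ambient factor attached to node $n$ is a product $B^{n,s} \otimes B^{n,s}$ rather than $B^{n,2s}$. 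I split the argument into $r < n$ and $r = n$.

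For $r < n$ the ambient multiplicity array has $\virtual{L}_s^{(r)} = \virtual{L}_s^{(2n-r)} = 1$ and is otherwise zero, so the ambient Kleber tree is \emph{literally the one constructed in the proof of Lemma~\ref{lemma:hw_typeC}}: its nodes at depth $i$ are indexed by sequences $r \geq k_1 \geq \cdots \geq k_i > 0$, the edge entering depth $j$ is labelled by $\virtual{\alpha}^{(k_j)}$ from~\eqref{equation.alpha}, and every node carries a distinct dominant weight. What changes is the selection. Condition~(V1) holds by the same symmetry of $\virtual{\alpha}^{(k)}$ under the folding used in Lemma~\ref{lemma:hw_typeC}. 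Condition~(V2) is now vacuous: at node $n$ we have $\widetilde{\gamma}_n = 1$ so there is no parity constraint $k_i = k_{i-1}$, and at the affine node there is no $\alpha_0$-component to compare; thus the pairing of steps that produced horizontal dominoes in type $C_n^{(1)}$ disappears. Likewise conditions~(A1) and~(A2) do not restrict the selection: the only node that could force $\gamma = \gamma_a > 1$ in~(A2) is the affine node $0$, for which $(d_{xy} \mid \virtual{\clfw}_0) = 0$ automatically since $d_{xy}$ lies in the classical root lattice. Hence \emph{all} nodes are selected, the selected weights range over all dominant weights obtained from $s\clfw_r$ by removing single boxes from an $r \times s$ rectangle (with $k_i$ the number of boxes removed from the $(r-i)$-th row), and reading off $m_i^{(a)}$ from $\virtual{\alpha}^{(k_j)} - \virtual{\alpha}^{(k_{j+1})}$ exactly as in Lemma~\ref{lemma:hw_typeC} — but devirtualizing with $\widetilde{\gamma}_n = 1$, which is precisely why $\nu^{(n)} = \overline{\lambda}$ and not $\tfrac{1}{2}\overline{\lambda}$ — yields the asserted $\nu$. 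All resulting vacancy numbers vanish, so the only riggings are $0$, and the decomposition of $\RC(B^{r,s})$ into the $\RC(B^{r,s};\lambda)$ over these $\lambda$ then follows from Theorem~\ref{thm:hw_crystals_nsl}.

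For $r = n$ we instead have $\virtual{L}_s^{(n)} = 2$, so the ambient is $B^{n,s} \otimes B^{n,s}$ in type $A_{2n-1}^{(1)}$ and the Kleber tree starts from the weight $2\clfw_n$. The roots that may be subtracted in~(K2) are
\[
\virtual{\alpha}^{(k)} = \virtual{\alpha}_{n-k+1} + 2\virtual{\alpha}_{n-k+2} + \cdots + (k-1)\virtual{\alpha}_{n-1} + k\virtual{\alpha}_n + (k-1)\virtual{\alpha}_{n+1} + \cdots + \virtual{\alpha}_{n+k-1}
\]
for $1 \leq k \leq n$, sending $2\clfw_n$ to $\clfw_{n-k} + \clfw_{n+k}$ (with the convention $\clfw_0 = \clfw_{2n} = 0$), which pictorially moves $k$ boxes between the two columns of height $n$. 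Iterating as before, and as in the $r = n$ case of Lemma~\ref{lemma:hw_typeA2odd}, the leaves are indexed by sequences $n \geq k_1 \geq \cdots \geq k_s > 0$, each giving a unique dominant weight; conditions~(V1) and~(V2) hold and all nodes are selected for the reasons above, and devirtualization yields the stated $\nu$ with $\nu^{(n)} = \overline{\lambda}$.

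The main obstacle is the bookkeeping at node $n$: one must be careful that the virtual Kleber algorithm for $A_{2n}^{(2)}$ uses $\widetilde{\gamma}_n = 1$ and the ambient factor $B^{n,s} \otimes B^{n,s}$, because this is exactly what turns ``remove a domino'' into ``remove a single box'' and what removes the halving of $\nu^{(n)}$ present in types $C_n^{(1)}$ and $A_{2n-1}^{(2)}$. Once this is pinned down, the combinatorics of the ambient tree is identical to cases already treated and the verification that all vacancy numbers are $0$ is routine.
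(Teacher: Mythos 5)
Your proof takes exactly the paper's route --- the paper's entire proof of this lemma is the single sentence ``Similar to Lemma~\ref{lemma:hw_typeC} except we select all nodes in the ambient Kleber tree,'' and your argument is a correct, detailed expansion of it, including the right treatment of $\widetilde{\gamma}_n = 1$ and of the $r=n$ ambient factor $B^{n,s}\otimes B^{n,s}$. One minor slip in your justification of the selection step: for $A_{2n}^{(2)}$ it is node $n$, not node $0$, that carries $\gamma_n = 2$ before the modification to $\widetilde{\gamma}_n = 1$; once all $\widetilde{\gamma}_a = 1$ one has $\gamma = 1$, so condition~(A1) alone already selects every node and~(A2) never comes into play.
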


\begin{proof}
Similar to Lemma~\ref{lemma:hw_typeC} except we select all nodes in the ambient Kleber tree.
\end{proof}

\begin{dfn}
\label{def:fill_A2even}
The crystal morphism $\fillmap \colon B^{r,s} \to T^{r,s}$ is as in Definition~\ref{def:fill_D} except the final column for step~(\ref{step:final}) 
(when $c > -1$) is $\column{1, \dotsc, x-1, \emptyset, \dotsc, \emptyset}$ (recall $\emptyset$ is the unique element in $B(0) \subseteq B^{1,1}$).
\end{dfn}

\begin{ex}
Consider $\lambda = 2\clfw_1 + \clfw_2 + 2\clfw_3 + \clfw_4$ for type $A_{20}^{(2)}$ and $B^{4,6}$. Then we have
\[
\fillmap(u_{\lambda}) = 
\begin{tikzpicture}[baseline]
\matrix [matrix of math nodes,column sep=-.4, row sep=-.5,text height=8,text width=8,align=center,inner sep=3,nodes={anchor=south,minimum height=15}] 
{
\node[draw]{4}; & \node[draw,fill=gray!30]{\overline{4}}; & \node[draw,fill=gray!30]{4}; & \node[draw,fill=gray!30]{\overline{3}}; & \node[draw,fill=gray!30]{\overline{4}}; & \node[draw,fill=gray!30]{\emptyset}; \\
\node[draw]{3}; & \node[draw]{3}; & \node[draw]{3}; & \node[draw,fill=gray!30]{\overline{4}}; & \node[draw,fill=gray!30]{4}; & \node[draw,fill=gray!30]{\emptyset}; \\
\node[draw]{2}; & \node[draw]{2}; & \node[draw]{2}; & \node[draw]{4}; & \node[draw,fill=gray!30]{3}; & \node[draw,fill=gray!30]{2}; \\
\node[draw]{1}; & \node[draw]{1}; & \node[draw]{1}; & \node[draw]{1}; & \node[draw]{1}; & \node[draw]{1}; \\
};
\end{tikzpicture}
\ .
\]
\end{ex}

We recall some pertinent facts about $\delta$ in type $A_{2n}^{(2)}$. Specifically for $\nu^{(n)}$, if the selected singular string 
has length 1, we terminate and return $\emptyset$, otherwise we remove 2 boxes from the selected string. In all other cases,
it behaves as in type $C_n^{(1)}$.

\begin{prop}
\label{prop:morphism_A2even}
Let $B^{r,s}$ be a KR crystal of type $A_{2n}^{(2)}$. Then
\[
\Phi = \fillmap \circ \iota^{-1}
\]
on highest weight elements with $\fillmap$ as in Definition~\ref{def:fill_A2even} and $\iota \colon B^{r,s} \to \RC(B^{r,s})$ is the natural crystal isomorphism.
\end{prop}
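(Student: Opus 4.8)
The plan is to argue by induction on $s$, following closely the proof of Proposition~\ref{prop:crystal_iso_C} for type $C_n^{(1)}$ and the proof of~\cite[Thm.~5.9]{OSS13} for type $D_n^{(1)}$. Starting from the unique classically highest weight rigged configuration $(\nu,J)\in\RC(B^{r,s};\lambda)$ of Lemma~\ref{lemma:hw_typeA2_even}, I would apply $\ls$ to pass to $\RC(B^{r,1}\otimes B^{r,s-1})$ and then trace the $r$ successive applications of $\delta$ that remove the leftmost column of the KR tableau. One must show (i) that these $r$ applications return exactly the leftmost column of $\fillmap(u_\lambda)$ as prescribed by Definition~\ref{def:fill_A2even}, and (ii) that the rigged configuration left in $\RC(B^{r,s-1})$ is again classically highest weight, namely for the component whose complement shape is $\overline{\lambda}$ with every length-$s$ row shortened to length $s-1$. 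Since the classical decomposition is multiplicity free, $\iota$ from Theorem~\ref{thm:hw_crystals_nsl} identifies $(\nu,J)$ with $u_\lambda$, so (i) and (ii) together yield $\Phi=\fillmap\circ\iota^{-1}$ on highest weight elements.

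I would organize the case analysis along the four steps of Definition~\ref{def:fill_D}/\ref{def:fill_A2even}. When the leftmost column is $\column{1,\dots,r}$ (step (1), i.e.\ $\overline{\lambda}_1<s$) or one of the paired columns $\column{1,\dots,h,\overline{r},\dots,\overline{h+1}}$ of step (2), the argument is the one in Proposition~\ref{prop:crystal_iso_C}: for every node $a<n$ the map $\delta$ in type $A_{2n}^{(2)}$ selects, shortens, and resingularizes strings exactly as in type $C_n^{(1)}$, and at node $n$ it removes two boxes from the singular rigging-$0$ row of length $s$ in $\nu^{(n)}=\overline{\lambda}$, which here has length $\ge 2$; the effect of $\ls$ is a $+1$ shift of the vacancy numbers on $\nu^{(r)}$ and nothing else, just as before. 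The ``complex'' step-(3) columns are handled as in~\cite[Thm.~5.9]{OSS13}: one verifies that $\delta$ returns the unbarred initial segment $\column{1,\dots,h}$, then, after the node-$n$ pass removes two boxes (the relevant row still having length $\ge 2$), the block $\column{r-(x-h-2),\dots,r}$, and finally the barred tail $\column{\overline{r},\dots,\overline{x}}$, with $x$ updated as in step (3). Here the explicit data $\nu^{(a)}=\overline{\lambda}^{[r-a]}$ for $a<r$ and $\nu^{(a)}=\overline{\lambda}$ for $r\le a\le n$ of Lemma~\ref{lemma:hw_typeA2_even} is exactly what drives the row-length bookkeeping.

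The one genuinely new feature, where the argument diverges from the type $D_n^{(1)}$ one, is the symbol $\emptyset$. In the type $A_{2n}^{(2)}$ description of $\delta$, once the string selected in $\nu^{(n)}$ has length $1$ the process terminates and returns $\emptyset$, instead of removing two boxes and proceeding through $\nu^{(n-2)},\dots,\nu^{(1)}$ with barred letters as in type $D_{n+1}^{(1)}$. I would show that, for the highest weight rigged configuration of Lemma~\ref{lemma:hw_typeA2_even}, this length-$1$ condition at node $n$ is first met precisely at the column where the filling procedure starts the barred tail of step (3) from $\overline{x}$ with $x$ the value carried over from the previous column; that each remaining application of $\delta$ peeling off this same column again reaches a length-$1$ singular string in $\nu^{(n)}$, hence again returns $\emptyset$; and that the final applications simply return $x-1,\dots,1$ because the lower rigged partitions then carry no singular strings. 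This reproduces the final column $\column{1,\dots,x-1,\emptyset,\dots,\emptyset}$ of Definition~\ref{def:fill_A2even}, and one checks that the rows of $\overline{\lambda}$ shortened while removing this column are exactly its length-$s$ rows, establishing (ii).

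I expect the main obstacle to be this last verification: determining the precise column at which the node-$n$ string first has length $1$, and confirming that it coincides with the step-(3)-to-step-(4) transition of the filling procedure --- equivalently, that the filling map's final value of $x$ equals the node index at which $\delta$ begins returning $\emptyset$. This requires propagating the row-length and singularity data of the $\overline{\lambda}^{[r-a]}$ through each of the $r$ applications of $\delta$ in every column, which is routine but delicate, and it is where one genuinely uses that in type $A_{2n}^{(2)}$ the component of Lemma~\ref{lemma:hw_typeA2_even} has $\nu^{(a)}=\overline{\lambda}$ for all $r\le a\le n$ with no half-size or doubled rows, so that the first $\emptyset$ appears at the correct moment. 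The remaining checks --- the vacancy-number shift under $\ls$, resingularization of affected rows with rigging $0$, and correctness of the weight --- are immediate from the formulas of Section~\ref{sec:background} and proceed exactly as in the $C_n^{(1)}$ and $D_n^{(1)}$ cases.
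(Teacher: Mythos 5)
Your proposal matches the paper's proof in all essentials: the paper likewise reduces the removal of the full and paired columns to the type $C_n^{(1)}$ argument of Proposition~\ref{prop:crystal_iso_C}, treats the remaining step-(3) columns as in~\cite[Thm.~5.9]{OSS13}, and handles the final column by observing that the leftover rigged configuration is the single-column one of Lemma~\ref{lemma:hw_typeA2_even}, so that every string selected in $\nu^{(n)}$ has length $1$ and $\delta$ returns $\emptyset$ until the configuration is empty, after which the letters $x-1,\dotsc,1$ come out for free. The only difference is one of emphasis: you single out as the delicate point the verification that the length-$1$ condition at node $n$ is first met exactly at the last column, which the paper settles implicitly by the fact that in all earlier columns the selected strings in $\nu^{(n)}$ are the maximal-length (hence length $\geq 2$) rows.
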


\begin{proof}
This is the similar to the proof as type $D_n^{(1)}$ given in~\cite{OSS13}, but we must make the following changes. When we remove pairs of 
columns (Step~1 in~\cite{OSS13}), this behaves as in the proof of Proposition~\ref{prop:crystal_iso_C} (type $C_n^{(1)}$ case). When there is a 
single column remaining (i.e. we are in the final step, Step~3, and $k_c = 1$ with all other $k_i = 0$), the resulting rigged configuration 
is given by Lemma~\ref{lemma:hw_typeA2_even} with $\lambda = \Lambda_x$ and $s = 1$, where $x$ in given by the algorithm for 
Definition~\ref{def:fill_D}. Here $\delta$ starts at $\nu^{(r)}$ and goes to $\nu^{(n)}$. Since 
$\nu^{(n)}$ consists only of a single column with a singular string, the map $\delta$ returns $\emptyset$.
Thus we remove a single box from each 
$\nu^{(k)}$ for $r \leq k \leq n$. This can be repeated this $r-x$ times, at which point we obtain the empty rigged configuration. Hence we obtain the 
final column as $\column{1, \dotsc, x-1, \emptyset, \dotsc, \emptyset}$.
\end{proof}

\subsection{Filling map for type $A_{2n}^{(2)\dagger}$}

This is the same as type $C_n^{(1)}$ except at $r = n$, in which case we have $v \colon B^{n,s} \to B^{n,s} \otimes B^{n,s}$ under the virtualization 
map into type $A_{2n-1}^{(1)}$. However this makes the behavior uniform with the proof for type $C_n^{(1)}$ for $r < n$.

\begin{lemma}
\label{lemma:hw_typeA2even_dual}
Let $B^{r,s}$ be a KR crystal of type $A_{2n}^{(2)\dagger}$. We have
\[
\RC(B^{r,s}) = \bigoplus_{\lambda} \RC(B^{r,s}; \lambda),
\]
where $\lambda$ is obtained by removing horizontal dominoes from an $r \times s$ rectangle. Moreover, the highest weight rigged configuration 
in $\RC(B^{r,s}; \lambda)$ is given by
\[
\nu^{(a)} = \begin{cases}
\overline{\lambda}^{[r-a]} & 1 \leq a < r, \\
\overline{\lambda} & r \leq a \leq n,
\end{cases} 
\]
with all riggings 0.
\end{lemma}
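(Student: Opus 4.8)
The plan is to mimic the proof of Lemma~\ref{lemma:hw_typeC} almost verbatim, since type $A_{2n}^{(2)\dagger}$ virtualizes into type $A_{2n-1}^{(1)}$ exactly as type $C_n^{(1)}$ does. Recall that for $r < n$ the virtual KR crystal is $\virtual{B}^{r,s} = B^{r,s} \otimes B^{2n-r,s}$, while for $r = n$ (by Conjecture~\ref{conj:KR_virtualization} and the convention fixed before Definition~\ref{def:fill_C}) it is $\virtual{B}^{n,s} = B^{n,s} \otimes B^{n,s}$; in every case the ambient type is $A_{2n-1}^{(1)}$ and the ambient Kleber tree of Definition~\ref{def:kleber_algorithm} starts from a sum $\virtual{\clfw}_p + \virtual{\clfw}_{2n-p}$ of two columns, with $p = r$ if $r < n$ and $p = n$ if $r = n$. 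In particular the case $r = n$ needs no separate argument: the two height-$n$ columns coming from $B^{n,s} \otimes B^{n,s}$ play precisely the roles that the height-$r$ and height-$(2n-r)$ columns play when $r < n$, which is exactly the "uniformity" alluded to at the start of this subsection.

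First I would construct the ambient tree $\virtual{T}(B)$ and run the virtual Kleber algorithm. As in the proof of Lemma~\ref{lemma:hw_typeC}, the only roots that may be subtracted are the $\virtual{\alpha}^{(k_i)}$ of~\eqref{equation.alpha} (with $r$ replaced by $p$), and every leaf corresponds to a weakly decreasing sequence $p \geq k_1 \geq k_2 \geq \cdots \geq k_s > 0$, each such sequence giving a distinct classical dominant weight. Condition~(V1) of Definition~\ref{def:virtual_kleber} holds by the symmetry of $\virtual{\alpha}^{(k_i)}$ under the folding (the coefficient of $\virtual{\alpha}_a$ equals that of $\virtual{\alpha}_{2n-a}$), and conditions~(V2),~(A1),~(A2) restrict the retained nodes so that $k_i = k_{i-1}$ for even $i$ and only even-depth nodes survive, exactly as in Lemma~\ref{lemma:hw_typeC}; after devirtualization this means the selected weights $\lambda$ are precisely those obtained from $s\clfw_r$ by removing horizontal dominoes, with $k_i$ recording the number of dominoes removed from the appropriate row of the $r \times s$ rectangle. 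Up to this point the argument is identical to the type $C_n^{(1)}$ case.

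The one genuine difference is the devirtualization step. For type $A_{2n}^{(2)\dagger}$ the map back to rigged configurations is Equation~\eqref{eq:rc_virtualization_map_A2dual}, which — unlike Equation~\eqref{eq:rc_virtualization_map} used for type $C_n^{(1)}$ — does \emph{not} rescale the underlying partitions at node $n$: one has $\virtual{\nu}^{(b)} = \nu^{(a)}$ for every $b \in \phi^{-1}(a)$ and every $a \in I_0$, including $a = n$ (only the riggings at node $n$ are doubled, which is irrelevant here since all riggings are $0$). Reading off $\virtual{\nu}$ from the coefficients of the $\virtual{\alpha}^{(k_i)}$ exactly as in Lemma~\ref{lemma:hw_typeC} gives $\virtual{\nu}^{(a)} = \virtual{\nu}^{(2n-a)} = \overline{\lambda}^{[r-a]}$ for $a < r$, $\virtual{\nu}^{(a)} = \overline{\lambda}$ for $r \leq a < n$, and $\virtual{\nu}^{(n)} = \overline{\lambda}$; hence $\nu^{(a)} = \overline{\lambda}^{[r-a]}$ for $1 \leq a < r$ and $\nu^{(a)} = \overline{\lambda}$ for $r \leq a \leq n$, with all riggings $0$ because all vacancy numbers vanish. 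This is the asserted highest weight rigged configuration; combined with Theorem~\ref{thm:hw_crystals_nsl} it also yields the decomposition~\eqref{equation.RC decomp} and the natural crystal isomorphism $\iota$, in analogy with the type $C_n^{(1)}$ discussion following Lemma~\ref{lemma:hw_typeC}.

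I expect the main obstacle to be purely bookkeeping around node $n$: one must confirm that the correct virtualization map here is Equation~\eqref{eq:rc_virtualization_map_A2dual} rather than Equation~\eqref{eq:rc_virtualization_map} (so that the factor of $\tfrac12$ occurring in type $C_n^{(1)}$ at node $n$ does \emph{not} appear), and one must track the scaling factors $\gamma_a$ for $A_{2n}^{(2)\dagger}$ carefully enough to see that the selection conditions~(A1),~(A2) still cut out exactly the even-depth nodes, so that horizontal dominoes — and not single boxes, as in the closely related type $A_{2n}^{(2)}$ of Lemma~\ref{lemma:hw_typeA2_even} — are removed. Everything else is a transcription of the $C_n^{(1)}$ proof.
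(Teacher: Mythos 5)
Your proposal matches the paper's proof, which simply states that the argument of Lemma~\ref{lemma:hw_typeC} carries over for $r<n$ and, for $r=n$, uses $\virtual{B}^{n,s}=B^{n,s}\otimes B^{n,s}$ in type $A_{2n-1}^{(1)}$ so that the same construction of the ambient Kleber tree applies. Your additional observation that devirtualization via Equation~\eqref{eq:rc_virtualization_map_A2dual} leaves the partitions at node $n$ unrescaled (so no factor $\tfrac{1}{2}$ appears at $\nu^{(n)}$, unlike type $C_n^{(1)}$) is correct and makes explicit a detail the paper leaves implicit.
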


\begin{proof}
Same as Lemma~\ref{lemma:hw_typeC} for $r < n$. For $r = n$, we have $\virtual{B}^{n,s} = B^{n,s} \otimes B^{n,s}$ in type $A_{2n-1}^{(1)}$. 
So a similar proof as Lemma~\ref{lemma:hw_typeC} holds here.
\end{proof}

\begin{prop}
Let $B^{r,s}$ be a KR crystal of type $A_{2n}^{(2)\dagger}$. Then
\[
\Phi = \fillmap \circ \iota^{-1}
\]
on highest weight elements with $\fillmap$ is the same as given in Definition~\ref{def:fill_C} and $\iota \colon B^{r,s} \to \RC(B^{r,s})$ is the 
natural crystal isomorphism.
\end{prop}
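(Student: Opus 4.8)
The plan is to follow the proof of Proposition~\ref{prop:crystal_iso_C} essentially verbatim, separating the cases $r < n$ and $r = n$. For $r < n$, the virtualization into type $A_{2n-1}^{(1)}$, the shape of the ambient Kleber tree, the classically highest weight rigged configurations supplied by Lemma~\ref{lemma:hw_typeA2even_dual}, and the map $\delta$ are all literally the same as in type $C_n^{(1)}$; hence there is nothing new to do and the proof of Proposition~\ref{prop:crystal_iso_C} applies word for word, with Lemma~\ref{lemma:hw_typeA2even_dual} used in place of Lemma~\ref{lemma:hw_typeC}.

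For $r = n$, recall that $B^{n,s}$ virtualizes into $B^{n,s} \otimes B^{n,s}$ of type $A_{2n-1}^{(1)}$, that is, the right tensor factor $B^{2n-r,s}$ simply degenerates to a second copy of $B^{n,s}$. Structurally this is the \emph{same} embedding as the $B^{r,s} \hookrightarrow B^{r,s} \otimes B^{2n-r,s}$ used for $r < n$ in type $C_n^{(1)}$: the ambient Kleber tree is built from the roots $\virtual{\alpha}^{(k)}$ of~\eqref{equation.alpha} (specialized to $r = n$), conditions~(V1) and~(V2) of the virtual Kleber algorithm force $k_i = k_{i-1}$ for $i \in 2\ZZ$, only even-depth nodes are selected, and devirtualization yields exactly the rigged configurations of Lemma~\ref{lemma:hw_typeA2even_dual} corresponding to removing horizontal dominoes from an $n \times s$ rectangle. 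The classical decomposition and the filling map are still given by Definition~\ref{def:fill_C}. Consequently the induction on $s$ from Proposition~\ref{prop:crystal_iso_C} — stripping off the leftmost column of the KR tableau while simultaneously tracking what $\delta$ removes from $(\nu, J)$, split into the cases $\overline{\lambda}_1 < s$ (leftmost column $\column{1, \dotsc, n}$) and $\overline{\lambda}_1 = s$ (leftmost column $\column{1, \dotsc, h, \overline{n}, \dotsc, \overline{h+1}}$) — carries over verbatim, together with the vacancy-number bookkeeping that shows the affected strings are left singular with rigging $0$.

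The one point that requires a moment's care is the node $n$, where type $A_{2n}^{(2)\dagger}$ differs from type $C_n^{(1)}$: the riggings in $J_{2j-1}^{(n)}$ lie in $\ZZ + \tfrac12$, the crystal operators $e_n, f_n$ carry the factor $k_n = 2$ (Definition~\ref{def:crystal_ops_nsl}), and $\delta$ at node $n$ is modified accordingly. On classically highest weight elements, however, all riggings are $0$, so throughout the run of $\delta$ the data at node $n$ remains integral and the algorithm proceeds exactly as in the type $C_n^{(1)}$ analysis; concretely, by the same comparison with the ambient type $A_{2n-1}^{(1)}$ computation used in the $r < n$ discussion, $\delta$ returns the expected letter at the transition past node $n$ and leaves the selected strings singular. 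This verification of the node-$n$ behavior is the step I expect to be the main (though mild) obstacle; once it is in place, the identity $\Phi = \fillmap \circ \iota^{-1}$ on highest weight elements follows as before, with $\fillmap$ as in Definition~\ref{def:fill_C}.
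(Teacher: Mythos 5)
Your proposal is correct and takes essentially the same route as the paper, whose entire proof is the single sentence ``The proof is similar to Proposition~\ref{prop:crystal_iso_C}'' (with the surrounding text noting that the $r=n$ virtualization $B^{n,s}\hookrightarrow B^{n,s}\otimes B^{n,s}$ makes that case uniform with $r<n$). Your only slip is the claim that the highest weight rigged configurations are ``literally the same'' as in type $C_n^{(1)}$ --- Lemma~\ref{lemma:hw_typeA2even_dual} gives $\nu^{(n)}=\overline{\lambda}$ rather than $\tfrac12\overline{\lambda}$ --- but you correctly flag and handle the node-$n$ discrepancy afterwards, so the argument stands.
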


\begin{proof}
The proof is similar to Proposition~\ref{prop:crystal_iso_C}.
\end{proof}

\subsection{Filling map for type $D_{n+1}^{(2)}$}

We note that in this case, the filling map is similar to type $A_{2n}^{(2)}$ except for $r = n$.

\subsubsection{$r < n$}

\begin{lemma}
\label{lemma:hw_typeD_twisted}
Let $B^{r,s}$ be a KR crystal type $D_{n+1}^{(2)}$ for $r < n$. We have
\[
\RC(B^{r,s}) = \bigoplus_{\lambda} \RC(B^{r,s}; \lambda),
\]
where $\lambda$ is obtained by removing single boxes from an $r \times s$ rectangle.
Moreover, the highest weight rigged configuration in $\RC(B^{r,s}; \lambda)$ is given by
\[
\nu^{(a)} = \begin{cases}
\overline{\lambda}^{[r-a]} & 1 \leq a < r, \\
\overline{\lambda} & r \leq a \leq n,
\end{cases} 
\]
with all riggings 0.
\end{lemma}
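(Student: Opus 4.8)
The plan is to mimic the proof of Lemma~\ref{lemma:hw_typeC} almost verbatim, using the virtualization of type $D_{n+1}^{(2)}$ into type $A_{2n-1}^{(1)}$ with folding $\phi$ satisfying $\phi^{-1}(a) = \{a, 2n-a\}$ for $1 \le a < n$ and $\phi^{-1}(n) = \{n\}$. The only structural change from the $C_n^{(1)}$ case is that here $\gamma_a = 1$ for every $a \in I_0$ --- the arrow of the Dynkin diagram $\Gamma$ at the node $n$ points toward $n$, so $\gamma_n = 1$, in contrast to type $C_n^{(1)}$ where $\gamma_n = 2$ --- and this is exactly why the decomposition will come out in terms of single boxes rather than horizontal dominoes, in parallel with the $A_{2n}^{(2)}$ case of Lemma~\ref{lemma:hw_typeA2_even}.

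First I would construct the ambient Kleber tree $\virtual{T}(B^{r,s})$ for $\virtual{B}^{r,s} = B^{r,s} \otimes B^{2n-r,s}$ of type $A_{2n-1}^{(1)}$. As in Lemma~\ref{lemma:hw_typeC}, starting from $t_0 = \virtual{\clfw}_r + \virtual{\clfw}_{2n-r}$ the only roots subtractable at the first step are the $\virtual{\alpha}^{(k_1)}$ of Equation~\eqref{equation.alpha} with $r \ge k_1 > 0$, and condition~(K2) at deeper levels forces every root-to-leaf path to be labelled by a weakly decreasing sequence $r \ge k_1 \ge \cdots \ge k_s > 0$, each yielding a distinct dominant weight; condition~(V1) is automatic from the $\phi$-symmetry of $\virtual{\alpha}^{(k_i)}$. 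Since $\gamma_a = 1$ for all $a \in I_0$, condition~(V2) imposes nothing and, as $\gamma = \max_{a\in I}\gamma_a = 1$, every node of $\virtual{T}(B^{r,s})$ is selected by~(A1). Because each subtracted root removes a single box from one row of the $r\times s$ rectangle, the devirtualized selected weights are exactly the partitions $\lambda$ obtained by removing single boxes, which gives the asserted decomposition.

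It then remains to identify the rigged configuration attached to a selected node on a path $r \ge k_1 \ge \cdots \ge k_p > 0$. Expanding $\virtual{\alpha}^{(k_i)} - \virtual{\alpha}^{(k_{i+1})}$ in the $\virtual{\alpha}_b$ produces the ambient multiplicities $\virtual{m}_i^{(b)}$, which are symmetric under $\phi$, and one checks --- just as in Lemma~\ref{lemma:hw_typeC} --- that $\virtual{\nu}^{(b)} = \overline{\lambda}^{[r-b]}$ for $1 \le b < r$ and $\virtual{\nu}^{(b)} = \overline{\lambda}$ for $r \le b \le n$, with all vacancy numbers, hence all riggings, equal to $0$. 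Devirtualizing via Equation~\eqref{eq:rc_virtualization_map} with $\widetilde{\gamma}_a = \gamma_a = 1$ (no $A_{2n}^{(2)}$-type modification arises in this type) leaves the partitions and labels unchanged, producing the stated $\nu^{(a)}$. The main thing to be careful about --- and the step I expect to be the only real obstacle --- is pinning down that every scaling factor for type $D_{n+1}^{(2)}$ equals $1$, so that the selection/admissibility conditions~(V2) and~(A2) are genuinely vacuous here; granting that, the rest is a routine transcription of the type $C_n^{(1)}$ and $A_{2n}^{(2)}$ arguments, and the statement is simply the single-box analogue of Lemma~\ref{lemma:hw_typeA2_even}.
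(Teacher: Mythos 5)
Your proposal is correct and follows essentially the same route as the paper, whose proof is literally ``similar to Lemma~\ref{lemma:hw_typeC} except we select all nodes in the ambient Kleber tree'': you build the same ambient $A_{2n-1}^{(1)}$ Kleber tree for $B^{r,s}\otimes B^{2n-r,s}$, observe that all scaling factors for $D_{n+1}^{(2)}$ equal $1$ (the arrows point toward $0$ and toward $n$), so (V2) and (A2) are vacuous and every node is selected, yielding single-box removals and the stated rigged configurations after the trivial devirtualization. The only difference is that you spell out the details the paper leaves implicit.
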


\begin{proof}
Similar to Lemma~\ref{lemma:hw_typeC} except we select all nodes in the ambient Kleber tree.
\end{proof}

\begin{prop}
Let $B^{r,s}$ be a KR crystal of type $D_{n+1}^{(2)}$ and $r < n$. Then
\[
\Phi = \fillmap \circ \iota^{-1}
\]
on highest weight elements with $\fillmap$ as in Definition~\ref{def:fill_A2even} and $\iota \colon B^{r,s} \to \RC(B^{r,s})$ is the natural crystal isomorphism.
\end{prop}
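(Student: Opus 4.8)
The plan is to follow the same template that worked for type $A_{2n}^{(2)}$ in Proposition~\ref{prop:morphism_A2even}, since Lemma~\ref{lemma:hw_typeD_twisted} produces exactly the same highest weight rigged configurations (removal of single boxes from an $r \times s$ rectangle, with $\nu^{(a)} = \overline{\lambda}^{[r-a]}$ for $a < r$ and $\nu^{(a)} = \overline{\lambda}$ for $r \le a \le n$). The key structural fact is that for $r < n$ in type $D_{n+1}^{(2)}$, the relevant part of the rigged configuration involved in $\delta$ is supported on nodes $a = r, r+1, \dotsc, n$, and the behavior of $\delta$ on $\nu^{(n)}$ here matches the behavior in type $A_{2n}^{(2)}$: if the selected singular string in $\nu^{(n)}$ has length $1$ we stop and return $\emptyset$, otherwise we remove two boxes; on all other nodes $\delta$ behaves exactly as in type $C_n^{(1)}$ (cf.\ the proof of Proposition~\ref{prop:crystal_iso_C}).

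First I would set up the induction on $s$, noting as in Proposition~\ref{prop:crystal_iso_C} that removing the leftmost column of the filled tableau corresponds to passing to the highest weight element of $B^{r,s-1}$ (resp.\ $\RC(B^{r,s-1})$) for the partition obtained from $\overline{\lambda}$ by deleting one box from every row of maximal length; the base case $s=0$ is trivial. Then, given $\lambda$ and its rigged configuration $(\nu,J)$ from Lemma~\ref{lemma:hw_typeD_twisted}, I would split into the two cases that occur in the filling procedure of Definition~\ref{def:fill_A2even}: (i) the removal of column pairs and singleton columns $\column{1,\dotsc,r}$ and $\column{1,\dotsc,h,\overline{r},\dotsc,\overline{h+1}}$, which is handled verbatim as in the type $C_n^{(1)}$ proof because on nodes $a < n$ the vacancy-number bookkeeping is identical; and (ii) the final column in step~(\ref{step:final}), where $k_c = 1$ and all other $k_i = 0$, so the rigged configuration is the one from Lemma~\ref{lemma:hw_typeD_twisted} with $\lambda = \overline{\Lambda}_x$ and $s = 1$. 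In case (ii), since $\nu^{(n)}$ is a single column carrying a singular string of length $1$, $\delta$ returns $\emptyset$ after removing one box from each $\nu^{(k)}$ for $r \le k \le n$; iterating $r - x$ times empties the rigged configuration and produces the column $\column{1,\dotsc,x-1,\emptyset,\dotsc,\emptyset}$, matching Definition~\ref{def:fill_A2even}.

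The main point to verify carefully — and the likely obstacle — is that the return values of $\delta$ in type $D_{n+1}^{(2)}$ really do reproduce steps~(1)--(4) of the filling procedure as inherited from Definition~\ref{def:fill_D}/Definition~\ref{def:fill_A2even}, i.e.\ that the selection of singular strings propagates correctly from $\nu^{(r)}$ up to $\nu^{(n)}$ and back down; this is where one must check that splitting off the leftmost column raises $p_i^{(r)}$ by $1$ for $i < s$ and leaves all other vacancy numbers unchanged, exactly as in the $C_n^{(1)}$ computation, and that the asymmetry at node $n$ (single boxes, $\emptyset$-return on length-$1$ strings) is the only difference. Once this is in place, the argument of~\cite[Thm.~5.9]{OSS13} carries over, and one concludes $\Phi = \fillmap \circ \iota^{-1}$ on highest weight elements.

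Concretely, I would write: ``Similar to the proof of Proposition~\ref{prop:morphism_A2even}, using Lemma~\ref{lemma:hw_typeD_twisted} in place of Lemma~\ref{lemma:hw_typeA2_even}: the removal of column pairs behaves as in Proposition~\ref{prop:crystal_iso_C}, and when a single column remains the rigged configuration is that of Lemma~\ref{lemma:hw_typeD_twisted} with $\lambda = \overline{\Lambda}_x$, $s = 1$, so $\delta$ starting at $\nu^{(r)}$ and reaching the length-$1$ singular string in $\nu^{(n)}$ returns $\emptyset$, removing one box from each $\nu^{(k)}$, $r \le k \le n$; iterating gives the final column $\column{1,\dotsc,x-1,\emptyset,\dotsc,\emptyset}$.'' This keeps the proof short while pinpointing the one place (node $n$) where type $D_{n+1}^{(2)}$ genuinely differs from type $C_n^{(1)}$.
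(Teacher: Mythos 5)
Your proposal follows exactly the paper's route: reduce to the type $A_{2n}^{(2)}$ argument of Proposition~\ref{prop:morphism_A2even} using Lemma~\ref{lemma:hw_typeD_twisted} in place of Lemma~\ref{lemma:hw_typeA2_even}, with the column-pair removals handled as in the type $C_n^{(1)}$ proof and the final column handled via the $\emptyset$-return at $\nu^{(n)}$. The one thing you assert without justification is precisely the one thing the paper's (very short) proof actually says: you claim that ``the behavior of $\delta$ on $\nu^{(n)}$ here matches the behavior in type $A_{2n}^{(2)}$,'' but the algorithm for $\delta$ in type $D_{n+1}^{(2)}$ from~\cite{OSS03} has an extra branch at node $n$ --- case~(Q), triggered by \emph{quasisingular} strings --- which has no analogue in type $A_{2n}^{(2)}$ and which, if entered, would change both the return value and the modification of the rigged configuration. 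The reason it is never entered here is that, by Lemma~\ref{lemma:hw_typeD_twisted}, all vacancy numbers and riggings of $\nu^{(n)}$ (and of every intermediate rigged configuration arising in the induction) are $0$, so no quasisingular strings exist. You should add this observation; with it, your argument is complete and coincides with the paper's.
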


\begin{proof}
Similar to Proposition~\ref{prop:morphism_A2even} as all vacancy numbers and riggings of $\nu^{(n)}$ are 0 (i.e. there are no 
quasisingular strings so we cannot go into case~(Q) when performing $\delta$~\cite{OSS03}).
\end{proof}

\subsubsection{$r = n$}

This is similar to $C_n^{(1)}$ when $r = n$.

\begin{prop}
Let $B^{n,s}$ be a KR crystal of type $D_{n+1}^{(2)}$. Then
\[
\Phi = \fillmap \circ \iota^{-1}
\]
on highest weight elements with $\fillmap$ the trivial filling map (i.e. the identity map on the unique highest weight tableau) 
and $\iota \colon B^{r,s} \to \RC(B^{r,s})$ is the natural crystal isomorphism.
\end{prop}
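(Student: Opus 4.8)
The plan is to follow the template of the type $C_n^{(1)}$ case with $r=n$ in Proposition~\ref{prop:crystal_iso_Cn}: since $B^{n,s}\iso B(s\clfw_n)$ classically in this type, $\RC(B^{n,s})$ has a single classical component and both $\Phi$ and $\fillmap$ act degenerately. First I would pin down the ambient Kleber tree. Under the virtualization $D_{n+1}^{(2)}\lhook\joinrel\longrightarrow A_{2n-1}^{(1)}$ we have $\lvert\phi^{-1}(n)\rvert=1$, so the virtual multiplicity array records a single KR factor $B^{n,\gamma_n s}$ of type $A_{2n-1}^{(1)}$. Since $B^{n,m}\iso B(m\clfw_n)$ with $\clfw_n$ minuscule for $A_{2n-1}$, step~(K2) of Definition~\ref{def:kleber_algorithm} never attaches a child, so $\virtual{T}(B^{n,s})$ is a single node of weight $(\gamma_n s)\clfw_n$, which is selected by condition~(A1). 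Converting this node via Definition~\ref{def:kleber_algorithm} gives the empty virtual configuration, and devirtualizing it yields the empty configuration; hence the unique highest weight rigged configuration in $\RC(B^{n,s})$ is the empty one $\emptyset$, lying in $\RC(B^{n,s};s\clfw_n)$, which in particular recovers $B^{n,s}\iso B(s\clfw_n)$.

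Next I would unwind both sides on this single highest weight element. By naturality of $\iota$, the element $\iota^{-1}(\emptyset)$ is the unique classically highest weight vector $u_{s\clfw_n}\in B^{n,s}$; as a KR tableau of shape $s\clfw_n$ (an $n\times s$ rectangle) it carries the entry $k$ throughout its $k$-th row, i.e.\ it is $s$ copies of the column $\column{1, \dotsc, n}$. Since $B^{n,s}$ is classically irreducible there is nothing to fill in, so $\fillmap$ is the identity on $u_{s\clfw_n}$ and $\fillmap\bigl(\iota^{-1}(\emptyset)\bigr)=u_{s\clfw_n}$, now regarded as an element of $T^{n,s}$.

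It then remains to compute $\Phi(\emptyset)$. Applying $\ls$ splits off a left factor $B^{n,1}$, which --- unlike in types $B_n^{(1)}$ and $D_n^{(1)}$ --- is not a spinor column here, so no doubling/halving is needed and the ordinary $\delta'=\delta\circ\lt$ is used. Running $\delta'$ on the empty configuration, which by construction amounts to running $\delta$ starting at $\nu^{(n)}$, no singular string is ever available, so the process stops immediately, returns the classically highest weight element $\column{1, \dotsc, n}$ of $B^{n,1}$, and leaves the configuration empty. Iterating $s$ times gives $\Phi(\emptyset)=\column{1, \dotsc, n}^{\otimes s}$, the $n\times s$ rectangle all of whose columns equal $\column{1, \dotsc, n}$ --- exactly $\fillmap(\iota^{-1}(\emptyset))$. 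Since $\emptyset$ is the only highest weight rigged configuration, this establishes $\Phi=\fillmap\circ\iota^{-1}$ on highest weight elements. I do not anticipate a genuine obstacle: this is the most degenerate of the types treated, and the only steps requiring a little care are the bookkeeping of $\gamma_n$ (and hence the shape of the ambient factor) and the observation, from the list of spinor columns in Section~\ref{sec:background}, that $B^{n,1}$ is handled by the unmodified $\delta'$.
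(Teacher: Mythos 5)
Your proposal is correct and follows essentially the same route as the paper, which simply reduces this case to the $r=n$ argument for type $C_n^{(1)}$: the ambient Kleber tree in type $A_{2n-1}^{(1)}$ is a single node (since $\clfw_n$ is minuscule there), so the unique highest weight rigged configuration is empty, $\delta$ terminates immediately at each step to produce the columns $\column{1,\dotsc,n}$, and $\fillmap$ is trivial. Your extra bookkeeping of $\gamma_n$ (here $\gamma_n=1$, versus $\gamma_n=2$ in type $C_n^{(1)}$) and the observation that no doubling/halving is needed are both accurate but do not change the argument.
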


\begin{proof}
This is the same as Proposition~\ref{prop:crystal_iso_Cn}.
\end{proof}

Moreover $\fillmap$ is the identity map on the tableaux as noted in Remark~\ref{remark:fillmap_identity}.

\subsection{Summary}

We have given an explicit description of the highest weight rigged configurations in all (non-exceptional) types for single tensor factors. 
Furthermore, we have shown the following.
\begin{thm}
\label{thm:isomorphism}
Let $\g$ be a non-exceptional affine type. We have
\[
\Phi = \fillmap \circ \iota^{-1}
\]
on highest weight elements in $\RC(B^{r,s})$ and $\iota \colon B^{r,s} \to \RC(B^{r,s})$ is the natural crystal isomorphism.
\end{thm}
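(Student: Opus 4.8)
The plan is to prove the identity $\Phi = \fillmap \circ \iota^{-1}$ one affine type at a time, since both $\Phi$ and $\fillmap$ are defined according to the affine Dynkin type of $\g$. For type $A_n^{(1)}$ there is nothing to do: here $B^{r,s} \cong B(s\clfw_r)$ as a classical crystal, so $\fillmap$ is the identity, $\iota$ is the canonical classical crystal isomorphism, and the claim reduces to the statement that $\Phi$ sends a classically highest weight rigged configuration to the semistandard tableau of the corresponding shape, which is established in~\cite{KSS02}. For the remaining non-exceptional types the identity is exactly the list of results proved in the preceding subsections --- \cite[Thm.~5.9]{OSS13} for type $D_n^{(1)}$, Propositions~\ref{prop:crystal_iso_C} and~\ref{prop:crystal_iso_Cn} for $C_n^{(1)}$, Proposition~\ref{prop:filling_A2odd} for $A_{2n-1}^{(2)}$, the two propositions on $B_n^{(1)}$, Proposition~\ref{prop:morphism_A2even} for $A_{2n}^{(2)}$, the proposition on $A_{2n}^{(2)\dagger}$, and the two propositions on $D_{n+1}^{(2)}$ --- so the theorem follows by assembling these, and the only genuine content is that this list exhausts the non-exceptional types.

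The per-type arguments all follow a single template, which is worth making explicit. First, for each $B^{r,s}$ one computes the classically highest weight rigged configurations explicitly by running the (virtual) Kleber algorithm; this is the content of Lemmas~\ref{lemma:hw_typeD}, \ref{lemma:hw_typeC}, \ref{lemma:hw_typeA2odd}, \ref{lemma:hw_typeB}, \ref{lemma:hw_typeB_spin}, \ref{lemma:hw_typeA2_even}, \ref{lemma:hw_typeA2even_dual}, \ref{lemma:hw_typeD_twisted}, which in every case express $\nu^{(a)}$ in terms of the complement shape $\overline{\lambda}$ (or $\overline{\lambda}^{[r-a]}$, $\overline{\lambda}^{1/2}$, $\tfrac12\overline{\lambda}$, $2\overline{\lambda}^{1/2}$ at the relevant nodes) with all riggings $0$. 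Combined with Theorem~\ref{thm:hw_crystals_nsl}, this produces the natural classical crystal isomorphism $\iota \colon B^{r,s} \to \RC(B^{r,s})$, which is well-defined because the classical decomposition of $B^{r,s}$ is multiplicity free so that $\lambda$ pins down its component. Second, one computes $\Phi(\iota(u_{\lambda}))$ by applying the splitting maps $\ls$, $\lt$ (together with the doubling and halving maps for the spinor columns of types $B_n^{(1)}$ and $D_n^{(1)}$) and then $\delta$ a total of $rs$ times, and checks that the sequence of returned letters reads off the tableau $\fillmap(u_{\lambda})$ of Definitions~\ref{def:fill_D}, \ref{def:fill_C}, \ref{def:fill_A2even}. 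This is run by induction on $s$, using that stripping the leftmost column via $\delta$ yields a highest weight rigged configuration in $\RC(B^{r,s-1})$ of the expected form.

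The feature that makes these computations manageable is the commutation of $\delta$ and the splitting maps with the virtualization map $v$ of Lemma~\ref{lemma:rc_virtual_crystal}, together with its explicit description in Equations~\eqref{eq:rc_virtualization_map} and~\eqref{eq:rc_virtualization_map_A2dual}. For the types embedding into $D_{n+1}^{(1)}$, namely $B_n^{(1)}$ and $A_{2n-1}^{(2)}$, this reduces the calculation to the type $D_n^{(1)}$ one of~\cite{OSS13}, since $\delta$ selects in $\nu^{(n)}$ the same singular string that $\delta_D$ selects in the ambient partitions $\virtual{\nu}^{(n)} = \virtual{\nu}^{(n+1)}$. For the types embedding into $A_{2n-1}^{(1)}$, namely $C_n^{(1)}$, $A_{2n}^{(2)}$, $A_{2n}^{(2)\dagger}$, $D_{n+1}^{(2)}$, one carries out the direct computation in the style of Proposition~\ref{prop:crystal_iso_C}. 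The main obstacle is the behavior of $\delta$ at the node $n$, which is genuinely different from type to type and is exactly where the filling maps diverge from the $D_n^{(1)}$ one: in type $A_{2n}^{(2)}$ a selected singular string of length $1$ in $\nu^{(n)}$ makes $\delta$ terminate and return $\emptyset$, producing the $\emptyset$-entries in Definition~\ref{def:fill_A2even}; in type $D_{n+1}^{(2)}$ one must check that the quasisingular case~(Q) of~\cite{OSS03} never occurs; and in the spinor cases $r=n$ for $B_n^{(1)}$ and $r=n-1,n$ for $D_n^{(1)}$ one must verify that the doubling map, the ambient run of $\delta$, and the halving map together realize the stated column-by-column filling (and that $c=-1$ for doubled spin columns, so that steps~(3)--(4) of the $D_n^{(1)}$ procedure never trigger). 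Checking these node-$n$ subtleties, and verifying in each case that the inductive step on $s$ returns a highest weight rigged configuration of the form produced by the corresponding Kleber lemma, is the crux; the rest is bookkeeping of tracking $\delta$ through the explicit rigged configurations.
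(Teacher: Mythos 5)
Your proposal is correct and follows essentially the same route as the paper: Theorem~\ref{thm:isomorphism} is stated there as a summary assembled from the type-by-type propositions of Section~\ref{sec:filling_map}, each proved by first computing the classically highest weight rigged configurations via the (virtual) Kleber algorithm and then tracking $\delta$ and the splitting/doubling maps column by column, by induction on $s$. Your identification of the node-$n$ behavior of $\delta$ as the only genuinely type-dependent point matches the paper's treatment exactly.
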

In other words, as classical crystals (and hence as sets) $\RC(B^{r,s}) \iso T^{r,s} \iso B^{r,s}$ and classically highest weight elements are 
mapped by $\Phi$ and $\fillmap$, respectively. We also note that the filling map for general $r$ depends only on how the affine 
node attaches to the classical type, analogous to the classical decompositions.

We can define $\ls$ and $\lt$ on (a tensor product of) $T^{r,s}$ by splitting off the left column (of the leftmost factor) and the top box (of the 
leftmost factor), respectively.

Moreover we can show that $\iota$ sends cocharge to energy.
\begin{thm}
\label{thm:statistics}
Let $B^{r,s}$ be a KR crystal of non-exceptional type with $s\ge 1$ and $1\le r\le n$.
For all $b \in B^{r,s}$ we have 
\[
D(b) = \cc\bigl( \theta \circ \iota(b) \bigr).
\]
\end{thm}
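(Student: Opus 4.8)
The plan is to reduce the statement to the already-established relation between cocharge and energy on highest weight elements, and then propagate it to all of $B^{r,s}$ using Proposition~\ref{prop:constant_cocharge} together with the fact (Definition~\ref{def:energy}) that energy is constant on classical components. First I would fix a classical component $B(\lambda) \subseteq B^{r,s}$. On the classically highest weight element $u_\lambda$, the map $\iota$ sends $u_\lambda$ to the unique highest weight rigged configuration $(\nu,J) \in \hwRC(B^{r,s};\lambda)$ of classical weight $\lambda$, described explicitly in the case-by-case lemmas (Lemma~\ref{lemma:hw_typeD}, \ref{lemma:hw_typeC}, \ref{lemma:hw_typeA2odd}, \ref{lemma:hw_typeB}, \ref{lemma:hw_typeB_spin}, \ref{lemma:hw_typeA2_even}, \ref{lemma:hw_typeA2even_dual}, \ref{lemma:hw_typeD_twisted}). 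Crucially, all riggings of this highest weight rigged configuration are $0$, so $\theta$ fixes it (every colabel $p_i^{(a)} - x = p_i^{(a)} - 0$ equals the label of a singular string only when $p_i^{(a)}=0$; more precisely, since all riggings are $0$ and all vacancy numbers are $0$ in these lemmas, $\theta$ is the identity here), and hence $\cc(\theta \circ \iota(u_\lambda)) = \cc(\nu,J) = \cc(\nu)$, which by Equation~\eqref{eq:cocharge_nu} is a purely combinatorial function of the shape $\overline{\lambda}$.

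Next I would verify that this value $\cc(\nu)$ equals the number of $\diamond$'s removed from the $r \times s$ rectangle to obtain $\lambda$, i.e. $D(u_\lambda)$. This is again a case-by-case computation: in each type the highest weight rigged configuration is built from $\overline{\lambda}$ (or $\overline{\lambda}^{1/2}$, $2\overline{\lambda}^{1/2}$, $\tfrac12\overline{\lambda}$ for spin/symplectic nodes), and plugging the multiplicities $m_i^{(a)}$ into \eqref{eq:cocharge_nu} — using the explicit bilinear forms $(\widetilde{\alpha}_a \mid \widetilde{\alpha}_b)$, the $t_a, t_a^\vee$ from \eqref{equation.t}, and the $\upsilon_a$ from \eqref{eq:vacancy} — should telescope to $|\overline{\lambda}|/|\diamond|$, the number of boxes/dominoes removed. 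Alternatively, and more cleanly, I would route this through the virtualization map: by Equation~\eqref{eq:cocharge_virtualization} we have $\cc(\virtual{\nu},\virtual{J}) = \gamma_0 \cc(\nu,J)$, the virtual rigged configuration is the type $D_{n+1}^{(1)}$ (or $A_{2n-1}^{(1)}$, $E_6^{(1)}$, $D_4^{(1)}$) highest weight rigged configuration appearing in the proofs of the filling-map propositions, and in those simply-laced ambient types the identity $\cc(\virtual{\nu}) = D(\text{ambient element})$ is known; combined with the compatibility of energy under virtualization of KR crystals one deduces $D(u_\lambda) = \cc(\nu)$. For type $A_n^{(1)}$ this is classical (Kerov--Kirillov--Reshetikhin / \cite{KSS02}).

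Finally I would extend from $u_\lambda$ to an arbitrary $b \in B^{r,s}$. Since $D$ is constant on each classical component $B(\lambda)$ (Definition~\ref{def:energy}), $D(b) = D(u_{\lambda})$ where $B(\lambda)$ is the component containing $b$. On the rigged configuration side, $\iota$ is a classical crystal isomorphism $B^{r,s} \to \RC(B^{r,s})$, so $\iota(b)$ lies in the classical component $X_{(\nu,J)}$ with $(\nu,J) = \iota(u_\lambda)$; applying $\theta$ and using Proposition~\ref{prop:constant_cocharge} gives $\cc(\theta \circ \iota(b)) = \cc(\theta \circ \iota(b'))$ for all $b'$ in that component — wait, here I must be careful: $\theta$ does not commute with the crystal operators in general, so I cannot directly say $\theta \circ \iota(b)$ stays in one component. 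Instead, the right statement is that $\cc \circ \theta$ is itself constant on $X_{(\nu,J)}$: indeed, in \cite[Thm.~3.9]{S06} (for simply-laced) and via \eqref{eq:cocharge_virtualization} (in general, as in Proposition~\ref{prop:constant_cocharge}) one shows directly that $\cc$ composed with "take colabels" is a component invariant, because $\cc(\theta(\nu,J)) = \cc(\nu) + \sum t_a^\vee(|J_i^{(a)}|_{\text{complemented}})$ depends on the component only. So $\cc(\theta\circ\iota(b)) = \cc(\theta\circ\iota(u_\lambda)) = D(u_\lambda) = D(b)$.

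The main obstacle is the middle step: establishing $\cc(\nu) = D(u_\lambda)$ for the explicit highest weight rigged configurations, uniformly across all eight non-exceptional types, including the correct bookkeeping for the spin nodes in $B_n^{(1)}$, $D_n^{(1)}$ and the folding factors $\gamma_0$. I expect the cleanest route is the virtualization argument (using \eqref{eq:cocharge_virtualization} and the known simply-laced case) rather than a direct eight-fold computation with \eqref{eq:cocharge_nu}, since the simply-laced identity $\cc = D$ on highest weight KR elements is already available from the work underlying Theorem~\ref{thm:crystal_structure} and the bijection $\Phi$ in types $A_n^{(1)}$ and $D_n^{(1)}$; the remaining subtlety is matching the normalization $\gamma_0$ with the ratio $|\diamond_{\virtual{\g}}| / |\diamond_{\g}|$ of box-removal sizes in the two decompositions.
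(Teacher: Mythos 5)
Your skeleton matches the paper's: reduce to classically highest weight elements using constancy of $D$ and of $\cc$ on classical components (Proposition~\ref{prop:constant_cocharge}), observe that $\theta$ is the identity there because all riggings \emph{and} all vacancy numbers vanish, and then compare $\cc(\nu)$ with the number of $\diamond$'s removed. The problem is that the one step you defer --- showing $\cc(\nu) = D(u_\lambda)$ uniformly across all types --- is the entire content of the proof, and neither of your two proposed routes closes it. Your preferred route, via virtualization, needs the identity $\cc(\virtual{\nu}) = \virtual{D}$ in the ambient type \emph{together with} compatibility of the energy function with the virtualization of KR crystals (i.e.\ $\virtual{D}\circ v = \gamma_0 D$, with the correct matching of $\gamma_0$ against the ratio of $\diamond$-sizes). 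That compatibility is not established in this paper, and in the $A_{2n-1}^{(1)}$ ambient cases $\virtual{B}^{r,s}$ is a two-factor tensor product, so the ambient energy involves local energies/$R$-matrix contributions rather than just the intrinsic $D$ of Definition~\ref{def:energy}; you would be importing machinery at least as heavy as what you are trying to prove. Your fallback route, plugging the explicit $m_i^{(a)}$ into \eqref{eq:cocharge_nu}, would work but you do not carry it out, and it is not a telescoping so much as a genuinely eight-fold quadratic-form evaluation.

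The device you are missing is the rewriting of cocharge in terms of vacancy numbers,
\[
\cc(\nu) = \frac{1}{2} \sum_{(a,i)\in \HH_0} t_a^{\vee} p_i^{(a)} m_i^{(a)} + \frac{1}{2}
\sum_{a \in I_0,\; i,j} t_a^{\vee} \min(i,j) L_i^{(a)} m_j^{(a)},
\]
which is \eqref{eq:cocharge_vac} in the paper. For the highest weight rigged configurations of Section~\ref{sec:filling_map} all $p_i^{(a)}=0$, so the first sum dies, and $L_i^{(a)} = \delta_{a,r}\delta_{i,s}$ collapses the second to $\tfrac{t_r^{\vee}}{2}\absval{\nu^{(r)}}$. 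The comparison with $D$ then reduces to checking, type by type, that $t_r^{\vee}\in\{1,2\}$ matches whether $\diamond$ is a domino or a single box (plus the four exceptional bookkeeping cases at $r=n$ where $\nu^{(n)}$ is $2\overline{\lambda}^{1/2}$, $\overline{\lambda}^{1/2}$, etc.). This is a one-line computation per type rather than an evaluation of the full bilinear form, and it avoids virtualization entirely. One further caveat: both you and the paper need that $b \mapsto \cc(\theta\circ\iota(b))$, not just $b\mapsto\cc(\iota(b))$, is constant on classical components; your instinct to flag this is correct, and your sketch that $\cc\circ\theta$ is a component invariant (via $\lvert\theta(J)_i^{(a)}\rvert = m_i^{(a)}p_i^{(a)} - \lvert J_i^{(a)}\rvert$ and \eqref{eq:cocharge_vac}) is the right way to justify the reduction, though it should be written out rather than asserted.
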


\begin{proof}
Our proof is similar to~\cite[Thm.~4.10]{OSS13}. Since the energy function is constant on classical components, and by 
Proposition~\ref{prop:constant_cocharge}, cocharge is as well, it suffices to prove the statement for highest weight elements $b\in B^{r,s}$ with 
the unique weight $\lambda_b$. Since all riggings and vacancy numbers are 0 for highest weight elements in $\RC(B^{r,s})$, the map $\theta$ 
is the identity. We rewrite the cocharge in terms of the vacancy numbers:
\begin{equation}
\label{eq:cocharge_vac}
\cc(\nu) = \frac{1}{2} \sum_{(a,i)\in \HH_0} t_a^{\vee} p_i^{(a)} m_i^{(a)} + \frac{1}{2} 
\sum_{\substack{ a \in I_0 \\ i,j \in \ZZ_{>0} }} t_a^{\vee} \min(i,j) L_i^{(a)} m_j^{(a)}.
\end{equation}
Since the vacancy numbers are zero, the first term of Equation~\eqref{eq:cocharge_vac} is 0. Also in our case $L_i^{(a)} = \delta_{a,r} \delta_{i,s}$, 
and so for the unique highest weight rigged configuration $(\nu, J)$ of weight $\lambda_b$, we have
\begin{equation}
\label{eq:cocharge_vac_nums}
\cc(\nu, J) = \cc(\nu) = \frac{1}{2} \sum_{j \in \ZZ_{>0}} t_r^{\vee} \min(s,j) m_j^{(r)} = \frac{t_r^{\vee}}{2} \absval{\nu^{(r)}}.
\end{equation}

We note that for $r \neq n$, we have $\nu^{(r)} = \overline{\lambda}$ by Lemma~\ref{lemma:hw_typeC} and equivalent lemmas in 
Section~\ref{sec:filling_map} (depending on the type). Furthermore, $t_r^{\vee} = 2$ in types $A_{2n}^{(2)}$ and $D_{n+1}^{(2)}$, both of which 
have a classical decomposition given by removing single boxes. In all other types we have $t_r^{\vee} = 1$ and we are removing dominoes 
(or no boxes are removed) to obtain the classical decomposition. By Definition~\ref{def:energy}, this shows the desired claim.

Now consider $r = n$. We have $t_n^{\vee} = 2$ in types $A_{2n}^{(2)}$ and $A_{2n-1}^{(2)}$, and $t_n^{\vee} = 1$ in all other types. For 
type $A_{2n}^{(2)}$, we are removing single boxes and $\nu^{(n)} = \overline{\lambda}$. For type $A_{2n}^{(2)\dagger}$, we are removing 
horizontal dominoes and $\nu^{(n)} = \overline{\lambda}$. For type $B_n^{(1)}$, we are removing vertical dominoes and 
$\nu^{(n)} = 2 \overline{\lambda}$, in particular $\absval{\overline{\lambda}} = 2 \absval{\nu^{(n)}}$. For type $A_{2n-1}^{(2)}$, we are 
removing vertical dominoes and $\nu^{(n)} = \overline{\lambda}^{1/2}$, in particular $\absval{\overline{\lambda}} = \frac{1}{2} \absval{\nu^{(n)}}$. 
In all other types the classical decomposition is trivial (i.e., no boxes are removed). From Equation~\eqref{eq:cocharge_vac_nums} and 
Definition~\ref{def:energy}, this shows the desired claim.
\end{proof}

From Theorem~\ref{thm:isomorphism}, we can give an affine crystal structure on rigged configurations and KR tableaux. We do so by 
mapping to the KN tableaux model, where we know how to explicitly compute $e_0$ and $f_0$ by~\cite{FOS09}, under the natural (classical) crystal 
isomorphism $\iota$ and mapping back. Together with Theorem~\ref{thm:statistics}, this implies that Conjecture~\ref{conj:bijection} holds for $B^{r,s}$ 
in non-exceptional types on classically highest weight elements.

However this definition of the affine crystal structure is somewhat unsatisfactory as it is not a direct description of $e_0$ and $f_0$ 
on rigged configurations nor on KR tableaux in general. In the next section, we will give an explicit description of $e_0$ and 
$f_0$ on rigged configurations of types $B_n^{(1)}$ and $A_{2n-1}^{(2)}$ for $B^{r,s}$ where $r < n$. For type $A_n^{(1)}$ this was 
done in~\cite{SW10} (for general factors) and for type $D_n^{(1)}$ in~\cite{OSS13}.

We also have the following conjecture related to Conjecture~\ref{conj:bijection} about the filling map for arbitrary number of factors.

\begin{conj}
\label{conj:filling_map}
Let $B = \bigotimes_{i=1}^N B^{r_i, s_i}$ and let $T = \bigotimes_{i=1}^N (B^{1,1})^{\otimes r_i s_i}$ (organized into a $r_i \times s_i$ 
rectangle as with $T^{r_i,s_i}$), then the filling map $\fillmap \colon B \to T$ is given by
\[
	\fillmap(B) = \bigotimes_{i=1}^N \fillmap(B^{r_i, s_i}).
\]
\end{conj}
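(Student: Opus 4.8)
The plan is to prove Conjecture~\ref{conj:filling_map} by induction on the number $N$ of tensor factors, reducing at each stage to classically highest weight elements. Granting Conjectures~\ref{conj:bijection} and~\ref{conj:crystal_isomorphism}, the maps $\Phi \colon \RC(B) \to T$ and $\iota \colon B \to \RC(B)$ are classical crystal isomorphisms, so $\fillmap := \Phi \circ \iota$ is as well; on the other hand $\bigotimes_{i=1}^N \fillmap(B^{r_i,s_i})$ is a classical crystal morphism, being a tensor product of the classical crystal isomorphisms of Theorem~\ref{thm:isomorphism} (tensor products of morphisms are morphisms, cf.\ the proof of Proposition~\ref{prop:virtual_tensor}). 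Since a classical crystal morphism between regular crystals is determined by its values on the highest weight vector of each connected component, and $\iota^{-1}$ sends classically highest weight rigged configurations to classically highest weight elements of $B$, it suffices to show: for every classically highest weight $(\nu,J) \in \RC(B)$, writing $b_N \otimes \cdots \otimes b_1 := \iota^{-1}(\nu,J) \in B$, one has
\[
\Phi(\nu,J) \;=\; \fillmap(b_N) \otimes \cdots \otimes \fillmap(b_1).
\]

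For the inductive step, write $B = B^{r_N,s_N} \otimes B'$ with $B' = \bigotimes_{i=1}^{N-1} B^{r_i,s_i}$. Recall that $\Phi$ processes the leftmost factor first: applying $\ls$ followed by $\delta' = \delta \circ \lt$ a total of $r_N s_N$ times, organized into $s_N$ columns of height $r_N$, produces the first Kirillov--Reshetikhin tableau $t_N \in T^{r_N,s_N}$ together with a residual rigged configuration $\rho(\nu,J) \in \RC(B')$, and $\Phi(\nu,J) = t_N \otimes \Phi_{B'}\bigl(\rho(\nu,J)\bigr)$. Since $\ls$ and $\lt$ are strict crystal embeddings by Corollary~\ref{cor:ls_lt_morphisms} and $\delta$ is compatible with the classical crystal operators in the cases at hand, the map $\rho$ intertwines the $e_a$: because $\Phi$ is a classical crystal isomorphism, $e_a(\nu,J)=0$ makes $t_N \otimes \Phi_{B'}(\rho(\nu,J))$ classically highest weight, and the tensor product rule then forces $\Phi_{B'}(\rho(\nu,J))$ to be classically highest weight, whence $\rho(\nu,J)$ is a classically highest weight rigged configuration of $\RC(B')$. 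Applying the inductive hypothesis to $\rho(\nu,J)$ gives $\Phi_{B'}(\rho(\nu,J)) = \fillmap(b_{N-1}) \otimes \cdots \otimes \fillmap(b_1)$, and, since $\Phi$ and $\iota^{-1}$ run the same recursive rigged-configuration algorithm on the leftmost factor (they differ only in how its output letters are assembled), these $b_i$ coincide with the corresponding tensor factors of $\iota^{-1}(\nu,J)$.

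It remains to identify the first block, $t_N = \fillmap(b_N)$, where $b_N$ is the leftmost factor of $\iota^{-1}(\nu,J)$. The $\ls$-and-$\delta'$ phase on the leftmost factor selects strings and removes boxes inside all of $(\nu,J)$, so the claim amounts to a \emph{locality} statement: the sequence of $r_N s_N$ letters it produces depends only on the classical component of $B^{r_N,s_N}$ in which $b_N$ lies, and coincides with the sequence produced by running the single-factor algorithm on the corresponding rigged configuration of $\RC(B^{r_N,s_N})$; granting this, the single-factor identity $\Phi = \fillmap \circ \iota^{-1}$ yields $t_N = \fillmap(b_N)$. This is the main obstacle, for two reasons. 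First, unlike the rightmost factor $b_1$, the factor $b_N$ is typically not classically highest weight in $B^{r_N,s_N}$, so one needs the single-factor identity on \emph{all} of $\RC(B^{r_N,s_N})$, not merely on highest weight elements as in Theorem~\ref{thm:isomorphism} (this again follows once $\Phi$ is a classical crystal isomorphism for the single factor), and one cannot invoke the highest weight analyses of Section~\ref{sec:filling_map} (Lemmas~\ref{lemma:hw_typeD}, \ref{lemma:hw_typeC} and their analogues, and the $\delta$-computations in Propositions~\ref{prop:crystal_iso_C}, \ref{prop:morphism_A2even}, etc.) verbatim. Second, establishing locality would require a tensor-product analogue of the Kleber and virtual Kleber tree descriptions that parametrize the classically highest weight rigged configurations of $\RC(B^{r_N,s_N} \otimes B')$, together with a proof that the leftmost $\delta$-phase reads off and modifies only the ``$B^{r_N,s_N}$-part'' of such a configuration. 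A secondary caveat is that, as formulated, the conjecture rests on Conjectures~\ref{conj:bijection} and~\ref{conj:crystal_isomorphism}, presently known only for type $A_n^{(1)}$, for single rows and columns, and for type $D_n^{(1)}$; in the remaining cases the argument above reduces Conjecture~\ref{conj:filling_map} to this locality statement for the leftmost $\delta$-phase.
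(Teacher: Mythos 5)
The statement you are trying to prove is stated in the paper as Conjecture~\ref{conj:filling_map}, not as a theorem: the authors offer no proof, only computer verification for non-exceptional types up to rank 4, up to 2 factors, and $s \leq 2$. So there is no ``paper proof'' to compare against, and the question is whether your attempt actually closes the conjecture. It does not, and you essentially concede this yourself in the final sentences.

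Concretely, there are two gaps, both of which you name but neither of which you fill. First, the entire reduction to classically highest weight elements presupposes Conjectures~\ref{conj:bijection} and~\ref{conj:crystal_isomorphism} (that $\Phi$ is a bijection intertwining the classical crystal operators on arbitrary tensor products); these are open outside of type $A_n^{(1)}$, single rows/columns, and partial results in type $D_n^{(1)}$, so a proof conditional on them is not a proof of the conjecture. Second, and more fundamentally, the identification of the leftmost block $t_N$ with $\fillmap(b_N)$ rests on the ``locality'' claim that the $r_N s_N$ letters produced by the leftmost $\ls$/$\delta'$ phase on $(\nu,J) \in \RC(B^{r_N,s_N} \otimes B')$ depend only on the $B^{r_N,s_N}$-part and reproduce the single-factor algorithm. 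This is exactly the hard combinatorial content: the highest weight rigged configurations of a tensor product are not simply concatenations of the single-factor ones of Lemmas~\ref{lemma:hw_typeD}, \ref{lemma:hw_typeC}, etc. (the vacancy numbers and hence the singular strings seen by $\delta$ depend on the full multiplicity array $L$, and $b_N$ need not be classically highest weight in its own factor), so the explicit $\delta$-computations of Section~\ref{sec:filling_map} do not transfer. Your write-up is a reasonable and correctly organized reduction of the conjecture to these two open statements, but the key step is asserted rather than proved, which is precisely why the statement remains a conjecture in the paper.
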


In other words, we have $T = \bigotimes_{i=1}^N T^{r_i,s_i}$. This has been verified by computer for tensor products for non-exceptional 
types up to rank 4, up to 2 factors, and $s \leq 2$.

\section{Affine crystal strucutre}
\label{sec:affine_structure}

In this section we give the explicit affine crystal structure for $\RC(B^{r,s})$ for all $1\le r\le n$ of type $B_n^{(1)}$ and $A_{2n-1}^{(2)}$.
In addition, we show that $B^{r,s}$ of type $B_n^{(1)}$ and $A_{2n-1}^{(2)}$ virtualizes into $\virtual{B}^{r,s}$ of type $D_{n+1}^{(1)}$
for $r<n$. This proves~\cite[Conj. 3.7]{OSS03II} (see also Conjecture~\ref{conj:KR_virtualization}) in these cases.

\subsection{Affine crystal operators}
Here we give an explicit description of the affine crystal operators $e_0$ and $f_0$ on rigged configurations for types $B_n^{(1)}$ and 
$A_{2n-1}^{(2)}$. In general for types $D_n^{(1)}$, $B_n^{(1)}$, and $A_{2n-1}^{(2)}$, we define the affine crystal operators by
\begin{subequations}
\label{eq:affine_operators_diagram_map}
\begin{align}
e_0 & = \sigma \circ e_1 \circ \sigma\;,
\\ f_0 & = \sigma \circ f_1 \circ \sigma\;,
\end{align}
\end{subequations}
where $\sigma$ is the crystal involution that is induced by the Dynkin diagram automorhpism which interchanges node $0$ and node $1$. 
Therefore to describe $e_0$ and $f_0$, we need to define the crystal automorphism $\sigma$. This is done by first defining the
map on $\{2,3,\ldots,n\}$-highest weight elements, which are in bijection with so-called $\pm$-diagrams and then extending to all
crystal elements.

A \emph{$\pm$-diagram} is a sequence of shapes $\tau \subseteq \mu \subseteq \lambda$ such that $\lambda / \mu$ and $\mu / \tau$ 
are horizontal strips (i.e. every column contains at most one box). We depict this as a skew shape $\lambda / \tau$ in which the cells 
of $\mu / \tau$ are filled with a $+$ and $\lambda / \mu$ are filled with a $-$. The partitions $\lambda$ and $\tau$ are called the outer 
and inner shapes, respectively.
In type $B_n$, the $\pm$-diagrams with columns of height $n$ can also contain at most one $0$ between a $+$ and $-$ at height $n$, 
can have at most one half-width spin column of height $n$ with either a $+$ or a $-$, and must have all columns of height $n$ being 
non-empty. In type $C_n$, there are no empty columns of height $n$. We will only consider $\pm$-diagrams in type $D_n$ whose outer 
shape does not contain any columns of height $n-1$ nor $n$.

\begin{prop}[{\cite{S08}, \cite[Sec.~3.2]{FOS09}}]
\label{prop:pm_diagram_classification}
Let $\g_0$ be of type $B_n,C_n$, or $D_n$.
There is a bijection $\zeta$ from $\pm$-diagrams of outer shape $\lambda$ to $\{2, \dotsc, n\}$-highest weight elements in the highest 
weight crystal $B(\lambda)$ of type $\g_0$. The $\pm$-diagram which has a $+$ in every column and no $-$ corresponds to the highest 
weight vector. Given a $\pm$-diagram $P$, we obtain the $\{2, \dotsc, n\}$-highest weight element $\zeta(P) = b$ inductively as follows:
\begin{itemize}
\item[Case 1:] $P$ has a column where a $+$ can be added. Let $P'$ be the $\pm$-diagram obtained from $P$ by adding a $+$ in the 
rightmost possible column at height $h$. If there is a column of height $n$ containing $0$, then $b = f_1 f_2 \cdots f_h f_1 f_2 \cdots f_n \zeta(P')$,
otherwise $b = f_1 f_2 \cdots f_h \zeta(P')$. Note that we cannot add a $+$ to a spin column.
\item[Case 2:] $P$ has no column where a $+$ can be added and at least one $-$. Let $P'$ be the $\pm$-diagram obtained from $P$ by 
removing the leftmost $-$ at height $h$ and either moving the $+$ in the same column up if $h > 1$ or adding a $+$ if $h = 1$. Then 
\[
b = \begin{cases}
f_1 f_2 \cdots f_n f_{n-2} f_{n-3} \cdots f_h \zeta(P') & \g_0 = D_n,\\ 
f_1 f_2 \cdots f_{n-1} f_n \zeta(P') & \g_0 = B_n \text{ and $-$ is in the spin column},\\ 
f_1 f_2 \cdots f_{n-1} f_n f_n f_{n-1} \cdots f_h \zeta(P') & \g_0 = B_n \text{ otherwise},\\ 
f_1 f_2 \cdots f_{n-1} f_n f_{n-1} f_{n-2} \cdots f_h \zeta(P') & \g_0 = C_n.
\end{cases}
\]
\end{itemize}
\end{prop}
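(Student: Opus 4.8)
The plan is to prove the proposition in three stages: first identify the set of $\{2,\dots,n\}$-highest weight elements of $B(\lambda)$ by a branching rule, then exhibit the map $\zeta$ concretely and show it is well defined, and finally match cardinalities to conclude bijectivity. Throughout one uses the Kashiwara--Nakashima model~\cite{KN94,HK02} and the embedding $B(\lambda) \hookrightarrow B(\clfw_1)^{\otimes|\lambda|}$ together with the signature form of the tensor product rule of Definition~\ref{definition.abstract crystal}.

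I would first set up the combinatorics of the target. Removing node $1$ from the Dynkin diagram of $\g_0$ leaves a diagram of type $B_{n-1}$, $C_{n-1}$, or $D_{n-1}$ on the index set $\{2,\dots,n\}$; write $\g_0'$ for the corresponding subalgebra. A $\{2,\dots,n\}$-highest weight element of $B(\lambda)$ is precisely a $U_q(\g_0')$-highest weight vector of the restriction $\mathrm{Res}_{\g_0'}B(\lambda)$, and, since this restriction decomposes as a disjoint union of the $U_q(\g_0')$-crystals $B'(\mu)$, the number of such elements equals the total branching multiplicity $\sum_\mu [B(\lambda):B'(\mu)]$. The classical input is the explicit value of $[B(\lambda):B'(\mu)]$ for $\g_0 \downarrow \g_0'$, a Littlewood--Richardson--type rule for the vector representations; in each of the three families this multiplicity is exactly the number of $\pm$-diagrams with outer shape $\lambda$ and the ``$\mu$-content'' prescribed by $\mu$, with the appropriate modifications for the columns of height $n$ (the extra $0$ and the half-width spin column, and the ``no empty height-$n$ column'' condition) in types $B_n$ and $C_n$, and with the restriction to outer shapes having no column of height $n-1$ or $n$ in type $D_n$. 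Summing over $\mu$ yields
\[
\#\{\pm\text{-diagrams of outer shape }\lambda\} \;=\; \#\{\{2,\dots,n\}\text{-highest weight elements of }B(\lambda)\}.
\]

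Second, I would make $\zeta$ explicit and show it is well defined, i.e. that the recursion terminates and that at each stage the displayed string of Kashiwara operators is applicable (never returns $0$). Termination is by induction on the pair $(\lvert\lambda\rvert-\lvert\tau\rvert,\ \lvert\lambda\rvert-\lvert\mu\rvert)$ ordered lexicographically: Case~1 leaves $\lambda$ and $\tau$ fixed while strictly enlarging $\mu$, and Case~2 strictly shrinks $\lambda$; the base case is the all-$+$ diagram, for which $\zeta$ returns the highest weight vector $u_\lambda$. For applicability it is cleanest to realize $\zeta(P)$ simultaneously as an explicit Kashiwara--Nakashima tableau $t(P)$ of shape $\lambda$ built columnwise from the data $\tau\subseteq\mu\subseteq\lambda$ (unbarred entries over $\tau$, a ``fill'' region over $\mu/\tau$ and $\lambda/\mu$, and the $0$ and spin-column conventions in type $B_n$), and then to prove by induction on the size of $P$ that applying the displayed product of $f_i$'s to $t(P')$ moves exactly one cell and produces $t(P)$. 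The induction is carried out with the signature rule on $B(\lambda)\hookrightarrow B(\clfw_1)^{\otimes|\lambda|}$, which shows in particular that each $f_i$ in the string has $\varphi_i>0$ at the moment it is applied, so the string is applicable. Granting this, $t(P)=\zeta(P)$ is a genuine element of $B(\lambda)$, it is $\{2,\dots,n\}$-highest weight by construction (no cell of $t(P)$ admits an $e_i$ for $i\ge 2$), and accumulating the weight changes $-\alpha_i$ along the recursion gives an explicit formula for $\wt\bigl(\zeta(P)\bigr)$ in terms of $\tau,\mu,\lambda$.

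Third, I would prove injectivity and conclude. Distinct $\pm$-diagrams $P$ give distinct tableaux $t(P)$, since $P$ is recovered from $t(P)$: the positions of the fill entries, of the $0$, and of the spin marker determine $\tau\subseteq\mu\subseteq\lambda$ uniquely; equivalently, the recursion is reversible, as the $\varepsilon_i$-data of $\zeta(P)$ determine which Case applied and the column and height involved, hence determine $P'$ and inductively $P$. Thus $\zeta$ is an injection from the $\pm$-diagrams of outer shape $\lambda$ into the $\{2,\dots,n\}$-highest weight elements of $B(\lambda)$; by the first stage these two finite sets have equal cardinality, so $\zeta$ is a bijection, and the all-$+$ diagram maps to $u_\lambda$ as noted. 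The main obstacle is the second stage: checking, uniformly across types $B_n$, $C_n$, $D_n$ and across both Cases, that the prescribed products of lowering operators are applicable at every step and compute the claimed tableaux, including the $h=1$, spin-column, and height-$n$ ``$0$'' subtleties; routing this through the explicit realization $t(P)$ and the signature rule, rather than arguing abstractly, is what keeps the bookkeeping manageable. (The statement and this line of argument are due to~\cite{S08} and~\cite[Sec.~3.2]{FOS09}.)
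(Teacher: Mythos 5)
The paper does not actually prove this proposition---it is imported verbatim from \cite{S08} and \cite[Sec.~3.2]{FOS09}---so there is no internal proof to compare against, only the cited sources. Your outline (count the $\{2,\dotsc,n\}$-highest weight elements via the classical branching rules $B_n\downarrow B_{n-1}$, $C_n\downarrow C_{n-1}$, $D_n\downarrow D_{n-1}$ and match them with $\pm$-diagrams; realize $\zeta(P)$ as an explicit Kashiwara--Nakashima tableau and verify applicability of the operator strings with the signature rule; conclude by injectivity plus equal cardinality) is precisely the strategy of those sources and is sound, with the one caveat that the substantive content---the type-by-type, case-by-case check that the displayed products of $f_i$'s never vanish and produce the claimed tableaux, including the $h=1$, spin-column, and height-$n$ ``$0$'' subtleties---is identified but deferred rather than carried out.
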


Next we recall the bijection $\zeta_{rc}$ from $\pm$-diagrams to rigged configurations given in~\cite{OSS13} for type $D_n^{(1)}$. 
Consider the classically highest weight component $\RC(B^{r,s}; \lambda)$. We construct all $\{2,\dotsc,n\}$-highest weight rigged 
configurations in $\RC(B^{r,s}; \lambda)$ from the $\pm$-diagrams of outer shape $\lambda$ as follows (note they are in bijection). 
Consider a single column $\pm$-diagram $P$ of height $x$, and let $y = r - x$ (this will always be even). We describe the rigged 
configuration based on which type of column $P$ is:
\begin{itemize}
\item $P$ does not contain any sign:
\begin{equation}
\label{equation.rc1}
\begin{split}
\nu & = \bigl( \phantom{-} \overbrace{(1), (1), \dotsc, (1)}^x, \overbrace{(1), (1^2), \dotsc, (1^y)}^y, (1^y), \dotsc, (1^y), (1^{\frac{y}{2}}), (1^{\frac{y}{2}}) \bigr),
\\ J & = \bigl( \overbrace{(-1), (0), \dotsc, (0)}^x, \overbrace{(1), (0^2), \dotsc, (0^y)}^y, (0^y), \dotsc, (0^y), (0^{\frac{y}{2}}), (0^{\frac{y}{2}}) \bigr).
\end{split}
\end{equation}

\item $P$ contains $+$:
\begin{equation}
\label{equation.rc2}
\begin{split}
\nu & = \bigl( \overbrace{\emptyset, \emptyset, \dotsc, \emptyset}^x, \overbrace{(1), (1^2), \dotsc, (1^y)}^y, (1^y), \dotsc, (1^y), (1^{\frac{y}{2}}), 
(1^{\frac{y}{2}}) \bigr),
\\ J & = \bigl( \overbrace{\emptyset, \emptyset, \dotsc, \emptyset}^x, \overbrace{(0), (0^2), \dotsc, (0^y)}^y, (0^y), \dotsc, (0^y), (0^{\frac{y}{2}}), 
(0^{\frac{y}{2}}) \bigr).
\end{split}
\end{equation}

\item $P$ contains $-$:
\begin{equation}
\label{equation.rc3}
\begin{split}
\nu & = \bigl( \phantom{-} \overbrace{(2), (2), \dotsc, (2)}^{x-1}, \overbrace{(1^2), (1^3), \dotsc, (1^{y+2})}^{y+1}, (1^{y+2}), \dotsc, (1^{y+2}), 
(1^{\frac{y+2}{2}}), (1^{\frac{y+2}{2}}) \bigr),
\\ J & = \bigl( \overbrace{(-2), (0), \dotsc, (0)}^{x-1}, \overbrace{(0^2), (0^3), \dotsc, (0^{y+2})}^{y+1}, (0^{y+2}), \dotsc, (0^{y+2}), (0^{\frac{y+2}{2}}), 
(0^{\frac{y+2}{2}}) \bigr),
\end{split}
\end{equation}
except when $x = 1$, where we take $(\nu, J)^{(1)} = \bigl((1,1), (-1, -1)\bigr)$.

\item $P$ contains $\pm$:
\begin{equation}
\label{equation.rc4}
\begin{split}
\nu & = \bigl( \phantom{-} \overbrace{(1), (1), \dotsc, (1)}^{x-1}, \overbrace{(1^2), (1^3), \dotsc, (1^{y+2})}^{y+1}, (1^{y+2}), \dotsc, (1^{y+2}), 
(1^{\frac{y+2}{2}}), (1^{\frac{y+2}{2}}) \bigr),
\\ J & = \bigl( \overbrace{(-1), (0), \dotsc, (0)}^{x-1}, \overbrace{(0^2), (0^3), \dotsc, (0^{y+2})}^{y+1}, (0^{y+2}), \dotsc, (0^{y+2}), (0^{\frac{y+2}{2}}), 
(0^{\frac{y+2}{2}}) \bigr).
\end{split}
\end{equation}
\end{itemize}

An arbitrary $\pm$-diagram $P$ is the concatenation of columns described above. The corresponding rigged configurations is
obtained by summing together all partitions (padding with $0$ as necessary) and riggings over all columns of $P$. We can invert this map 
as follows. Fix a $\pm$-diagram $P$. Let $c_{\bullet}(h)$, $c_+(h)$, $c_-(h)$, and $c_{\pm}(h)$ denote the number of columns of $P$ with 
outer height $h$ with no sign, $+$, $-$, and $\pm$ respectively. These values uniquely determine the $\pm$-diagram and can be computed 
(inductively) from $h = 0$ ($r$ even) or $h = 1$ ($r$ odd) to $h = r$ as follows:
\begin{align*}
c_{\bullet}(h) & = \begin{cases}
  J^{(h+1)}_1 + \delta_{h0} \nu_1^{(1)} & 0 \leq h < r \\
  \nu_1^{(r)} - \nu_1^{(r+1)} & h = r
\end{cases}
\\ c_+(h) & = \nu_1^{(h+1)} - \nu_1^{(h)} \qquad 1\le h <r
\\ c_-(h) & = \begin{cases}
  \nu_2^{(1)} & h = 1 \\
  \nu_1^{(h-1)} - \nu_1^{(h)} & 1 < h \leq r
\end{cases}
\\ c_{\pm}(h) & = \sum_{j=1}^2 \bigl( \nu_j^{(h)} - \nu_j^{(h-1)} \bigr) - \bigl( c_{\bullet}(h-2) + c_+(h-2) \bigr)
\end{align*}
where we set $c_+(0) = 0$ and $\delta_{h0}$ is the Kronecker delta. Note that $c_+(r)$ is not determined by the above formula, but
rather by the fat that the total number of columns is $s$.

\begin{prop}[\cite{OSS13} Proposition~4.3]
\label{prop:rc_pm_bijection_typeD}
Let $\zeta$ be the map from $\pm$-diagrams to $\{2,\dotsc,n\}$-highest weight elements in $B^{r,s}$ in type $D_n^{(1)}$. Then we have
\[
\zeta_{rc} = \iota \circ \zeta.
\]
\end{prop}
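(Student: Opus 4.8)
The plan is to prove the identity by induction on the recursion that defines $\zeta$ in Proposition~\ref{prop:pm_diagram_classification}, matching each step of that recursion against the explicit rigged configuration crystal operators of Definition~\ref{def:rc_crystal_ops}. Fix an outer shape $\lambda$ obtained from the $r \times s$ rectangle by removing vertical dominoes. Both $\zeta_{rc}$ and $\iota \circ \zeta$ send $\pm$-diagrams of outer shape $\lambda$ to $\{2, \dotsc, n\}$-highest weight rigged configurations in $\RC(B^{r,s}; \lambda)$, and by Theorem~\ref{thm:hw_crystals_nsl} together with the multiplicity-freeness of the classical decomposition of $B^{r,s}$, the restriction of $\iota$ to the component $B(\lambda) \subseteq B^{r,s}$ is the \emph{unique} classical crystal isomorphism onto $\RC(B^{r,s}; \lambda)$; in particular $\iota$ commutes with every $f_a$ for $a \in I_0$. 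Hence, whenever $\zeta(P) = f_{i_1} \cdots f_{i_k} \zeta(P')$ for the $\pm$-diagram $P'$ and index chain prescribed by Case~1 or Case~2 of Proposition~\ref{prop:pm_diagram_classification}, we get $\iota(\zeta(P)) = f_{i_1} \cdots f_{i_k} \iota(\zeta(P'))$. Since each $P'$ has $\zeta(P')$ of strictly higher classical weight, the recursion is well founded and terminates at the all-$+$ diagram, so it suffices to establish two statements: (i) $\zeta_{rc}$ agrees with $\iota \circ \zeta$ on the all-$+$ diagram; and (ii) for the same chain, $\zeta_{rc}(P) = f_{i_1} \cdots f_{i_k} \zeta_{rc}(P')$, which refers only to the explicit formulas \eqref{equation.rc1}--\eqref{equation.rc4} and to Definition~\ref{def:rc_crystal_ops}, not to $\iota$. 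In (ii) one gets for free that the chain does not annihilate $\zeta_{rc}(P')$: by the inductive hypothesis it equals $\iota(\zeta(P'))$, and applying the chain gives $\iota(\zeta(P)) \neq 0$.

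For (i), the all-$+$ $\pm$-diagram $P_0$ of outer shape $\lambda$ is the concatenation of columns each carrying a single $+$, so $\zeta_{rc}(P_0)$ is the columnwise sum of the data \eqref{equation.rc2}: a $+$-column of outer height $x$ contributes one box to each of $\nu^{(x+1)}, \dotsc, \nu^{(n)}$ (with the halving at the two spin nodes), all riggings $0$. Summing over the column heights of $\lambda$ reproduces exactly the highest weight rigged configuration $\nu^{(a)} = \overline{\lambda}^{[r-a]},\ \overline{\lambda},\ \overline{\lambda}^{1/2}$ of Lemma~\ref{lemma:hw_typeD}, which by the normalization of $\iota$ equals $\iota(u_\lambda) = \iota(\zeta(P_0))$.

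For (ii) I would treat Cases~1 and~2 of Proposition~\ref{prop:pm_diagram_classification} separately; in either case exactly one column of the $\pm$-diagram changes its sign pattern (a column passes from having no sign to carrying a $+$, or from a $-$ to a $\pm$, in Case~1; from a $\pm$ to a $+$, or at height $1$ from a $-$ to a $+$, in Case~2), so $\zeta_{rc}(P)$ and $\zeta_{rc}(P')$ differ only in that column's contribution, and the difference is read off from the corresponding pair among \eqref{equation.rc1}--\eqref{equation.rc4}, with the degenerate $x = 1$ instance of \eqref{equation.rc3} treated on its own. One then verifies directly from Definition~\ref{def:rc_crystal_ops} that applying the prescribed chain --- $f_1 f_2 \cdots f_h$ in Case~1, and $f_1 f_2 \cdots f_n f_{n-2} f_{n-3} \cdots f_h$ in Case~2 --- to $\zeta_{rc}(P')$ produces exactly $\zeta_{rc}(P)$: each operator in turn acts on the singular string of the prescribed (minimal or maximal) length at the current smallest label, adds a box, and relabels so that all colabels are as recorded in the target formula. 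The main obstacle is precisely this bookkeeping in the inductive step. The delicate points are the passage of a Case~2 chain up to node $n$ and then back down the Dynkin diagram through $f_{n-2}, f_{n-3}, \dotsc$; the two spin nodes $a = n-1, n$, where $\nu^{(n-1)}$ and $\nu^{(n)}$ carry the half-size partitions so that string lengths there must be tracked against the halved data; and the $x = 1$ column. Although the outer shapes here contain no columns of height $n-1$ or $n$, as recalled before Proposition~\ref{prop:pm_diagram_classification}, the intermediate rigged configurations $\zeta_{rc}(P')$ do reach those nodes, so the spin-node analysis cannot be avoided. Once it is carried through, the induction closes and $\zeta_{rc} = \iota \circ \zeta$.
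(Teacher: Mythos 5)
This proposition is quoted from~\cite{OSS13} and the paper offers no proof of its own; your strategy --- induction on the recursive construction of $\zeta$ in Proposition~\ref{prop:pm_diagram_classification}, checking the all-$+$ base case against Lemma~\ref{lemma:hw_typeD}, and verifying that the chains of Cases~1 and~2 carry the columnwise data \eqref{equation.rc1}--\eqref{equation.rc4} for $P'$ to that for $P$ --- is precisely the one used in~\cite{OSS13}, and your framing (uniqueness of $\iota$ on each multiplicity-free classical component, well-foundedness of the recursion, non-vanishing of the chains via the inductive hypothesis) is sound. The only reservation is that the inductive step, which you rightly flag as the main obstacle, is where essentially all of the content of the proposition lives: the case analysis of how $f_1 \cdots f_n f_{n-2} \cdots f_h$ threads through the spin nodes and the $x=1$ column is asserted rather than executed, so as written this is a correct and faithful outline rather than a complete argument.
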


In order to define the diagram involution map, we now need an involution on $\pm$-diagrams.

\begin{dfn}[\cite{S08}]
\label{def:pm_map}
Let $P$ be a $\pm$-diagram of outer shape $\Lambda$, where the columns of $\Lambda$ are either 
all even or all odd height. Then $\mathfrak{S}(P)$ is the $\pm$-diagram, where compared to $P$ the values $c_+(h)$ and $c_-(h)$ are 
interchanged for $r \ge h \geq 1$, and the values of $c_{\bullet}(h-2)$ and $c_{\pm}(h)$ are interchanged for $r \ge h \geq 2$.
\end{dfn}

We can now define the diagram involution on $\RC(B^{r,s})$.

\begin{dfn}
\label{def:promotion_type_D}
Let $(\nu,J) \in \RC(B^{r,s})$ with $B^{r,s}$ a KR crystal of type $D_n^{(1)}$ with $1 \leq r \leq n-2$. Choose a sequence $b = (b_1, b_2, \dotsc, b_k)$ 
with $b_i \in \{2, \dotsc, n\}$ such that $e_b(\nu, J) := e_{b_1} \cdots e_{b_k}(\nu, J)$ is $\{2, \dotsc, n\}$-highest weight. Then define
\begin{equation}
\label{eq:diagram_map}
\sigma_{rc}(\nu, J) = f_{b^r} \circ \zeta_{rc} \circ \mathfrak{S} \circ \zeta_{rc}^{-1} \circ e_b,
\end{equation}
where $b^r$ is the reverse of $b$.
\end{dfn}

\begin{thm}[{\cite[Thm.~4.9]{OSS13}}]
\label{thm:affine_rc_typeD}
Let $B^{r,s}$ be a KR crystal of type $D_n^{(1)}$ with $1\le r \le  n-2$. Then $\RC(B^{r,s})$ is a $U_q^{\prime}(\g)$-crystal with
\begin{align*}
e_0 & = \sigma_{rc} \circ e_1 \circ \sigma_{rc},
\\ f_0 & = \sigma_{rc} \circ f_1 \circ \sigma_{rc}.
\end{align*}
Moreover the natural classical crystal isomorphism $\iota$ is an affine crystal isomorphism.
\end{thm}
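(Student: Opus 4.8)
The plan is to realize the map $\sigma_{rc}$ of Definition~\ref{def:promotion_type_D} as the conjugate by $\iota$ of the already-known crystal involution $\sigma$ on $B^{r,s}$ induced by the Dynkin diagram automorphism exchanging nodes $0$ and $1$; once $\sigma_{rc} = \iota\circ\sigma\circ\iota^{-1}$ is established, every assertion of the theorem follows by a formal diagram chase. First I would recall the two inputs that make this possible. By Proposition~\ref{prop:rc_pm_bijection_typeD} we have $\zeta_{rc} = \iota\circ\zeta$, so passing from a $\{2,\dotsc,n\}$-highest weight rigged configuration to its $\pm$-diagram is literally the $\iota^{-1}$-image of the classification in Proposition~\ref{prop:pm_diagram_classification}. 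And by~\cite{S08} (see also~\cite[Sec.~3.2]{FOS09}) the restriction of $\sigma$ to $\{2,\dotsc,n\}$-highest weight elements of $B^{r,s}$ equals $\zeta\circ\mathfrak{S}\circ\zeta^{-1}$, where $\mathfrak{S}$ is the involution of Definition~\ref{def:pm_map}. The hypothesis $1\le r\le n-2$ enters precisely here: it guarantees that every outer shape appearing (a diagram obtained from an $r\times s$ rectangle by removing vertical dominoes) has no column of height $n-1$ or $n$, so that $\mathfrak{S}$ is defined and matches $\sigma$ without spin corrections.

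Next I would prove $\sigma_{rc}\circ\iota = \iota\circ\sigma$. Fix $v\in B^{r,s}$ and a word $b=(b_1,\dotsc,b_k)$ with $b_i\in\{2,\dotsc,n\}$ such that $e_b(v) := e_{b_1}\cdots e_{b_k}(v)$ is $\{2,\dotsc,n\}$-highest weight; since $\iota$ is a classical crystal isomorphism, $e_b(\iota(v)) = \iota(e_b(v))$ is then $\{2,\dotsc,n\}$-highest weight in $\RC(B^{r,s})$, so $b$ is a valid choice for computing $\sigma_{rc}(\iota(v))$. Using $e_b\circ\iota = \iota\circ e_b$, then $\zeta_{rc}^{-1} = \zeta^{-1}\circ\iota^{-1}$ and $\zeta_{rc} = \iota\circ\zeta$, then $f_{b^r}\circ\iota = \iota\circ f_{b^r}$, and finally the identity $\zeta\circ\mathfrak{S}\circ\zeta^{-1} = \sigma$ on $\{2,\dotsc,n\}$-highest weight elements, we obtain
\[
\sigma_{rc}(\iota(v)) = f_{b^r}\circ\zeta_{rc}\circ\mathfrak{S}\circ\zeta_{rc}^{-1}\circ e_b\bigl(\iota(v)\bigr) = \iota\Bigl( f_{b^r}\circ\zeta\circ\mathfrak{S}\circ\zeta^{-1}\circ e_b(v) \Bigr) = \iota\bigl( f_{b^r}\circ\sigma\circ e_b(v) \bigr) = \iota\bigl(\sigma(v)\bigr),
\]
where the last equality holds because $\sigma$ commutes with $e_a$ and $f_a$ for $a\in\{2,\dotsc,n\}$, so $\sigma(e_b(v)) = e_b(\sigma(v))$ and $f_{b^r}$ then undoes the $e_b$-steps (all intermediate applications being nonzero, as $\sigma$ is an automorphism). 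In particular the right-hand side does not depend on $b$, so $\sigma_{rc}$ is well defined, and it is an involution because $\sigma$ is and $\iota$ is a bijection.

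Finally the theorem follows formally. From $\sigma_{rc} = \iota\circ\sigma\circ\iota^{-1}$ and the fact that $\iota$ intertwines the classical operator $e_1$, we get
\[
\sigma_{rc}\circ e_1\circ\sigma_{rc} = \iota\circ\sigma\circ\iota^{-1}\circ e_1\circ\iota\circ\sigma\circ\iota^{-1} = \iota\circ(\sigma\circ e_1\circ\sigma)\circ\iota^{-1} = \iota\circ e_0\circ\iota^{-1},
\]
using the definition of $e_0$ on $B^{r,s}$ from~\eqref{eq:affine_operators_diagram_map}, and likewise $\sigma_{rc}\circ f_1\circ\sigma_{rc} = \iota\circ f_0\circ\iota^{-1}$. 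Hence, defining $e_0,f_0$ on $\RC(B^{r,s})$ by the formulas in the statement, $\iota$ intertwines $e_0$ and $f_0$; since $\iota$ already intertwines $e_a,f_a$ for $a\in I_0$ and preserves weights, it is an isomorphism of $U_q^{\prime}(\g)$-crystals, and in particular $\RC(B^{r,s})$ inherits a $U_q^{\prime}(\g)$-crystal structure from $B^{r,s}$.

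The main obstacle is the input imported via Proposition~\ref{prop:rc_pm_bijection_typeD} together with~\cite{S08,FOS09}: one must know that $\mathfrak{S}$ on $\pm$-diagrams genuinely models $\sigma$, and that the explicit column-by-column rigged configuration description~\eqref{equation.rc1}--\eqref{equation.rc4} of $\zeta_{rc}$ transforms under $\mathfrak{S}$ exactly by swapping $c_+(h)\leftrightarrow c_-(h)$ and $c_\bullet(h-2)\leftrightarrow c_\pm(h)$. Verifying this last point requires a careful case analysis of how the four column types of a $\pm$-diagram superimpose and of the inversion formulas for $c_\bullet,c_+,c_-,c_\pm$; it is the genuine combinatorial content of the argument, and once it is in hand everything above is bookkeeping that propagates cleanly through $\iota$.
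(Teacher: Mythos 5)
The paper does not prove this statement itself --- it is imported verbatim as \cite[Thm.~4.9]{OSS13} --- and your argument is exactly the proof given in that source: establish $\sigma_{rc}=\iota\circ\sigma\circ\iota^{-1}$ from Proposition~\ref{prop:rc_pm_bijection_typeD} together with the identity $\sigma=\zeta\circ\mathfrak{S}\circ\zeta^{-1}$ on $\{2,\dotsc,n\}$-highest weight elements, and then conjugate the defining formulas~\eqref{eq:affine_operators_diagram_map} by $\iota$. The proof is correct, including the observation that independence of the choice of $b$ (hence well-definedness of $\sigma_{rc}$) falls out of the computation.
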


Now we show an analogous result to Proposition~\ref{prop:rc_pm_bijection_typeD} for types $B_n^{(1)}$ with $r < n$ and $A_{2n-1}^{(2)}$ 
for all $r\le n$. We will use this to show the analogous result to Theorem~\ref{thm:affine_rc_typeD} for $r < n$.

Consider $\RC(B^{r,s})$ of type $B_n^{(1)}$ for $r < n$ or type $A_{2n-1}^{(2)}$ for $r \le n$. We begin by showing that $\pm$-diagrams are 
in bijection with $\{2, \dotsc, n\}$-highest weight rigged configurations. 
By inspection of~\eqref{equation.rc1}--\eqref{equation.rc4} observe that the $n$-th and $(n+1)$-th rigged partition in the $\{2,3,\ldots,n+1\}$-highest 
weight rigged configurations of type $D_{n+1}^{(1)}$ are equal. Hence we can define $\zeta_{rc}$ in type $A_{2n-1}^{(2)}$ similar to type 
$D_{n+1}^{(1)}$ except we identify the last two rigged partitions (which in effect drops $\nu^{(n+1)}$).
For type $B_n^{(1)}$, we define $\zeta_{rc}$ by also identifying the last two rigged partitions of type $D_{n+1}^{(1)}$ (which in effect drops $\nu^{(n+1)}$) 
in addition to doubling $\nu^{(n)}$ to keep with our convention. For example, if a column in the $\pm$-diagram does not contain any sign, in 
type $A_{2n-1}^{(2)}$ we add
\begin{align*}
\nu & = \bigl( \phantom{-} \overbrace{(1), (1), \dotsc, (1)}^x, \overbrace{(1), (1^2), \dotsc, (1^y)}^y, (1^y), \dotsc, (1^y), (1^{\frac{y}{2}}) \bigr),
\\ J & = \bigl( \overbrace{(-1), (0), \dotsc, (0)}^x, \overbrace{(1), (0^2), \dotsc, (0^y)}^y, (0^y), \dotsc, (0^y), (0^{\frac{y}{2}}) \bigr),
\end{align*}
and in type $B_{n}^{(1)}$ we add
\begin{align*}
\nu & = \bigl( \phantom{-} \overbrace{(1), (1), \dotsc, (1)}^x, \overbrace{(1), (1^2), \dotsc, (1^y)}^y, (1^y), \dotsc, (1^y), (2^{\frac{y}{2}}) \bigr),
\\ J & = \bigl( \overbrace{(-1), (0), \dotsc, (0)}^x, \overbrace{(1), (0^2), \dotsc, (0^y)}^y, (0^y), \dotsc, (0^y), (0^{\frac{y}{2}}) \bigr).
\end{align*}

\begin{prop}
\label{prop:rc_pm_bijection}
Let $\zeta$ be the map from $\pm$-diagrams to $\{2,\dotsc,n\}$-highest weight elements in $B^{r,s}$ in type $A_{2n-1}^{(2)}$ for all 
$r$ or type $B_n^{(1)}$ for $r < n$. Then we have
\[
\zeta_{rc} = \iota \circ \zeta.
\]
\end{prop}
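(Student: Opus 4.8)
The plan is to reduce everything to the already-established type $D_{n+1}^{(1)}$ statement (Proposition~\ref{prop:rc_pm_bijection_typeD}) by means of the virtualization map $v \colon \RC(B^{r,s}) \to \RC(\virtual{B}^{r,s})$ of Lemma~\ref{lemma:rc_virtual_crystal}, where $\virtual{\g} = D_{n+1}^{(1)}$. First I would note that $\iota \circ \zeta$ automatically has the two properties that characterize $\zeta$ in Proposition~\ref{prop:pm_diagram_classification}: the bijection $\iota$ is a classical crystal isomorphism sending each $u_\lambda$ to the unique highest weight rigged configuration of $\RC(B^{r,s};\lambda)$, and it commutes with $f_a$ for $a\in I_0$, so from $\zeta(P) = f_w\zeta(P')$ one gets $(\iota\circ\zeta)(P) = f_w(\iota\circ\zeta)(P')$ with the \emph{same} word $w$. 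Since the base case together with this recursion determines a map on $\pm$-diagrams uniquely, it suffices to check that $\zeta_{rc}$ also has these two properties. The base case is a direct computation: for the $\pm$-diagram $P_0$ with a $+$ in every column and no $-$, each column contributes according to~\eqref{equation.rc1}--\eqref{equation.rc4} (with the truncation of $\nu^{(n+1)}$, and the doubling of $\nu^{(n)}$ in type $B_n^{(1)}$), and summing these contributions over all columns of $\lambda$ reproduces exactly the highest weight rigged configuration of Lemma~\ref{lemma:hw_typeA2odd} (type $A_{2n-1}^{(2)}$) or Lemma~\ref{lemma:hw_typeB} (type $B_n^{(1)}$), all riggings being $0$.

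For the recursion I would use three ingredients. First, by the very definition of $\zeta_{rc}$ in these types, the column formulas are set up so that $v \circ \zeta_{rc} = \virtual{\zeta}_{rc} \circ \Xi$, where $\virtual{\zeta}_{rc}$ is the type $D_{n+1}^{(1)}$ map and $\Xi$ sends a $\pm$-diagram of classical type $C_n$ (resp. $B_n$) to the corresponding type $D_{n+1}$ $\pm$-diagram --- the identity on columns for type $A_{2n-1}^{(2)}$ with $r<n$, and the column-by-column doubling for type $B_n^{(1)}$; checking this is routine from~\eqref{equation.rc1}--\eqref{equation.rc4} and the definition of $v$. Second, $v \circ f_a = f^v_a \circ v$ with $f^v_a = \virtual{f}_a^{\gamma_a}$ (Proposition~\ref{prop:crystal_ops_commute_virtual}). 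Third, $\virtual{\zeta}_{rc} = \virtual{\iota} \circ \virtual{\zeta}$ satisfies the type $D_{n+1}$ recursion of Proposition~\ref{prop:pm_diagram_classification} (Proposition~\ref{prop:rc_pm_bijection_typeD} together with $\virtual{\iota}$ commuting with the crystal operators). Applying $v$ to the target identity $\zeta_{rc}(P) = f_w\zeta_{rc}(P')$ and using the first two ingredients, it remains to show that the virtualized word $f^v_w$ applied to $\virtual{\zeta}_{rc}(\Xi(P'))$ produces $\virtual{\zeta}_{rc}(\Xi(P))$, which by the third ingredient is what the type $D_{n+1}$ recursion word $\virtual{w}$ for the step $\Xi(P')\to\Xi(P)$ does; since $v$ is injective, equality of these two actions yields the recursion for $\zeta_{rc}$ and hence $\zeta_{rc} = \iota\circ\zeta$.

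The main obstacle is precisely this last comparison $f^v_w = \virtual{w}$ on $\virtual{\zeta}_{rc}(\Xi(P'))$. For type $A_{2n-1}^{(2)}$ with $r<n$ it is immediate: $\gamma_a = 1$ for all $a$, $\Xi$ is the identity on columns, and substituting $f^v_n = \virtual{f}_n\virtual{f}_{n+1}$ (which commute) into the type $C_n$ recursion words of Proposition~\ref{prop:pm_diagram_classification} yields verbatim the type $D_{n+1}$ recursion words. For type $B_n^{(1)}$ with $r<n$ the columns of $\Xi(P)$ are doubled and $f^v_a = \virtual{f}_a^2$ for $a<n$, so $\virtual{w}$ is the type $D_{n+1}$ word applied twice while $f^v_w$ is each of its letters squared; these have the same effect on $\virtual{\zeta}_{rc}(\Xi(P'))$ because that element has only even parts and even riggings throughout, whence Lemma~\ref{lemma:act_same_string} applies and the relevant squared operators move along common strings --- making this bookkeeping precise is the one genuinely delicate step. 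Finally, for type $A_{2n-1}^{(2)}$ with $r=n$, where $\virtual{B}^{n,s} = B^{n,s}\otimes B^{n+1,s}$, one runs the same argument but must match the spin-column contributions and the type $C_n$ recursion at height $n$ against the tensor-product picture in type $D_{n+1}^{(1)}$; together with the doubled-word bookkeeping in type $B_n^{(1)}$ these are the only parts requiring real care.
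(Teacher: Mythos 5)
Your overall strategy --- reducing to the type $D_{n+1}^{(1)}$ statement through the virtualization map $v$ --- is genuinely different from the paper's. The paper argues directly in the folded type, redoing the computation of \cite[Prop.~4.3]{OSS13} on the modified column data; the only new input it records is that, in type $B_n^{(1)}$, the subword $f_nf_n$ of the type $B_n$ string in Proposition~\ref{prop:pm_diagram_classification} acts twice on the same string of $\nu^{(n)}$ and preserves $m_i^{(n)}=0$ for odd $i$, by Lemma~\ref{lemma:act_same_string}. Your reduction is clean and complete where the folding interacts trivially with the words, namely type $A_{2n-1}^{(2)}$ with $r<n$: there $\gamma_a=1$, $\Xi$ is the identity, and substituting $f_n\mapsto\virtual{f}_n\virtual{f}_{n+1}$ turns the type $C_n$ strings of Proposition~\ref{prop:pm_diagram_classification} verbatim into the type $D_{n+1}$ strings, so that case does follow from Proposition~\ref{prop:rc_pm_bijection_typeD} with essentially no new computation.

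However, the two places you flag as ``requiring care'' are genuine gaps rather than bookkeeping. First, in type $B_n^{(1)}$ the operator $f^v_w$ is each letter of the type $D_{n+1}$ word squared, whereas $\virtual{w}$ is the type $D_{n+1}$ word executed twice in succession, once for each copy of the doubled column, with the second pass determined by the intermediate $\pm$-diagram rather than by $\Xi(P')$. Lemma~\ref{lemma:act_same_string} does not give equality of these two compositions: its hypothesis (all parts and riggings in $2\ZZ$) holds at $\virtual{\zeta}_{rc}(\Xi(P'))$ but fails at the element reached after one pass of the type $D_{n+1}$ word, so one still must justify commuting the various $\virtual{f}_a$ past one another on these specific elements --- which is essentially the direct verification the paper performs anyway. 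Second, for $r=n$ in type $A_{2n-1}^{(2)}$ the ambient object is $B^{n,s}\otimes B^{n+1,s}$ of type $D_{n+1}^{(1)}$, i.e.\ spin columns; Proposition~\ref{prop:rc_pm_bijection_typeD} and the surrounding $\pm$-diagram combinatorics are only set up for $1\le r\le n-2$ (outer shapes with no columns of height $n-1$ or $n$), so the ``third ingredient'' of your reduction is not available there and would have to be established separately, at roughly the cost of the direct argument. As written, then, the proposal is a sound alternative architecture that fully settles only the $A_{2n-1}^{(2)}$, $r<n$ case.
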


\begin{proof}
The proof is similar to~\cite[Prop.~4.3]{OSS13}. We note that in type $B_n^{(1)}$ from Lemma~\ref{lemma:act_same_string}, $f_n^2$ acts on 
the same string and keeps $m_i^{(n)} = 0$ for all $i \notin 2\ZZ$.
\end{proof}

Next to extend this to $r = n$ for $B_n^{(1)}$, we need the following lemma and need to reformulate the doubling map given 
in the definition of $\Phi$ in Section~\ref{sec:background} (in the paragraph after Equation~\eqref{eq:doubling_map}) from type 
$B_n^{(1)}$ to $A_{2n-1}^{(2)}$ as a classical virtualization map with $\gamma_r = 2 - \delta_{rn}$ 
and the trivial folding $\phi(r) = r$ for all $r \in I_0$.

\begin{lemma}[{\cite[Lemma~3.5]{FOS09}}]
\label{lemma:double_pm_diagrams}
Let $d \colon B^{n,s} \to \virtual{B}^{n,s}$ denote the doubling map from $B_n^{(1)} \to A_{2n-1}^{(2)}$. Let $\lambda = \sum_{i=1}^n k_i \clfw_i$
be a classical weight of type $B_n$, let $b \in B(\lambda) \subseteq B^{n,s}$ be a $\{2, \dotsc, n\}$-highest weight element,
and $P$ be the corresponding $\pm$-diagram. The $\pm$-diagram corresponding 
to $\virtual{b}$ in $B\bigl( \Psi(\lambda) \bigr) \subseteq \virtual{B}^{n,s}$ 
is obtained by doubling each column of $P$ together with its signs for non-spin columns. For a spin 
column, it becomes a usual full width column with the same sign. For a column with $0$, we replace it with a column containing a $+$ and 
a column containing a $-$.
\end{lemma}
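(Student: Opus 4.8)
The plan is to reduce the statement to a column-by-column computation on rigged configurations and then to isolate the height-$n$ phenomena. By Propositions~\ref{prop:rc_pm_bijection} and~\ref{prop:rc_pm_bijection_typeD} we have $\zeta_{rc} = \iota \circ \zeta$ for both types $B_n^{(1)}$ and $A_{2n-1}^{(2)}$, and (as recalled just before the statement) the doubling map $d$ is the classical virtualization for the trivial folding $\phi(r) = r$ with scaling factors $\gamma_a = 2 - \delta_{an}$; correspondingly, on rigged configurations we have the explicit map $d_{rc}$ given by~\eqref{eq:rc_virtualization_map}, namely $\virtual{\nu}^{(a)} = 2\nu^{(a)}$, $\virtual{J}^{(a)} = 2 J^{(a)}$ for $a < n$ and $\virtual{\nu}^{(n)} = \nu^{(n)}$, $\virtual{J}^{(n)} = J^{(n)}$. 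Since both $d$ and $d_{rc}$ send $u_\lambda \mapsto u_{\Psi(\lambda)}$ and intertwine $f_a$ with $\virtual{f}_a^{\,\gamma_a}$ (Theorem~\ref{thm:virtual_highest_weight} and Lemma~\ref{lemma:act_same_string}), and $\iota$ is the weight-preserving crystal isomorphism determined on classically highest weight elements, we get $\iota \circ d = d_{rc} \circ \iota$. Writing $\virtual{P}$ for the $\pm$-diagram obtained from $P$ by the column operation in the statement, it therefore suffices to prove $d_{rc}\bigl(\zeta_{rc}(P)\bigr) = \zeta_{rc}(\virtual{P})$.

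First I would use that $\zeta_{rc}$ is \emph{additive over the columns} of a $\pm$-diagram: the rigged configuration attached to $P$ is the sum (padding partitions and multisets of riggings with $0$'s) of the rigged configurations attached to its columns, directly from the construction of $\zeta_{rc}$ in~\cite{OSS13} and in the paragraph before Proposition~\ref{prop:rc_pm_bijection}. Hence the identity may be checked one column at a time. For a column of height $x < n$ (of any of the four sign types), doubling it yields two identical columns, and summing two copies of a rigged configuration doubles every part size and duplicates every rigging; since the single-column rigged configurations in types $B_n^{(1)}$ and $A_{2n-1}^{(2)}$ differ from the type $D_{n+1}^{(1)}$ ones of~\eqref{equation.rc1}--\eqref{equation.rc4} only in whether $\nu^{(n)}$ is doubled, this case reduces immediately to the type $D_{n+1}^{(1)}$ statement (Proposition~\ref{prop:rc_pm_bijection_typeD}), equivalently to a short check against~\eqref{equation.rc1}--\eqref{equation.rc4}. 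A full-width column of height $n$ is treated the same way.

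The remaining cases, where the content of the lemma lies, are the height-$n$ columns that do \emph{not} simply become two copies: a half-width spin column, which must become a single full-width column of height $n$ with the same sign (this is precisely the identification of Remark~\ref{remark:spin_tableaux}), and a $0$-column in type $B_n$, which must become a $+$-column followed by a $-$-column in type $A_{2n-1}^{(2)}$. For these I would write out the spin-column and $0$-column analogues of~\eqref{equation.rc1}--\eqref{equation.rc4} explicitly on both sides and verify $d_{rc}(\,\cdot\,) = \zeta_{rc}(\,\cdot\,)$ directly. I expect this to be the main obstacle: because $\gamma_n = 1$, the coordinate $\nu^{(n)}$ and the riggings $J^{(n)}$ are \emph{not} rescaled by $d_{rc}$, so the asymmetry between the doubled coordinates $a < n$ and the fixed coordinate $a = n$ is concentrated exactly in these exceptional splittings, and one must carefully track how a single spin or $0$-column of $P$ distributes its contribution over the two ordinary height-$n$ columns of $\virtual{P}$ (and, for the $0$-column, how the ``$0$ between a $+$ and a $-$'' is accounted for). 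As an alternative one could bypass rigged configurations and argue by induction along the recursion defining $\zeta$ in Proposition~\ref{prop:pm_diagram_classification}, matching the word $f_{i_1}\cdots f_{i_k}$ produced there --- with each $f_a$ for $a < n$ replaced by $\virtual{f}_a^{\,2}$ under $d$ --- against the corresponding words for the two copies of the affected column in $\virtual{P}$; the same height-$n$ bookkeeping is the crux in that approach as well.
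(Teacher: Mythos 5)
First, note that the paper does not prove this statement at all: it is quoted from \cite[Lemma~3.5]{FOS09}, where it is established by direct computation with Kashiwara--Nakashima columns and the recursion of Proposition~\ref{prop:pm_diagram_classification}. Your proposal therefore has to stand on its own, and as written it has two problems. The more serious one is a circularity relative to the logical structure of this paper. Your reduction to $d_{rc}\circ\zeta_{rc} = \virtual{\zeta}_{rc}\circ d_{\pm}$ requires the identity $\zeta_{rc} = \iota\circ\zeta$ on the \emph{source} side, i.e.\ for $B^{n,s}$ of type $B_n^{(1)}$. But Proposition~\ref{prop:rc_pm_bijection}, which you cite for this, explicitly covers type $B_n^{(1)}$ only for $r<n$; the $r=n$ case is Proposition~\ref{prop:rc_pm_spinor_bijection}, and the paper proves \emph{that} proposition from the present lemma (via the well-definedness of $d_{\pm}$ and Proposition~\ref{prop:rc_pm_doubling_commute}). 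So you are assuming a statement whose only available proof depends on the one you are trying to give. To break the circle you would have to establish $\zeta_{rc} = \iota\circ\zeta$ for the type $B_n^{(1)}$ spin node independently, by a computation analogous to \cite[Prop.~4.3]{OSS13}, which is not easier than the lemma itself.

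Second, even granting the reduction, the cases that carry all of the content --- the half-width spin column becoming a full-width signed column, and the $0$-column becoming a $+$-column next to a $-$-column --- are precisely the ones you defer to an unexecuted ``write out both sides and verify directly''. You correctly identify that the asymmetry between $\gamma_n=1$ and $\gamma_a=2$ for $a<n$ concentrates the difficulty there, but locating the obstacle is not the same as clearing it; for a column of height $x<n$ the claim is essentially formal, so a proof that handles only that case proves nothing new. The alternative you sketch in your last sentence --- inducting along the recursion of Proposition~\ref{prop:pm_diagram_classification} and matching the $f$-strings, with each $f_a$ for $a<n$ replaced by $\virtual{f}_a^{\,2}$ under $d$ --- is in substance the argument of \cite{FOS09}, avoids the circularity entirely, and is the version worth writing out in full.
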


We recall that the $\pm$-diagrams fit inside a $n \times (s/2)$ box with possibly one half-width spin column. Therefore we need to 
describe a map from a column of height $n$ containing a $0$ or a spin column to rigged configurations.
If  it is a spin column with a $-$ or a full column containing a 0, we add
\begin{align*}
\nu & = \bigl( \phantom{-}(1), (1), (1), \dotsc, (1) \bigr),
\\ J & = \bigl( (-1), (0), (0), \dotsc, (0) \bigr).
\end{align*}
For a spin column with a $+$, we do not add anything. Thus we have the following.
\begin{prop}
\label{prop:rc_pm_doubling_commute}
Let $d_{rc} \colon \RC(B^{n,s}) \to \RC(B^{n,s})$ be the doubling map from type $B_n^{(1)}$ to type $A_{2n-1}^{(2)}$ defined in 
Section~\ref{sec:background} (equivalently by Equation~\eqref{eq:rc_virtualization_map} with the scaling factors given above). 
Let $d_{\pm}$ denote the doubling map defined on $\pm$-digrams given by Lemma~\ref{lemma:double_pm_diagrams}.
Then we have
\[
\zeta_{rc} \circ d_{\pm} = d_{rc} \circ \zeta_{rc}.
\]
\end{prop}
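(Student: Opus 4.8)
The plan is to reduce to a column-by-column verification, exploiting that all three maps are ``local'' on columns. Recall that $\zeta_{rc}$ assigns to a $\pm$-diagram $P$ the rigged configuration obtained by summing, row by row on each partition and entrywise on each rigging, the single-column contributions \eqref{equation.rc1}--\eqref{equation.rc4} (in the relevant type) together with the extra rules stated just before the proposition for a height-$n$ column carrying a $0$ and for a spin column. By Lemma~\ref{lemma:double_pm_diagrams} the map $d_{\pm}$ acts on each column of $P$ independently: a non-spin column is replaced by two adjacent copies of itself, a spin column by a full-width column with the same sign, and a height-$n$ column with a $0$ by a $+$-column followed by a $-$-column. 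Finally $d_{rc}$ multiplies $\nu^{(a)}$ and $J^{(a)}$ by $2$ for $a < n$ and fixes $(\nu^{(n)},J^{(n)})$, and this operation is additive for the row-wise/entrywise sum used to assemble columns. Hence it suffices to check $\zeta_{rc}\bigl(d_{\pm}(C)\bigr) = d_{rc}\bigl(\zeta_{rc}(C)\bigr)$ for a single column $C$ of a $\pm$-diagram for $B^{n,s}$ of type $B_n^{(1)}$.

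For a non-spin column $C$ of outer height $x$ (set $y := n - x$, which is even), $d_{\pm}(C)$ is two copies of the corresponding type $A_{2n-1}^{(2)}$ column, so $\zeta_{rc}\bigl(d_{\pm}(C)\bigr)$ is the row-wise sum of two equal rigged partitions; since the row-wise sum of a partition (resp.\ rigging vector) with itself doubles every entry, this equals $2$ times the type $A_{2n-1}^{(2)}$ single-column value of \eqref{equation.rc1}--\eqref{equation.rc4}. On the other hand, comparing the type $A_{2n-1}^{(2)}$ and type $B_n^{(1)}$ versions of those formulas, the only difference is the factor $2$ on $\nu^{(n)}$ in type $B_n^{(1)}$ (the riggings at node $n$ being $0$ in both), so applying $d_{rc}$ to the type $B_n^{(1)}$ single-column value --- doubling $\nu^{(a)},J^{(a)}$ for $a<n$ and leaving node $n$ alone --- produces exactly $2$ times the type $A_{2n-1}^{(2)}$ value as well. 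The two sides therefore agree for every non-spin $C$.

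It remains to treat the three special columns, which is where I expect the only real care to be needed since there $d_{\pm}$ genuinely changes the combinatorial type of the column. For a spin column with a $+$, $d_{\pm}(C)$ is a full-width $+$-column of height $n$, which by \eqref{equation.rc2} specialized to $x = n$, $y = 0$ contributes the empty rigged configuration; this matches the rule that a spin $+$-column contributes nothing, so both sides are empty. For a spin column with a $-$, and likewise for a height-$n$ column with a $0$ (whose $d_{\pm}$-image is a $+$-column contributing nothing together with a $-$-column), $d_{\pm}(C)$ produces a full-width $-$-column of height $n$; I would compute its $\zeta_{rc}$-image from \eqref{equation.rc3} at $x = n$, $y = 0$ --- paying attention to the degeneration of that formula at $r = n$, and to the $x = 1$ exception, which for $r = n$ only intervenes if $n = 1$ --- and check that it equals $d_{rc}$ of the ad hoc single-column rigged configuration $\nu^{(a)} = (1)$ for all $a$, $J^{(1)} = (-1)$, all other riggings $0$ that is assigned to a spin $-$-column and to a $0$-column. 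Combining the single-column identities with the additivity of $\zeta_{rc}$, $d_{\pm}$ and $d_{rc}$ over columns then gives $\zeta_{rc} \circ d_{\pm} = d_{rc} \circ \zeta_{rc}$.
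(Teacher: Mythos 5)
Your proof is correct and, in approach, coincides with the paper's, whose entire argument is the single sentence that the claim ``follows from the definition of $d_{rc}$, $d_{\pm}$, and $\zeta_{rc}$''; you have simply carried out the column-by-column unwinding of those definitions (additivity of all three maps over columns, doubling a non-spin column being the row-wise self-sum of its single-column contribution) that the authors leave implicit. The one place you stop short of an explicit computation --- checking that the full-width height-$n$ $-$-column arising from a spin $-$-column or a $0$-column matches $d_{rc}$ of the ad hoc contribution $\nu^{(a)}=(1)$ for all $a$, $J^{(1)}=(-1)$ --- does go through (both sides give $\nu^{(a)}=(2)$ for $a<n$, $\nu^{(n)}=(1)$, $J^{(1)}=(-2)$, all other riggings $0$), so there is no genuine gap.
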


\begin{proof}
This follows from the definition of $d_{rc}$, $d_{\pm}$, and $\zeta_{rc}$.
\end{proof}

We therefore can extend Proposition~\ref{prop:rc_pm_bijection} to $r = n$.
\begin{prop}
\label{prop:rc_pm_spinor_bijection}
Let $\zeta$ be the bijection from $\{2,\dotsc,n\}$-highest weight elements in $B^{n,s}$ to $\pm$-diagrams in type $B_n^{(1)}$. Then we have
\[
\zeta_{rc} = \iota \circ \zeta.
\]
\end{prop}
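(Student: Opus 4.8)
The plan is to bootstrap from the $r=n$ case of Proposition~\ref{prop:rc_pm_bijection} in type $A_{2n-1}^{(2)}$ by transporting everything through the doubling map, which --- as explained in the paragraph preceding the statement --- we view as a classical virtualization map $d\colon B^{n,s}\to\virtual{B}^{n,s}$ from $B_n^{(1)}$ to $A_{2n-1}^{(2)}$ with trivial folding $\phi=\mathrm{id}$ and scaling factors $\gamma_r=2-\delta_{rn}$. Write $\zeta^B,\zeta^B_{rc},\iota^B$ for the three maps in type $B_n^{(1)}$ and $\zeta^A,\zeta^A_{rc},\iota^A$ for their counterparts in type $A_{2n-1}^{(2)}$, let $d_\pm$ be the doubling map on $\pm$-diagrams from Lemma~\ref{lemma:double_pm_diagrams}, and let $d_{rc}$ be the doubling map on rigged configurations. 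First I would collect the three known compatibilities: (a) $\zeta^A\circ d_\pm=d\circ\zeta^B$, which is the content of Lemma~\ref{lemma:double_pm_diagrams}; (b) $\zeta^A_{rc}\circ d_\pm=d_{rc}\circ\zeta^B_{rc}$, which is Proposition~\ref{prop:rc_pm_doubling_commute}; and (c) $\zeta^A_{rc}=\iota^A\circ\zeta^A$, which is the $r=n$ case of Proposition~\ref{prop:rc_pm_bijection}.

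The missing link --- and the only real work --- is the compatibility
\[
d_{rc}\circ\iota^B=\iota^A\circ d .
\]
Both sides are morphisms of classical $U_q(B_n)$-crystals from $B^{n,s}$ of type $B_n^{(1)}$ to $\RC(B^{n,s})$ of type $A_{2n-1}^{(2)}$: the maps $d$ and $d_{rc}$ are virtualization maps, so they intertwine the classical operators with the virtual operators $e^v_a$, $f^v_a$ (Proposition~\ref{prop:crystal_ops_commute_virtual}, Lemma~\ref{lemma:rc_virtual_crystal}), while $\iota^B$ and $\iota^A$ are classical crystal isomorphisms. On a classically highest weight element $u_\lambda\in B(\lambda)\subseteq B^{n,s}$ we have $d(u_\lambda)=u_{\Psi(\lambda)}$ (Theorem~\ref{thm:virtual_highest_weight}, or directly from Lemma~\ref{lemma:double_pm_diagrams}), which is classically highest weight in the ambient crystal, so $\iota^A(d(u_\lambda))$ is the classically highest weight rigged configuration in $\RC(B^{n,s})$ of type $A_{2n-1}^{(2)}$ of weight $\Psi(\lambda)$; on the other side, $d_{rc}$ sends the classically highest weight rigged configuration of $\RC(B^{n,s})$ of type $B_n^{(1)}$ of weight $\lambda$ to that of type $A_{2n-1}^{(2)}$ of weight $\Psi(\lambda)$, since $d_{rc}$ is a crystal isomorphism onto the virtual subcrystal and the classical decomposition of a single KR factor is multiplicity free. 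Hence the two composites agree on every classically highest weight element; since $B^{n,s}$ of type $B_n^{(1)}$ is generated from these by the lowering operators $f_a$, $a\in I_0$, and all four maps commute with them in the virtualized sense, the two composites agree everywhere.

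Finally I would chain the pieces together: for any $\pm$-diagram $P$ of type $B_n^{(1)}$ --- outer shape inside an $n\times(s/2)$ box, with at most one half-width spin column ---
\[
d_{rc}\bigl(\zeta^B_{rc}(P)\bigr)
=\zeta^A_{rc}\bigl(d_\pm(P)\bigr)
=\iota^A\bigl(\zeta^A(d_\pm(P))\bigr)
=\iota^A\bigl(d(\zeta^B(P))\bigr)
=d_{rc}\bigl(\iota^B(\zeta^B(P))\bigr),
\]
using (b), (c), (a), and the compatibility above, in that order. Since $d_{rc}$ is a virtualization map it is injective, so $\zeta^B_{rc}(P)=\iota^B(\zeta^B(P))$, i.e. $\zeta_{rc}=\iota\circ\zeta$. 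The one subtlety I expect to have to watch is precisely in the highest weight bookkeeping of the middle paragraph: one must use multiplicity-freeness of the single-factor decomposition to rule out the permutation of classical components that the general uniqueness-of-virtualization statement would otherwise allow, so that the two virtual images coincide on the nose rather than only up to relabeling.
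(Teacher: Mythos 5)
Your proposal is correct and follows essentially the same route as the paper, which simply cites the fact that the doubling map is a virtualization map (\cite[Lemma~4.2]{FOS09}), Proposition~\ref{prop:rc_pm_bijection}, and Proposition~\ref{prop:rc_pm_doubling_commute}. You have merely made explicit the step the paper leaves to the citation, namely the compatibility $d_{rc}\circ\iota^B=\iota^A\circ d$, and your argument for it (agreement on classically highest weight elements via multiplicity-freeness, then propagation by the intertwined lowering operators) is sound.
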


\begin{proof}
This follows from the fact that the doubling map is a virtualization map~\cite[Lemma~4.2]{FOS09}, Proposition~\ref{prop:rc_pm_bijection}, and 
Proposition~\ref{prop:rc_pm_doubling_commute}.
\end{proof}

\begin{thm}
Consider $\RC(B^{r,s})$ in type $B_n^{(1)}$ or $A_{2n-1}^{(2)}$. The natural classical crystal isomorphism $\iota \colon \RC(B^{r,s}) \to B^{r,s}$ 
is an affine crystal isomorphism.
\end{thm}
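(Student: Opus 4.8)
The plan is to equip $\RC(B^{r,s})$ with affine Kashiwara operators by the same recipe as in Theorem~\ref{thm:affine_rc_typeD}, namely $e_0 = \sigma_{rc} \circ e_1 \circ \sigma_{rc}$ and $f_0 = \sigma_{rc} \circ f_1 \circ \sigma_{rc}$, where $\sigma_{rc}$ is the rigged-configuration diagram involution built exactly as in Definition~\ref{def:promotion_type_D} but using the map $\zeta_{rc}$ of Proposition~\ref{prop:rc_pm_bijection} (for type $A_{2n-1}^{(2)}$ and for $B_n^{(1)}$ with $r < n$) and Proposition~\ref{prop:rc_pm_spinor_bijection} (for $B_n^{(1)}$ with $r = n$), together with the $\pm$-diagram involution $\mathfrak{S}$ of Definition~\ref{def:pm_map}. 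Recall that on $B^{r,s}$ for types $B_n^{(1)}$ and $A_{2n-1}^{(2)}$ the affine structure coming from~\cite{FOS09} is, by Equation~\eqref{eq:affine_operators_diagram_map}, obtained by conjugating $e_1, f_1$ with the crystal involution $\sigma$ attached to the Dynkin diagram automorphism exchanging nodes $0$ and $1$ (such an automorphism exists for both types, as well as for $D_n^{(1)}$), and that $\sigma$ is itself defined on $\{2,\dotsc,n\}$-highest weight elements by $\sigma = \zeta \circ \mathfrak{S} \circ \zeta^{-1}$ via Proposition~\ref{prop:pm_diagram_classification} and extended to all of $B^{r,s}$ by the same rule as in Definition~\ref{def:promotion_type_D}, i.e.\ $\sigma = f_{b^r} \circ \zeta \circ \mathfrak{S} \circ \zeta^{-1} \circ e_b$. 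The whole theorem thus reduces to the single intertwining identity $\sigma_{rc} \circ \iota = \iota \circ \sigma$.

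The main step is to establish that identity. First I would verify it on $\{2,\dotsc,n\}$-highest weight elements, where it is immediate: there $\sigma = \zeta \circ \mathfrak{S} \circ \zeta^{-1}$ and $\sigma_{rc} = \zeta_{rc} \circ \mathfrak{S} \circ \zeta_{rc}^{-1}$ by definition, and $\zeta_{rc} = \iota \circ \zeta$ by Propositions~\ref{prop:rc_pm_bijection} and~\ref{prop:rc_pm_spinor_bijection}, so $\sigma_{rc} \circ \iota = (\iota \circ \zeta) \circ \mathfrak{S} \circ \zeta^{-1} = \iota \circ \sigma$ on these elements. Since $\iota$ is a classical crystal isomorphism (Theorem~\ref{thm:isomorphism}) it commutes with $e_a$ and $f_a$ for $a \in I_0$, so the $f_{b^r} \circ (-) \circ e_b$ extension — performed by the identical rule on both sides — propagates the identity to all of $\RC(B^{r,s})$; this also shows that $\sigma_{rc} = \iota^{-1} \circ \sigma \circ \iota$ is well defined, i.e.\ independent of the chosen word $b$, since $\sigma$ is. Conjugating $e_1$ and $f_1$ then gives $e_0 = \iota^{-1} \circ e_0 \circ \iota$ and $f_0 = \iota^{-1} \circ f_0 \circ \iota$ on $\RC(B^{r,s})$, so $\iota$ intertwines all of $e_a, f_a$ for $a \in I$; as $\iota$ already preserves classical weights (hence the level-$0$ affine weight of Equation~\eqref{eq:weight_function} ff.) and the functions $\varepsilon_a, \varphi_a$, it is an affine crystal isomorphism, and $\RC(B^{r,s})$ inherits the $U_q'(\g)$-crystal structure.

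For the spinor case $r = n$ of type $B_n^{(1)}$ an extra ingredient is already supplied: the identity $\zeta_{rc} = \iota \circ \zeta$ there is obtained by pushing everything through the doubling map $B_n^{(1)} \hookrightarrow A_{2n-1}^{(2)}$, using Proposition~\ref{prop:rc_pm_doubling_commute} ($d_{rc} \circ \zeta_{rc} = \zeta_{rc} \circ d_{\pm}$), Lemma~\ref{lemma:double_pm_diagrams} (the effect of doubling on $\pm$-diagrams), and the fact that the doubling map is a virtualization, hence intertwines the classical crystal operators. One also has to confirm that $\mathfrak{S}$ is compatible with the identification of the last two rigged partitions of type $D_{n+1}^{(1)}$ (and, for $B_n^{(1)}$, the doubling of $\nu^{(n)}$) used to define $\zeta_{rc}$; this is a direct inspection, since the column data $c_\bullet, c_+, c_-, c_\pm$ only involve $\nu^{(a)}, J^{(a)}$ for $a \le n$ and the parity of column heights of the outer shapes here is uniform (all $\equiv r \bmod 2$, as only vertical dominoes are removed), so Definition~\ref{def:pm_map} applies verbatim.

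I expect the actual obstacle to be bookkeeping rather than conceptual: making sure the $\pm$-diagram combinatorics — the column counts and the swap $\mathfrak{S}$ — transfer faithfully between the type $D_{n+1}^{(1)}$ rigged configurations underlying Proposition~\ref{prop:rc_pm_bijection_typeD} and the genuine $B_n^{(1)}$ and $A_{2n-1}^{(2)}$ rigged configurations, so that the two occurrences of $\mathfrak{S}$ (one on $B^{r,s}$ through $\zeta$, one on $\RC(B^{r,s})$ through $\zeta_{rc}$) are literally the same map under $\iota$. Once that compatibility is recorded, the argument of the preceding paragraphs is purely formal, and it also yields the auxiliary claims stated in the section header, e.g.\ that $B^{r,s}$ of type $B_n^{(1)}$ and $A_{2n-1}^{(2)}$ virtualizes into $\virtual{B}^{r,s}$ of type $D_{n+1}^{(1)}$ for $r < n$, by combining this affine intertwining with Lemma~\ref{lemma:rc_virtual_crystal} and the affine result for $D_{n+1}^{(1)}$.
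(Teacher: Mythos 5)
Your proposal is correct and follows essentially the same route as the paper: the paper's proof simply notes that $\iota$ intertwines $\sigma$ and $\sigma_{rc}$ by construction and invokes Proposition~\ref{prop:rc_pm_bijection} (the identity $\zeta_{rc} = \iota \circ \zeta$), which is exactly the chain $\sigma_{rc}\circ\iota = \zeta_{rc}\circ\mathfrak{S}\circ\zeta^{-1} = \iota\circ\sigma$ that you spell out, propagated from $\{2,\dotsc,n\}$-highest weight elements by the classical isomorphism. Your version merely makes explicit the bookkeeping that the paper leaves implicit.
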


\begin{proof}
The classical crystal isomorphism $\iota$ intertwines with $\sigma$ and $\sigma_{rc}$ by construction. Therefore 
Proposition~\ref{prop:rc_pm_bijection} implies that $\iota$ is an affine crystal isomorphism.
\end{proof}

\subsection{Virtualization as affine crystals}

By constructing the virtualization map on $\pm$-diagrams, we can show Conjecture~\ref{conj:KR_virtualization} for $B^{r,s}$ of types 
$B_n^{(1)}$ and $A_{2n-1}^{(2)}$ (i.e., those that virtualize in $D_{n+1}^{(1)}$) for $r < n$ (which we assume in this subsection). 
We first must describe the action of $e_0$ and $f_0$ on $\pm$-diagrams.

Define a virtualization map $v$ on $\pm$-diagrams of outer shape $\lambda$ to $\pm$-diagrams of outer shape $\Psi(\lambda)$ 
by $c_*(r) \mapsto \gamma_r c_*(r)$ where $* = \bullet, -, +, \pm$.

\begin{lemma}
\label{lemma:virtual_pm_diagrams}
Consider $B^{r,s}$ of type $B_n^{(1)}$ or type $A_{2n-1}^{(2)}$. The virtualization map $v$ restricted to $\{2, \dotsc, n\}$-highest weight 
elements in $\RC(B^{r,s})$ commutes with $\zeta_{rc}$ and $\zeta$.
\end{lemma}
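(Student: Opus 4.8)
The plan is to verify both commutations column by column on $\pm$-diagrams. Recall that an arbitrary $\pm$-diagram is a concatenation of single columns, that $\zeta_{rc}$ is obtained by summing the partitions and riggings attached to each column through the type-adapted formulas \eqref{equation.rc1}--\eqref{equation.rc4}, and that $v$ on $\pm$-diagrams replaces each single column by $\gamma_r$ copies of it (since $c_*(r) \mapsto \gamma_r c_*(r)$, and with $r<n$ no spin column occurs). The virtualization map $v$ on rigged configurations of Equation~\eqref{eq:rc_virtualization_map} is likewise column-additive: it scales the rows and riggings of $\nu^{(a)}$ by $\gamma_a$ and duplicates the $n$-th rigged partition, since $\absval{\phi^{-1}(n)}=2$ for both embeddings into $D_{n+1}^{(1)}$. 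Hence it suffices to compare, for each of the four single-column types and each admissible height, the rigged configuration produced by the ambient type $D_{n+1}^{(1)}$ formulas on $\gamma_r$ copies of the column (a type $B_n^{(1)}$ or $A_{2n-1}^{(2)}$ column is also a legal $D_{n+1}^{(1)}$ column here) against the image under $v$ of the rigged configuration produced by the type $B_n^{(1)}$ (resp.\ $A_{2n-1}^{(2)}$) formulas on the single column.

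For type $A_{2n-1}^{(2)}$ one has $\gamma_r=1$, so $v$ is the identity on $\pm$-diagrams while on rigged configurations it merely reinstates the $(n+1)$-st rigged partition that was identified with $\nu^{(n)}$ in the very definition of $\zeta_{rc}$ in this type; both sides reproduce the ambient formulas verbatim and the identity $v\circ\zeta_{rc}=\zeta_{rc}\circ v$ is immediate. For type $B_n^{(1)}$ with $r<n$ one has $\gamma_r=2$: on the $\pm$-diagram side $v$ doubles every column multiplicity, and on the rigged-configuration side it doubles the rows and riggings of $\nu^{(a)}$ for $a<n$, reinstates $\nu^{(n+1)}:=\nu^{(n)}$, and leaves $\nu^{(n)}$ of the already-``doubled'' form built into the type $B_n^{(1)}$ definition of $\zeta_{rc}$. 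Checking that summing two copies of each of \eqref{equation.rc1}--\eqref{equation.rc4} reproduces exactly this prescription is a short computation; the one delicate point is the partition $\nu^{(n)}$, where one invokes Lemma~\ref{lemma:act_same_string} just as in the proof of Proposition~\ref{prop:rc_pm_bijection} (that $f_n^2$ acts on the same string and keeps $m_i^{(n)}=0$ for $i\notin 2\ZZ$).

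Finally, the commutation with $\zeta$ follows by combining $v\circ\zeta_{rc}=\zeta_{rc}\circ v$ with Proposition~\ref{prop:rc_pm_bijection}, which gives $\zeta_{rc}=\iota\circ\zeta$ in both the folded and the ambient type, together with the compatibility of $\iota$ with the virtualization maps on rigged configurations and on crystal elements (both are the unique crystal morphisms sending the classically highest weight element of weight $\lambda$ to that of weight $\Psi(\lambda)$, hence coincide). I expect the main obstacle to be the bookkeeping in the type $B_n^{(1)}$ case: matching the doubling of columns on $\pm$-diagrams against the simultaneous row-doubling of $\nu^{(a)}$ for $a<n$ and duplication of $\nu^{(n)}$, uniformly over the four column types and the parity constraints. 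This is a genuine verification rather than a formal manipulation, but it runs entirely parallel to the corresponding computations already carried out in~\cite{OSS13} for type $D_n^{(1)}$ and for Proposition~\ref{prop:rc_pm_bijection}.
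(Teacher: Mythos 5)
Your proposal is correct and follows essentially the same route as the paper: the paper also establishes the commutation with $\zeta_{rc}$ directly from the explicit single-column formulas \eqref{equation.rc1}--\eqref{equation.rc4} (dismissing it as ``clear from the definition''), and then deduces the commutation with $\zeta$ from the chain $v \circ \zeta = v \circ \iota^{-1} \circ \zeta_{rc} = \virtual{\iota}^{-1} \circ v \circ \zeta_{rc} = \virtual{\iota}^{-1} \circ \virtual{\zeta}_{rc} \circ v = \virtual{\zeta} \circ v$, using Proposition~\ref{prop:rc_pm_bijection} and Theorem~\ref{thm:virtual_highest_weight} exactly as you do. Your write-up merely fleshes out the column-by-column bookkeeping that the paper leaves implicit.
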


\begin{proof}
It is clear that $\virtual{\zeta}_{rc}^{-1} \circ v = v \circ \zeta_{rc}^{-1}$ from the definition of $\zeta_{rc}$, which proves our first claim. Next 
since $\zeta_{rc} = \iota \circ \zeta$, $\virtual{\zeta}_{rc} = \virtual{\iota} \circ \virtual{\zeta}$ and the fact that Theorem~\ref{thm:virtual_highest_weight} 
implies that $v \circ \iota = \virtual{\iota} \circ v$, we have
\[
v \circ \zeta = v \circ \iota^{-1} \circ \zeta_{rc} = \virtual{\iota}^{-1} \circ v \circ \zeta_{rc} = \virtual{\iota}^{-1} \circ \virtual{\zeta}_{rc} \circ v 
= \virtual{\zeta} \circ v.
\]
\end{proof}

\begin{lemma}
\label{lemma:virtual_pm_involution}
The virtualization map $v$ commutes with $\mathfrak{S}$.
\end{lemma}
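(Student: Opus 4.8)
The plan is to reduce the statement to a bookkeeping identity for the column-count statistics, exploiting the fact that, in the range of interest, both $v$ and $\mathfrak{S}$ are described entirely in terms of the numbers $c_\bullet(h)$, $c_+(h)$, $c_-(h)$, $c_\pm(h)$ of a $\pm$-diagram. Recall that $v$ acts on a $\pm$-diagram $P$ by $c_*(h) \mapsto \gamma_h\, c_*(h)$ for every height $h$ and every $* \in \{\bullet,+,-,\pm\}$, while $\mathfrak{S}$ interchanges $c_+(h) \leftrightarrow c_-(h)$ for $1 \le h \le r$ and $c_\bullet(h-2) \leftrightarrow c_\pm(h)$ for $2 \le h \le r$, and leaves all remaining statistics unchanged. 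So the whole lemma comes down to whether applying $\mathfrak{S}$ before or after these rescalings makes a difference.

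First I would record the single arithmetic input. Since we assume $r < n$, every height $h$ appearing in a $\pm$-diagram for $B^{r,s}$ of type $B_n^{(1)}$ or $A_{2n-1}^{(2)}$, and in a $\pm$-diagram for the ambient crystal $\virtual{B}^{r,s}$ of type $D_{n+1}^{(1)}$, satisfies $h \le r \le n-1$; and for every such $h$ the scaling factor is constant, namely $\gamma_h = 2$ in type $B_n^{(1)}$ and $\gamma_h = 1$ in type $A_{2n-1}^{(2)}$. In particular $\gamma_h = \gamma_{h-2}$ throughout the relevant range. I would also note that $v$ does not change the set of column heights of the outer shape (it only rescales their multiplicities), so if $P$ has outer shape with all columns of a single parity -- which holds for the $\pm$-diagrams arising from $\RC(B^{r,s})$, whose outer shapes are obtained from an $r \times s$ rectangle by removing vertical dominoes -- then so does $v(P)$, and hence $\mathfrak{S}(v(P))$ is defined.

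Then I would carry out the comparison statistic by statistic. For a statistic untouched by $\mathfrak{S}$, both orders merely multiply it by $\gamma_h$, so they agree. For the pair $(c_+(h), c_-(h))$, both $\mathfrak{S}\circ v$ and $v\circ\mathfrak{S}$ produce $(\gamma_h c_-(h),\, \gamma_h c_+(h))$, since the same scalar $\gamma_h$ is attached to both slots. For the pair $(c_\bullet(h-2), c_\pm(h))$, applying $\mathfrak{S}$ and then $v$ yields $(\gamma_{h-2}\, c_\pm(h),\, \gamma_h\, c_\bullet(h-2))$, whereas applying $v$ and then $\mathfrak{S}$ yields $(\gamma_h\, c_\pm(h),\, \gamma_{h-2}\, c_\bullet(h-2))$; these coincide exactly because $\gamma_h = \gamma_{h-2}$. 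Since this exhausts all of $c_\bullet, c_+, c_-, c_\pm$ at all heights, $\mathfrak{S}\circ v = v\circ\mathfrak{S}$.

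The only place anything can go wrong is the constancy of the $\gamma_h$, i.e. the identity $\gamma_h = \gamma_{h-2}$, and this is precisely what the standing hypothesis $r < n$ secures; if one tried to push the naive column rescaling to $r = n$ in type $B_n^{(1)}$, where $\gamma_n = 1 \neq 2 = \gamma_{n-2}$, the rescaling would genuinely fail to commute with $\mathfrak{S}$, and one would instead have to pass through the doubling map as in Proposition~\ref{prop:rc_pm_doubling_commute}. So beyond being careful about this range restriction there is no real obstacle, and the proof is a short direct verification.
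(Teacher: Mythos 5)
Your proof is correct and rests on exactly the same observation as the paper's one-line argument, namely that $\mathfrak{S}$ is determined by the column statistics $c_\bullet, c_+, c_-, c_\pm$ and that the scaling factors satisfy $\gamma_a = \gamma_b$ for all $a, b < n$, so that rescaling before or after the swaps makes no difference. The paper phrases this via the column-by-column reformulation of $\mathfrak{S}$ while you verify it slot by slot, but the content is identical, and your closing remark about why the argument would break at $r = n$ in type $B_n^{(1)}$ is a correct reading of why the range restriction matters.
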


\begin{proof}
Since $\mathfrak{S}$ can be reformulated as acting column by column 
and $\gamma_a = \gamma_b$ for all $a,b < n$, it is clear that $\virtual{\mathfrak{S}} \circ v = v \circ \mathfrak{S}$.
\end{proof}

\begin{prop}
\label{prop:virtual_sigma}
The virtualization map on $\pm$-diagrams commutes with $\sigma_{rc}$ and $\sigma$.
\end{prop}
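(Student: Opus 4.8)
The plan is to verify the identity $v \circ \sigma_{rc} = \virtual{\sigma}_{rc} \circ v$ directly from the defining formula~\eqref{eq:diagram_map} for $\sigma_{rc}$, and then to obtain the statement for $\sigma$ on $B^{r,s}$ itself by transporting along $\iota$ and $\virtual{\iota}$: these intertwine $\sigma$ with $\sigma_{rc}$ and $\virtual{\sigma}$ with $\virtual{\sigma}_{rc}$ by construction, and satisfy $v \circ \iota = \virtual{\iota} \circ v$ (a consequence of Theorem~\ref{thm:virtual_highest_weight}, as used in the proof of Lemma~\ref{lemma:virtual_pm_diagrams}), so commutativity of $v$ with $\sigma_{rc}$ forces commutativity with $\sigma$.

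First I would fix $(\nu, J) \in \RC(B^{r,s})$ together with a sequence $b = (b_1, \dotsc, b_k)$, $b_i \in \{2, \dotsc, n\}$, such that $e_b(\nu, J)$ is $\{2, \dotsc, n\}$-highest weight, and build from $b$ an admissible sequence $\virtual{b}$ in $\{2, \dotsc, n+1\}$ for computing $\virtual{\sigma}_{rc}$ on $v(\nu, J)$. The natural choice is the ``blow-up'' of $b$: replace each entry $b_i$ by $\gamma_{b_i}$ copies of $b_i$ when $\absval{\phi^{-1}(b_i)} = 1$, and by the pair $n, n+1$ when $b_i = n$ (so $\phi^{-1}(b_i) = \{n, n+1\}$ with $\gamma_{b_i} = 1$). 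With this choice $e_{\virtual{b}} = e^v_{b_1} \cdots e^v_{b_k}$, so Proposition~\ref{prop:crystal_ops_commute_virtual} gives $e_{\virtual{b}}\bigl(v(\nu, J)\bigr) = v\bigl(e_b(\nu, J)\bigr)$; since $v$ is a crystal isomorphism (Lemma~\ref{lemma:rc_virtual_crystal}) and $\phi^{-1}(\{2, \dotsc, n\}) = \{2, \dotsc, n+1\}$, this element is $\{2, \dotsc, n+1\}$-highest weight in $\RC(\virtual{B}^{r,s})$. As $\virtual{\sigma}_{rc}$ is independent of the chosen sequence (Definition~\ref{def:promotion_type_D} and Theorem~\ref{thm:affine_rc_typeD}), I may evaluate $\virtual{\sigma}_{rc}(v(\nu,J))$ using $\virtual{b}$.

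Next I would push $v$ through $f_{\virtual{b}^r} \circ \virtual{\zeta}_{rc} \circ \virtual{\mathfrak{S}} \circ \virtual{\zeta}_{rc}^{-1} \circ e_{\virtual{b}}$ one factor at a time: it commutes with $\virtual{\zeta}_{rc}^{-1}$ and $\virtual{\zeta}_{rc}$ by Lemma~\ref{lemma:virtual_pm_diagrams}, with $\virtual{\mathfrak{S}}$ by Lemma~\ref{lemma:virtual_pm_involution}, and with $e_{\virtual{b}}$ and $f_{\virtual{b}^r}$ by Proposition~\ref{prop:crystal_ops_commute_virtual} (here $\virtual{b}^r$ is the blow-up of $b^r$, the order of $n, n+1$ within a pair being immaterial since $\virtual{f}_n$ and $\virtual{f}_{n+1}$ commute). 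Telescoping these commutations turns $\virtual{\sigma}_{rc}(v(\nu,J))$ into $v\bigl(f_{b^r} \circ \zeta_{rc} \circ \mathfrak{S} \circ \zeta_{rc}^{-1} \circ e_b(\nu, J)\bigr) = v\bigl(\sigma_{rc}(\nu, J)\bigr)$, which is the claim.

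The telescoping itself is routine. The one place that needs care is the translation between the virtual crystal operators $e^v_a = \prod_{c \in \phi^{-1}(a)} \virtual{e}_c^{\,\gamma_a}$ appearing in Proposition~\ref{prop:crystal_ops_commute_virtual} and the genuine ambient operators $\virtual{e}_c$ appearing in the definition of $\virtual{\sigma}_{rc}$ — that is, confirming that the blow-up $\virtual{b}$ of $b$ is genuinely an admissible sequence for $\virtual{\sigma}_{rc}$ and that reversal of sequences is compatible with the blow-up. This is the main (and fairly mild) obstacle; the remainder is bookkeeping.
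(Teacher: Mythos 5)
Your argument is correct and follows essentially the same route as the paper, whose proof simply cites Theorem~\ref{thm:virtual_highest_weight}, Lemma~\ref{lemma:virtual_pm_diagrams}, and Lemma~\ref{lemma:virtual_pm_involution}; you have merely made explicit the telescoping through the factors of~\eqref{eq:diagram_map} and the choice of the blown-up admissible sequence $\virtual{b}$. The points you flag as needing care (alignedness guaranteeing that the image is $\{2,\dotsc,n+1\}$-highest weight, and compatibility of reversal with the blow-up) are handled correctly.
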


\begin{proof}
This follows from Theorem~\ref{thm:virtual_highest_weight}, Lemma~\ref{lemma:virtual_pm_diagrams}, and Lemma~\ref{lemma:virtual_pm_involution}.
\end{proof}

Thus we can show the following case of Conjecture~\ref{conj:KR_virtualization}.

\begin{thm}
\label{thm:virtual_KR}
Let $B^{r,s}$ be a KR crystal of type $B_n^{(1)}$ or $A_{2n-1}^{(2)}$. Then $B^{r,s}$ virtualizes in $B^{r,\gamma_r s}$ of type $D_{n+1}^{(1)}$ as $U_q'(\g)$-crystals.
\end{thm}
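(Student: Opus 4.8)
The plan is to build the affine virtualization map by transporting the rigged-configuration virtualization map of Lemma~\ref{lemma:rc_virtual_crystal} through the natural crystal isomorphisms and then verifying compatibility with $e_0$ and $f_0$. Since $r<n$ throughout this subsection, $\phi^{-1}(r)=\{r\}$ and hence $\virtual{B}^{r,s}=B^{r,\gamma_r s}$ of type $D_{n+1}^{(1)}$ (with $\widetilde{\gamma}_a=\gamma_a$ throughout); note also $r\le n-1=(n+1)-2$, so Theorem~\ref{thm:affine_rc_typeD} applies to the ambient crystal $\virtual{B}^{r,s}$. Concretely, set $v_B:=\virtual{\iota}\circ v_{rc}\circ\iota^{-1}\colon B^{r,s}\to\virtual{B}^{r,s}$, where $\iota\colon\RC(B^{r,s})\to B^{r,s}$ and $\virtual{\iota}\colon\RC(\virtual{B}^{r,s})\to\virtual{B}^{r,s}$ are the natural (affine) crystal isomorphisms and $v_{rc}$ is the virtualization map of Equation~\eqref{eq:rc_virtualization_map}, and put $V:=v_B(B^{r,s})$ with the $U_q^{\prime}(\g)$-crystal structure transported from $B^{r,s}$.

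The classical part is essentially free: by Lemma~\ref{lemma:rc_virtual_crystal} the map $v_{rc}$ is an injective $U_q(\g_0)$-crystal morphism with $v_{rc}\circ e_a=e^v_a\circ v_{rc}$, $v_{rc}\circ f_a=f^v_a\circ v_{rc}$, $\varepsilon_a=\gamma_a^{-1}\virtual{\varepsilon}_b$ and $\varphi_a=\gamma_a^{-1}\virtual{\varphi}_b$ for $a\in I_0$, $b\in\phi^{-1}(a)$, and since $\iota$ and $\virtual{\iota}$ are classical crystal isomorphisms the same holds for $v_B$. So the only new content concerns the affine node. Here the inputs are: (i) on both $B^{r,s}$ and $\virtual{B}^{r,s}$ the affine operators are $e_0=\sigma\circ e_1\circ\sigma$, $f_0=\sigma\circ f_1\circ\sigma$ (respectively $\virtual{e}_0=\virtual{\sigma}\circ\virtual{e}_1\circ\virtual{\sigma}$, $\virtual{f}_0=\virtual{\sigma}\circ\virtual{f}_1\circ\virtual{\sigma}$), with $\sigma$ the crystal involution induced by the diagram automorphism interchanging $0$ and $1$ --- this follows from Equation~\eqref{eq:affine_operators_diagram_map} and Theorem~\ref{thm:affine_rc_typeD} after transporting through the (affine) isomorphisms $\iota$, $\virtual{\iota}$, which intertwine $\sigma_{rc}$ with $\sigma$; and (ii) Proposition~\ref{prop:virtual_sigma}, which, pushed through $\iota$ and $\virtual{\iota}$, gives the crucial commutation $v_B\circ\sigma=\virtual{\sigma}\circ v_B$.

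It then remains to match scaling factors. For both embeddings $B_n^{(1)}\hookrightarrow D_{n+1}^{(1)}$ and $A_{2n-1}^{(2)}\hookrightarrow D_{n+1}^{(1)}$ we have $\phi^{-1}(0)=\{0\}$, $\phi^{-1}(1)=\{1\}$ and $\gamma_0=\gamma_1=:\gamma$ (namely $\gamma=2$ and $\gamma=1$, respectively), so by Definition~\ref{dfn:virtual_crystal} $e^v_0=\virtual{e}_0^{\,\gamma}$ and $e^v_1=\virtual{e}_1^{\,\gamma}$; using that $\virtual{\sigma}$ is an involution, $\virtual{\sigma}\circ e^v_1\circ\virtual{\sigma}=(\virtual{\sigma}\circ\virtual{e}_1\circ\virtual{\sigma})^{\gamma}=\virtual{e}_0^{\,\gamma}=e^v_0$, and likewise for $f$. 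Therefore
\[
v_B\circ e_0=v_B\circ\sigma\circ e_1\circ\sigma=\virtual{\sigma}\circ(v_B\circ e_1)\circ\sigma=\virtual{\sigma}\circ e^v_1\circ\virtual{\sigma}\circ v_B=e^v_0\circ v_B ,
\]
and symmetrically $v_B\circ f_0=f^v_0\circ v_B$. Finally one checks $\varepsilon_0=\gamma^{-1}\virtual{\varepsilon}_0$ and $\varphi_0=\gamma^{-1}\virtual{\varphi}_0$ on $V$ from $\varepsilon_0=\varepsilon_1\circ\sigma$, $\virtual{\varepsilon}_0=\virtual{\varepsilon}_1\circ\virtual{\sigma}$, $v_B\circ\sigma=\virtual{\sigma}\circ v_B$ and $\gamma_0=\gamma_1$; hence $(V,\virtual{B}^{r,s})$ is a virtual crystal and $v_B$ a $U_q^{\prime}(\g)$-crystal isomorphism onto it, which is the claimed virtualization of $B^{r,s}$ in $B^{r,\gamma_r s}$ of type $D_{n+1}^{(1)}$.

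I expect the real obstacle to be the identity $v_B\circ\sigma=\virtual{\sigma}\circ v_B$, i.e.\ that virtualization commutes with the Dynkin-diagram involution, rather than the bookkeeping above --- and this is exactly what the $\pm$-diagram machinery is for: since $\sigma_{rc}$ is defined only indirectly (pass to a $\{2,\dots,n\}$-highest weight element, apply the bijection $\zeta_{rc}$ with $\pm$-diagrams of Proposition~\ref{prop:rc_pm_bijection}, apply the involution $\mathfrak{S}$, come back), one must know virtualization commutes with each of these steps, which is precisely Lemmas~\ref{lemma:virtual_pm_diagrams} and~\ref{lemma:virtual_pm_involution} feeding into Proposition~\ref{prop:virtual_sigma}.
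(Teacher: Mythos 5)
Your proposal is correct and follows essentially the same route as the paper: the paper's proof is a one-line citation of Equation~\eqref{eq:affine_operators_diagram_map}, Proposition~\ref{prop:virtual_sigma}, and Theorem~\ref{thm:virtual_highest_weight}, and your argument is exactly the expansion of those three ingredients (classical virtualization, conjugation of the affine operators by $\sigma$, and commutation of $v$ with $\sigma$ via the $\pm$-diagram machinery), together with the correct bookkeeping $\gamma_0=\gamma_1$ and $\phi^{-1}(0)=\{0\}$, $\phi^{-1}(1)=\{1\}$. You also correctly identify that $v_B\circ\sigma=\virtual{\sigma}\circ v_B$ is the real content and that it is supplied by Lemmas~\ref{lemma:virtual_pm_diagrams} and~\ref{lemma:virtual_pm_involution} feeding into Proposition~\ref{prop:virtual_sigma}.
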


\begin{proof}
This follows from Equation~\eqref{eq:affine_operators_diagram_map}, Proposition~\ref{prop:virtual_sigma}, and Theorem~\ref{thm:virtual_highest_weight}.
\end{proof}

\begin{remark}
Theorem~\ref{thm:virtual_KR} implies that the doubling map for $B^{n,s}$ in type $B_n^{(1)}$ into type $A_{2n-1}^{(2)}$ can be extended to a virtualization 
map given by Equation~\eqref{eq:doubling_map} for any $B^{r,s}$ with $r < n$ into $B^{r,\gamma_r s}$ in type $A_{2n-1}^{(2)}$ with 
$\gamma_r = 2$ for all $r < n$ and $\gamma_n = 1$. This can be seen by the composition of virtualization maps
\[
	B_n^{(1)} \xrightarrow[\hspace{30pt}]{v} D_{n+1}^{(1)} \xrightarrow[\hspace{30pt}]{v^{-1}} A_{2n-1}^{(2)}.
\]
\end{remark}

We note that the result of this section cannot be easily extended to types $\g = C_n^{(1)}, D_{n+1}^{(2)}, A_{2n}^{(2)}$ because the 
construction of $e_0$ and $f_0$ in type $\g$ as given in~\cite[Sec.~4]{FOS09} use a different virtual construction than the one
discussed here, and this other virtual construction is not well-behaved with respect to rigged configurations because the folding does not 
preserve the affine node $0$.

\subsection{Extension to $r = n$}

Recall that in type $D_{n+1}$, we can represent $B(\clfw_n) \otimes B(\clfw_{n+1})$ as a usual KN column of height $n$. From the construction of the ambient Kleber tree for $r = n$ in the proof of Lemma~\ref{lemma:hw_typeA2odd}, we know that $B^{n,s} \otimes B^{n+1,s}$ of type $D_{n+1}^{(1)}$ has a classical decomposition
given by removing vertical dominoes from an $n \times s$ rectangle, analogous to the usual type $D_{n+1}^{(1)}$ case of $r<n$. We note that 
there is also an extension of Proposition~\ref{prop:pm_diagram_classification} to spin columns in type $D_{n+1}^{(1)}$.

Thus we can define an affine crystal $\widetilde{B}^{n,s}$ of type $D_{n+1}^{(1)}$ by having a classical decomposition 
$\bigoplus_{\lambda} B(\lambda)$, where $\lambda$ is obtained by removing vertical dominoes from an $n \times s$ rectangle 
and the affine structure by Equation~\ref{eq:affine_operators_diagram_map}. From this definition and the preceeding paragraph, we have 
$\widetilde{B}^{n,s} \iso B^{n,s} \otimes B^{n+1,s}$ as classical crystals. Moreover, we can define a filling map 
$\fillmap \colon \widetilde{B}^{n,s} \to \widetilde{T}^{n,s}$ as in the usual type $D_{n+1}^{(1)}$ case and can extend 
Theorem~\ref{thm:isomorphism} to this case as well by extending $\Phi$ in a natural way by using $\widetilde{\delta}^{(n)}$, see~\cite{S05}.

Additionally, we can extend the virtualization map $v \colon B^{n,s} \to \widetilde{B}^{n,s}$ from type $A_{2n-1}^{(2)}$ to type $D_{n+1}^{(1)}$
as the identity map (on rigged configurations, it is almost the identity except for $\nu^{(n)} = \virtual{\nu}^{(n)} = \virtual{\nu}^{(n+1)}$). From 
this we can see that Conjecture~\ref{conj:KR_virtualization} in type $A_{2n-1}^{(2)}$ for $B^{n,s}$ is equivalent to the following conjecture.

\begin{conj}
\label{conj:KR_typeD}
Let $\g$ be of type $D_{n+1}^{(1)}$. We have $\widetilde{B}^{n,s} \iso B^{n,s} \otimes B^{n+1,s}$ as affine crystals.
\end{conj}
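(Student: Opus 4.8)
The plan is to start from the classical crystal isomorphism $\Theta \colon \widetilde{B}^{n,s} \to B^{n,s} \otimes B^{n+1,s}$ that is already available: both sides have the classical ($D_{n+1}$) decomposition obtained by removing vertical dominoes from an $n \times s$ rectangle, and this decomposition is multiplicity free, so $\Theta$ is the unique weight-preserving classical isomorphism. Since $\Theta$ preserves the classical weight and both crystals carry level $0$ affine weights, $\Theta$ preserves the affine weight $\wt = c_0 \Lambda_0 + \overline{\wt}$ automatically; hence it suffices to show that $\Theta$ commutes with $e_0$ (equivalently with $f_0$). By the very definition of $\widetilde{B}^{n,s}$ we have $e_0 = \sigma_{rc} \circ e_1 \circ \sigma_{rc}$, where $\sigma_{rc}$ is the crystal involution realizing the Dynkin diagram automorphism $\tau$ interchanging nodes $0$ and $1$, built from the $\pm$-diagram involution $\mathfrak{S}$ via (the extension to height-$n$ columns of $D_{n+1}^{(1)}$ of) Definition~\ref{def:promotion_type_D}. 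So everything reduces to transporting this involution across $\Theta$.

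First I would record that on the spin KR crystals $B^{n,s}$ and $B^{n+1,s}$ of type $D_{n+1}^{(1)}$ the affine operators are likewise given by $e_0 = \sigma \circ e_1 \circ \sigma$ for a crystal involution $\sigma$ realizing the same automorphism $\tau$; this is part of the uniform $\pm$-diagram description of the type $D$ KR crystals in~\cite{FOS09}. Denote these involutions $\sigma_n$ and $\sigma_{n+1}$. Next I would check the general fact that the tensor product of two crystal isomorphisms realizing a Dynkin automorphism again realizes it: the tensor product rule refers only to $\varepsilon_i,\varphi_i,e_i,f_i$, each of which is transported by $\tau$ on the factors, so $\sigma_\otimes := \sigma_n \otimes \sigma_{n+1}$ is a crystal isomorphism $B^{n,s}\otimes B^{n+1,s} \to (B^{n,s}\otimes B^{n+1,s})^{\tau}$. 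Combining the two points gives $e_0 = \sigma_\otimes \circ e_1 \circ \sigma_\otimes$ on $B^{n,s}\otimes B^{n+1,s}$ as well.

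The heart of the argument is then the identity $\Theta \circ \sigma_{rc} = \sigma_\otimes \circ \Theta$. Since $\sigma_{rc}$, $\sigma_\otimes$, and $\Theta$ all commute with $e_a$ and $f_a$ for $a \in \{2,\dotsc,n+1\}$ (the first two realize an automorphism fixing those nodes, the third is a classical morphism), and since both crystals are generated from their $\{2,\dotsc,n+1\}$-highest weight elements by $f_2,\dotsc,f_{n+1}$, it is enough to verify the identity on $\{2,\dotsc,n+1\}$-highest weight elements. On $\widetilde{B}^{n,s}$ these are indexed by $\pm$-diagrams $P$ with $\sigma_{rc}$ acting as $P \mapsto \mathfrak{S}(P)$; on $B^{n,s}\otimes B^{n+1,s}$ the $\{2,\dotsc,n+1\}$-highest weight elements of $B(s\clfw_n)\otimes B(s\clfw_{n+1})$ are again indexed by $\pm$-diagrams through the spin-column extension of $\zeta$, and one must show that $\sigma_n \otimes \sigma_{n+1}$ acts on them by interchanging $c_+(h) \leftrightarrow c_-(h)$ and $c_\bullet(h-2) \leftrightarrow c_\pm(h)$, i.e.\ by $\mathfrak{S}$. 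I would do this column by column, tracking how each $+$, $-$, $\pm$, or empty height-$n$ column of $P$ distributes across the two spin factors under the tensor product rule and applying the single-column description of $\sigma$ on each factor. Granting this, $\Theta e_0 \Theta^{-1} = (\Theta \sigma_{rc}\Theta^{-1})(\Theta e_1 \Theta^{-1})(\Theta \sigma_{rc}\Theta^{-1}) = \sigma_\otimes e_1 \sigma_\otimes = e_0$, and symmetrically for $f_0$, so $\Theta$ is an affine crystal isomorphism, proving the conjecture (and hence, by the equivalence noted before the statement, Conjecture~\ref{conj:KR_virtualization} for $B^{n,s}$ in type $A_{2n-1}^{(2)}$).

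I expect the main obstacle to be precisely the last combinatorial identification: a clean description of the $\{2,\dotsc,n+1\}$-highest weight elements of $B^{n,s}\otimes B^{n+1,s}$ as $\pm$-diagrams and a proof that the single-column involutions $\sigma_n,\sigma_{n+1}$, when pushed through the tensor product rule, assemble to $\mathfrak{S}$ — the two spin factors interact, so this is not simply a column-wise statement inside one factor. A secondary technical point is setting up the spin-column extensions of Proposition~\ref{prop:pm_diagram_classification}, of $\zeta_{rc}$, and of Definition~\ref{def:promotion_type_D} for columns of height $n$ in $D_{n+1}^{(1)}$ and checking they satisfy the crystal axioms; an efficient route is to derive these from the already-established chain of virtualizations $B_n^{(1)} \hookrightarrow A_{2n-1}^{(2)} \hookrightarrow D_{n+1}^{(1)}$ together with Proposition~\ref{prop:rc_pm_doubling_commute}, reducing the spin bookkeeping for $\widetilde{B}^{n,s}$ to the non-spin case already handled in this section.
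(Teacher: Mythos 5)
The paper does not prove this statement: it is stated as Conjecture~\ref{conj:KR_typeD}, is noted to be equivalent to Conjecture~\ref{conj:KR_virtualization} for $B^{n,s}$ in type $A_{2n-1}^{(2)}$, and only the case $s=1$ is known (from~\cite[Thm.~3.3]{S05}). So there is no proof in the paper to compare yours against; the question is whether your argument closes the gap, and it does not.

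Your formal reduction is sound: the classical isomorphism $\Theta$ exists and is unique by multiplicity-freeness, level-zero affine weights are determined by classical weights, $e_0=\sigma\circ e_1\circ\sigma$ holds on both sides (granting the spin-column extension of the $\pm$-diagram machinery from~\cite{FOS09}), and the identity $\Theta\circ\sigma_{rc}=\sigma_\otimes\circ\Theta$ need only be checked on $\{2,\dotsc,n+1\}$-highest weight elements. But the entire mathematical content of the conjecture is concentrated in exactly that check, and you defer it (``Granting this\dots''). Concretely: a $\{2,\dotsc,n+1\}$-highest weight element of $B(s\clfw_n)\otimes B(s\clfw_{n+1})$ is a pair $b_2\otimes b_1$ in which $b_2$ is generally \emph{not} $\{2,\dotsc,n+1\}$-highest weight in its own factor, so computing $\sigma_n(b_2)\otimes\sigma_{n+1}(b_1)$ requires controlling $\sigma_n$ on arbitrary elements of the spin crystal, not just on those indexed by single-column $\pm$-diagrams; one must then re-encode the result as a $\pm$-diagram for the tensor product and match it against $\mathfrak{S}(P)$. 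No mechanism for doing this is supplied, and as you yourself observe it is not a column-wise statement inside one factor because the two factors interact through the tensor product rule. Until that identification is established, what you have is a correct strategy together with a restatement of the difficulty, not a proof; for $s=1$ this is precisely the computation carried out in~\cite{S05}.
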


This conjecture was proven for $s = 1$ in~\cite[Thm.~3.3]{S05}.

\section{The virtualization map and $\Phi$}
\label{sec:single_factors}

In this section, we show that the virtualization map commutes with the bijection $\Phi$ on highest weight elements of a single
tensor factor for $\g$ of non-exceptional affine type. In addition, for type $A_{2n-1}^{(2)}$ we prove that the virtualization
map in general (multiple tensor factors and not necessarily highest weight) commutes with $\Phi$.

\subsection{Single tensor factors}

In this subsection $\g$ is of non-exceptional type. Recall that $B^{r,s} \cong T^{r,s}$ are related by the filling map and are 
isomorphic as crystals. Hence the virtualization maps on $B^{r,s}$ of Section~\ref{subsection.virtual crystals} can be lifted to $T^{r,s}$.
We define the crystal morphism $v \colon T^{r,s} \to \virtual{T}^{r,s}$ by sending $u_{\lambda} \in B(\lambda) \subseteq T^{r,s}$ to 
$u_{\Psi(\lambda)} \in B(\Psi(\lambda)) \subseteq \virtual{T}^{r,s}$ and extending as a virtual classical crystal. It is not a priori 
clear that $B(\Psi(\lambda))$ is indeed a component in $\virtual{T}^{r,s}$, so it needs to be shown that $v$ is well-defined.

\begin{lemma}
\label{lemma:virtualization_KR_crystals}
The map $v$ is well-defined and virtualizes $T^{r,s}$ in $\virtual{T}^{r,s}$ as a classical virtual crystal.
\end{lemma}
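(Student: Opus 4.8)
The plan is to establish well-definedness by reducing everything to the rigged configuration side, where we already have a complete understanding of the virtualization map. Recall that by Theorem~\ref{thm:isomorphism} we have classical crystal isomorphisms $\iota \colon B^{r,s} \to \RC(B^{r,s})$ and $\fillmap \circ \iota^{-1} = \Phi$ on highest weight elements, and under $\iota$ the KR tableaux model $T^{r,s}$ is identified with $\RC(B^{r,s})$ (as classical crystals, and as sets). Similarly $\virtual{T}^{r,s}$ is identified with $\RC(\virtual{L}^{r,s})$ in the ambient type via $\virtual{\iota}$. So the statement we must prove, that $v \colon T^{r,s} \to \virtual{T}^{r,s}$ sending $u_\lambda \mapsto u_{\Psi(\lambda)}$ and extended by $f_a \mapsto f_a^v$ is a well-defined virtual classical crystal morphism, is equivalent to the corresponding statement about the rigged configuration virtualization map $v \colon \RC(L^{r,s}) \to \RC(\virtual{L}^{r,s})$ of Equations~\eqref{eq:rc_virtualization_map} and~\eqref{eq:rc_virtualization_map_A2dual}.

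First I would observe that Lemma~\ref{lemma:rc_virtual_crystal} already shows that $\RC(L;\lambda)$ of type $\g_0$ virtualizes in $\RC(\virtual{L};\Psi(\lambda))$, and by the decomposition~\eqref{equation.RC decomp} (and its analogues for each type proved in Section~\ref{sec:filling_map}) we have $\RC(L^{r,s}) = \bigoplus_\lambda \RC(L^{r,s};\lambda)$. Since virtual crystals are closed under direct sums (stated just before Proposition~\ref{prop:virtual_tensor}), $\RC(L^{r,s})$ virtualizes in $\bigoplus_\lambda \RC(\virtual{L}^{r,s};\Psi(\lambda))$. The key point that needs checking is that each $\RC(\virtual{L}^{r,s};\Psi(\lambda))$ is indeed a (nonempty) classical component appearing in $\RC(\virtual{L}^{r,s})$ in the ambient type — equivalently, that $\Psi(\lambda)$ appears in the classical decomposition of $\virtual{B}^{r,s}$. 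This is where the explicit descriptions of the highest weight rigged configurations from the lemmas of Section~\ref{sec:filling_map} (and the ambient Kleber tree computations therein, e.g. Lemma~\ref{lemma:hw_typeC}, Lemma~\ref{lemma:hw_typeA2odd}, etc.) do the work: in each type, the highest weight rigged configuration in $\RC(B^{r,s};\lambda)$ was obtained precisely by devirtualizing a selected node of the ambient Kleber tree, so its image under $v$ is, by construction, the corresponding ambient highest weight rigged configuration, which lies in $\RC(\virtual{L}^{r,s};\Psi(\lambda))$. Thus $v(u_\lambda)$, transported to the ambient rigged configuration side via $\virtual{\iota}$, is exactly $u_{\Psi(\lambda)} \in B(\Psi(\lambda)) \subseteq \virtual{T}^{r,s}$, so the assignment on highest weight elements is consistent, and extending by $f_a \mapsto f_a^v$ is forced and well-defined because (by Proposition~\ref{prop:crystal_ops_commute_virtual}) $v$ on rigged configurations already intertwines $f_a$ with $f_a^v$. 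The map is a crystal morphism (conditions~\eqref{mor:1}--\eqref{mor:3}) and a virtual classical crystal by Lemma~\ref{lemma:rc_virtual_crystal} applied componentwise, together with Proposition~\ref{prop:virtual_tensor} (extended to classical types) if one wishes to phrase $\virtual{T}^{r,s}$ as a tensor product in the $A_{2n}^{(2)}, A_{2n}^{(2)\dagger}$ cases where $\virtual{B}^{r,s} = B^{n,s}\otimes B^{n,s}$.

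The main obstacle I anticipate is the bookkeeping across the several virtual embeddings, particularly the types $A_{2n}^{(2)}$ and $A_{2n}^{(2)\dagger}$ at $r=n$, where $\virtual{B}^{n,s} = B^{n,s} \otimes B^{n,s}$ rather than a single KR factor, and the types $B_n^{(1)}$ at $r = n$ where a doubling map intervenes; in each of these one must confirm that the relevant ``doubled/tensored'' ambient object has the right classical decomposition to contain $B(\Psi(\lambda))$, and that the scaling factors $\widetilde\gamma_a$ used for rigged configurations are the ones that make $v$ match the crystal-theoretic virtualization of Section~\ref{subsection.virtual crystals}. All of these compatibilities, however, have already been recorded: Lemma~\ref{lemma:hw_typeA2even_dual}, Lemma~\ref{lemma:hw_typeA2_even}, Lemma~\ref{lemma:hw_typeB_spin}, and the associated propositions give the decompositions, and Proposition~\ref{prop:crystal_ops_commute_virtual} handles the operators. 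So the proof amounts to assembling these pieces and invoking Theorem~\ref{thm:isomorphism} to pass between $T^{r,s}$ and $\RC(B^{r,s})$.
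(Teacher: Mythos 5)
Your proof is correct in substance but takes a genuinely different route from the paper's. The paper splits according to the ambient type: when $\virtual{\g}$ is of type $D_{n+1}^{(1)}$ with $r<n$ it simply invokes Theorem~\ref{thm:virtual_KR} (the affine virtualization already established via $\pm$-diagrams), for $r=n$ it cites the Kleber tree computations of Lemmas~\ref{lemma:hw_typeA2odd} and~\ref{lemma:hw_typeB_spin}, and when $\virtual{\g}$ is of type $A_{2n-1}^{(1)}$ it argues directly on the tableaux side: multiplicity-freeness of the tensor product of two rectangles together with the ambient Kleber tree shows that $B(\Psi(\lambda))$ occurs (uniquely) in $\virtual{T}^{r,s}$, and Theorem~\ref{thm:virtual_highest_weight} then supplies the virtualization of each classical component, with no detour through rigged configurations. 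You instead route everything through $\RC$, using Lemma~\ref{lemma:rc_virtual_crystal}, Proposition~\ref{prop:crystal_ops_commute_virtual}, and the decomposition lemmas of Section~\ref{sec:filling_map}; this buys uniformity across types and reuses machinery already proved. The one step you should make explicit is your identification of $\virtual{T}^{r,s}$ with $\RC(\virtual{B}^{r,s})$ via $\virtual{\iota}$: Theorem~\ref{thm:isomorphism} provides $\iota$ only for a single KR factor, so in the cases where $\virtual{B}^{r,s}$ is a genuine tensor product (ambient type $A_{2n-1}^{(1)}$, and the $r=n$ cases with ambient $D_{n+1}^{(1)}$) you need to separately justify that the ambient rigged configurations and the ambient tensor product have the same (multiplicity-free) classical decomposition --- in type $A_{2n-1}^{(1)}$ this is available from \cite{KSS02,DS06} and \cite{Stembridge01}, and for $B^{n,s}\otimes B^{n+1,s}$ in type $D_{n+1}^{(1)}$ from the decomposition recorded in Section~\ref{sec:affine_structure}. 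With that point spelled out, your argument goes through and yields the same conclusion.
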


\begin{proof}
When $\virtual{\g}$ is of type $D_{n+1}^{(1)}$ with $r < n$, the claim follows from Theorem~\ref{thm:virtual_KR}. For $r = n$, this follows 
from the proofs of Lemma~\ref{lemma:hw_typeA2odd} and Lemma~\ref{lemma:hw_typeB_spin}.
 
Now assume that $\virtual{\g}$ is of type $A_{2n-1}^{(1)}$. We note that the decomposition of a KR crystal of non-exceptional type 
into classical crystals is multiplicity free. The tensor product of two rectangles in type $A_{2n-1}^{(1)}$ is multiplicity free~\cite{Stembridge01} 
(we have also shown this in the proof of Lemma~\ref{lemma:hw_typeC} during the construction of the ambient Kleber tree). From the construction of the ambient Kleber tree in the proof of 
Lemma~\ref{lemma:hw_typeC}, we have shown that for every shape $\lambda$ in a $r \times s$ rectangle, the crystal $B(\lambda)$ virtualizes 
into the decomposition of $T^{r,s} \otimes T^{2n-r,s}$. Thus there exists a unique classical crystal $B(\Psi(\lambda)) \subseteq \virtual{T}^{r,s}$ 
corresponding to $B(\lambda) \subseteq T^{r,s}$. Hence the map $v$ is well-defined. That $T^{r,s}$ virtualizes (as a classical crystal) in 
$\virtual{T}^{r,s}$ under $v$ follows from Theorem~\ref{thm:virtual_highest_weight}.
\end{proof}

Consider a weight $\lambda = \sum_{i \in I_0} k_i \clfw_i$ of type $\g$. Suppose $\virtual{\g}$ is of type $D_{n+1}^{(1)}$. For $T^{r,s}$ with 
$r < n$, the corresponding classically highest weight element is $u_{\Psi(\lambda)} \in \virtual{T}^{r,s} = T^{r,\gamma_r s}$ given in 
Section~\ref{sec:filling_map}. For $T^{n,s}$, the classically highest weight element $u_{\Psi(\lambda)} \in \virtual{T}^{n,s} = T^{n,s} \otimes T^{n+1,s}$ 
is given by filling the right tableau by trivial columns of $\column{1, \dotsc, n, n+1}$ and the left tableau with 
$\column{1, \dotsc, k, \overline{n+1}, \dotsc, \overline{k+1}}$, where $k$ is the height of the corresponding column in $\lambda$.
Now suppose $\virtual{\g}$ is of type $A_{2n-1}^{(1)}$. For $T^{r,s}$ with $r < n$, the classically highest weight element 
$u_{\Psi(\lambda)} \in \virtual{T}^{r,s} = T^{r,s} \otimes T^{2n-r,s}$ is given by filling the right tableau with trivial columns of 
$\column{1, 2, \dotsc, 2n-r}$ and the left tableau with $\column{1, \dotsc, k, 2n-r+1, \dotsc, 2n-k}$, where $k$ is the height of the corresponding 
column in $\lambda$. For $r = n$ in types $A_{2n}^{(2)}$ and $A_{2n}^{(2)\dagger}$, the image $u_{\Psi(\lambda)} \in \virtual{T}^{n,s}$ is 
the same as above. For $r = n$ in types $C_n^{(1)}$ and $D_{n+1}^{(2)}$, the corresponding 
$u_{\Psi(\lambda)} \in \virtual{T}^{n,s} = T^{n, \gamma_n s}$ is the tableau with trivial columns $\column{1, 2, \dotsc, n}$.

Now we can prove the main result of this section.

\begin{thm}
\label{thm:bij_virtual_hw}
Consider a single Kirillov-Reshetikhin crystal $B^{r,s}$. The virtualization map $v$ commutes with the bijection $\Phi$ on highest weight elements.
\end{thm}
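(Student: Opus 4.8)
The plan is to reduce the theorem to statements already established in the paper, exploiting that both sides ($\Phi$ and $v$) are built up from the same elementary maps ($\delta$, $\ls$, $\lt$, the doubling/halving maps). First I would set up the diagram: on rigged configurations we have the virtualization map $v_{rc} \colon \RC(B^{r,s}) \to \RC(\virtual{B}^{r,s})$ of Equations~\eqref{eq:rc_virtualization_map}--\eqref{eq:rc_virtualization_map_A2dual}, on KR tableaux we have $v \colon T^{r,s} \to \virtual{T}^{r,s}$ which is well-defined by Lemma~\ref{lemma:virtualization_KR_crystals}, and the two are tied together by the natural classical crystal isomorphisms $\iota$ and $\virtual{\iota}$. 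Since $\iota$ is multiplicity-free and weight preserving, and since $v_{rc}$ sends $\hwRC(B^{r,s};\lambda)$ to $\hwRC(\virtual{B}^{r,s};\Psi(\lambda))$ by Lemma~\ref{lemma:rc_virtual_crystal}, we automatically get $\virtual{\iota} \circ v_{rc} = v \circ \iota$ on highest weight elements (both sides are the unique classically highest weight rigged configuration / tableau of the appropriate weight). So the content of the theorem is that $\Phi$ commutes with $v_{rc}$ on highest weight elements, equivalently (via Theorem~\ref{thm:isomorphism}, $\Phi = \fillmap \circ \iota^{-1}$) that $v \circ \fillmap = \virtual{\fillmap} \circ v$ on highest weight tableaux.

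Next I would establish this case by case according to the ambient type $\virtual{\g}$, matching the organization of Section~\ref{sec:filling_map}. The key computational input is that for each non-exceptional $\g$ the explicit highest weight rigged configuration is given by one of Lemmas~\ref{lemma:hw_typeD}, \ref{lemma:hw_typeC}, \ref{lemma:hw_typeA2odd}, \ref{lemma:hw_typeB}, \ref{lemma:hw_typeA2_even}, \ref{lemma:hw_typeA2even_dual}, \ref{lemma:hw_typeD_twisted}, and the corresponding lemma in the ambient simply-laced type is its image under $v_{rc}$ (this is exactly what the proofs of those lemmas verify when they construct the virtual Kleber tree). Given this, I would compare the images of $\fillmap$ directly: for the $A_{2n-1}^{(1)}$ ambient cases (types $C_n^{(1)}$, $A_{2n}^{(2)}$, $A_{2n}^{(2)\dagger}$, $D_{n+1}^{(2)}$), the virtual highest weight tableau $u_{\Psi(\lambda)} \in \virtual{T}^{r,s} = T^{r,s}\otimes T^{2n-r,s}$ (with the explicit left/right column description recalled just before the theorem statement) should coincide with $v(\fillmap(u_\lambda))$ where $v$ pads each column of $\fillmap(u_\lambda)$ with its "complementary" entries to height $2n-r$ in the right factor. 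For the $D_{n+1}^{(1)}$ ambient cases (types $B_n^{(1)}$, $A_{2n-1}^{(2)}$), the virtualization map on rigged configurations is at worst the identity with $\nu^{(n)} = \virtual{\nu}^{(n)} = \virtual{\nu}^{(n+1)}$ (plus doubling of $\nu^{(n)}$ for $B_n^{(1)}$), and here Propositions~\ref{prop:filling_A2odd} and the $B_n^{(1)}$ propositions already show $\Phi = \fillmap \circ \iota^{-1}$ in both $\g$ and $\virtual{\g}$ with compatible $\delta$'s, so commutation follows by inspection of the shared $\fillmap$ of Definition~\ref{def:fill_D}/\ref{def:fill_C}. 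For the spinor cases $r = n$ I would use Proposition~\ref{prop:rc_pm_doubling_commute} and Lemma~\ref{lemma:double_pm_diagrams} to reduce to the non-spinor computation, as done in the affine-structure section.

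The main obstacle I anticipate is the $A_{2n-1}^{(1)}$ ambient case with $c > -1$ in the filling algorithm: that is, when $\lambda$ has an odd column and $\fillmap$ produces the nontrivial rightmost column of step~(\ref{step:final}) (for $C_n^{(1)}$, $A_{2n}^{(2)\dagger}$) or the $\emptyset$-padded column of Definition~\ref{def:fill_A2even} (for $A_{2n}^{(2)}$, $D_{n+1}^{(2)}$). In those cases $v$ must be checked to send this special column to the matching pair of columns in $T^{r,s}\otimes T^{2n-r,s}$, and the "$\emptyset$" entries in type $A_{2n}^{(2)}$ interact with the $B^{n,s}\otimes B^{n,s}$-style virtual embedding (rather than $B^{n,2s}$) in a way that requires care with the modified scaling factor $\widetilde{\gamma}_n$. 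I would handle this by pushing everything back to rigged configurations, where $v_{rc}$ is explicit and the highest weight rigged configuration has all riggings and vacancy numbers zero, so that $\Phi$ in the ambient type applies $\delta$ to a transparent configuration; comparing the sequence of boxes removed by $\delta$ in $\g$ versus $\virtual{\g}$ (exactly as in the proofs of Propositions~\ref{prop:crystal_iso_C} and~\ref{prop:morphism_A2even}) then finishes the argument. Once all ambient-type cases are dispatched, the theorem follows from Theorem~\ref{thm:isomorphism} and Lemma~\ref{lemma:virtualization_KR_crystals}.
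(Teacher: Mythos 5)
Your proposal is correct and follows essentially the same route as the paper: reduce to the explicit all-zero-rigged highest weight configurations of Section~\ref{sec:filling_map}, dispose of the cases where the ambient crystal is a single factor by weight and multiplicity-freeness considerations, and track $\delta$ explicitly on the virtual rigged configuration when the ambient crystal is a tensor product (the $A_{2n-1}^{(1)}$ and spinor cases). The only slight misdirection is the $r=n$, $\virtual{\g} = D_{n+1}^{(1)}$ case, where the paper tracks $\delta^{(n)}$ and $\widetilde{\delta}^{(n)}$ on the rigged configuration directly rather than invoking the $\pm$-diagram doubling machinery of the affine-structure section, but your stated fallback of pushing everything back to rigged configurations and comparing the boxes removed by $\delta$ covers exactly this.
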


\begin{proof}
We consider a highest weight $\lambda$ in the classical decomposition of $T^{r,s} \iso B^{r,s}$. The corresponding type $\g$ rigged configuration is 
generally $\nu^{(a)} = \overline{\lambda}$ for all $r \leq a < n$ and $\nu^{(a)} = \overline{\lambda}^{[r-a]}$ for all $a < r$ (recall that $\overline{\lambda}$ 
is the complement of $\lambda$ in an $r \times s$ box and $\lambda^{[i]}$ denotes $\lambda$ with the first $i$ rows removed) with all riggings 
and vacancy numbers are $0$ from the results in Section~\ref{sec:filling_map}. Let $k$ be the largest index such that 
$\inner{\alpha_k^{\vee}}{\lambda} \neq 0$ (i.e., the height of $\lambda$).

\case{$\virtual{\g}$ of type $D_{n+1}^{(1)}$ and $r < n$}

\noindent
Note that we double $\lambda$ under the virtualization map if $\gamma_r = 2$. Hence by weight considerations (recall $\Phi$ is a bijection 
on classical highest weight elements in $T^{r,s}$ 
and the classical decomposition is multiplicity free) and the fact that the virtual rigged configuration corresponds to the highest weight in $\virtual{\g}$ 
for $\Psi(\lambda)$, the bijection $\Phi$ must commute with $v$ (on classically highest weight elements).

\case{$\virtual{\g}$ of type $D_{n+1}^{(1)}$ and $r = n$}

\noindent
We note that in type $B_n^{(1)}$, the spinor lifts to the type $A_{2n-1}^{(2)}$ case (albeit with $T^{n,2s}$). Thus without loss of generality, 
assume $\g$ is of type $A_{2n-1}^{(2)}$. We begin by splitting off the leftmost column, which increases the vacancy numbers of rows 
smaller than $2\lambda_1$ in $\nu^{(n-1)}$. Therefore when we apply the doubling map, it keeps the strings of length smaller than 
$2\lambda_1$ non-singular. Thus $\delta^{(n)}$
(recall $\delta^{(n)}$ and $\widetilde{\delta}^{(n)}$ were defined as slightly modified versions of $\delta$ in Section~\ref{sec:background})
selects the row corresponding to $2\lambda_1$ and must terminate at $k$ since 
$\nu^{(k)}$ does not have any rows of length $2\lambda_1$, 
and thus $\delta$ returns $\overline{k+1}$. Next applying $\widetilde{\delta}^{(n)}$ 
selects $2\lambda_1$ from $\nu^{(n+1)}$, skips $\nu^{(n)}$, and proceeds down until $\nu^{(k+1)}$ and returns a $\overline{k+2}$.

Now we are applying the usual $\delta$ where we select a string of length $2 \lambda_1 - 1$ from $\nu^{(n-1)}, \nu^{(n)}, \nu^{(n+1)}$, and then 
another string of length $2 \lambda_1 - 1$ down until $\nu^{(k+2)}$ and returns a $\overline{k+3}$. A similar process holds for each of the 
remaining $k-3$ rows of length $2\lambda_1 - 1$. At this point, all the strings in $\nu^{(k)}$ are not singular, thus $\delta$ returns $k$. This 
process repeats until we remove the entire column. A similar process occurs for the remaining columns in the left factor until it is completely 
removed. Once we are at the right factor of $T^{n,s}$, we have the empty rigged configuration. Therefore $\Phi$ returns the letter $a$ for each 
entry at height $a$ and we have the desired filling.

\case{$\virtual{\g}$ of type $A_{2n-1}^{(1)}$}

\noindent
To see that the image under $v$ of the KR tableaux corresponds to the virtual rigged configuration, we first split off the leftmost column. This 
increases vacancy numbers of rows smaller than $\lambda_1$ in $\nu^{(r)}$. Therefore $\delta$ selects the row corresponding to $\lambda_1$ 
and must terminate at $2n-k$ since $\nu^{(2n-k)}$ does not have any rows of length $\lambda_1$ since $k$ corresponds to the number of 
rightmost columns.
A similar procedure occurs except using $k^{\prime} = k+1$ and repeating until all rows of length $\lambda_1$ are removed. 
At this point, all the strings in $\nu^{(k)}$ are not singular, thus $\delta$ returns $k$. This process repeats until we remove the entire column. 
We then repeat this for the next column, and a similar situation holds. This process is repeated until the left factor is removed, and we are 
left with the empty rigged configuration. Hence the right tableau must by filled by $\column{1, \dotsc, 2n-k}$.
\end{proof}

\subsection{General case}

Now we consider the general case, where we start by giving an extended version of~\cite[Conj.~7.2]{OSS03III}.

\begin{conj}
\label{conj:virtual_bijection}
Let $\g$ be of affine type and $B = \bigotimes_{i=1}^N B^{r_i, s_i}$ with virtualization map $v$ into type $\virtual{\g}$. Then we have
\[
v \circ \Phi = \virtual{\Phi} \circ v.
\]
\end{conj}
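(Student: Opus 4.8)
The plan is to reduce Conjecture~\ref{conj:virtual_bijection} to statements already established in the paper, proceeding factor by factor through the recursive definition of $\Phi$. Recall that $\Phi$ is built from the elementary maps $\delta$ (equivalently $\delta' = \delta \circ \lt$), the left-splitting map $\ls$, and, in the spinor cases, the doubling/halving maps together with $\delta^{(r)}$ and $\widetilde\delta^{(r)}$. Since the virtualization map $v$ on $\RC(B)$ is given explicitly by Equations~\eqref{eq:rc_virtualization_map} and~\eqref{eq:rc_virtualization_map_A2dual}, and since $\virtual\Phi$ is the composite of the \emph{ambient} elementary maps applied to $\RC(\virtual B)$, it suffices to show that $v$ intertwines each elementary step with its ambient counterpart. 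The proof is then a structural induction on the total size of $B$ (measured, say, by $\sum_i r_i s_i$), where at each stage we peel off one letter via $\delta'$ from the leftmost factor of $B$ (respectively the appropriate $\virtual{r}_i$ copies of a letter from the leftmost factor(s) of $\virtual B$), and invoke the inductive hypothesis on the remainder.

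The key steps, in order, are as follows. First I would record the precise relationship between the left-splitting and left-box-splitting maps and $v$: since $\ls$ is the identity on rigged configurations (only the vacancy numbers change, cf.\ Corollary~\ref{cor:ls_lt_morphisms}) and $\lt$ adds length-$1$ singular strings to $\nu^{(a)}$ for $a<r$, a direct check against the definitions of $v$ shows $v \circ \ls = \virtual\ls \circ v$ and $v \circ \lt = \virtual\lt{}^{\,|\phi^{-1}(\cdot)|,\gamma} \circ v$ in the appropriate sense (adding $|\phi^{-1}(a)|$ singular strings, or one string scaled by $\gamma_a$, to each ambient rigged partition in $\phi^{-1}(a)$, which is exactly what preserves membership in $V$). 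Second, and this is the crux, I would show that $\delta$ commutes with $v$ up to the virtual return value: when we run the $\delta$ algorithm on $v(\nu,J)$ in the ambient type, the sequence of selected singular strings is precisely the image under $v$ of the strings selected by $\delta$ on $(\nu,J)$, and the ambient algorithm, run $\gamma$ times (once on each partition in each orbit), returns the letter $v(b) \in \virtual B^{1,1}$ corresponding under the virtualization of the vector representation to the letter $b$ returned by $\delta$. This uses Equation~\eqref{eq:vacancy_virtualization} (so that a string is singular iff its image is), Lemma~\ref{lemma:act_same_string} and the compatibility analysis in the proof of Proposition~\ref{prop:crystal_ops_commute_virtual} (so that the $\gamma$-fold ambient removal stays inside $V$), and the explicit descriptions of $\delta$ in types $A_{2n-1}^{(1)}$ and $D_{n+1}^{(1)}$ from~\cite{OSS03} together with the folding. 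The spinor cases ($r=n$ in $B_n^{(1)}$, $r=n-1,n$ in $D_n^{(1)}$, and $r=n$ in $A_{2n}^{(2)}$, $A_{2n}^{(2)\dagger}$) require the extra observation that the doubling map $d_{rc}$ and $\delta^{(r)}$, $\widetilde\delta^{(r)}$ are themselves compatible with $v$, for which one uses Proposition~\ref{prop:rc_pm_doubling_commute}, Theorem~\ref{thm:virtual_KR}, and the reformulation of the $B_n^{(1)}\to A_{2n-1}^{(2)}$ doubling as a virtualization with $\gamma_r = 2-\delta_{rn}$ noted after Theorem~\ref{thm:virtual_KR}.

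The main obstacle I anticipate is the bookkeeping in the second step: verifying that the $\delta$-algorithm's choices of minimal singular strings in consecutive rigged partitions $\nu^{(a)}, \nu^{(a+1)}, \dots$ are matched, under $v$, by the ambient algorithm's choices in $\virtual\nu^{(b)}$ for $b \in \phi^{-1}(a)$, particularly at the "turnaround" nodes of the folding (the branch point of $D_{n+1}^{(1)}$, or the two ends of $A_{2n-1}^{(1)}$ that get identified), where the length thresholds $\ell^{(a)}$ versus $\overline\ell^{(a)}$ interact with the scaling factors $\gamma_a$ and with the distinction between $\widetilde\gamma_a$ and $\gamma_a$ in types $A_{2n}^{(2)}$, $A_{2n}^{(2)\dagger}$. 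Here the safest route is to argue uniformly via the already-proven classical statement (Theorem~\ref{thm:bij_virtual_hw} gives $v \circ \Phi = \virtual\Phi \circ v$ on highest weight elements for a single factor) combined with the fact, from Theorem~\ref{thm:hw_crystals_nsl} and Lemma~\ref{lemma:rc_virtual_crystal}, that $v$ is a classical crystal morphism: for type $A_{2n-1}^{(2)}$ one additionally checks that $\delta$ commutes with the classical operators $e_a, f_a$ on both sides (using Sakamoto's type $D_n^{(1)}$ result~\cite{Sakamoto13} in the ambient type, restricted via the folding), so that commutativity on highest weight elements of each tensor factor propagates to the whole connected component; since tensor products of KR crystals are connected~\cite{Kashiwara.2002}, $v \circ \Phi = \virtual\Phi \circ v$ on all of $\RC(B)$. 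I would carry this last reduction out only for $A_{2n-1}^{(2)}$, as the conjecture's general (non-highest-weight, multi-factor) claim is only asserted there in the excerpt, and state the highest-weight/single-factor case (Theorem~\ref{thm:bij_virtual_hw}) as the content for the remaining types.
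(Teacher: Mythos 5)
First, note that the statement you were asked to prove is labelled as a conjecture, and the paper does not prove it in full generality: it establishes only the single-factor case on classically highest weight elements (Theorem~\ref{thm:bij_virtual_hw}) and the full statement for type $A_{2n-1}^{(2)}$ (Theorem~\ref{thm:virtual_bijection_A2odd}). You correctly recognize this and aim at exactly those special cases, and your overall strategy --- decompose $\Phi$ into $\ls$, $\lt$, $\delta'$ (plus the doubling/halving maps and $\delta^{(r)}$, $\widetilde{\delta}^{(r)}$ in the spinor cases) and show that $v$ intertwines each elementary step with its ambient counterpart --- is the same shape as the paper's argument. The difference is in how the crux is discharged: for $A_{2n-1}^{(2)}$ the paper simply quotes from~\cite{SS2006} that $\virtual{\delta} \circ v = v \circ \delta$ and $v(b) = \virtual{b}$, observes that $v$ is essentially the identity on both tableaux and rigged configurations in this type (so $\ls$ and $\lt$ trivially commute with $v$), and concludes from the definition of $\Phi$ as a composite. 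Your primary route --- tracking the singular strings selected by $\delta$ through the folding --- amounts to re-deriving that cited fact; it is the right idea, but it is precisely the bookkeeping you flag as the main obstacle, and you do not carry it out.

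Your fallback route for the general $A_{2n-1}^{(2)}$ case has a genuine gap. You propose to propagate commutativity from classically highest weight elements to entire classical components using the fact that $\Phi$ and $v$ commute with the classical operators $e_a, f_a$. But the base case you invoke, Theorem~\ref{thm:bij_virtual_hw}, covers only a single tensor factor, whereas the propagation requires $v \circ \Phi = \virtual{\Phi} \circ v$ on classically highest weight elements of arbitrary tensor products $\bigotimes_{i=1}^N B^{r_i,s_i}$; that base case is established nowhere in the paper and does not follow from the single-factor statement, because $\delta$ acts on the whole rigged configuration and its output for the leftmost factor depends on all of $(\nu,J)$, not just the part attributable to that factor. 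The appeal to connectedness of tensor products of KR crystals as affine crystals~\cite{Kashiwara.2002} does not repair this: your propagation uses only classical operators, so it reaches only the classical components of the highest weight elements you already control. Finally, using the classical-crystal-isomorphism property of $\Phi$ in type $A_{2n-1}^{(2)}$ as an ingredient risks circularity, since the paper obtains that property as a consequence of Theorem~\ref{thm:virtual_bijection_A2odd} (though it can alternatively be extracted from~\cite{Sakamoto13} via the observation that the $A_{2n-1}^{(2)}$ and $D_{n+1}^{(1)}$ algorithms are essentially identical); even granting it independently, the missing multi-factor highest weight base case remains.
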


Conjecture~\ref{conj:virtual_bijection} was shown for $\bigotimes_{i=1}^N B^{r_i,1}$ in types $C_n^{(1)}$, $D_{n+1}^{(2)}$, and 
$A_{2n}^{(2)}$ in~\cite{OSS03III} and for $\bigotimes_{i=1}^N B^{1,s_i}$ in all non-exceptional affine types in~\cite{SS2006}. 
Also Theorem~\ref{thm:bij_virtual_hw} is Conjecture~\ref{conj:virtual_bijection} for classically highest weight elements for 
$B = B^{r,s}$ of non-exceptional affine type. We show that this reduces Conjecture~\ref{conj:crystal_isomorphism} to showing 
it holds in simply-laced types.

\begin{prop}
Let $\g$ be of affine type. Suppose Conjecture~\ref{conj:virtual_bijection} holds and Conjecture~\ref{conj:crystal_isomorphism} 
holds in type $\virtual{\g}$, then Conjecture~\ref{conj:crystal_isomorphism} holds in type $\g$.
\end{prop}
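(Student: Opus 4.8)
The plan is to transfer the affine crystal isomorphism statement through the virtualization functor, using that virtual crystals form a tensor category (Proposition~\ref{prop:virtual_tensor}) and that $\Phi$ is already known to preserve weights. First I would fix $B = \bigotimes_{i=1}^N B^{r_i,s_i}$ of type $\g$ and let $\virtual{B}$ be its virtualization in type $\virtual{\g}$, with virtualization map $v \colon B \to \virtual{B}$ and likewise $v \colon \RC(L) \to \RC(\virtual{L})$ (Lemma~\ref{lemma:rc_virtual_crystal}, extended to tensor products by Proposition~\ref{prop:virtual_tensor} applied to the individual factors via Conjecture~\ref{conj:KR_virtualization}, which we are assuming via Conjecture~\ref{conj:virtual_bijection}'s hypotheses). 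The key diagram to contemplate is the square with vertices $\RC(L)$, $B$, $\RC(\virtual{L})$, $\virtual{B}$, whose horizontal arrows are $\Phi$ and $\virtual{\Phi}$ and whose vertical arrows are the two virtualization maps; Conjecture~\ref{conj:virtual_bijection} says this square commutes.

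Next I would argue that $\Phi$ is a bijection. Since $v$ on the $\RC$ side and $v$ on the crystal side are both injective (being crystal embeddings, by Lemma~\ref{lemma:rc_virtual_crystal} on one side and Theorem~\ref{thm:virtual_highest_weight}/the tensor-product closure on the other), and $\virtual{\Phi}$ is a bijection by the hypothesis that Conjecture~\ref{conj:crystal_isomorphism} holds in type $\virtual{\g}$, the commuting square $v \circ \Phi = \virtual{\Phi} \circ v$ forces $\Phi$ to be injective; surjectivity follows because $\RC(L)$ and $B$ have the same (finite) cardinality — both decompose into the same classical components with the same multiplicities by the analysis of Section~\ref{sec:virtual_rc} and Section~\ref{sec:filling_map}. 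Hence $\Phi$ is a bijection. Then I would show $\Phi$ commutes with the classical crystal operators $e_a, f_a$ for $a \in I_0$: by Proposition~\ref{prop:crystal_ops_commute_virtual} we have $v \circ f_a = f^v_a \circ v$ on rigged configurations, $v$ on crystals intertwines $f_a$ with $f^v_a = \prod_{b \in \phi^{-1}(a)} \virtual{f}_b^{\gamma_a}$ by definition of the virtual crystal, and $\virtual{\Phi}$ commutes with each $\virtual{f}_b$ by the simply-laced hypothesis; chasing these around the square and using injectivity of $v$ yields $\Phi \circ f_a = f_a \circ \Phi$ (and similarly for $e_a$). The affine operators $e_0, f_0$ are handled the same way, now using that $f_0^v = \prod_{b \in \phi^{-1}(0)} \virtual{f}_b^{\gamma_0}$ and that $\virtual{\Phi}$ commutes with the affine operators of $\virtual{\g}$; again injectivity of $v$ transports commutativity down. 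Since $\Phi$ already preserves weights (it is a classical crystal isomorphism by the established results), a bijection commuting with all $e_a, f_a$ for $a \in I$ is precisely an affine crystal isomorphism, which is Conjecture~\ref{conj:crystal_isomorphism} for $\g$.

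The main obstacle is the $e_0, f_0$ step: the whole argument rests on $f_0^v$ being expressible as a product of the ambient $\virtual{f}_b$'s in a way compatible with both the $\RC$-side and the $B$-side virtualizations, and — as the remark at the end of Section~\ref{sec:affine_structure} warns — for types such as $C_n^{(1)}, D_{n+1}^{(2)}, A_{2n}^{(2)}$ the construction of $e_0, f_0$ on $B^{r,s}$ in~\cite{FOS09} uses a folding that does \emph{not} fix the affine node, so the naive virtual formula for $f_0$ need not match the $\RC$-side virtualization. One must therefore be careful to phrase the hypothesis so that the affine virtualization used on $B$ is the \emph{same} one that Conjecture~\ref{conj:virtual_bijection} refers to — i.e. Conjecture~\ref{conj:virtual_bijection} is assumed precisely for the virtualization that also works on rigged configurations — after which the square-chase goes through uniformly. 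Everything else (injectivity transport, cardinality count, intertwining of classical operators) is routine diagram-chasing given the results already proved.
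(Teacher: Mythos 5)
Your square-chase for the classical operators is exactly the paper's proof: from $v \circ \Phi = \virtual{\Phi} \circ v$ (Conjecture~\ref{conj:virtual_bijection}), $v \circ f_a = f_a^v \circ v$ (Proposition~\ref{prop:crystal_ops_commute_virtual}), and $\virtual{\Phi} \circ f_a^v = f_a^v \circ \virtual{\Phi}$ (the hypothesis in type $\virtual{\g}$), one gets $v \circ \Phi \circ f_a = v \circ f_a \circ \Phi$ and cancels the injective $v$. The one genuine divergence is your final step: the paper never chases the square for $e_0, f_0$. It instead invokes the reduction stated just after Conjecture~\ref{conj:crystal_isomorphism} — since $\Phi$ preserves weights, it suffices that it be a bijection commuting with the \emph{classical} operators $e_a, f_a$, $a \in I_0$ — so the obstacle you (correctly) identify, namely that for types such as $C_n^{(1)}$, $D_{n+1}^{(2)}$, $A_{2n}^{(2)}$ the affine operators of~\cite{FOS09} are built from a folding that does not fix the node $0$ and hence need not match the rigged-configuration virtualization, simply never arises. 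Your instinct that this step is the weak point is right, and the paper's route is precisely the way to avoid it; as written, your direct $f_0$ argument would need the extra (unproved) compatibility you describe. Your explicit bijectivity argument (injectivity transported through the square, surjectivity by matching classical decompositions) is sound and fills in a point the paper leaves implicit.
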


\begin{proof}
Let $V^*$ be the set of classically highest weight rigged configurations in $\RC(\virtual{B})$ that satisfy Equation~\eqref{eq:rc_virtualization_map}. 
From Equation~\eqref{eq:vacancy_virtualization}, it is easy to see that $V^*$ is in bijection with $\RC^*(B)$. We have
\[
v \circ \Phi \circ f_a = \virtual{\Phi} \circ v \circ f_a = \virtual{\Phi} \circ f_a^v \circ v
= f_a^v \circ \virtual{\Phi} \circ v = f_a^v \circ v \circ \Phi = v \circ f_a \circ \Phi,
\]
so $f_a \circ \Phi = \Phi \circ f_a$ and similarly for $e_a$. Therefore $\Phi$ is a crystal isomorphism.
\end{proof}

\begin{thm}
\label{thm:virtual_bijection_A2odd}
Conjecture~\ref{conj:virtual_bijection} holds for $\g$ of type $A_{2n-1}^{(2)}$.
\end{thm}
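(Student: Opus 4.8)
The plan is to reduce the statement for type $A_{2n-1}^{(2)}$ to the already-known type $B_n^{(1)}$ case of Conjecture~\ref{conj:virtual_bijection}, or more precisely, to the commutativity of $\Phi$ with the analogous virtualization into type $D_{n+1}^{(1)}$. First I would recall that for $\g$ of type $A_{2n-1}^{(2)}$ the virtualization map $v$ into $\virtual{\g} = D_{n+1}^{(1)}$ has all scaling factors $\gamma_a = 1$ and the folding $\phi$ identifies the nodes $n$ and $n+1$ (so $\absval{\phi^{-1}(a)} = 1$ for $a \neq n$ and $\absval{\phi^{-1}(n)} = 2$), and that on rigged configurations $v$ is essentially the identity except it duplicates the $n$-th rigged partition: $\virtual{\nu}^{(a)} = \nu^{(a)}$ for $a < n$ and $\virtual{\nu}^{(n)} = \virtual{\nu}^{(n+1)} = \nu^{(n)}$ (with the corresponding duplication of the multiplicity array $\virtual{L}$). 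On the crystal side, $\virtual{B} = \bigotimes_{i=1}^N \virtual{B}^{r_i,s_i}$ where $\virtual{B}^{r,s} = B^{r,s}$ of type $D_{n+1}^{(1)}$ for $r < n$ and $\virtual{B}^{n,s} = B^{n,s} \otimes B^{n+1,s}$ of type $D_{n+1}^{(1)}$; this is Conjecture~\ref{conj:KR_virtualization}, which for types $B_n^{(1)}$ and $A_{2n-1}^{(2)}$ with $r < n$ is now Theorem~\ref{thm:virtual_KR}, and for $r = n$ reduces to Conjecture~\ref{conj:KR_typeD}, known for $s = 1$ by~\cite{S05}.

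Next I would set up the inductive structure that $\Phi$ itself uses: $\Phi$ is built from the single-column map $\delta$ (and its modifications $\delta^{(r)}$, $\widetilde{\delta}^{(r)}$ for spinor columns) together with the splitting maps $\ls$ and $\lt$. The key lemma to establish is that each of these building-block maps commutes with $v$. For $\ls$ and $\lt$ this is essentially immediate since both are (almost) the identity on rigged configurations and since Corollary~\ref{cor:ls_lt_morphisms} tells us they are strict crystal embeddings; one checks that the corresponding operations on KR tableaux (splitting off the leftmost column / top box) are intertwined by $v$ using the explicit description of $v$ on $T^{r,s}$ from Lemma~\ref{lemma:virtualization_KR_crystals}. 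The heart of the argument is showing $v \circ \delta = \virtual{\delta}_D \circ v$ where $\virtual{\delta}_D$ is the $D_{n+1}^{(1)}$ version of $\delta$ (this is the content already exploited in the proof of Proposition~\ref{prop:filling_A2odd} at the level of highest weight elements): the point is that $\virtual{\delta}_D$ selects the same singular string in $\virtual{\nu}^{(n)}$ and in $\virtual{\nu}^{(n+1)}$, and the vacancy numbers $\virtual{p}_i^{(a)} = p_i^{(a)}$ transform trivially under $v$ by Equation~\eqref{eq:vacancy_virtualization} with $\gamma_a = 1$, so the algorithm for $\delta$ in type $A_{2n-1}^{(2)}$ traces through exactly the same sequence of strings (with the $n$-th and $(n+1)$-th steps of $\virtual{\delta}_D$ collapsing to the single $n$-th step of $\delta$), returns matching letters, and produces matching rigged partitions. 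For the spinor column $r = n$, I would use $\delta^{(n)}$ and $\widetilde{\delta}^{(n)}$ and the doubling map as in Theorem~\ref{thm:bij_virtual_hw}, again checking the selected strings match after duplication of $\nu^{(n)}$.

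Then I would assemble: since $v$ commutes with $\ls$, $\lt$, $\delta$, $\delta^{(n)}$, $\widetilde{\delta}^{(n)}$, and the doubling/halving maps, and since $\Phi$ (respectively $\virtual{\Phi}$) is by definition a fixed composition of these on $B$ (respectively $\virtual{B}$) with the factor structure respected by the definition of $\virtual{B}$ as a tensor product of the corresponding $\virtual{B}^{r_i,s_i}$, an induction on the total number of boxes $\sum r_i s_i$ gives $v \circ \Phi = \virtual{\Phi} \circ v$. I would phrase the induction so that at each stage we strip off one column (or one box, or one doubled spinor column) from the leftmost factor, apply the commutativity of the relevant building block, and invoke the inductive hypothesis on the smaller tensor product. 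The main obstacle I anticipate is the bookkeeping at the spinor node $r = n$ in type $A_{2n-1}^{(2)}$: one must verify carefully that the interplay of the doubling map (needed because our $\nu^{(n)}$ convention differs from~\cite{OSS03}, see Remark~\ref{rem:rc_convention}), the modified maps $\delta^{(n)}$ and $\widetilde{\delta}^{(n)}$, and the duplication $\virtual{\nu}^{(n)} = \virtual{\nu}^{(n+1)}$ all cohere — i.e., that the string selected by $\widetilde{\delta}^{(n)}$ in $\virtual{\nu}^{(n+1)}$ is genuinely forced to match the one selected by $\delta^{(n)}$ in $\virtual{\nu}^{(n)}$ because they are literally the same rigged partition before $\delta$ acts. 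A secondary subtlety is that Conjecture~\ref{conj:virtual_bijection} is only needed here for $A_{2n-1}^{(2)}$, whose ambient type $D_{n+1}^{(1)}$ has $\Phi$ proven to be well-defined for single columns and single rows only (\cite{SS2006,S05}); so one should be careful to state the theorem for those cases where $\virtual{\Phi}$ is known to be defined, or more precisely to note that the commutativity is an identity of partially-defined maps that holds wherever both sides are defined, which suffices for the application in the preceding proposition.
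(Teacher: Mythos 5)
Your proposal is correct and follows essentially the same route as the paper: both reduce the statement to the commutativity of $v$ with the building blocks $\delta$, $\ls$, and $\lt$ of $\Phi$, using that $v$ is the identity on tableaux and essentially the identity on rigged configurations (duplicating $\nu^{(n)}$ into $\virtual{\nu}^{(n)} = \virtual{\nu}^{(n+1)}$). The only difference is that the paper simply cites~\cite{SS2006} for $\virtual{\delta} \circ v = v \circ \delta$ and $v(b) = \virtual{b}$, whereas you sketch a direct proof of that step; your added care about the spinor column and about where $\virtual{\Phi}$ is known to be defined is reasonable but not needed beyond what the citation supplies.
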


\begin{proof}
Let $b, \virtual{b}$ be the elements returned under $\delta, \virtual{\delta}$, respectively. It was shown in~\cite{SS2006} that 
$\virtual{\delta} \circ v = v \circ \delta$ and $v(b) = \virtual{b}$. Since $v$ is the identity map on tableaux and is essentially the identity 
map on rigged configurations (recall that $\nu^{(n)} \mapsto \virtual{\nu}^{(n)} = \virtual{\nu}^{(n+1)}$), we have $\virtual{\ls} \circ v = v \circ \ls$ 
and $\virtual{\lt} \circ v = v \circ \lt$ on both $\RC(B)$ and $B$. Therefore by the definition of $\Phi$, we have
\[
	\virtual{\Phi} \circ v = v \circ \Phi.
\]
\end{proof}

Therefore Conjecture~\ref{conj:crystal_isomorphism} holds for type $A_{2n-1}^{(2)}$ any time $\Phi$ is known to be a bijection 
in type $D_n^{(1)}$. Alternatively, recall that the algorithm $\Phi$ for the $A_{2n-1}^{(2)}$ case is essentially identical to the type $D_n^{(1)}$ case. Thus the property that $\Phi$ is a classical crystal isomorphism is an immediate consequence of~\cite{Sakamoto13}. If we use the argument after Conjecture~\ref{conj:crystal_isomorphism}, we are not necessarily to use Theorem~\ref{thm:virtual_bijection_A2odd}.

\appendix
\section{Proof of Theorem~\ref{thm:virtual_highest_weight}}
\label{appendix:extension}

Theorem~\ref{thm:virtual_highest_weight} was proved by Baker~\cite{baker2000} when $\virtual{\g}_0$ is of type
$A_{2n-1}$. We provide details for the other cases here. Our proof follows the proof of Baker~\cite{baker2000}, in that we show that 
$B(\clfw_a)$ virtualizes in $B\bigl(\sum_{b \in \phi^{-1}(a)} \gamma_a \clfw_b \bigr)$ and then use Proposition~\ref{prop:virtual_tensor} 
to extend this to general shapes $\lambda$.

\begin{table}[t]
\[
\begin{array}{|c|c|c|c|c|}\hline
\g & \virtual{B}^{1,1} & \virtual{B}^{2,1} & \virtual{B}^{3,1} & \virtual{B}^{4,1}
\\ \hline
E_6^{(2)} & B^{2,1} & B^{4,1} & B^{3,1} \otimes B^{5,1} & B^{1,1} \otimes B^{6,1}
\\ \hline
F_4^{(1)} & B^{2,2} & B^{4,2} & B^{3,1} \otimes B^{5,1} & B^{1,1} \otimes B^{6,1}
\\ \hline
G_2^{(1)} & B^{1,1} \otimes B^{3,1} \otimes B^{4,1} & B^{2,3} & &
\\ \hline
D_4^{(3)} & B^{2,1} & B^{1,1} \otimes B^{3,1} \otimes B^{4,1} & &
\\ \hline
\end{array}
\]
\caption{Virtualizations given in Proposition~\ref{prop:single_column_fg}.}
\label{table:single_column_fg}
\end{table}

\begin{prop}
\label{prop:single_column_fg}
Consider one of the foldings
\begin{align*}
E_6^{(2)}, F_4^{(1)} & \lhook\joinrel\longrightarrow E_6^{(1)},
\\ G_2^{(1)}, D_4^{(3)} & \lhook\joinrel\longrightarrow D_4^{(1)}.
\end{align*}
Unless $a=2$ and $\g$ is of type $F_4^{(1)}$, $B^{a,1}$ virtualizes in 
$\virtual{B}^{a,1} = \bigotimes_{b \in \phi^{-1}(a)} B^{b,\gamma_a}$ as affine crystals.
\end{prop}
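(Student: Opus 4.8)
The plan is to argue folding by folding and, within each folding, node by node, using the ambient crystals recorded in Table~\ref{table:single_column_fg}; throughout, $\phi$ denotes the relevant diagram folding and $(\gamma_a)$ the scaling factors, and since each of the foldings in the statement fixes the affine node we have $\phi^{-1}(0)=\{0\}$. In each case I would produce the candidate virtualization map $v\colon B^{a,1}\to\virtual{B}^{a,1}$ by setting $v(u_\lambda)=u_{\Psi(\lambda)}$ on the unique classically highest weight element $u_\lambda$ of each classical component $B(\lambda)\subseteq B^{a,1}$ and extending by $v\circ f_c=f^v_c\circ v$ for $c\in I_0$, and then verify that $v$ is a $U_q^{\prime}(\g)$-crystal isomorphism onto a virtual crystal $(V,\virtual{B}^{a,1})$ in the sense of Definition~\ref{dfn:virtual_crystal}.

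For the classical content there are two things to establish. First, that $B(\Psi(\lambda))$ really is a classical component of $\virtual{B}^{a,1}=\bigotimes_{b\in\phi^{-1}(a)}B^{b,\gamma_a}$: this I would read off from the (virtual) Kleber algorithm in the ambient type, exactly as in the computations of Section~\ref{sec:typeD_fill} and Lemma~\ref{lemma:hw_typeC}, using that the single-column classical decompositions entering here are multiplicity free. Second, that $B(\lambda)$ virtualizes in $B(\Psi(\lambda))$: for the fundamental weights $\lambda=\clfw_a$ this is the Baker-style base case, proved by matching $B(\clfw_a)$ --- realized as the top classical component of $B^{a,1}$ --- with the top component $B\bigl(\sum_{b\in\phi^{-1}(a)}\gamma_a\clfw_b\bigr)=B(\Psi(\clfw_a))$ of $\virtual{B}^{a,1}$ through the explicit crystal models, and for the remaining $\lambda$ it follows from Proposition~\ref{prop:virtual_tensor}. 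Since the relevant decompositions are multiplicity free, matching components this way pins down $v$ uniquely and makes it a classical crystal isomorphism onto its image; this classical statement (for $\lambda=\clfw_a$) is precisely the ingredient the rest of the appendix feeds, together with Proposition~\ref{prop:virtual_tensor}, into the proof of Theorem~\ref{thm:virtual_highest_weight} in the exceptional cases.

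It then remains to verify compatibility with the affine operators, $v\circ e_0=e^v_0\circ v$ and $v\circ f_0=f^v_0\circ v$, where $e^v_0=\virtual{e}_0^{\;\gamma_0}$ and $f^v_0=\virtual{f}_0^{\;\gamma_0}$ since $\phi^{-1}(0)=\{0\}$. I would do this by invoking the explicit description of $e_0,f_0$ on $B^{a,1}$ and on the simply-laced crystal $\virtual{B}^{a,1}$ (coming from the ambient KR crystal structure, and in the relevant cases from the order-$\gamma_0$ Dynkin diagram automorphism of $\virtual{\g}$), reducing the identity to a finite check on $\{1,\dots,n\}$-highest weight elements with the key compatibility being $\Psi(\alpha_0)=\gamma_0\virtual{\alpha}_0$. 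Together with Proposition~\ref{prop:virtual_tensor} and weight-preservation this yields the affine virtualization. This last step is the genuinely delicate one: there is no local-axioms shortcut for $e_0,f_0$, so the check must be carried out with the combinatorial model, and for one or two of these (small) crystals a direct machine verification is the cleanest option. The case $a=2$, $\g=F_4^{(1)}$ is set aside because there $\virtual{B}^{2,1}=B^{4,2}$ and the single-column, multiplicity-free framework used above no longer applies, so a separate argument is needed.
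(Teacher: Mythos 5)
Your proposal reaches a valid proof, but it is organized quite differently from the paper's. The paper's entire proof of this proposition is a single direct machine verification: starting from a node in the image of $v$ (typically the unique node of classical weight $\Psi(\lambda)$), one applies all the virtual operators $f_i^v$ for $i \in I$ (including $i=0$) to generate the virtual crystal graph inside $\virtual{B}^{a,1}$ and compares the result with the crystal graph of $B^{a,1}$, using the models of \cite{JS10,LSSSS14,LNSSS14} (and \cite{FOS09} when the ambient factor is not a single column, e.g.\ $B^{2,3}$ of type $D_4^{(1)}$ for $G_2^{(1)}$). In particular, the paper does not separate the classical analysis from the affine one, does not invoke the virtual Kleber algorithm or Proposition~\ref{prop:virtual_tensor} here (those enter only in the final proposition of the appendix, where the classical virtualization of $B(\clfw_a)$ is obtained by \emph{restricting} the present proposition to the top classical component), and does not reduce to highest weight elements with respect to a subset of the index set. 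Your staged argument buys more structure and would localize the computation to the affine edges, but it front-loads machinery the paper deliberately postpones, and the reduction of the $e_0,f_0$-check to classically highest weight elements is not spelled out and is exactly the delicate point you yourself flag. One factual correction: the case $a=2$, $\g=F_4^{(1)}$ is excluded not because the multiplicity-free, single-column framework breaks down, but because the ambient crystal is $B^{4,2}$ of type $E_6^{(1)}$, whose crystal graph had not been computed; the paper states explicitly that the same check would go through once that graph is available.
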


\begin{proof}
This was be done by (computer) computation in Sage~\cite{sage} using the results from~\cite{JS10,LSSSS14,LNSSS14}. 
The algorithm is to start from any node in the image of $v$ (usually this is the unique node of (classical) weight $\Psi(\lambda)$), 
apply all possible $f_i^v$ to build the crystal graph of the virtual crystal inside of $B(\Psi(\lambda))$, and compare it to the crystal 
graph of $B(\lambda)$.

We note that the results from~\cite{LSSSS14,LNSSS14} only give a model for the single column KR crystal $B^{r, 1}$. In the remaining 
cases, the resulting KR crystal is not a single column (for example in type $G_2^{(1)}$, the virtualization of $B^{2,1}$ is $B^{2,3}$), 
but other models exist for these cases~\cite{FOS09, JS10}.
\end{proof}

A similar check could be made for $B^{2,1}$ in type $F_4^{(1)}$ once the crystal graph for $B^{4,2}$ in type $E_6^{(1)}$ is computed.

\begin{lemma}
\label{lemma:folding_F}
Consider the folding $F_4 \lhook\joinrel\longrightarrow E_6$. Then $B(\clfw_2)$ virtualizes in $B(2\clfw_4)$.
\end{lemma}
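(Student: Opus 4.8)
The plan is to realize $B(\clfw_2)$ as a subcrystal of $B(\clfw_1)^{\otimes 2}$ and transport the (already available) virtualization of $B(\clfw_1)$ through the tensor product via Proposition~\ref{prop:virtual_tensor}. Since node $1$ of the $F_4$ diagram $1-2\Rightarrow 3-4$ is an endpoint joined to node $2$ by a single bond, $\alpha_1 = 2\clfw_1 - \clfw_2$, that is $\clfw_2 = 2\clfw_1 - \alpha_1$. As $\varphi_1(u_{\clfw_1}) = \inner{\alpha_1^\vee}{\clfw_1} = 1$, the element $x := f_1 u_{\clfw_1} \otimes u_{\clfw_1} \in B(\clfw_1) \otimes B(\clfw_1)$ is nonzero with classical weight $(\clfw_1 - \alpha_1) + \clfw_1 = \clfw_2$; a direct check with the tensor product rule (using $\inner{\alpha_1^\vee}{\clfw_1}=1$ and that $\clfw_1-\alpha_1+\alpha_a\not\le\clfw_1$ for $a\neq 1$) gives $e_a x = 0$ for all $a \in I_0$, so $x$ is $I_0$-highest weight and generates a subcrystal $\iso B(\clfw_2)$.

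First I would extract the classical input from the $a=1$ case of Proposition~\ref{prop:single_column_fg}, which gives that $B^{1,1}$ of type $F_4^{(1)}$ virtualizes in $\virtual{B}^{1,1} = B^{2,2}$ of type $E_6^{(1)}$. Restricting the virtualization to the top classical component $B(\clfw_1) \subseteq B^{1,1}$ and using that a crystal morphism preserves $\varepsilon_a$ and $\varphi_a$, the image of the classical highest weight vector is $\virtual{e}_b$-annihilated for every $b$, hence $E_6$-highest weight of weight $\Psi(\clfw_1) = 2\virtual{\clfw}_2$ (here $\phi^{-1}(1) = \{2\}$, $\gamma_1 = 2$); so $B(\clfw_1)$ virtualizes, as a classical crystal, in $B(2\virtual{\clfw}_2)$ with $f_1 \mapsto f_1^v = \virtual{f}_2^{\,2}$. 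Write $v_1$ for this virtualization map. By Proposition~\ref{prop:virtual_tensor} the map $v := v_1 \otimes v_1$, $v(b_2 \otimes b_1) = v_1(b_2)\otimes v_1(b_1)$, virtualizes $B(\clfw_1)^{\otimes 2}$ in $B(2\virtual{\clfw}_2)^{\otimes 2}$.

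It then remains to follow $x$ through $v$: we get $v(x) = \bigl(\virtual{f}_2^{\,2}\, u_{2\virtual{\clfw}_2}\bigr) \otimes u_{2\virtual{\clfw}_2}$, which is nonzero (since $\varphi_2(u_{2\virtual{\clfw}_2}) = 2$) and has $E_6$-weight $\Psi(\clfw_2) = 2\virtual{\clfw}_4$; since $v$ is a crystal morphism and $x$ is $I_0$-highest weight, $\virtual{\varepsilon}_b(v(x)) = \gamma_a\varepsilon_a(x) = 0$ for every $b \in \phi^{-1}(a)$ and every $a \in I_0$, hence for every node $b$ of the $E_6$ diagram, so $v(x)$ is $E_6$-highest weight by regularity of $B(2\virtual{\clfw}_2)^{\otimes 2}$. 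Therefore $v(x)$ generates an $E_6$-subcrystal $\iso B(2\virtual{\clfw}_4)$, and since $v$ carries $B(\clfw_2)$ onto the subset obtained from $v(x)$ by applying products of the $\virtual{f}_b$, the image $v(B(\clfw_2))$ is contained in $B(2\virtual{\clfw}_4)$. Thus $\bigl(v(B(\clfw_2)),\, B(2\virtual{\clfw}_4)\bigr)$ is a virtual crystal and $v$ identifies $B(\clfw_2)$ with it, which is the assertion (the crystal $B(2\virtual{\clfw}_4)$ being the one denoted $B(2\clfw_4)$ in the statement). The only step needing care is the reduction from the affine statement of Proposition~\ref{prop:single_column_fg} to this classical virtualization of $B(\clfw_1)$, together with the bookkeeping of $\phi$ and the scaling factors; once that is in place everything else is formal, so I expect no genuine obstacle — which is precisely why no separate computer verification is required here.
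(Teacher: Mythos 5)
Your proof is correct, but it takes a genuinely different route from the paper's: the paper establishes Lemma~\ref{lemma:folding_F} purely by computer computation, directly building the virtual subcrystal of $B(2\virtual{\clfw}_4)$ and comparing it with the crystal graph of $B(\clfw_2)$. You instead deduce the lemma from the $a=1$, type $F_4^{(1)}$ case of Proposition~\ref{prop:single_column_fg}: restricting that affine virtualization $B^{1,1}\hookrightarrow B^{2,2}$ to the top classical component gives a classical virtualization $B(\clfw_1)\hookrightarrow B(2\virtual{\clfw}_2)$ with $f_1^v=\virtual{f}_2^{\,2}$; tensoring it with itself via Proposition~\ref{prop:virtual_tensor} and locating $B(\clfw_2)$ as the component of $B(\clfw_1)^{\otimes 2}$ generated by the highest weight element $f_1u_{\clfw_1}\otimes u_{\clfw_1}$ of weight $2\clfw_1-\alpha_1=\clfw_2$ then forces its image to lie in the $E_6$-component generated by $\virtual{f}_2^{\,2}u_{2\virtual{\clfw}_2}\otimes u_{2\virtual{\clfw}_2}$, which is $E_6$-highest of weight $\Psi(\clfw_2)=2\virtual{\clfw}_4$. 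I checked the individual steps: $e_a x=0$ for all $a\in I_0$ under the paper's tensor product rule; the scaling factors $\gamma_1=\gamma_2=2$, $\gamma_3=\gamma_4=1$ for the folding induced by $F_4^{(1)}$; and your $\phi^{-1}(1)=\{2\}$, $\phi^{-1}(2)=\{4\}$ agrees with Table~\ref{table:single_column_fg} (i.e.\ with Bourbaki labels for $E_6$, which is what the lemma's statement $B(2\clfw_4)$ presupposes). The restrict-then-tensor technique is exactly the one the paper itself employs in the closing proposition of the appendix, so no new machinery is needed and there is no circularity, since Proposition~\ref{prop:single_column_fg} excludes the $a=2$, $F_4^{(1)}$ case and its proof does not invoke this lemma. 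What your route buys is the elimination of one of the two computer verifications in this part of the appendix: the large check of the 1274-element crystal $B(\clfw_2)$ against $B(2\virtual{\clfw}_4)$ is replaced by bookkeeping on top of the much smaller, already-performed check for $B^{1,1}\hookrightarrow B^{2,2}$. What the paper's direct computation buys, in exchange, is logical independence from Proposition~\ref{prop:single_column_fg}; and, as you note, your argument is not computation-free, since it still rests on that earlier Sage verification.
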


\begin{proof}
This was done by (computer) computation using well-known models for type $F_4$ and $E_6$ crystals (for example, LS paths or 
Nakajima monomials) following the algorithm in Proposition~\ref{prop:single_column_fg}.
\end{proof}

Combining Proposition~\ref{prop:single_column_fg} and Lemma~\ref{lemma:folding_F} using the (virtual) Kleber algorithm shows that
$B^{2,1}$ of type $F_4^{(1)}$ classically virtualizes in $B^{4,2}$ of type $E_6^{(1)}$ (as opposed to as affine crystals).

\begin{lemma}
\label{lemma:folding_CD}
Consider the folding $C_n  \lhook\joinrel\longrightarrow D_{n+1}$. Then $B(\clfw_r)$ virtualizes in $B(\clfw_r)$ for all $r \neq n$ and 
$B(\clfw_n + \clfw_{n+1})$ for $r = n$.
\end{lemma}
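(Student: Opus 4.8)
The plan is to follow Baker's argument \cite{baker2000}: first establish the statement for $r=1$ (the vector representation) by an explicit elementary check, and then obtain all other $r$ — including $r=n$ — from the fact that $B(\clfw_r)$ embeds into $B(\clfw_1)^{\otimes r}$ via the column reading word, together with the closure of virtual crystals under tensor products (Proposition~\ref{prop:virtual_tensor}). Recall that the embedding $C_n \hookrightarrow D_{n+1}$ is the classical restriction of $A_{2n-1}^{(2)} \hookrightarrow D_{n+1}^{(1)}$, so its folding $\phi$ has $\phi^{-1}(a) = \{a\}$ for $a < n$ and $\phi^{-1}(n) = \{n, n+1\}$, and all scaling factors equal $1$; hence $\Psi(\clfw_r) = \virtual{\clfw}_r$ for $r<n$, $\Psi(\clfw_n) = \virtual{\clfw}_n + \virtual{\clfw}_{n+1}$, and the virtual operators of Definition~\ref{dfn:virtual_crystal} are $f^v_a = \virtual{f}_a$ for $a<n$ and $f^v_n = \virtual{f}_n\,\virtual{f}_{n+1}$ (the two factors commuting, as $n$ and $n+1$ are non-adjacent in $D_{n+1}$).

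For $r=1$, I would realize $B(\clfw_1)$ of type $C_n$ on the alphabet $\{1 < \dotsc < n < \bar{n} < \dotsc < \bar{1}\}$ and $B(\Psi(\clfw_1))$ of type $D_{n+1}$ on $\{1 < \dotsc < n < {}^{\;n+1}_{\overline{n+1}} < \bar{n} < \dotsc < \bar{1}\}$, and take $v$ to be the inclusion $i \mapsto i$, $\bar{\imath}\mapsto\bar{\imath}$, whose image $V$ omits exactly $n+1$ and $\overline{n+1}$. On this $2n$-element set one checks directly: for $a<n$ the entire $a$-string of $B(\Psi(\clfw_1))$ lies in $V$ and coincides with the $a$-string of $B(\clfw_1)$; the composite $f^v_n = \virtual{f}_n\virtual{f}_{n+1}$ sends $n$ to $\bar{n}$ (via $n \to \overline{n+1} \to \bar{n}$) and annihilates every other element of $V$, exactly as $f_n$ does for $C_n$; and $\virtual{\varepsilon}_n = \virtual{\varepsilon}_{n+1}$, $\virtual{\varphi}_n = \virtual{\varphi}_{n+1}$ on $V$, these common values being the $\varepsilon_n,\varphi_n$ of $B(\clfw_1)$. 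Together with $\wt = \Psi^{-1}\circ\virtual{\wt}$ this exhibits $\bigl(V, B(\Psi(\clfw_1))\bigr)$ as a virtual crystal isomorphic to $B(\clfw_1)$, which is the case $r=1$.

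For $r>1$, and likewise $r=n$, Proposition~\ref{prop:virtual_tensor} shows that $\bigl(V^{\otimes r}, B(\Psi(\clfw_1))^{\otimes r}\bigr)$ is a virtual crystal carrying a $U_q(C_n)$-structure isomorphic to $B(\clfw_1)^{\otimes r}$. Inside it, let $W$ be the connected component containing $v^{\otimes r}(u_{\clfw_r})$, i.e. the reading word of the column $\column{1,2,\dotsc,r}$; as a $U_q(C_n)$-crystal $W \iso B(\clfw_r)$. Since each $f^v_a$ is a composition of $D_{n+1}$-operators, $W$ lies in the $D_{n+1}$-connected component of $v^{\otimes r}(u_{\clfw_r})$; but that element is a $D_{n+1}$-highest weight vector — it is the canonical tableau of its weight, and for $a \in \{n, n+1\}$ the column $\column{1,\dotsc,r}$ contains neither $\bar{n}$ nor $n+1$, so $\virtual{e}_n = \virtual{e}_{n+1} = 0$ on it — of weight $\epsilon_1 + \dotsb + \epsilon_r = \Psi(\clfw_r)$. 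Hence that component is $B\bigl(\Psi(\clfw_r)\bigr)$, which equals $B(\clfw_r)$ for $r<n$ and $B(\clfw_n+\clfw_{n+1})$ for $r=n$, and $\bigl(W, B(\Psi(\clfw_r))\bigr)$ is the desired virtualization.

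The main obstacle I anticipate is the case $r=n$, where $B(\clfw_n)$ of type $C_n$ is a spin-type fundamental crystal and the target $B(\clfw_n + \clfw_{n+1})$ of type $D_{n+1}$ is a single component of $B(\Psi(\clfw_1))^{\otimes n}$ (realized by full-width height-$n$ columns): one must be sure the tensor-product construction lands inside precisely this component and does not scatter across the decomposition. This is exactly where the highest-weight identification of $v^{\otimes n}(u_{\clfw_n})$ and the multiplicity-one occurrence of $B(\clfw_n + \clfw_{n+1})$ in $B(\Psi(\clfw_1))^{\otimes n}$ (equivalently, in $\Lambda^n$ of the vector representation of $D_{n+1}$) enter. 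For $r < n$ there is also the more pedestrian alternative of verifying directly that a $C_n$-admissible height-$r$ Kashiwara--Nakashima column is $D_{n+1}$-admissible and that the columnwise inclusion intertwines the crystal operators, but the tensor-product argument above treats all $r$ uniformly.
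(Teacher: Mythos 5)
Your proposal is correct, but it reaches the conclusion by a different mechanism than the paper. The paper's proof is a direct verification on Kashiwara--Nakashima columns: it asserts that the virtualization map is the identity map on tableaux, notes that commutation with $f_a$ for $a\neq n$ is clear since $\gamma_a=1$, and for $a=n$ observes that $f_n^v=\virtual{f}_n\virtual{f}_{n+1}$ sends $n\mapsto\overline{n}$, so that starting from a column containing neither $n+1$ nor $\overline{n+1}$ one can never create these letters; closure of the image and the remaining axioms of Definition~\ref{dfn:virtual_crystal} are then left as a check. You instead push the appendix's general strategy one level further down: you verify the virtualization explicitly only for the vector crystal $B(\clfw_1)$, invoke Proposition~\ref{prop:virtual_tensor} to get a virtual crystal structure on $V^{\otimes r}\subseteq B(\clfw_1)^{\otimes r}$ of type $D_{n+1}$, and then identify the relevant $D_{n+1}$-component as $B(\Psi(\clfw_r))$ by checking that the image of the reading word of $\column{1,\dotsc,r}$ is a $\virtual{\g}_0$-highest weight vector of weight $\Psi(\clfw_r)$. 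The underlying observations are the same (the image avoids $n+1,\overline{n+1}$; $f_n^v$ sends $n\mapsto\overline{n}$; everything has scaling factor $1$), but your route trades the columnwise admissibility-and-operator check, which the paper leaves implicit, for the highest-weight identification of the ambient component inside the tensor power; both are legitimate, and yours is arguably the more self-contained for $r=n$, where the target $B(\clfw_n+\clfw_{n+1})$ sits with multiplicity one in $B(\clfw_1)^{\otimes n}$. One terminological slip: $B(\clfw_n)$ of type $C_n$ is not a spin crystal ($\clfw_n=\epsilon_1+\cdots+\epsilon_n$ by \eqref{eq:Croots}, realized by an ordinary height-$n$ column); this does not affect your argument.
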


\begin{proof}
Recall that in type $D_{n+1}$ that $B(\clfw_n + \clfw_{n+1})$ is represented by a single column of height $n$. We claim that the virtualization 
map $v$ in both cases is given by the identity map on tableaux. That $v$ commutes with $f_a$ for all $a \neq n$ is clear since $\gamma_a = 1$. 
For $a = n$, we have that $f_n^v = f_n f_{n+1}$ sends $n$ to $\overline{n}$. Therefore if we start with something that does not contain an 
$n+1$ or an $\overline{n+1}$, we cannot obtain an $n+1$ or $\overline{n+1}$. Hence neither $n+1$ nor $\overline{n+1}$ can appear in 
the image and all properties of Definition~\ref{dfn:virtual_crystal} can be checked.
\end{proof}

\begin{lemma}
\label{lemma:folding_BD}
Consider the folding $B_n \lhook\joinrel\longrightarrow D_{n+1}$. Then $B(\clfw_r)$ virtualizes in $B(2\clfw_r)$ for all $r \neq n$ 
and $B(\clfw_n + \clfw_{n+1})$ for $r = n$.
\end{lemma}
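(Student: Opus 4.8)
The plan is to follow the scheme indicated in the appendix and used in Lemma~\ref{lemma:folding_CD}: establish the statement for the fundamental crystals $B(\clfw_r)$ (which is exactly what is asserted), splitting into the non‑spinor range $1\le r<n$ and the spinor case $r=n$. Recall the data of the folding $B_n\hookrightarrow D_{n+1}$: $\phi^{-1}(a)=\{a\}$ with $\gamma_a=2$ for $1\le a<n$, while $\phi^{-1}(n)=\{n,n+1\}$ with $\gamma_n=1$; hence $f^v_a=\virtual{f}_a^{\,2}$ and $e^v_a=\virtual{e}_a^{\,2}$ for $a<n$, whereas $f^v_n=\virtual{f}_n\virtual{f}_{n+1}$ (the two factors commute since $n$ and $n+1$ are non‑adjacent in $D_{n+1}$), and $\Psi(\clfw_r)=2\clfw_r$ for $r<n$ while $\Psi(\clfw_n)=\clfw_n+\clfw_{n+1}$, so the two claimed target weights are exactly $\Psi(\clfw_r)$.

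For the spinor case $r=n$ I would use the full‑width column realization of $B(\clfw_n)^{B_n}$ from Remark~\ref{remark:spin_tableaux}, namely height‑$n$ columns over $\{1<\dotsc<n<\bar n<\dotsc<\bar 1\}$, and define $v$ to send such a column to the Kashiwara--Nakashima column of type $D_{n+1}$ with the same entries, so $u_{\clfw_n}=\column{1,\dotsc,n}$ goes to $u_{\clfw_n+\clfw_{n+1}}=\column{1,\dotsc,n}$. One then checks: (i) a column of $B_n$‑letters is a valid $D_{n+1}$ column of shape $\clfw_n+\clfw_{n+1}$, and the letters $n+1,\overline{n+1}$ neither appear in the image nor can be produced by $\virtual{e}_n,\virtual{f}_n,\virtual{e}_{n+1},\virtual{f}_{n+1}$ from such columns, so the image is a genuine subcrystal; (ii) for $a<n$ the operator $\virtual{f}_a$ on these columns coincides with the type‑$B_n$ operator on $B(2\clfw_n)^{B_n}$, so $f^v_a=\virtual{f}_a^{\,2}$ reproduces the spin operator $f_a$ by Remark~\ref{remark:spin_tableaux}; (iii) $f^v_n=\virtual{f}_n\virtual{f}_{n+1}$ turns the entry $n$, when present, into $\overline{n+1}$ and then into $\bar n$, which is precisely the short‑root operator on the spin crystal, and the scaled statistics $\varepsilon_a,\varphi_a$ match by a direct computation. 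This verifies all conditions of Definition~\ref{dfn:virtual_crystal}.

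For the range $1\le r<n$ I would reduce to $r=1$. The crystal $B(\clfw_r)^{B_n}$ is the connected subcrystal of $B(\clfw_1)^{B_n,\otimes r}$ generated by $u_{\clfw_r}=r\otimes(r-1)\otimes\dotsc\otimes 1$ (the reading word of the column $\column{1,\dotsc,r}$). Granting the base case that $B(\clfw_1)^{B_n}$ virtualizes in $B(2\clfw_1)^{D_{n+1}}$ via a map $v_1$ sending the letter $i$ ($1\le i\le n$) to the shape‑$(2)$ tableau with both entries $i$, Proposition~\ref{prop:virtual_tensor} gives that $B(\clfw_1)^{B_n,\otimes r}$ virtualizes in $B(2\clfw_1)^{D_{n+1},\otimes r}$, and $u_{\clfw_r}$ is sent to $v_1(r)\otimes\dotsc\otimes v_1(1)$. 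A short check shows this tensor is $\virtual{\g}_0$‑highest weight of weight $2\clfw_r=\Psi(\clfw_r)$, so the subcrystal it generates is a copy of $B(2\clfw_r)^{D_{n+1}}$; since a virtualization restricts to the subcrystal generated by any distinguished element, $B(\clfw_r)^{B_n}$ virtualizes in $B(2\clfw_r)^{D_{n+1}}$.

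The base case $r=1$ is then a direct combinatorial verification, in the spirit of Baker's argument: realizing $B(2\clfw_1)^{D_{n+1}}$ by Kashiwara--Nakashima tableaux of shape $(2)$, set $v_1(u_{\clfw_1})$ equal to the highest weight tableau and propagate via $v_1\circ f_a=f^v_a\circ v_1$; one finds that $v_1$ sends $i$ and $\bar i$ ($1\le i\le n$) to the shape‑$(2)$ tableaux with repeated entry $i$ and $\bar i$, and the zero letter to the weight‑zero tableau forced by $f^v_n=\virtual{f}_n\virtual{f}_{n+1}$, and one checks injectivity and that $\varepsilon_a(v_1(b))=\gamma_a\varepsilon^{B_n}_a(b)$ and $\varphi_a(v_1(b))=\gamma_a\varphi^{B_n}_a(b)$. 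I expect this base case — equivalently, the analogous check at node $n$ in the spinor case — to be the only genuine obstacle: it requires pinning down the type‑$D_{n+1}$ Kashiwara--Nakashima combinatorics at the fork $\{n,n+1\}$ and around the zero letter of the $B_n$ vector representation. Once that is in hand, the rest is formal, following from Proposition~\ref{prop:virtual_tensor}, Remark~\ref{remark:spin_tableaux}, and the fact that virtualizations restrict to generated subcrystals.
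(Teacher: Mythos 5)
Your treatment of the spinor case $r=n$ coincides with the paper's: both factor $v$ as the spin doubling $B(\clfw_n)\to B(2\clfw_n)$ of Remark~\ref{remark:spin_tableaux} followed by the identity on column tableaux into $B(\clfw_n+\clfw_{n+1})$, and both check the node-$n$ behaviour $n\mapsto\overline{n+1}\mapsto\overline{n}$ against $f_n^2$ on the doubled spin column. For $1\le r<n$, however, you take a genuinely different route. The paper constructs $v$ explicitly on an arbitrary height-$r$ column $t$: it partitions the letters into $t_\pm$, $K=t_+\cap t_-$, and auxiliary sets $J_0,J$ chosen maximally in the complement, writes down the two-column image in closed form, and then verifies $v\circ f_a=f_a^v\circ v$ by an exhaustive case analysis over which of these sets $a$ and $a+1$ lie in. You instead embed $B(\clfw_r)$ into $B(\clfw_1)^{\otimes r}$ via the column reading word, tensor up the $r=1$ virtualization using Proposition~\ref{prop:virtual_tensor}, observe that $v_1(r)\otimes\cdots\otimes v_1(1)$ is $D_{n+1}$-highest weight of weight $2\clfw_r$, and restrict to the generated components; this restriction step is sound (the virtual operators are words in the ambient operators, so they preserve ambient components, and the conditions of Definition~\ref{dfn:virtual_crystal} are inherited) and is in fact the same device the paper uses in the final proposition of the appendix to pass from fundamental weights to general $\lambda$. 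What your route buys is that the only hands-on verification left is the vector representation --- $2n+1$ letters, with the delicate behaviour concentrated at the fork $\{n,n+1\}$ and the zero letter --- which is far smaller than the paper's general-column case analysis and is essentially the known $B^{1,1}\hookrightarrow B^{1,2}$ virtualization of~\cite{OSS03II}; what you give up is the explicit formula for $v$ on columns, which the lemma does not require and the paper does not use elsewhere. Be aware that the $r=1$ base case remains a genuine obligation that you have only sketched, but it is a finite, uniform-in-$n$ check and nothing in it should fail.
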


\begin{proof}
Consider first $r = n$. Recall that $B(\clfw_n)$ virtualizes in $B(2\clfw_n)$ (both of these are type $B_n$ crystals) by taking 
$\gamma_a = 2$ for all $a \in I_0$ as mentioned in Remark~\ref{remark:spin_tableaux}. Thus we can represent the elements of 
$B(2\clfw_n)$ by single column tableaux, and we claim that the desired virtualization map is the composition
\[
v \colon B(\clfw_n) \xrightarrow[\hspace{30pt}]{d} B(2\clfw_n) \xrightarrow[\hspace{30pt}]{v'} B(\clfw_n + \clfw_{n+1}),
\] 
where the second map is the identity map on tableaux. For $a \neq n$, we have $f^v_a = \virtual{f}_a^2$ and so $v \circ f_a = f^v_a \circ v$
since the embedding $d$ doubles everything (in particular we apply $\virtual{f}_a$ twice) and $v'$ is the identity map. 
For $r = n$ in $B(2\clfw_n)$ of type $B_n$, the crystal operator $f_n^d = \virtual{f}_n^2$ 
sends $n \mapsto 0 \mapsto \overline{n}$ and in type $D_{n+1}$, we have $f^{v}_n = \virtual{f}_n \virtual{f}_{n+1}$ sending $n \mapsto n+1 \mapsto \overline{n}$ (alternatively going through 
$\overline{n+1}$). Therefore $v$ is the desired virtualization map.

Next assume $r < n$ and let $t$ be a single column tableau in $B(\clfw_r)$ of type $B_n^{(1)}$. 
Let $t_+ = \{a \mid a \in t, a\neq 0\}$ and $t_-=\{a \mid \overline{a} \in t, a\neq 0\}$ 
denote the set of non-zero unbarred and barred letters in $t$, respectively. Let
\begin{align*}
K & := t_+ \cap t_-, 
\\ J_0 & := \max\{ A \subseteq (t_+ \cup t_-)^c \mid \absval{A} = k_0 \},
\\ J & := \max\{ A \subseteq (t_+ \cup t_- \cup J_0)^c \mid \absval{A} = \absval{K}, \text{ and } A < K \},
\end{align*}
where $k_0$ is the number of times $0$ appears in $t$, the maxima are taken with respect to lexicographic order $<$, and 
$p^c = \{1, \dotsc, n\} \setminus p$. We note that $J$ is well-defined from the one column condition on $B_n$ tableaux 
(see~\cite{KN94,HK02}). Let $v_{\pm} := t_{\pm} \setminus K$. We claim that the image of $t$ under the virtualization map $v$ is
\[ v(t)=
\begin{array}{c|c}
\overline{t}_- & \overline{v}_- \\
J & \overline{J}_0 \\
J_0 & \overline{J} \\
v_+ & t_+
\end{array},
\]
where $\overline{L}$ denotes the set $L$ but with barred letters and reordering within each column as necessary.

Since $f_n^v=\virtual{f}_n \virtual{f}_{n+1}$, we have by similar arguments to the proof of Lemma~\ref{lemma:folding_CD} that neither 
$n+1$ nor $\overline{n+1}$ can appear in the image. Also note that $v_+, v_-, J_0, J, K$ are pairwise disjoint. From the construction, 
it is clear that the image has the correct weight. Next we let $t' = f_a t$ and $\virtual{t} = v(t)$, We also let $K'$, $J_0'$, $J'$, and $v'_{\pm}$ denote 
the above constructions with $t'$. We proceed by doing a case-by-case analysis to show that $v \circ f_a = f_a^v \circ v$. The main cases split 
according to which of these disjoint sets $a$ belongs to. Within each case, we also have to consider whether $a < n$ or $a = n$ and what set 
$a + 1$ belongs to.

\case{Case $a \in v_+$}

We split this into two subcases, when $a < n$ and when $a = n$. Note that $\overline{a} \notin t, \virtual{t}$ and
$a$ appears in both columns of $\virtual{t}$ by construction.

\case{Subcase $a < n$}

We begin by assuming $a+1 \notin J_0, J$. Now for the next two (sub)subcases, we assume that $a+1 \notin t$. If $\overline{a+1} \notin t$, 
then $t'$ differs from $t$ by replacing $a$ with $a+1$ and $f_a^v \virtual{t}$ replaces both instances of $a$ with $a+1$. Diagrammatically, 
writing only the entries that contribute to the computation of $f_a$ (i.e., all of the other entries do not change under $f_a$), we have
\[
\virtual{t} = \begin{array}{c|c}
\vdots & \vdots
\\ a & a
\\ \vdots & \vdots
\end{array}
\xrightarrow[\hspace{30pt}]{\virtual{f}_a}
\begin{array}{c|c}
\vdots & \vdots
\\ a & a+1
\\ \vdots & \vdots
\end{array}
\xrightarrow[\hspace{30pt}]{\virtual{f}_a}
\begin{array}{c|c}
\vdots & \vdots
\\ a+1 & a+1
\\ \vdots & \vdots
\end{array} = f_a^v \virtual{t} = v(t').
\]
If $\overline{a+1} \in t$, then we have
\[
\virtual{t} = \begin{array}{c|c}
\overline{a+1} & \overline{a+1}
\\ \vdots & \vdots
\\ a & a
\end{array}
\xrightarrow[\hspace{30pt}]{\virtual{f}_a}
\begin{array}{c|c}
\overline{a+1} & \overline{a+1}
\\ \vdots & \vdots
\\ a & a+1
\end{array}
\xrightarrow[\hspace{30pt}]{\virtual{f}_a}
\begin{array}{c|c}
\overline{a+1} & \overline{a}
\\ \vdots & \vdots
\\ a & a+1
\end{array} = f_a^v \virtual{t}.
\]
As before, we replaced $a$ with $a+1$ in $t'$, but now we have $a+1 \in K'$. Thus either $a \in J_0'$ or $a \in J'$ by the one column 
condition and the construction of $J_0'$ and $J'$, and hence $f_a^v \virtual{t} = v(t')$.

Now we assume $a+1 \in t$ (note that by construction $a+1 \notin J_0, J$). So if $\overline{a+1} \notin t$, then $f_a t = 0$ and $f_a^v \virtual{t} = 0$. 
Otherwise if $\overline{a+1} \in t$, then we replace $\overline{a+1} \mapsto \overline{a}$ to obtain $t'$, and we have
\[
\virtual{t} = \begin{array}{c|c}
\overline{a+1} & \vdots
\\ \vdots & a+1
\\ a & a
\end{array}
\xrightarrow[\hspace{50pt}]{f_a^v}
\begin{array}{c|c}
\overline{a} & \vdots
\\ \vdots & a+1
\\ a+1 & a
\end{array} = f_a^v \virtual{t}.
\]
Note that $\absval{K} = \absval{K'}$, and so $a \in K'$. Additionally $a+1 \in v_+'$, and hence we have $f_a^v \virtual{t} = v(t')$.

Next we consider the case when $a + 1 \in J$. Therefore we have
\[
\virtual{t} = \begin{array}{c|c}
\vdots & \overline{a+1}
\\ a + 1 & \vdots
\\ a & a
\end{array}
\xrightarrow[\hspace{50pt}]{f_a^v}
\begin{array}{c|c}
\vdots & \overline{a}
\\ a+1 & \vdots
\\ a & a + 1
\end{array} = f_a^v \virtual{t},
\]
and $f_a$ sends the (unique) $a \mapsto a+1$ to obtain $t'$. Thus since $a \notin t'_+ \cup t'_-$, we have that $a \in J'$.
Hence $v(t') = f_a^v \virtual{t}$. The case $a + 1 \in J_0$ is similar to the case $a+1\in J$.

\case{Subcase $a = n$}

By our assumption of $a \in v_+$, we note that applying $f_n$ sends the $n \mapsto 0$ to obtain $t'$. Recall that $f_n^v = \virtual{f}_n \virtual{f}_{n+1}$, and so we have
\[
\virtual{t} = \begin{array}{c|c}
\vdots & \vdots
\\ n & n
\end{array}
\xrightarrow[\hspace{50pt}]{f_n^v}
\begin{array}{c|c}
\vdots & \overline{n}
\\ n & \vdots
\end{array} = f_a^v \virtual{t}.
\]
Since $n \notin t_{\pm}'$, we must have $n \in J'_0$, and therefore we have $f_n^v \virtual{t} = v(t')$.

\case{Case $a \in v_-$}

We must have $f_a t = 0$ and $f_a^v \virtual{t} = 0$ since $a \notin t, \virtual{t}$ and any $\overline{a+1}$ (resp. $n,0$ if $a = n$) would pair with $\overline{a}$ (resp. $\overline{n}$).

\case{Case $a \in J_0$}

We recall that $a, \overline{a} \notin t$ from the construction of $J_0$. We now proceed into subcases.

\case{Subcase $a < n$}

By construction of $J_0$ and $J$, we must have that $a+1$ is in $v_+$, $v_-$, $K$, or $J_0$. So we start by assuming $a+1 \in v_+$. Then we have $f_a t = 0$ and
\[
\virtual{t} = \begin{array}{c|c}
\vdots & \overline{a}
\\ a+1 & \vdots
\\ a & a+1
\end{array},
\]
and so $f_a^v \virtual{t} = 0$.

If $a+1 \in v_-$, then $t'$ is obtained by sending the $\overline{a+1} \mapsto \overline{a}$. Thus we have
\[
\virtual{t} = \begin{array}{c|c}
\overline{a+1} & \overline{a}
\\ \vdots & \overline{a+1}
\\ a & \vdots
\end{array}
\xrightarrow[\hspace{50pt}]{f_a^v}
\begin{array}{c|c}
\overline{a} & \overline{a}
\\ \vdots & \overline{a+1}
\\ a+1 & \vdots
\end{array} = f_a^v \virtual{t}.
\]
Note that $a \in v'_-$, and we have $a+1 \in J'_0$ because $\absval{J'_0} = \absval{J_0}$ with $a+1 \in (t'_+ \cup t'_-)^c$ and $a$ was the previous maximum. Therefore $f_a^v \virtual{t} = v(t')$.

If $a+1 \in K$, then we have
\[
\virtual{t} = \begin{array}{c|c}
\overline{a+1} & \overline{a}
\\ \vdots & \vdots
\\ a & a+1
\end{array}
\xrightarrow[\hspace{50pt}]{f_a^v}
\begin{array}{c|c}
\overline{a} & \overline{a}
\\ \vdots & \vdots
\\ a+1 & a+1
\end{array} = f_a^v \virtual{t}.
\]
$t'$ is given by sending the $\overline{a+1} \mapsto \overline{a}$, so that
$a \in v'_-$ and $a+1 \in v'_+$. Now since $\absval{J_0'} = \absval{J_0}$ but $\absval{K'} = \absval{K} - 1$, 
we have moved a letter $b \in J$ into $J_0'$ (i.e., $b \notin J'$). Hence $f_a^v \virtual{t} = v(t')$.

If $a+1 \in J$, then we have $f_a t = 0$ and 
\[
\virtual{t} = \begin{array}{c|c}
\vdots & \overline{a}
\\ \vdots & \overline{a+1}
\\ a+1 & \vdots
\\ a & \vdots
\end{array},
\]
and hence $f_a^v \virtual{t} = 0$.

\case{Subcase $a = n$}

We obtain $t'$ from mapping $0 \mapsto \overline{n}$, and hence $n \in v'_-$. So we have
\[
\virtual{t} = \begin{array}{c|c}
\vdots & \overline{n}
\\ n & \vdots
\end{array}
\xrightarrow[\hspace{50pt}]{f_n^v}
\begin{array}{c|c}
\overline{n} & \overline{n}
\\ \vdots & \vdots
\end{array} = f_a^v \virtual{t} = v(t')
\]
since $\absval{J_0'} = 0$ and $J_0 = \{n\}$.

\case{Case $a \in J$}

Unlike in the previous cases, we cannot have $n \in J$, so we only need to consider $a < n$. From the construction of $J$, we have that $a, \overline{a} \notin t$.

\case{Subcase $a+1 \in J_0$ or $J$}
In this case $a+1, \overline{a+1}\not \in t$, so that we have $f_a t = 0$. On the other hand
\[
\virtual{t} = \begin{array}{c|c}
\vdots & \overline{a}
\\ \vdots & \overline{a+1}
\\ a+1 & \dots
\\ a & \vdots
\end{array},
\]
and hence $f_a^v \virtual{t} = 0$.

\case{Subcase $a+1 \in v_+$}

We have $f_a t = 0$ and
\[
\virtual{t} = \begin{array}{c|c}
\vdots & \overline{a}
\\ a + 1 & a + 1
\\ a & \vdots
\end{array},
\]
so that $f_a^v \virtual{t} = 0$.

\case{Subcase $a+1 \in v_-$}

$t'=f_at $ is obtained by $\overline{a+1} \mapsto \overline{a}$, and so we have 
\[
\virtual{t} = \begin{array}{c|c}
\overline{a+1} & \overline{a}
\\ \vdots & \overline{a+1}
\\ a & \vdots
\end{array}
\xrightarrow[\hspace{50pt}]{f_a^v}
\begin{array}{c|c}
\overline{a} & \overline{a}
\\ \vdots & \overline{a+1}
\\ a+1 & \vdots
\end{array} = f_a^v \virtual{t} = v(t')
\]
since $a \in v'_-$ and $a+1 \in J'$.

\case{Subcase $a+1 \in K$}

We obtain $t'$ by sending $\overline{a+1} \mapsto \overline{a}$ and
\[
\virtual{t} = \begin{array}{c|c}
\overline{a+1} & \overline{a}
\\ \vdots & \vdots
\\ a & a + 1
\end{array}
\xrightarrow[\hspace{50pt}]{f_a^v}
\begin{array}{c|c}
\overline{a} & \overline{a}
\\ \vdots & \vdots
\\ a + 1 & a + 1
\end{array} = f_a^v \virtual{t} = v(t')
\]
since $a \in v'_-$ and $a+1 \in v'_+$.

\case{Case $a \in K$}

We break this into subcases. We generally assume that $a < n$ since $a = n$ is the same as the first subcase below.

\case{Subcase $a+1 \notin t_{\pm}, J_0, J$}

We have $f_a t = 0$ and
\[
\virtual{t} = \begin{array}{c|c}
\overline{a} & \vdots
\\ \vdots & a
\end{array},
\]
and so $f_a^v \virtual{t} = 0$.

\case{Subcase $a+1 \in v_+$}

We have $f_a t = 0$ and
\[
\virtual{t} = \begin{array}{c|c}
\overline{a} & \vdots
\\ \vdots & a + 1
\\ a + 1 & a
\end{array},
\]
which implies $f_a^v \virtual{t} = 0$.

\case{Subcase $a + 1 \in v_-$}

Here $f_a$ sends the $a \mapsto a+1$, and so we have
\[
\virtual{t} = \begin{array}{c|c}
\overline{a} & \overline{a+1}
\\ \overline{a+1} & \vdots
\\ \vdots & a
\end{array}
\xrightarrow[\hspace{50pt}]{f_a^v}
\begin{array}{c|c}
\overline{a} & \overline{a}
\\ \overline{a+1} & \vdots
\\ \vdots & a+1
\end{array} = f_a^v \virtual{t} = v(t')
\]
since $a \in v'_-$ and $a+1 \in K'$.

\case{Subcase $a+1 \in K$}

We have $f_a t = 0$ and 
\[
\virtual{t} = \begin{array}{c|c}
\overline{a} & \vdots
\\ \overline{a+1} & \vdots
\\ \vdots & a+1
\\ \vdots & a
\end{array},
\]
which implies $f_a^v \virtual{t} = 0$.

\case{Subcase $a+1 \in J_0$ or $J$}

We have $f_a t = 0$ and
\[
\virtual{t} = \begin{array}{c|c}
\overline{a} & \overline{a+1}
\\ \vdots & \vdots
\\ a + 1 & a
\end{array},
\]
which implies $f_a^v \virtual{t} = 0$.

\hspace{12pt}

Therefore from the cases above, we have $v \circ f_a = f_a^v \circ v$. Similarly, one can show that $v \circ e_a = e_a^v \circ v$. 
The cases also all show that
\begin{align*}
\gamma_a \varepsilon_a = \virtual{\varepsilon}_b
\qquad \text{and} \qquad
\gamma_a \varphi_a = \virtual{\varphi}_b
\end{align*}
for any $b \in \phi^{-1}(a)$.
Therefore the map $v$ is a virtualization map.
\end{proof}

\begin{prop}
Let $\g_0$ be of classical type with foldings given by Equation~\eqref{eq:classical_embeddings}. The highest weight 
crystal $B(\lambda)$ virtualizes in $B(\Psi(\lambda))$ with the virtualization map $v$ given by $v(u_{\lambda}) \mapsto u_{\Psi(\lambda)}$ 
(recall $u_{\lambda}$ is the unique highest weight element in $B(\lambda)$).
\end{prop}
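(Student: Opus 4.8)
The plan is to follow the two-step strategy announced at the start of this appendix: first prove the statement when $\lambda$ is a fundamental weight, and then bootstrap to arbitrary dominant $\lambda$ using that virtual crystals are closed under tensor products.

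\textbf{Step 1: fundamental weights.} I would first verify that $B(\clfw_a)$ virtualizes in $B(\Psi(\clfw_a))$ for every $a \in I_0$, where $\Psi(\clfw_a) = \gamma_a \sum_{b \in \phi^{-1}(a)} \virtual{\clfw}_b$. When the ambient $\virtual{\g}_0$ is of type $A_{2n-1}$ this is Baker's theorem~\cite{baker2000}. For the foldings $C_n \lhook\joinrel\longrightarrow D_{n+1}$ and $B_n \lhook\joinrel\longrightarrow D_{n+1}$ it is exactly Lemma~\ref{lemma:folding_CD} and Lemma~\ref{lemma:folding_BD}, the latter also using the spin virtualization $B(\clfw_n) \hookrightarrow B(2\clfw_n)$ recalled in Remark~\ref{remark:spin_tableaux}. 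For $F_4 \lhook\joinrel\longrightarrow E_6$ the case $a = 2$ is Lemma~\ref{lemma:folding_F}; for the remaining $a$, and for $G_2 \lhook\joinrel\longrightarrow D_4$, one restricts the affine virtualization map of Proposition~\ref{prop:single_column_fg} to the classically highest weight component $B(\clfw_a) \subseteq B^{a,1}$. Since $v$ takes classically highest weight elements to classically highest weight elements of weight $\Psi(\clfw_a)$, the image of $u_{\clfw_a}$ generates a copy of $B(\Psi(\clfw_a))$ inside $\virtual{B}^{a,1}$, and $v$ restricted to $B(\clfw_a)$ is the desired virtualization. Hence in every case $B(\clfw_a)$ virtualizes in $B(\Psi(\clfw_a))$ with $v(u_{\clfw_a}) = u_{\Psi(\clfw_a)}$.

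\textbf{Step 2: arbitrary $\lambda$.} Write $\lambda = \sum_{a \in I_0} k_a \clfw_a$, so that $B(\lambda)$ is the connected $U_q(\g_0)$-subcrystal generated by $u_\lambda = \bigotimes_a u_{\clfw_a}^{\otimes k_a}$ inside $B := \bigotimes_a B(\clfw_a)^{\otimes k_a}$. By Step 1 each factor virtualizes in $B(\Psi(\clfw_a))$, and by Proposition~\ref{prop:virtual_tensor} (whose proof applies to classical types, as noted after it) virtual crystals form a tensor category, so the product map $v := \bigotimes_a v_a^{\otimes k_a}$ virtualizes $B$ in $\virtual{B} := \bigotimes_a B(\Psi(\clfw_a))^{\otimes k_a}$. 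By construction $v(u_\lambda)$ is the tensor product of the various $u_{\Psi(\clfw_a)}$, hence a highest weight vector of $\virtual{B}$ of weight $\sum_a k_a \Psi(\clfw_a) = \Psi(\lambda)$; the $U_q(\virtual{\g}_0)$-component of $\virtual{B}$ that it generates is a copy of $B(\Psi(\lambda))$, and I identify this component with $B(\Psi(\lambda))$ and its generator with $u_{\Psi(\lambda)}$. Now restrict $v$ to $B(\lambda) \subseteq B$: since $v$ intertwines $f_a$ with $f_a^v = \prod_{b \in \phi^{-1}(a)} \virtual{f}_b^{\;\gamma_a}$ and $e_a$ with $e_a^v$, the subset $v(B(\lambda))$ is precisely the $f_a^v$-orbit of $v(u_\lambda) = u_{\Psi(\lambda)}$, which is contained in $B(\Psi(\lambda))$ because the $\virtual{f}_b$ and $\virtual{e}_b$ preserve connected components of $\virtual{B}$. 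The pair $\bigl(v(B(\lambda)), B(\Psi(\lambda))\bigr)$ is then a virtual crystal in the sense of Definition~\ref{dfn:virtual_crystal}, the injectivity of $v|_{B(\lambda)}$ and the identities $\wt = \Psi^{-1} \circ \virtual{\wt}$, $\gamma_a \varepsilon_a = \virtual{\varepsilon}_b$, $\gamma_a \varphi_a = \virtual{\varphi}_b$ being inherited from the virtualization of $B$. Thus $v|_{B(\lambda)}$ realizes the virtualization of $B(\lambda)$ in $B(\Psi(\lambda))$ with $v(u_\lambda) = u_{\Psi(\lambda)}$.

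\textbf{Main obstacle.} The real content is concentrated in Step 1, and within it in Lemma~\ref{lemma:folding_BD} for $B_n \lhook\joinrel\longrightarrow D_{n+1}$ with $r < n$: one must write down the explicit column-by-column formula for $v$ on single-column KN tableaux and run through the exhaustive case analysis showing that this $v$ commutes with every $f_a$ and $e_a$ (and matches the $\varepsilon_a$, $\varphi_a$). Once Lemmas~\ref{lemma:folding_F}--\ref{lemma:folding_BD} and Proposition~\ref{prop:single_column_fg} are in place, Step 2 is essentially formal; the only care needed there is the bookkeeping that $v(u_\lambda)$ really is the ambient highest weight vector $u_{\Psi(\lambda)}$ and that its orbit under the $\virtual{f}_b$ inside the tensor product $\virtual{B}$ is a full copy of $B(\Psi(\lambda))$.
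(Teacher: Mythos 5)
Your proposal is correct and follows essentially the same route as the paper: establish the fundamental-weight cases via Baker's theorem, Lemmas~\ref{lemma:folding_F}--\ref{lemma:folding_BD}, and the restriction of Proposition~\ref{prop:single_column_fg} to the classical component $B(\clfw_a) \subseteq B^{a,1}$, then tensor these together using Proposition~\ref{prop:virtual_tensor} and restrict to the component generated by $u_\lambda$. Your Step 2 spells out the bookkeeping (that $v(u_\lambda) = u_{\Psi(\lambda)}$ and that its $\virtual{f}_b$-orbit stays in $B(\Psi(\lambda))$) more explicitly than the paper does, but the argument is the same.
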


\begin{proof}
Let $\lambda = \sum_{a \in I_0} c_a \clfw_a$. From Proposition~\ref{prop:single_column_fg} restricted to the 
(unique) classical component $B(\clfw_a) \subseteq B^{a,1}$, Lemma~\ref{lemma:folding_F}, 
Lemma~\ref{lemma:folding_CD}, Lemma~\ref{lemma:folding_BD}, and 
Proposition~\ref{prop:virtual_tensor}, we know that there exists a virtualization map
\[
v \colon \bigotimes_{a \in I_0} B(\clfw_a)^{\otimes c_a} \lhook\joinrel\longrightarrow \bigotimes_{a \in I_0} 
B\bigl(\Psi(\clfw_a) \bigr)^{\otimes c_a}.
\]
If we restrict $v$ to the unique classical component 
$v' \colon B(\lambda) \to B\bigl( \Psi(\lambda) \bigr)$, then $v'$ is the desired virtualization map.
\end{proof}

\bibliographystyle{alpha}
\bibliography{filling}

\end{document}